\documentclass{article}
\usepackage[utf8]{inputenc}

\usepackage{authblk}

\title{Simplicial lists in operad theory I}
\author{Redi Haderi\footnote{redi.haderi@aybu.edu.tr (corresponding author)}}
\author{Özgün Ünlü\footnote{unluo@fen.bilkent.edu.tr}}
\affil{{\small{Department of Mathematics, Ankara Yıldırım Beyazit University, Turkey}}}
\affil{{\small{Department of Mathematics, Bilkent University, Ankara, Turkey}}}

\date{\today} 

\usepackage[english]{babel}
\usepackage[utf8]{inputenc}
\usepackage{amsmath}
\usepackage{graphicx}
\usepackage{quiver}
\usepackage{mathtools}

\usepackage[mathscr]{eucal}

\usepackage{url}

\usepackage{amsthm}

\usepackage{hyperref}
\usepackage{amssymb}

\usepackage{tikz}
\usetikzlibrary{arrows}
\usetikzlibrary{cd}

\usepackage{array}
\usepackage{epigraph}

\usepackage{bbold}

\newtheorem{theorem}{Theorem}[section]
\newtheorem{proposition}{Proposition}[section]

\newtheorem{lemma}{Lemma}[section]
\newtheorem{corollary}{Corollary}[section]

\theoremstyle{remark}

\theoremstyle{remark}

\theoremstyle{remark}
\newtheorem{remark}{Remark}[section]

\theoremstyle{remark}
\newtheorem*{note}{Note}

\theoremstyle{remark}
\newtheorem{construction}{Construction}[section]

\theoremstyle{remark}
\newtheorem{example}{Example}[section]

\theoremstyle{remark}
\newtheorem{notation}{Notation}[section]

\theoremstyle{remark}
\newtheorem{definition}{Definition}[section]

\makeatletter
\def\slashedarrowfill@#1#2#3#4#5{%
  $\m@th\thickmuskip0mu\medmuskip\thickmuskip\thinmuskip\thickmuskip
   \relax#5#1\mkern-7mu%
   \cleaders\hbox{$#5\mkern-2mu#2\mkern-2mu$}\hfill
   \mathclap{#3}\mathclap{#2}%
   \cleaders\hbox{$#5\mkern-2mu#2\mkern-2mu$}\hfill
   \mkern-7mu#4$%
}
\def\rightslashedarrowfill@{%
  \slashedarrowfill@\relbar\relbar\mapstochar\rightarrow}
\newcommand\xslashedrightarrow[2][]{%
  \ext@arrow 0055{\rightslashedarrowfill@}{#1}{#2}}
\makeatother

\newcommand{\cc}[1]{\mathscr{#1}}
\newcommand{\bb}[1]{\mathbb{#1}}
\newcommand{\D}[1]{\Delta^{#1}}
\newcommand{\DD}[1]{\bb{\Delta}^{#1}}
\newcommand{\set}{\mathsf{Set}}
\newcommand{\pset}{\mathsf{Set}_*}
\newcommand{\spa}{\mathsf{List}}

\newcommand{\sset}{\mathsf{sSet}}

\newcommand{\cat}{\mathsf{Cat}}
\newcommand{\scat}{\mathsf{Cat}_\D{}}
\newcommand{\operad}{\mathsf{Op}}
\newcommand{\soperad}{\mathsf{Op}_\D{}}
\newcommand{\moncat}{\mathsf{MonCat}}
\newcommand{\dplus}{\Delta_+}
\newcommand{\assoc}{\mathsf{Assoc}}

\newcommand{\mult}{\mathsf{MGraph}}

\newcommand{\nd}{N(\Delta_+)}

\newcommand{\llist}{\mathsf{List}}
\newcommand{\slist}{\mathsf{sList}}
\newcommand{\slisto}{\mathsf{sList}_{\Op}}

\newcommand{\nc}{N_\D{}}
\newcommand{\ncl}{{N_\D{}}^{\!\!\!\!l\;}}

\newcommand{\monoid}{\mathsf{Monoid}}
\newcommand{\vect}{\mathsf{Vect}}
\newcommand{\Vect}{\mathsf{Vect}_{OU}}
\newcommand{\mat}{\mathsf{Mat}}
\newcommand{\Mat}{\mathsf{Mat}_{OU}}

\newcommand{\Ob}{\text{Ob}}
\newcommand{\Spx}{\mathsf{Spx}}

\newcommand{\Col}{\text{Col}}
\newcommand{\Op}{\text{Op}}
\newcommand{\mor}{\text{Mor}}

\newcommand{\CP}{\cc{P}}
\newcommand{\CL}{\cc{L}}
\newcommand{\Q}{\cc{Q}}
\newcommand{\C}{\cc{C}}
\newcommand{\M}{\cc{M}}
\newcommand{\A}{\cc{A}}
\newcommand{\B}{\cc{B}}
\newcommand{\V}{\cc{V}}
\newcommand{\K}{\cc{K}}
\newcommand{\F}[1]{\cc{F}^{(#1)}}

\newcommand{\CF}{\cc{F}}

\newcommand{\lx}{\cc{L} X}

\newcommand{\llq}{\cc{L} \cc{Q}}
\newcommand{\VVa}{\bb{V}_\alpha}

\newcommand{\VV}{\bb{V}}
\newcommand{\UUa}{\bb{U}_\alpha}

\newcommand{\UU}{\bb{U}}
\newcommand{\LQ}{\cc{L} \cc{Q}}
\newcommand{\LM}{\cc{L} \cc{M}}
\newcommand{\LP}{\cc{L} \cc{P}}
\newcommand{\fc}{\mathfrak{C}}
\newcommand{\fo}{\mathfrak{O}}

\DeclareMathOperator*{\colim}{colim}

\def\ua{{\underline{a}}}
\def\ub{{\underline{b}}}

\def\ux{{\underline{x}}}


\newcommand{\listdelta}{\widetilde{\mathsf{sList}}}
\newcommand{\listtwo}{\mathsf{List}^{2}}
\newcommand{\listdeltatwo}{\widetilde{\mathsf{sList}^2}}
\newcommand{\Ab}{\mathsf{Ab}}
\newcommand{\sAb}{\mathsf{sAb}}
\newcommand{\grAb}{\mathsf{grAb}}
\newcommand{\cAb}{\text{Ch}_{\geq 0}(\mathsf{Ab})}

\usepackage{biblatex}

\addbibresource{mystuff.bib}

\begin{document}

\maketitle

\begin{abstract}
    We define a category $\llist$ whose objects are sets and morphisms are mappings which assign to an element in the domain an ordered sequence (list) of elements in the codomain. We introduce and study a category of simplicial objects $\slist$ whose objects are functors $\D{op} \to \llist$, which we call simplicial lists, and morphisms are natural transformations which have functions as components. We demonstrate that $\slist$ supports the combinatorics of (non-symmetric) operads by constructing a fully-faithful nerve functor $N^l : \operad \to \slist$ from the category of operads. This leads to a reasonable model for the theory of non-symmetric $\infty$-operads. 

    We also demonstrate that $\slist$ has the structure of a presheaf category. In particular, we study a subcategory $\slisto$ of operadic simplicial lists, in which the nerve functor takes values. The latter category is also a presheaf category over a base whose objects may be interpreted as leveled trees. We construct a coherent nerve functor which outputs an $\infty$-operad for each operad enriched in Kan complexes.  We also define homology groups of simplicial lists and study first properties.
\end{abstract}

\tableofcontents

\section{Introduction and summary}

This paper presents a new approach to the combinatorics of non-symmetric operads. We propose a simplicial framework via simplicial lists. First, we define a convenient category of sets, denoted $\llist$, whose morphisms are mappings which assign to an element in the domain a sequence of elements in the codomain. A simplicial list is a functor $\D{op} \to \llist$. 

A central contention of ours is that, in a strong sense, (certain) simplicial lists are to non-symmetric operads what simplicial sets are to categories. 
Many of the constructions and theorems in this paper provide justification for this contention. For example, we construct a nerve functor and prove a  nerve theorem which leads us to a notion of \emph{quasi-operad}. 

The intuition underlying most of our constructions and results is deceptively simple. Thus, in order to make the latter more technical sections more digestible, we wish to discuss some preliminary ideas in this introduction. We begin by a short discussion on the notion of operad and $\infty$-operad.

\subsection*{A few words on operads and $\infty$-operads}

A non-symmetric \emph{operad} $\CP$ is comprised of
\begin{itemize}
    \item [-] Objects $a, b, c \dots$, traditionally referred to as \emph{colors}.
    \item[-] Multimorphisms, which have a codomain and, unlike categorical morphisms, have a sequence of colors serving as domain. Such multimorphisms are referred to as \emph{operations} and are typically depicted as corollas
    \[\begin{tikzcd}
	{} & {} & {} \\
	& f \\
	& {}
	\arrow["{a_k}", no head, from=1-3, to=2-2]
	\arrow["b", no head, from=2-2, to=3-2]
	\arrow["\dots"{description}, draw=none, from=1-2, to=2-2]
	\arrow["{a_1}"', no head, from=1-1, to=2-2]
\end{tikzcd}\]
    to indicate that $f$ has as domain the sequence $\ua = (a_1, \dots, a_k)$ and codomain $b$. We may also write $f : \ua \to b$.
    \item[-] \emph{Composition} of operations when placing operations on top of the inputs of another operation operation. We may depict such a composition scheme as a tree
    \[\begin{tikzcd}
	{} && {} && {} && {} \\
	& {f_1} && \dots && {f_k} \\
	&&& g \\
	&&& {}
	\arrow["{b_1}"', no head, from=2-2, to=3-4]
	\arrow["{b_k}", no head, from=2-6, to=3-4]
	\arrow["c"', no head, from=3-4, to=4-4]
	\arrow[""{name=0, anchor=center, inner sep=0}, "{a_1^{(1)}}", shorten >=4pt, no head, from=2-2, to=1-1]
	\arrow[""{name=1, anchor=center, inner sep=0}, "{a_1^{(k)}}"', shorten <=4pt, no head, from=1-5, to=2-6]
	\arrow[""{name=2, anchor=center, inner sep=0}, "{a_{n_k}^{(k)}}", shorten <=4pt, no head, from=1-7, to=2-6]
	\arrow[""{name=3, anchor=center, inner sep=0}, "{a_{n_1}^{(1)}}", shorten <=4pt, no head, from=1-3, to=2-2]
	\arrow["\dots"{description}, draw=none, from=0, to=3]
	\arrow["\dots"{description}, draw=none, from=1, to=2]
\end{tikzcd}\]
This tree encodes the data for operations $(f_i : \ua^{(i)} \to b_i)_i$ and $g : \ub \to c$. The composite of these operations is an operation $g \circ (f_1, \dots , f_k) : \ua^{(1)} * \dots * \ua^{(k)} \to c$ with domain the concatenation of the sequences on the top of the tree. Composition is required to be associative.

\item[-] A \emph{identity} operation $1_a : a \to a$ for each color $a$ such that composition is \emph{unital}.
\end{itemize}
In order to abbreviate terminology, throughout this paper, we refer to non-symmetric operads simply as operads. 

It is evident from the above sketch that, although operads, by virtue of having operations which compose, are objects of study in category theory, the theory of operads extends the theory of categories. Another feature is hinted at by the string diagrammatic style of depicting operations: operads generalize monoidal categories. In fact, any monoidal category $(\V, \otimes)$ may be regarded as an operad, typically denoted $\V^\otimes$, with objects those of $\V$ and operations morphisms in $\V$ of the form $f: A_1 \otimes \dots \otimes A_k \to B$.

A morphism of operads $\phi : \CP \to \Q$, which is just a structure preserving map, is also interpreted a $\Q$-valued \emph{algebra} of $\CP$. Interpreting morphisms as algebras is most useful when $\Q = V^\otimes$ originates from some monoidal category, the classical example being the category of sets with the cartesian product $\Q = \set^\times$. 
An operad $\CP$ is only as interesting as its algebras. A vast array of mathematical structures can be encoded as algebras over operads and we refer the reader to textbook accounts such as \cite{leinster2004higher}, \cite{markl2002operads} or \cite{loday2012algebraic} for examples from multiple areas of mathematics and beyond. The original formulation of operads is due to Peter May \cite{may2006geometry} in the context of loop spaces. 

The terminal operad $\assoc$, called the \emph{associative operad}, which consists of a single color and a unique operation with $k$-inputs for each $k \geq 0$, is particularly well-known. Its algebras $\assoc \to \set^\times$ are monoids and more generally its algebras in a monoidal category are monoid objects in that monoidal category. Thus, the operad $\assoc$ may be regarded as a structure which records the notion of monoid.

If we pursue this line of thought, then as soon as we try to implement these ideas in higher dimension we run out of structure. For instance,
we want to say monoidal categories are $\assoc$-algebras in $\cat^\times$, but this would only cover strict monoidal structures. This observation would lead us to consider weak 2-morphisms $\assoc \to \cat^\times$ and implement ideas from 2-category theory (a couple of examples being \cite{haderi2024OMon}, \cite{corner2013operads}). Soon enough though, we would need even higher morphisms and we run into the usual problem of inability to properly keep track of  coherence laws. 

Ultimately, we are interested in \emph{homotopy coherent} algebras $\CP \to C^\times$ which take values in a higher category $C$ (which may be incarnated in various forms, like a simplicial category or quasi-category). In case $\CP = \assoc$ and $C$ is the $\infty$-category of spaces such algebras are $A_\infty$-spaces. The investigation of coherence for algebras goes back at least to the influential work of Boardman and Vogt (\cite{boardman2006homotopy}), while a classical treatment of coherent associativity goes back to Stasheff's paper \cite{MR0158400}. While homotopy coherence can be formulated in the context of topological or simplicially enriched operads via cofibrant resolutions (also known as $W$-construction due to Boardman and Vogt), we run into practical limitations again due to the amount of data involved in resolutions. 

A good theory of $\infty$-operads encodes a notion of operad where coherence is recorded in the structure itself, meaning that composition of operations is well-defined up to coherent homotopy. A beautiful and successful approach in the context of categories is the quasi-categorical model of $\infty$-categories developed by Joyal (\cite{joyal2008notes}) and Lurie (\cite{lurie2009higher}). One of the features which makes quasi-categories practical is the fact that, being simplicial sets, many ideas from category theory, like limits and colimits, may be directly implemented semantically and the resulting notion is automatically coherent. 

The latter idea is implemented by Lurie in \cite{lurie2017higher} to model $\infty$-operads. An operad may be interpreted within category theory as a certain category which is equipped with a certain functor into finite pointed sets (see also \cite{haugseng2023allegedly} for a somewhat allegedly friendly introduction to the idea). Then, the latter is interpreted in the realm of $\infty$-categories using theory developed in \cite{lurie2009higher}. While the resulting structure encodes coherence for operads, one could argue that the direct semantic relationship between Lurie's model and classical operad theory is not transparent (at least when compared to the theory of quasi-categories in relation to ordinary categories). 

The model we present in this paper is developed in analogy with $\infty$-category theory rather than within the theory. We develop a combinatorial framework via the theory of simplicial lists with the initial aim of creating a good environment to interpret classical operad theory in a coherent manner. Certain aspects of our approach are similar to the dendroidal approach developed by Moerdijk and his collaborators (\cite{moerdijk2007dendroidal,cisinski2011dendroidal, cisinski2013dendroidalsegal,cisinski2013dendroidalsimplicial}, see also the dedicated book \cite{heuts2022simplicial}), but there are key differences which will appear as the theory is presented. Other aspects of our work are similar to models for $\infty$-operads based on Segal conditions developed by Barwick (\cite{barwick2018operator}) and Chu-Haugseng (\cite{chu2021homotopy,chu2023enriched}).

Moreover, we believe that the same approach will eventually cover coherent versions of related structures such as properads or props (analogues of the structures in \cite{hackney2015infinity}, for instance). In this paper we give hints of the possibility of such an ambition to come to fruition.

\begin{note}[Symmetries]

Roughly speaking, an operad has symmetries if one can permute inputs of operations to obtain new operations. Even though we believe our approach can extend to cover symmetric operads, in this paper we restrict ourselves to the non-symmetric case. While many examples of interest are symmetric, there are many which are most easily comprehensible without symmetries (like $\assoc$).
    
\end{note}

\subsection*{Our framework}

Some of the features that enable the combinatorics of simplicial sets to support $\infty$-category theory are the following:
\begin{itemize}
    \item [-] Kan complexes are a model for the homotopy theory of spaces.
    \item[-] There is a fully-faithful nerve functor $N : \cat \to \sset$ from the category of categories.
    \item[-] There is a coherent nerve functor $N_\D{} : \cat_\D{} \to \sset$ from the category of simplicial categories which, among other things, maps categories enriched in Kan complexes to $\infty$-categories. 
\end{itemize}

Let $\CP$ be an operad. As nice as simplicial sets are, we cannot define a nerve $N(\CP) : \D{op} \to \set$ such that
$$N(\CP)_0 = \text{colors of } \ \CP \ \ \ , \ \ \ N(\CP)_1 = \text{operations of } \ \CP$$
for the evident reason that the domain assigning function $d_1 $ would not model multimorphisms. A natural line of thought is to declare that the category $\D{}$ itself is deficient, as it is formed by simplex-like objects and their morphisms. The dendroidal setting developed by Moerdijk-Weiss (\cite{moerdijk2007dendroidal}) addresses the issue by replacing $\D{}$ with a category $\Omega$ whose objects are rooted trees and whose morphisms are maps of operads between the free operads generated by rooted trees. This way, one encodes all of the structure of an operad by mapping rooted trees into the operad. 
We will come back to the $\D{}$-replacement strategy.

The other line of thought is to say that the blame on the failure of a nerve construction for operads lies in the notion of function. After all, why does $d_1$ have to map elements of $N(\CP)_1$ to elements of $N(\CP)_0$? This leads us to consider the notion of \emph{listing}.

\begin{definition}[The category of listings] \label{def_list}

    Let $X$ be a set. We denote by $\CL X$ the set of finite sequences in $X$, i.e.
    $$\CL X = \displaystyle \coprod_{n \geq 0} X^n$$
     A listing, denoted $u : A \xslashedrightarrow{} X$, is a function $u : A \rightarrow \CL X$. 
    
     Given composable listings $u : A \xslashedrightarrow{} X$ and $v : X \xslashedrightarrow{} Y$, their composite is defined via concatenation, i.e. in case $a \in A$ and $u(a) = (x_1, \dots ,x_k)$ we have 
    $$(vu)(a) = v(x_1) * \dots * v(x_k)$$
    where $*$ denotes the concatenation operation for sequences.
    
     Let $\spa$ be the resulting category.

\end{definition}

\begin{remark}
    $\llist$ is just the Kleisli category of the free monoid monad on the category of sets. Equivalently, it is the category of free monoids. 
    In light of this, the inclusion $\set \subseteq \llist$, which regards every function as a listing with target lists being of length $1$, has a right adjoint
    \[
    \begin{tikzcd}
        \CL :&[-3em] \llist \arrow[r] & \set
    \end{tikzcd}
    \]
    
    We do not adopt these perspectives in this paper. Rather, we think of listings as certain \emph{multivalued functions}. For instance, here is a listing $u$
    \[\begin{tikzcd}[row sep=tiny]
	&& {b_1} \\
	{a_1} && {b_2} \\
	{a_2} && {b_3} \\
	&& {b_4}
	\arrow[from=2-1, to=1-3]
	\arrow[from=2-1, to=3-3]
	\arrow[from=3-1, to=1-3]
	\arrow[from=3-1, to=2-3]
	\arrow[from=3-1, to=4-3]
\end{tikzcd}\]
with $u(a_1) = (b_1, b_3)$ and $u(a_2) = (b_1, b_2, b_4)$. This is closer to the notion of span between sets. In fact, a listing $u : A \xslashedrightarrow{} B$ may be regarded as a span $A \xleftarrow{s} U \xrightarrow{t} B$ in which the fibres $s^{-1}(a)$ are finite and equipped with an order. Here, $U$ is the set of arrows and $s, t$ are functions assigning source and target. Note that $s^{-1}(a)$ is allowed to be empty, which means that a listing may assign the empty sequence in the codomain to an element in the domain. 
\end{remark}

For an operad $\CP$, instead of aiming at incarnating its nerve as a simplicial set, we will divert our ambition into creating a \emph{list nerve}
\[
\begin{tikzcd}
    N^l(\CP) :&[-3em] \D{op} \arrow[r] &\llist
\end{tikzcd}
\]
The $d_1$-obstruction vanishes. Indeed, we can declare a listing $d_1 : N^l(\CP)_1 \xslashedrightarrow{} N^l(\CP)_0$ which assigns to each operation its domain sequence of colors.

For the $2$-simplices, we implement the general nerve idea
$$N^l(\CP)_2 = \text{composable pairs of operations in } \ \CP$$
Let us consider the following pair of operations $(f,g)$
\[\begin{tikzcd}
	{} & {} & {} \\
	{} & f & {} \\
	& g \\
	& {}
	\arrow["{a_1}"', shorten <=6pt, no head, from=1-1, to=2-2]
	\arrow["{a_2}"{pos=0.2}, shift left, no head, from=1-2, to=2-2]
	\arrow["{a_3}", shorten <=6pt, no head, from=1-3, to=2-2]
	\arrow["{b_2}", no head, from=2-2, to=3-2]
	\arrow["{b_1}"', shorten <=6pt, no head, from=2-1, to=3-2]
	\arrow["{b_3}", shorten <=6pt, no head, from=2-3, to=3-2]
	\arrow["c", no head, from=3-2, to=4-2]
\end{tikzcd}\]
If we declare a $2$-simplex $\sigma$ that witnesses the composite $g \circ f$, we may declare that $d_0\sigma = g$, $d_2\sigma = f$ and $d_1\sigma = g \circ f$. However, we can compute that $d_1d_1\sigma = d_1(g \circ f) = (b_1, a_1, a_2, a_3, b_3)$, while on the other hand $d_1d_2\sigma = d_1 f = (a_1, a_2, a_3)$. So, while the depicted $\sigma$ would be a perfectly fine dendrex in the dendroidal nerve by virtue of being recorded as a tree, the simplicial identities seem to fail.

Again, there is a remedy. In the simplicial setting, the simplex $\sigma$ should really be depicted as
\[\begin{tikzcd}
	{} & {} & {} & {} & {} \\
	& {1_{b_1}} & f & {1_{b_3}} \\
	&& g \\
	&& {}
	\arrow["{a_1}"', shorten <=6pt, no head, from=1-2, to=2-3]
	\arrow["{a_2}"{pos=0.2}, shift left, no head, from=1-3, to=2-3]
	\arrow["{a_3}", shorten <=6pt, no head, from=1-4, to=2-3]
	\arrow["{b_2}", no head, from=2-3, to=3-3]
	\arrow["{b_1}"', no head, from=2-2, to=3-3]
	\arrow["{b_3}", no head, from=2-4, to=3-3]
	\arrow["c", no head, from=3-3, to=4-3]
	\arrow["{b_1}"', shorten <=6pt, no head, from=1-1, to=2-2]
	\arrow["{b_3}", shorten <=4pt, no head, from=1-5, to=2-4]
\end{tikzcd}\]
and, taking advantage of $\llist$, we declare that $d_0\sigma = g$, $d_2\sigma = (1_{b_1}, f, 1_{b_3})$ and $d_1\sigma = g \circ f$. This way, the simplicial identities will hold and we are lead to the general pattern our attempt at a nerve construction should follow: it must be indexed by \emph{leveled trees}.

It turns out that leveled trees of the type we are interested in are encoded by simplices of augmented simplex category category $\dplus$, which is just the category $\D{}$ with the additional empty-set object $\emptyset$ serving as initial object. For notation benefits, we think of the objects of $\dplus$ as finite ordered sets and lie up to unique isomorphism. For example, a $2$-simplex $\alpha \in \nd_2$ of the form
\[
\begin{tikzcd}
  \alpha : &[-2em]  \{c_1, a_1, a_2, a_3, c_2 \} \arrow[r, "\alpha_1"] & \{b_1, b_2, b_3 \} \arrow[r, "\alpha_2"] & \{ c\} \\[-2em]
  & c_1 \arrow[r, mapsto] & b_1 & \\[-2em]
  & a_1, a_2, a_3 \arrow[r, mapsto] & b_2 & \\[-2em]
  & c_3 \arrow[r, mapsto] & b_3 & 
\end{tikzcd}
\]
encodes the following leveled tree
\[\begin{tikzcd}
	{} & {} & {} & {} & {} \\
	& {p_1} & {p_2} & {p_3} \\
	&& q \\
	&& {}
	\arrow["{a_1}"', shorten <=6pt, no head, from=1-2, to=2-3]
	\arrow["{a_2}"{pos=0.2}, shift left, no head, from=1-3, to=2-3]
	\arrow["{a_3}", shorten <=6pt, no head, from=1-4, to=2-3]
	\arrow["{b_2}", no head, from=2-3, to=3-3]
	\arrow["{b_1}"', no head, from=2-2, to=3-3]
	\arrow["{b_3}", no head, from=2-4, to=3-3]
	\arrow["c", no head, from=3-3, to=4-3]
	\arrow["{c_1}"', shorten <=6pt, no head, from=1-1, to=2-2]
	\arrow["{c_3}", shorten <=4pt, no head, from=1-5, to=2-4]
\end{tikzcd}\]
Notice that the morphisms $\alpha_1$ and $\alpha_2$ in the formation of $\alpha$ determine and are determined by the shape of the tree. 

We may form the free operad $T_\alpha$ associated to a simplex $\alpha \in \nd$ and define
$$N^l(\CP)_n = \displaystyle \coprod_{\alpha \in \nd_n^{root}} \operad(T_\alpha, \CP)$$
The notation $\nd^{root}$ stands for \emph{rooted simplices} of $\nd$, meaning those simplices $\alpha : \D{n} \to \dplus$ such that $\alpha(n) \cong [0]$. Note that general $\alpha$ encode ordered sequences of leveled trees\footnote{Such sequences of leveled trees appear in \cite{heuts2016equivalence} as leveled forests, except that ours is a planar variant. They also appear in \cite{barwick2018operator} (we comment on this in detail in Section \ref{subsec : operadic}).}, and this gives rise to the listings in the simplicial structure.  More precisely, we do have an isomorphism
$$\nd_n \cong \CL \nd_n^{root}$$
for all $[n] \in \D{}$.
Given a morphism $\theta : [k] \to [n]$ and $\alpha \in \nd_n^{root}$, we do have a morphism of operads $T_{\theta^* \alpha} \to T_\alpha$ which produces the listing 
\[
\begin{tikzcd}
    N^l(\CP)_n \arrow[r] & \CL N^l(\CP)_k
\end{tikzcd}
\]
by precomposition. 

What we have sketched above assembles into a fully faithful list nerve functor
\[
\begin{tikzcd}
    N^l :&[-3em] \operad \arrow[r] & \slist
\end{tikzcd}
\]

where $\slist$ is the category whose:
\begin{itemize}
    \item [-] Objects are functors $X : \D{op} \to \llist$.
    \item[-] Morphisms are natural transformations $f : X \to Y$ such that for each $[n] \in \D{}$ the component $f_n : X_n \to Y_n$ is a function.
\end{itemize}

Moreover, for an operad $\CP$, all the face and degeneracy maps in the simplicial structure of $N^l(\CP)$ are functional, except for the last faces $d_n : N^l(\CP)_n \xslashedrightarrow{} N^l(\CP)_{n-1}$. The reason is simple: every simplicial operation on a rooted simplex in $\nd$ produces a rooted simplex, except for the last face operation which cuts the root and leaves us with a  list of rooted trees. This leads us to an important class of simplicial lists, which we call \emph{operadic}.

\begin{definition}[Operadic simplicial lists] \label{def_operadic}

A simplicial list $X$ is said to be operadic if for all morphisms $\theta : [k] \to [n]$ in $\D{}$ such that $\theta(k) = n$, the action map $\theta^* : X_n \to X_k$ is a function.
    
\end{definition}

\begin{theorem}[Nerve Theorem]

Let $X$ be a simplicial list. Then, $X$ is isomorphic to $N^l(\CP)$ for some operad $\CP$ if and only if $X$ is operadic and the simplicial set $\CL X$ is (isomorphic to the nerve of) a category.
    
\end{theorem}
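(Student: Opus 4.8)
The plan is to prove both implications by identifying $\CL X$ with the nerve of the \emph{monoidal envelope} of an operad. For an operad $\CP$, let $\mathrm{Env}(\CP)$ denote the strict monoidal category whose objects are finite lists of colors of $\CP$, with monoidal product concatenation, and whose morphisms $(a_1,\dots,a_m)\to(b_1,\dots,b_n)$ are tuples $(g_1,\dots,g_n)$ with $g_j$ an operation $\ua^{(j)}\to b_j$ and $\ua^{(1)}*\cdots*\ua^{(n)}=(a_1,\dots,a_m)$, composed blockwise via operadic composition. For necessity I will show that $X=N^l(\CP)$ is operadic and that $\CL X\cong N(\mathrm{Env}(\CP))$; for sufficiency I will recognize a category $\A$ with $\CL X\cong N(\A)$ as $\mathrm{Env}(\CP)$ for an operad $\CP$ reconstructed from $X$.

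For necessity, suppose $X=N^l(\CP)$. Then $X$ is operadic: if $\theta\colon[k]\to[n]$ satisfies $\theta(k)=n$ and $\alpha\in\nd_n^{root}$, i.e. $\alpha(n)\cong[0]$, then $(\theta^*\alpha)(k)=\alpha(\theta(k))=\alpha(n)\cong[0]$, so $\theta^*\alpha$ is again rooted; hence the structure map $\theta^*$, which on the $\alpha$-summand of $N^l(\CP)_n=\coprod_{\alpha\in\nd_n^{root}}\operad(T_\alpha,\CP)$ is precomposition with $T_{\theta^*\alpha}\to T_\alpha$, takes values in the length-one part $N^l(\CP)_k\subseteq\CL N^l(\CP)_k$ and is therefore a function. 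For the second point, combining $\nd_n\cong\CL\nd_n^{root}$ with the fact that $T$ carries a list $(\alpha_1,\dots,\alpha_m)$ of rooted simplices to the coproduct operad $\coprod_iT_{\alpha_i}$, whence $\operad(T_{(\alpha_i)},\CP)\cong\prod_i\operad(T_{\alpha_i},\CP)$, yields
\[\CL N^l(\CP)_n\;\cong\;\coprod_{\gamma\in\nd_n}\operad(T_\gamma,\CP).\]
A simplex $\gamma\colon[n]\to\dplus$ together with a map $T_\gamma\to\CP$ is read off level by level: the colored set $\gamma(i)$ is an object of $\mathrm{Env}(\CP)$, and the map $\gamma(i)\to\gamma(i+1)$ decorated by the operations sitting between these two levels is a morphism of $\mathrm{Env}(\CP)$; this exhibits the right-hand side as $N(\mathrm{Env}(\CP))_n$, naturally in $[n]$, with inner faces corresponding to operadic composition (i.e. composition in $\mathrm{Env}(\CP)$), degeneracies to insertion of identities, outer faces to deletion of the top or bottom level, and levelwise concatenation to the monoidal product. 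So $\CL X\cong N(\mathrm{Env}(\CP))$ as simplicial monoids, and in particular $\CL X$ is the nerve of a category.

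For sufficiency, suppose $X$ is operadic and fix an isomorphism $\CL X\cong N(\A)$ with $\A$ a category. The levelwise concatenation monoid structure on $\CL X$ is preserved by every simplicial operator, so $\CL X$ is a monoid object in $\sset$; transporting it along the iso and using that $N\colon\cat\to\sset$ is fully faithful and product-preserving, we obtain a strict monoidal structure $\otimes$ on $\A$ for which $N(\A)\cong\CL X$ is an isomorphism of simplicial monoids. In particular $\Ob(\A)=N(\A)_0$ and $N(\A)_1$ are free monoids on the sets $X_0$ and $X_1$. Operadicity is what forces $\A$ into the shape of an envelope: because $d_0\colon X_1\to\CL X_0$ is a function, every $\otimes$-irreducible morphism $f\in X_1$ of $\A$ has codomain a single color (whereas $d_1f$ may be an arbitrary list of colors); every morphism of $\A$ is uniquely a $\otimes$-product of such irreducibles; and comparing domains and codomains gives a bijection $\A(\ua,(b_1,\dots,b_n))\cong\coprod_{\ua=\ua_1*\cdots*\ua_n}\prod_l\A(\ua_l,(b_l))$. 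Now let $\CP$ be the operad with color set $X_0$, with operations $\ua\to b$ the set $\A(\ua,(b))$, with $g\circ(f_1,\dots,f_n):=g\circ_\A(f_1\otimes\cdots\otimes f_n)$ (a morphism into a single color, hence an operation), and with identities the identity morphisms of $\A$; the operad axioms follow from associativity and unitality of $\circ_\A$, functoriality of $\otimes$, and its interchange law. The bijection above, together with the matching of composition, promotes to an isomorphism of strict monoidal categories $\A\cong\mathrm{Env}(\CP)$, so $\CL X\cong N(\A)\cong N(\mathrm{Env}(\CP))\cong\CL N^l(\CP)$ is a chain of isomorphisms of simplicial monoids. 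Since a simplicial list is recovered functorially from $\CL X$ equipped with its levelwise monoid structure — the generators of the free monoid $\CL X_n$ being precisely $X_n$, and any monoid isomorphism preserving generators — this chain descends to an isomorphism $X\cong N^l(\CP)$.

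The step I expect to be the main obstacle is the recognition, in the sufficiency direction, that operadicity forces $\A$ to be a monoidal envelope: that the $\otimes$-irreducible morphisms of $\A$ are exactly the operations (those with single-color codomain), and that the hom-sets of $\A$ split along the codomain as displayed. This is where the interplay between the free-monoid structure on each $\CL X_n$ and the simplicial identities must be used with care. A secondary, more routine, task is checking the operad axioms for the reconstructed $\CP$, which amounts to chasing faces of $2$- and $3$-simplices of $N(\A)$ through the identifications above.
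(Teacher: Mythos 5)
Your proof is correct, and the necessity half follows the paper's route exactly (operadicity from rootedness being preserved by last-vertex-preserving $\theta$, plus the identification $\CL N^l(\CP)\cong N(\LP)$ via the simplex classification $\coprod_{\gamma\in\nd_n}\operad(T_\gamma,\CP)$). The sufficiency half reaches the same destination by a noticeably different path. The paper builds the operad $\CP_X$ directly on $X_0$ and $X_1$, defines composition by observing that the composite list $\underline{h}=g\circ\underline{f}$ in $\CL X_1$ must be a singleton because $d_0\underline{h}=d_0 g$ is a single color, and then verifies $X\cong N^l(\CP_X)$ by a spine argument on simplices of $X$. You instead transport the levelwise free-monoid structure to a strict monoidal structure on $\A$, prove $\A\cong\mathrm{Env}(\CP)$ (i.e.\ $\LP$ in the paper's notation) by analyzing $\otimes$-irreducibles, and recover $X\cong N^l(\CP)$ from the chain of simplicial-monoid isomorphisms using the fact that a monoid isomorphism between free monoids carries generators to generators in order. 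The key mechanism is identical in both proofs --- operadicity forces single-color codomains on irreducibles and hence singleton composites --- but your final recovery step (generators of $\CL X_n$ determine $X_n$ and its listing structure maps) is a genuinely different, and arguably more formal, substitute for the paper's "uniquely determined by its spine" argument; it also makes explicit the envelope identification that the paper leaves implicit. The price is the extra bookkeeping of checking that all the isomorphisms in your chain are isomorphisms of simplicial \emph{monoids}, which you correctly flag and which is indeed where the care is needed.
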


One way to interpret the above nerve theorem is to say that leveled trees and their simplicial relations are enough to fully record the structure of an operad. 
Here, $\CL X$ is the simplicial set obtained as a composite $\D{op} \xrightarrow{X} \llist \xrightarrow{\CL} \set$. Its simplices are simply list of simplices of $X$ and hence the listings in the simplicial structure of $X$ become functions in the structure of $\CL X$. 

\begin{definition}[Quasi-operad]

A quasi-operad is an operadic simplicial list $P$ such that the simplicial set $\CL P$ is a quasi-category.
    
\end{definition}

There is a sense in which we are justified in claiming that quasi-operads as defined above do model $\infty$-operads. This is in light of our observations and the well-established quasi-categorical model for the theory of $\infty$-categories. Depending on context, we may refer to quasi-operads as $\infty$-operads, since it is quite common in the literature (the main example being \cite{lurie2009higher}) to refer to quasi-categories as $\infty$-categories.

We will provide an additional piece of evidence which indicates that our notion of $\infty$-operad actually captures some classical theory. 
It is possible to construct simplicial operads $\VVa$ associated with leveled tree shapes $\alpha \in \nd^{root}$, which are well-behaved cofibrant replacements of the operads $T_\alpha$, and define a homotopy coherent list nerve functor
\[
\begin{tikzcd}
    \ncl :&[-3em] \soperad \arrow[r] &\slist
\end{tikzcd}
\]
which assigns to a simplicial operad $\Q$ sets of simplices 
$$(\ncl \Q)_n = \displaystyle \coprod_{\alpha \in \nd^{root}_n} \soperad(\VVa , \Q)$$
for all $[n] \in \D{}$.
\begin{theorem}
    If a simplicial operad $\Q$ is enriched in Kan complexes, $\ncl \Q$ is a quasi-operad.
\end{theorem}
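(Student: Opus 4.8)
The plan is to reduce the statement to the corresponding fact for simplicial categories, namely that the coherent nerve $N_\Delta$ of a category enriched in Kan complexes is a quasi-category, by comparing $\CL (\ncl \Q)$ with $N_\Delta$ of a suitable simplicial category built from $\Q$. First I would unwind what must be checked: $\ncl \Q$ is operadic essentially by construction, since the simplicial operations on rooted simplices of $\nd$ behave exactly as in the list nerve case — every $\theta : [k] \to [n]$ with $\theta(k) = n$ pulls a rooted simplex back to a rooted simplex, so the induced map on $\soperad(\VVa, \Q)$-indexed coproducts is functional; the content of the theorem is therefore that $\CL(\ncl \Q)$ is a quasi-category whenever $\Q$ is Kan-enriched.

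The key step is to identify $\CL(\ncl \Q)$. Using the isomorphism $\nd_n \cong \CL \nd_n^{root}$ together with the fact that a general simplex $\alpha \in \nd_n$ decomposes as a list of rooted simplices, a simplex of $\CL(\ncl \Q)$ in degree $n$ is a list of maps $\VVa_i \to \Q$ indexed by the rooted pieces of some $\alpha \in \nd_n$; I expect this to be naturally isomorphic to the set $\sset\text{-}\cat(\fc[\Delta^n], \Q')_n$-analogue, i.e. to $(N_\Delta \Q^{\oplus})_n$ for an appropriate simplicial category $\Q^{\oplus}$ manufactured from $\Q$ — concretely, the simplicial category whose objects are finite lists of colors of $\Q$ and whose mapping spaces assemble the $\VVa$-resolutions, so that the $\VVa$ play the role of the cofibrant replacements $\fc[\Delta^n]$ of the ordinary nerve. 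Establishing this identification cleanly — matching the levelled-tree combinatorics of the $\VVa$ with the Boardman–Vogt-type resolutions implicit in Cordier's $\fc[-]$ — is the technical heart.

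Once $\CL(\ncl \Q) \cong N_\Delta \Q^{\oplus}$ is in place, the conclusion is immediate: if $\Q$ is enriched in Kan complexes then so is $\Q^{\oplus}$ (its mapping spaces are built from the Kan complexes of $\Q$ via finite products and the contractible indexing data of the resolutions $\VVa$, hence remain Kan), and the classical theorem that the coherent nerve of a Kan-enriched category is a quasi-category finishes the argument. The main obstacle I anticipate is precisely the middle step: verifying that the $\VVa$ are the "right" cofibrant replacements, i.e. that the mapping objects in $\Q^{\oplus}$ obtained by this list-nerve recipe genuinely agree with Cordier's construction (up to the isomorphism of simplicial sets needed — not merely a weak equivalence), and checking functoriality of this identification in $\Q$. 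A fallback, if the strict identification is awkward, is to verify the inner horn lifting property for $\CL(\ncl \Q)$ directly: an inner horn $\Lambda^n_i \to \CL(\ncl \Q)$ is a compatible family of lists of operad maps out of the $\VVa$ associated to the faces of $\Delta^n$, and filling it amounts to extending along the cofibration of simplicial operads $\colim_{\text{faces}} \VVa|_{\Lambda^n_i} \hookrightarrow \VVa$, which is possible because each mapping space of $\Q$ is Kan and the relevant inclusion of indexing data is an acyclic cofibration — but this requires knowing the $\VVa$ are cofibrant and the horn inclusions among them are acyclic cofibrations, which should be recorded in the construction of the $\VVa$ earlier in the paper.
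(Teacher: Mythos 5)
Your proposal is correct and follows essentially the same route as the paper: the simplicial category $\Q^{\oplus}$ you describe (objects are finite lists of colors, mapping spaces assembled from those of $\Q$ by finite products and coproducts) is precisely the monoidal envelope $\CL\Q$, and the identification $\CL(\ncl\Q)\cong \nc(\CL\Q)$ is exactly the paper's commutation lemma, whose proof rests on the classification of simplices of $\nc(\CL\Q)$ as morphisms $\VVa\to\Q$ together with rooted decomposition. The remaining steps — that $\CL\Q$ inherits the Kan enrichment and that the classical coherent nerve of a Kan-enriched category is a quasi-category — are the same as in the paper.
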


Besides the combinatorial capability of $\slist$ to support both nerves and coherent nerves of operads, it has, at least to us, a surprising property: it is a presheaf category!

\begin{theorem}
    There is a full subcategory $\cc{F} \subseteq \slist$ such that $\slist \cong \set^{\cc{F}^{op}}$.
\end{theorem}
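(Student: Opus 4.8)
The plan is to apply the standard recognition theorem for presheaf categories: a cocomplete category $\mathcal{E}$ equipped with a small full subcategory $\cc{F}\subseteq\mathcal{E}$ is equivalent, via the restricted Yoneda functor $R\colon\mathcal{E}\to\set^{\cc{F}^{op}}$, $R(X)=\mathcal{E}(-,X)|_{\cc{F}}$, to $\set^{\cc{F}^{op}}$ as soon as every object of $\cc{F}$ is \emph{tiny} (the corepresented functor $\mathcal{E}(F,-)$ preserves all small colimits) and $\cc{F}$ is \emph{dense} (equivalently, $R$ is fully faithful). So the task reduces to checking that $\slist$ is cocomplete and to producing a suitable small full subcategory $\cc{F}$ of tiny objects which is dense.

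Cocompleteness is the easy part, but it must be done with care since colimits in $\llist$ are not computed on underlying functions. In fact colimits in $\slist$ are computed ``levelwise on underlying sets'': coproducts are levelwise disjoint unions (coproducts in $\llist$ agree with those in $\set$), and a coequalizer is the levelwise set-coequalizer, the one subtlety being that the listings $\theta^*$ descend to the quotient because $\CL$ applied to a function preserves list-lengths. In particular each evaluation functor $X\mapsto X_n$ preserves colimits. Turning to $\cc{F}$, the first guess — the free simplicial list on one $n$-cell — fails: $X\mapsto X_n$ has no left adjoint, since the length of $\theta^*x$ for $\theta\colon[m]\to[n]$ is unbounded while a morphism of simplicial lists must preserve it. The remedy is the levelled-tree bookkeeping already visible in the paper: to each cell $x\in X_n$ one assigns a \emph{shape} $\sigma(x)\in\nd_n$ — a levelled planar tree — recording the lengths of all the lists $\theta^*x$ and the way the entries of $(\theta\psi)^*x$ sit inside those of $\theta^*x$; the simplicial identities of $X$ force $\sigma(x)$ to be a genuine simplex of $\nd$, coherently in $x$, and this is exactly where the isomorphism $\nd_n\cong\CL\nd_n^{root}$ and its compatibility with faces and degeneracies are used. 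For a shape $\alpha$ one then builds the ``free simplicial list on a cell of shape $\alpha$'', $F_\alpha$, with $(F_\alpha)_m$ the set of formal pairs $(\theta,j)$ — $\theta\colon[m]\to[n]$ and $j$ an index into the list $\theta^*\alpha$ — carrying the evident listing structure, and one verifies $\slist(F_\alpha,X)\cong\{x\in X_n:\sigma(x)=\alpha\}$, naturally in $X$. Let $\cc{F}$ be the full subcategory of $\slist$ on the $F_\alpha$, $\alpha$ ranging over the (small) set of simplices of $\nd$.

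With this $\cc{F}$ the remaining two conditions fall out. For tininess, $\slist(F_\alpha,-)$ is the summand of the colimit-preserving functor $X\mapsto X_n$ cut out by $\sigma(x)=\alpha$; since shapes are preserved by morphisms of simplicial lists — hence transported along colimit cocones and unaffected by the identifications made in coequalizers — this summand is again computed levelwise, so it preserves colimits. For density, one shows $R$ is fully faithful: a natural transformation of ``cell systems'' $R(X)\to R(Y)$ reassembles, along the decomposition $X_m=\coprod_\alpha\{x:\sigma(x)=\alpha\}$, into functions $X_m\to Y_m$, and their compatibility with every $\theta^*$ is automatic, because each entry of $\theta^*x$ is the image of the generating cell of $F_{\sigma(x)}$ under a morphism of $\cc{F}$, on which naturality was assumed. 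The recognition theorem then gives $\slist\cong\set^{\cc{F}^{op}}$.

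The main obstacle is the construction and analysis of $\cc{F}$, concretely two points. First, that the list-lengths of the faces of an arbitrary cell really do organize into a well-defined simplex of $\nd$ and that $x\mapsto\sigma(x)$ respects all simplicial operations — this is the technical heart, and it is precisely where one exploits $\nd_n\cong\CL\nd_n^{root}$ and its interaction with the simplicial structure. Second, that $F_\alpha$ genuinely corepresents ``cells of shape $\alpha$''; the failure of ``free on a bare $n$-cell'' shows that this corepresentability is a real phenomenon rather than a formality, and it is what forces the combinatorics of levelled trees into the base of the presheaf presentation. By comparison, the cocompleteness of $\slist$ and the identification of $\cc{F}$-morphisms with the appropriate maps of levelled trees are routine, if somewhat tedious, verifications.
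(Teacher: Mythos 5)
Your overall strategy --- cocompleteness plus a small dense full subcategory of tiny objects, then the recognition theorem for presheaf categories --- is a legitimate alternative to the paper's argument, which instead constructs the equivalence and its inverse by hand. But the key step of your proof, the identification of the corepresenting objects, contains a genuine error. You claim that every cell $x \in X_n$ of an \emph{arbitrary} simplicial list has a shape $\sigma(x) \in \nd_n$, i.e.\ that the lengths of the lists $\theta^* x$ and their compatibilities are governed by a chain $A_0 \to \cdots \to A_n$ in $\dplus$. This is false outside the operadic setting. The paper's ``wild simplicial list'' is a counterexample: take $X_0 = \{a\}$, $X_1 = \{s', s''\}$ with $s_0 a = (s', s'')$, $d_1 s' = a$, $d_0 s'' = a$, and the remaining faces empty. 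The degeneracy of the $0$-cell $a$ is a list of length $2$, whereas in every $F_\alpha = U_\alpha$ with $\alpha \in \nd_n$ the operation $\theta^*$ applied to any cell is a singleton whenever $\theta(k)=n$, in particular on all degeneracies of the generating cell. Consequently there is no morphism $F_\alpha \to X$ whose generating cell maps to $a$, for any $\alpha \in \nd_0$; your decomposition $X_m = \coprod_\alpha\{x : \sigma(x)=\alpha\}$ fails, and the restricted Yoneda functor along your $\cc{F}$ is not fully faithful.

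What your construction actually yields is the \emph{operadic} presheaf theorem $\slisto \cong \set^{\Upsilon^{op}}$, where $\Upsilon$ consists of the $U_\alpha$ for rooted $\alpha$ (you would also need rootedness so that $F_\alpha$ has a single generating top cell). For the general statement the paper defines the shape of $x$ abstractly, as the middle object $U_x$ of the perfect--function factorisation of the listing $(x) : \D{n} \xslashedrightarrow{} X$, takes $\cc{F}$ to be the full subcategory of all ``fundamental'' simplicial lists arising this way, and explicitly remarks that a purely combinatorial description of this $\cc{F}$ remains open --- precisely the description you tried to supply via $\nd$. Your cocompleteness argument and the tininess/density checks are sound in outline and would go through with the corrected $\cc{F}$, since the facts you rely on (each simplex of $U$ occurs exactly once in the image of the perfect listing from $\D{n}$, hence a morphism out of $U$ is determined by the image of its generating cell) hold for all fundamental simplicial lists, not only the $U_\alpha$.
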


We construct the subcategory $\cc{F}$ combinatorially, but in rather abstract fashion. A purely combinatorial description of $\cc{F}$ remains to be explored. However, when we restrict to operadic simplicial lists, we still have a presheaf structure with base the operadic objects in $\cc{F}$. The combinatorial structure of the latter is summarized in the following theorem.

\begin{theorem}

There is a full subcategory $\Upsilon \subseteq \slisto$ of the category of operadic simplicial lists such that 
$$\slisto \cong \set^{\Upsilon^{op}}$$
Moreover, the category $\Upsilon$ admits the following description:
\begin{itemize}
    \item [-] The objects are the rooted simplices of $\nd$ (leveled trees).
    \item[-]  Given two such simplices $\alpha \in \Upsilon^{(n)}$ and $\beta \in \Upsilon^{(k)}$, where $n$ and $k$ indicate simplicial dimension in $\nd$, a morphism $(\theta , a) : [\beta] \to [\alpha]$ is a pair consisting of a morphism $\theta : [k] \to [n]$ and an element $a \in \alpha(\theta(k))$ such that $\beta = (\theta^*\alpha)_a$. The latter indicates the rooted simplex with root $a$ which appears in the sequence of rooted simplices of $\theta^* \alpha$.
\end{itemize}
    
\end{theorem}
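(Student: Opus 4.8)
The plan is to produce mutually inverse functors between $\slisto$ and $\set^{\Upsilon^{op}}$, using the stated combinatorial description of $\Upsilon$ as the blueprint. The whole argument rests on a preliminary structural fact: that $\nd$, suitably promoted to an operadic simplicial list, is terminal in $\slisto$. Explicitly, let $\mathfrak n$ be the simplicial list with $\mathfrak n_n := \nd_n^{root}$ whose listing $\theta^\ast\colon\nd_n^{root}\to\CL\nd_k^{root}$, for $\theta\colon[k]\to[n]$, sends a rooted simplex $\alpha$ to the sequence of rooted pieces of the chain $\theta^\ast\alpha$, indexed and ordered by $\alpha(\theta(k))=(\theta^\ast\alpha)(k)$. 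This is operadic because $\theta(k)=n$ forces $(\theta^\ast\alpha)(k)=\alpha(n)$ to be a singleton, so $\theta^\ast\alpha$ is again rooted; and $\Upsilon$, having objects the rooted simplices of $\nd$, is small since $\dplus$ is. I would then prove that for any operadic simplicial list $X$ there is a unique map $X\to\mathfrak n$: its component $X_n\to\nd_n^{root}$ must be the \emph{shape} map $x\mapsto\mathrm{sh}(x)$, where the $i$-th set of the chain $\mathrm{sh}(x)$ is recovered as the ordered set of entries of the list in $\CL X_0$ obtained by pulling $x$ back along the $i$-th vertex $[0]\to[n]$, and the transition map to the $(i+1)$-st set is read off from the list in $\CL X_1$ obtained by pulling $x$ back along the edge $[1]\to[n]$, $0\mapsto i$, $1\mapsto i+1$ (operadicity makes the top set a singleton, so $\mathrm{sh}(x)$ is rooted). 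The bulk of the work here is checking that these data really assemble into a chain in $\dplus$ compatibly, that $\mathrm{sh}$ is a morphism of simplicial lists, and that it is forced; \emph{this is the step I expect to be the main obstacle}, since it amounts to a careful reconstruction of a levelled tree from an abstract element and its low-dimensional simplicial shadows. Terminality in particular produces, naturally in $X$, a partition $X_m=\coprod_{\beta\in\nd_m^{root}}X[\beta]$ into the fibres $X[\beta]:=\mathrm{sh}^{-1}(\beta)$.

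Next I would introduce the representables. For $\alpha\in\nd_n^{root}$ let $\Upsilon[\alpha]$ be the simplicial list with $\Upsilon[\alpha]_m:=\coprod_{\theta\colon[m]\to[n]}\alpha(\theta(m))$ and, for $\psi\colon[m']\to[m]$, with $\psi^\ast(\theta,a)$ the list of pairs $(\theta\psi,b)$ over those $b\in\alpha(\theta\psi(m'))$ mapping to $a$ under $\alpha(\theta\psi(m'))\to\alpha(\theta(m))$. A short check shows $\Upsilon[\alpha]$ is a well-defined operadic simplicial list carrying a distinguished cell $u_\alpha=(\mathrm{id}_{[n]},\ast)\in\Upsilon[\alpha]_n$ of shape $\alpha$, with $\theta^\ast u_\alpha$ the sequence $\big((\theta,b)\big)_{b\in\alpha(\theta(m))}$, so every cell of $\Upsilon[\alpha]$ is a component of some $\theta^\ast u_\alpha$. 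The co-Yoneda lemma here is a natural bijection $\slisto(\Upsilon[\alpha],X)\cong X[\alpha]$: a morphism $f$ is determined by $f(u_\alpha)$; composing with $X\to\mathfrak n$ forces $f(u_\alpha)\in X[\alpha]$; conversely any $x\in X[\alpha]$ yields a morphism by $(\theta,a)\mapsto(\theta^\ast x)_a$, which is well defined and natural precisely because naturality of $\mathrm{sh}$ makes the entries of $\theta^\ast x$ indexed by $\alpha(\theta(m))$. Specialising $X=\Upsilon[\alpha]$ gives $\slisto(\Upsilon[\beta],\Upsilon[\alpha])\cong\Upsilon[\alpha][\beta]=\{(\theta,a):\theta\colon[k]\to[n],\,a\in\alpha(\theta(k)),\,(\theta^\ast\alpha)_a=\beta\}$, and a routine manipulation of rooted decompositions shows composition of such pairs agrees with composition in $\slisto$. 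Thus the full subcategory of $\slisto$ on the objects $\Upsilon[\alpha]$ is exactly the category $\Upsilon$ of the statement.

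It remains to show the restricted Yoneda functor $R\colon\slisto\to\set^{\Upsilon^{op}}$, $RX=\big(\alpha\mapsto X[\alpha]\big)$ with $(\theta,a)$ acting by $x\mapsto(\theta^\ast x)_a$, is an equivalence. For full faithfulness, a natural family $\phi=(\phi_\alpha\colon X[\alpha]\to Y[\alpha])_\alpha$ assembles, via the shape partition, into $f\colon X\to Y$ with $f_m|_{X[\beta]}=\phi_\beta$; naturality of $f$ against each $\theta$ unwinds exactly to the compatibility of $\phi$ with the morphisms $(\theta,a)$ of $\Upsilon$, and this is inverse to $R$ on hom-sets. For essential surjectivity, given $P\in\set^{\Upsilon^{op}}$ I would set $(X_P)_m:=\coprod_{\beta\in\nd_m^{root}}P(\beta)$ with listing $\theta^\ast$ on the $\beta$-summand given by the tuple $\big(P(\theta,e)(-)\big)$ as $e$ runs through $\beta(\theta(m))$ in order; functoriality in $\theta$ comes from functoriality of $P$ together with the composition law for the pairs $(\theta,a)$, operadicity from $\theta(m')=m$ forcing $\beta(\theta(m'))$ to be a singleton, and $R(X_P)\cong P$ by construction. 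Alternatively one can finish via the recognition theorem for presheaf categories: $\slisto$ is cocomplete with colimits formed levelwise in $\set$ — the functional-components condition lets the levelwise colimit inherit a listing structure, and operadicity is preserved — and each $\Upsilon[\alpha]$ is atomic since $X\mapsto X[\alpha]$ is a natural coproduct summand of the colimit-preserving functor $X\mapsto X_m$; combined with density, i.e.\ full faithfulness of $R$, this gives the equivalence. The only genuinely delicate point in the plan is the terminality of $\mathfrak n$ in the first paragraph; granted that, the rest is bookkeeping.
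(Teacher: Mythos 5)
Your proposal is correct, but it takes a genuinely different route from the paper's. The paper first proves a presheaf theorem for \emph{all} of $\slist$: a simplex $x \in X_n$ is represented by a listing $(x) : \D{n} \xslashedrightarrow{} X$, which is factored (perfect listing followed by a function) through a ``fundamental'' simplicial list $U_x$; the base is the abstractly defined category $\CF$ of all fundamental simplicial lists, the operadic case is obtained by restricting to ``types,'' and only then are the fundamental operadic objects classified as the $U_\alpha$ with $\alpha$ rooted. You bypass the perfect-function factorisation and the auxiliary category $\CF$ entirely and work inside $\slisto$ from the start: your key lemma is that $\nd^{root}$ (which the paper only identifies as $N^l\assoc$) is terminal in $\slisto$, which packages the classification of shapes into a shape map $X \to \nd^{root}$ whose fibres give the partition $X_n = \coprod_\beta X[\beta]$; your representables $\Upsilon[\alpha]$ coincide with the paper's $U_\alpha$ (Construction \ref{const : Ua}), and your co-Yoneda lemma $\slisto(U_\alpha, X) \cong X[\alpha]$ is the operadic specialisation of the paper's isomorphism $X_n \cong \coprod_{U \in \F{n}} \slist(U,X)$. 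The step you flag as delicate does go through: the levels $A_i$ of $\mathrm{sh}(x)$ are the index sets of the vertex lists $v_i^\ast x$, the transition maps are read off the edge lists via $d_1$, and compositionality $\alpha_{i,k} = \alpha_{j,k}\circ\alpha_{i,j}$ follows from $d_1 d_1 = d_1 d_2$ applied to the pulled-back $2$-simplices, using operadicity to know that $d_0$, $d_1 : X_2 \to X_1$ and the last-vertex maps are functions; naturality of $\mathrm{sh}$ against a general $\theta$ follows similarly. What your route buys is a self-contained treatment of the operadic case together with the structurally pleasant fact that $\nd^{root}$ is terminal in $\slisto$ (consistent with $\assoc$ being terminal in $\operad$ and $N^l$ fully faithful); what it gives up is the paper's uniform treatment of arbitrary type-subcategories $\slist_S$ and of general (non-operadic) simplicial lists, where no comparably explicit classifying object is available and the factorisation machinery carries the argument.
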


Here is an example of a morphism in $\Upsilon$

\[\begin{tikzcd}
	{} && {} \\
	& {p_2} \\
	& {}
	\arrow["{a_2}"', no head, from=1-1, to=2-2]
	\arrow["{a_3}", no head, from=1-3, to=2-2]
	\arrow["{b_2}"', no head, from=2-2, to=3-2]
\end{tikzcd}
\xrightarrow{(d_2, b_2)}
\begin{tikzcd}
	{} & {} && {} \\
	{p_1} && {p_2} \\
	& q \\
	& {}
	\arrow["{a_1}"', no head, from=1-1, to=2-1]
	\arrow["{a_2}"', no head, from=1-2, to=2-3]
	\arrow["{a_3}", no head, from=1-4, to=2-3]
	\arrow["{b_1}"', no head, from=2-1, to=3-2]
	\arrow["{b_2}", no head, from=2-3, to=3-2]
	\arrow["c"', no head, from=3-2, to=4-2]
\end{tikzcd}\]
formed by first applying the simplicial operation $d_2$ on the target, which cuts the operation $q$, and then choosing the root $b_2$ in the remaining sequence of rooted trees. 

At the end of the day, we did replace $\D{}$ with a category of trees. However, we still retain the lists perspective which allows us to directly make analogy with the classical theory of simplicial sets. On the other hand, the presheaf structure allows us to perform constructions faster via Yoneda extension. In comparison to the dendroidal setting, our base category $\Upsilon$ has a purely combinatorial description since the morphisms between leveled trees we consider are not all morphisms between the free operads they generate\footnote{This is not to say the dendroidal approach is not combinatorial. The morphisms between dendroidal trees do factor into faces, degeneracies and automorphisms. What we mean is that this combinatorial structure is formal, while the one we present is explicit.}.

Besides dendroidal sets, the referee pointed out another strong connection to already existing ideas, more precisely to the structures developed by Barwick in \cite{barwick2018operator} and Chu-Haugseng in \cite{chu2021homotopy,chu2023enriched}. Indeed, $\Upsilon$ may be regarded as a full subcategory of the category $\D{}_{\Phi}$ in \cite[Definition 2.4]{barwick2018operator} for the case $\Phi = \dplus$. The latter category also appears as an example of an \emph{algebraic pattern} in \cite[Example 3.8]{chu2021homotopy}.

In both of the above bodies of work, a model for $\infty$-operads is defined via Segal conditions, mostly generalizing ideas developed by Lurie in \cite{lurie2017higher} (such as operator categories or the inert-active factorization). In fact, it is easy to derive from the aforementioned references that Segal $\Upsilon$-spaces (say, as in \cite[Definition 2.7]{chu2021homotopy}) do model $\infty$-operads.
In this paper, we work in the quasi-categorical direction as we believe this approach has advantages due to combinatorial tractability.

\subsection*{Organization of this paper} 

We recall some basic elements of the theory of operads in Section \ref{section : operads}, mostly with the intention of fixing notation and terminology. In Section \ref{section : first constructions} we introduce the important notion of multigraph and construct a free-forgetful adjunction with operads. Then, we take a brief detour into the free resolution this adjunction generates. Afterwards, we construct our nerve functor (already sketched to some amount of detail above) and prove our nerve theorem which motivates our model for $\infty$-operads. In Section \ref{sec : structure} we prove our presheaf theorems. This section is the combinatorial heart of this paper, as we introduce some key techniques to deal with simplicial lists. In Section \ref{section : coherent nerve} we define our coherent nerve functor, building on results from the previous sections. We prove that coherent nerves of Kan-enriched operads provide examples of $\infty$-operads. Moreover, we study some properties of the rigidification functor obtained by adjunction. In Section \ref{section : homology} we briefly discuss a natural variant of homology for simplicial lists. The short appendices \ref{appendix_Aug} and \ref{appendix_free} are dedicated to augmented simplicial objects and free resolutions, as these structures appear quite often in this paper. 

\subsection*{Some connections with the literature}

Even though the above ideas are developed independently, many of them are not quite new. The evident connection to dendroidal theory and to the Segal-condition based notions in \cite{barwick2018operator}, \cite{chu2021homotopy} has already been mentioned. The idea of encoding trees via multivalued functions appears in \cite{kock2011polynomial}. In the context of symmetric $\infty$-properads, the ideas proposed in \cite{barkan2022equifibered} seem to be aligned with the spirit of our ideas (with a key difference being that their approach is internal to $\infty$-category theory). The idea of keeping track of levels in trees is present in \cite{heuts2016equivalence} (in the notion of forest) where an equivalence between Lurie's model and the dendroidal model is established, in \cite{bonventre2020rigidification} where a necklace theorem for dendroidal rigidification is presented, but also in the aforementioned \cite{barwick2018operator} , \cite{chu2021homotopy} and \cite{chu2023enriched}. Leveled graphs appear in the context of properads in \cite{chu2022rectification} and are also used in \cite{beardsley2024labelled}. 

\subsection*{Future goals}

While the scent of homotopy theory can be discerned in this paper, we do not speak of a model structure on $\slist$. In a sequel paper, we intend to provide a model structure on $\slist$ which is Quillen equivalent via the coherent nerve to the model structure on $\soperad$ (\cite{robertson2011homotopy}). A study of the comparison to the dendroidal model, but also to Lurie's model and Segal $\Upsilon$-spaces, should be possible via model structures. 
Other model structures such as the cartesian model structure ought to be studied in the context of simplicial lists.
Another objective of ours is to develop an appropriate notion of symmetric simplicial list which models symmetric operads and $\infty$-operads. 

Moreover, we expect to be able to say something about properads and $\infty$-properads via simplicial lists. For instance, we can talk about properadic simplicial list by allowing the first face maps to be listings (besides the last face maps). A priory, it is already known to us that properadic simplicial lists have a presheaf structure. The work to do is in unpacking the combinatorics and construct good nerve functors. 

In fact, at least in the non-symmetric setting, an interesting object of study, which we would like to call compositional network, appears in front of us.

\begin{definition} \label{def: conet}
    A \emph{compositional network} is a simplicial list $X$ such that $\CL X$ is (the nerve of) a category.
\end{definition}

These objects seem to generalize both operads and properads, and their algebras might be interesting. 

Another interesting direction to pursue is to check under what circumstances our constructions can be carried our when we work relative to a given monad $T$ on some category $\C$. Is there a nerve functor which maps a $T$-multicategory (in the sense of Leinster \cite[Definition 4.2.2]{leinster2004higher}) to a simplicial object in the Kleisli category $\C_T$? In fact, the special case of the monad $\textbf{fc}$ on $\cat$ induced by the free -forgetful adjunction with graphs, leads to the notion of virtual double category. A homotopy coherent version has been worked out, in the style of Lurie, in \cite{gepner2015enriched} in order to study enrichment. 

While we briefly discuss a variant of homology for simplicial lists, other variants might be possible and more sophisticated results and applications are to be searched for. 

\subsection*{Some conventions}

In general, we name categories after the name of their objects. For example, $\set$ is the category of sets, $\operad$ is the category of operads, $\Ab$ is the category of abelian groups etc. One exception is the category $\llist$ which is named after its morphisms. 

When it comes to simplicial objects we follow fairly standard notation. $\D{}$ is the simplex category, or equivalently the category of finite ordinals, whose objects we denote $[n] = \{0, \dots, n \}$. A simplicial set if a functor $X : \D{op} \to \set$. We denote by $X_n$ the image of $[n]$ under $X$. For a general morphism $\theta : [n] \to [m]$ we write $\theta^* : X_m \to X_n$ for the image of $\theta$ under the functor $X$, while we denote by $d_i$ and $s_i$ the face and degeneracy functions.

We denote by $\dplus$ the augmented simplex category, obtained by adjoining an initial object $\emptyset$ to $\D{}$. For notational benefits, we regard any finite ordered set as an object in $\dplus$. We are justified in this abuse since for any finite ordered set $A$ there is a unique $[n] \in \dplus$ equipped with a unique isomorphism $A \cong [n]$. 

As for size issues, while we do not comment much on them in this paper, there is nothing problematic going on. For instance, one can apply the technology of Grothendieck universes. We refer the reader to \cite{shulman2008set} for more details on the subject.

\subsection*{Acknowledgements}

The first author would like to thank his friend and colleague Walker Stern for numerous beneficial conversations on higher categories and higher operads, for the many useful suggestions during the preparation of this document and for providing him generous feedback on earlier drafts of this paper. The first author has also benefited from correspondence with Philip Hackney on infinity operads and infinity properads. 

We are also grateful to the anonymous referee for pointing out useful references in the liteature, correcting mistakes and providing beneficial comments on the presentation of the material.

\section{Some elements of the theory of operads} \label{section : operads}

We briefly recall some elements of the theory of operads which are relevant to us, mainly for the purpose of fixing notation and language. We are particularly interested in the relationship between operads and categories.

\begin{definition}[Colored operad] 

A (colored, non-symmetric) operad $\CP$ consists of a collection of colors and operations of the form $f : \ua \to b$ which admit as domain sequences of colors $\ua = (a_1, \dots, a_k)$. These collections are denoted by $\Col(\CP)$ and $\Op(\CP)$ respectively, while $\CP(\ua, b)$ is the set of operations with specified source and target. Typically, an operation $f : \ua \to b$ is depicted as a corolla
\[\begin{tikzcd}
	{} & {} & {} \\
	& f \\
	& {}
	\arrow["{a_k}", no head, from=1-3, to=2-2]
	\arrow["b", no head, from=2-2, to=3-2]
	\arrow["\dots"{description}, draw=none, from=1-2, to=2-2]
	\arrow["{a_1}"', no head, from=1-1, to=2-2]
\end{tikzcd}\]

An operation $g : \ub \to c$ may be composed with a sequence of operations $(f_i : \ua^{(i)} \to b_i)_i$
\[\begin{tikzcd}
	{} && {} && {} && {} \\
	& {f_1} && \dots && {f_k} \\
	&&& g \\
	&&& {}
	\arrow["{b_1}"', no head, from=2-2, to=3-4]
	\arrow["{b_k}", no head, from=2-6, to=3-4]
	\arrow["c"', no head, from=3-4, to=4-4]
	\arrow[""{name=0, anchor=center, inner sep=0}, "{a_1^{(1)}}", shorten >=4pt, no head, from=2-2, to=1-1]
	\arrow[""{name=1, anchor=center, inner sep=0}, "{a_1^{(k)}}"', shorten <=4pt, no head, from=1-5, to=2-6]
	\arrow[""{name=2, anchor=center, inner sep=0}, "{a_{n_k}^{(k)}}", shorten <=4pt, no head, from=1-7, to=2-6]
	\arrow[""{name=3, anchor=center, inner sep=0}, "{a_{n_1}^{(1)}}", shorten <=4pt, no head, from=1-3, to=2-2]
	\arrow["\dots"{description}, draw=none, from=0, to=3]
	\arrow["\dots"{description}, draw=none, from=1, to=2]
\end{tikzcd}\]
to produce an operation $g \circ (f_1, \dots , f_k) : \ua^{(1)} * \dots * \ua^{(k)} \to c$ whose domain the concatenation of the sequences on the top of the tree. For each color $a \in \CP$, there is an identity operation $1_a : a \to a$. The composition operation is associative and unital.
    
\end{definition}

\begin{remark}
    In this paper, the term operad always means \emph{colored non-symmetric operad}. The literature varies on this point. Sometimes symmetries are assumed, sometimes a single color is postulated, and sometimes the colors and symmetries are postulated in a definition of operad.
\end{remark}

\begin{remark}
    The general composition scheme postulated in the definition also entails composition along a single input, by inserting identities at the other inputs. For example, in case we have $f : a \to b_1$ and $g : (b_1, b_2) \to c$ we may form $g \circ f = g \circ (f, 1_{b_2})$
    \[\begin{tikzcd}[column sep = small, row sep = 1.5 em]
	{} \\
	f && {} \\
	& g \\
	& {}
	\arrow["a"', no head, from=1-1, to=2-1]
	\arrow["{b_1}"', no head, from=2-1, to=3-2]
	\arrow["c"', no head, from=3-2, to=4-2]
	\arrow["{b_2}", no head, from=2-3, to=3-2]
\end{tikzcd} = 
\begin{tikzcd}[column sep = small, row sep = 1.5 em]
	{} && {} \\
	f && {1_{b_2}} \\
	& g \\
	& {}
	\arrow["a"', no head, from=1-1, to=2-1]
	\arrow["{b_1}"', no head, from=2-1, to=3-2]
	\arrow["c"', no head, from=3-2, to=4-2]
	\arrow["{b_2}", no head, from=2-3, to=3-2]
	\arrow["{b_2}", no head, from=1-3, to=2-3]
\end{tikzcd} \mapsto
\begin{tikzcd}[column sep = small, row sep = 1.5 em]
	{} && {} \\
	& {g \circ f} \\
	& {}
	\arrow["c"', no head, from=2-2, to=3-2]
	\arrow["{b_2}", no head, from=1-3, to=2-2]
	\arrow["a"', no head, from=1-1, to=2-2]
\end{tikzcd}
\]
It is possible to rewrite the definition of operad in terms of single input composites.
\end{remark}

\begin{definition}[Morphisms a.k.a. algebras]
    A morphism of operads $\phi : \CP \to \Q$ is a structure preserving map, i.e. it consists of  functions on colors and operations which respect source, target, identities and composition. Let $\operad$ be the category of operads under this notion of morphism.
    Depending on context, morphisms $\CP \to \Q$ are also referred to as $\CP$-algebras in $\Q$. 
\end{definition}

Note that the definition of operad allows the empty sequence of colors $\emptyset$ in the domain. Operations of the form $\emptyset \to b$ are called \emph{nullary}, while operations of the form $a \to b$ with domain a singleton sequence are called \emph{unary}. Hence, every operad $\CP$ has incorporated in it a category $[\CP]_1$ whose objects are the colors of $\CP$ and morphisms are the unary operations of $\CP$. In fact, we have a functor
\[
\begin{tikzcd}
    \left[- \right]_1 :&[-3em] \operad \arrow[r] & \cat
\end{tikzcd}
\]
which is right adjoint to the inclusion functor $\cat \subseteq \operad$ which regards every category as an operad with unary operations only. This is the first interesting relationship between operads and categories. 

\begin{definition}[Equivalence of operads]

A morphism of operads $\phi : \CP \to \Q$ is said to be an equivalence if it induces bijections on operation sets and it is essentially surjective. The latter means that $[\phi]_1$ is essentially surjective. 
    
\end{definition}

\begin{remark}[Enriched operads]
    Operads accept enrichment over any monoidal category $\V$. This is to say that there is a $\V$-object of operations $\CP(\ua, b)$ for all sequences of colors $\ua$ and color $b$ such that the relevant diagrams in defining an operad commute in $\V$. In this paper we are interested in $\soperad$, the category of operads enriched in simplicial sets. Enrichment for operads is discussed in far greater generality in \cite[Section 6.8]{leinster2004higher}.
\end{remark}

\begin{example}[Monoidal categories]
    As mentioned in the introduction, every monoidal category $(\V, \otimes, 1_\V)$ gives rise to a colored operad $\V^\otimes$ whose colors are the objects of $\V$ and operations $\underline{A} \to B$ are morphisms of the form $A_1 \otimes \dots \otimes A_k \to B$. 
    \[\begin{tikzcd}
	{} && {} \\
	& f \\
	& {}
	\arrow["B"', no head, from=2-2, to=3-2]
	\arrow[""{name=0, anchor=center, inner sep=0}, "{A_k}", no head, from=1-3, to=2-2]
	\arrow[""{name=1, anchor=center, inner sep=0}, "{A_1}"', no head, from=1-1, to=2-2]
	\arrow["\dots"{description}, draw=none, from=1, to=0]
\end{tikzcd} = 
\begin{tikzcd}
	{A_1 \otimes \dots \otimes A_k} \\
	\\
	B
	\arrow["f", from=1-1, to=3-1]
\end{tikzcd}
\]
    
    Morphisms $1_\V \to A$ from the unit of the monoidal structure are regarded as nullary operations, while composition of operations is provided by composition in $\V$ (and the fact that $\otimes$ is functorial). 
    It is easy to see that a monoidal functor gives rise to a morphism of operads, so that the construction $(\V, \otimes, 1_\V) \mapsto \V^\otimes$ defines a functor
    \[
    \begin{tikzcd}
        (-)^\otimes :&[-3em] \moncat \arrow[r] &\operad
    \end{tikzcd}
    \]
    from the category of monoidal categories to operads. 
\end{example}

\begin{example}[The monoidal envelope]
    Each operad $\CP$ gives rise to a monoidal category, called the monoidal envelope of $\CP$, which we denote $\LP$ (recall that for a set $X$, $\CL X$ denotes the set of finite sequences in $X$). This category can be described as follows:
    \begin{itemize}
        \item [-] Its objects are sequences of colors of $\CP$, i.e. $\Ob(\LP) = \CL \Col (\CP)$.
        \item[-] Its morphisms are sequences of operations in $\CP$, i.e. $\mor(\LP) = \CL \Op (\CP)$. For instance,
        \[\begin{tikzcd}
	{(a_1, a_2, a_3, a_4)} \\
	\\
	{(b_1, b_2, b_3)}
	\arrow["{(f_1, f_2, f_3)}"', from=1-1, to=3-1]
\end{tikzcd}
=
\begin{tikzcd}
	{} & {} & {} &&& {} \\
	& {f_1} && {f_2} && {f_3} \\
	& {} && {} && {}
	\arrow["{a_1}"', shorten <=4pt, no head, from=1-1, to=2-2]
	\arrow["{b_1}"', no head, from=2-2, to=3-2]
	\arrow["{a_3}", shorten <=4pt, no head, from=1-3, to=2-2]
	\arrow["{b_2}"', no head, from=2-4, to=3-4]
	\arrow["{b_3}"', no head, from=2-6, to=3-6]
	\arrow["{a_4}"', no head, from=1-6, to=2-6]
	\arrow["{a_2}"'{pos=0.2}, no head, from=1-2, to=2-2]
\end{tikzcd}
\]
        \item[-] Source, target and identities are assigned in the evident manner, while composition is provided by composing operations in $\CP$.  
        \item[-] The monoidal structure is provided by concatenation of sequences. 
    \end{itemize}

    A non-internal way to describe $\LP$ is to say that, for two sequences of colors $\ua : A \to \Col (\CP)$ and $\ub : B \to \Col (\CP)$ indexed by ordered sets $A$ and $B$, we have 
    $$\LP (\ua, \ub) = \displaystyle \coprod_{\alpha : A \to B} \left(\bigtimes_{b \in B} \CP(\ua_b, b) \right)$$
    where $\alpha : A \to B$ signifies a morphism in $\dplus$ and $\ua_b$ indicates the restriction of $\ua$ to $\alpha^{-1}(b)$. We obtain a functor 
    \[
    \begin{tikzcd}
        \CL :&[-3em] \operad \arrow[r] & \moncat
    \end{tikzcd}
    \]
    although many times we disregard the monoidal structure in $\LP$. Notice that the
second description of $\LP$ can be extended to cover the enriched case (if the base
monoidal category has finite coproducts which distribute over the monoidal product, which is the case for complete and cocomplete cartesian monoidal categories).

This construction is discussed in \cite[Section 2.3]{leinster2004higher}, while the term monoidal envelope was introduced by Lurie in \cite[Section 2.2.4]{lurie2017higher} in the context of $\infty$-operads.
\end{example}

\begin{example}[The associative operad]

We already discussed $\assoc$, the terminal operad which records monoid laws. Note that we have an isomorphism $\CL \assoc \cong \dplus$. 
    
\end{example}

\begin{remark}[Simplicial operads] \label{remark : L discrete}

Just as in the case of simplicial categories, we may regard a simplicial operad $\Q$ as a simplicial object $\Q_\bullet : \D{op} \to \operad$ which is \emph{discrete}. For $[n] \in \D{}$, the operad $\Q_n$ has as colors those of $\Q$ and $n$-simplices in the operation spaces of $\Q$ as operations. We say $\Q_\bullet$ is discrete to indicate the fact that simplicial operations are identity-on-colors morphisms of operads. 

In this vein, we may identify $\LQ$ with the discrete simplicial category $\LQ_\bullet$. In other words, the following diagram commutes
\[\begin{tikzcd}
	\soperad & \scat \\
	{\operad^{\D{op}}} & {\cat^{\D{op}}}
	\arrow["\subseteq"', from=1-1, to=2-1]
	\arrow["\CL", from=1-1, to=1-2]
	\arrow["\CL"', from=2-1, to=2-2]
	\arrow["\subseteq", from=1-2, to=2-2]
\end{tikzcd}\]
where the functor on the bottom is obtained by applying $\CL : \operad \to \cat$ dimension-wise. 
We will freely use these identifications in this paper. 
    
\end{remark}

\begin{lemma}
    The functor $\CL$ is left adjoint to $(-)^\otimes$.
\end{lemma}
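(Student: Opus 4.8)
The plan is to exhibit a natural bijection
\[
\moncat(\LP, \M) \;\cong\; \operad(\CP, \M^\otimes)
\]
for an operad $\CP$ and a monoidal category $\M$, and then check that it is natural in both variables. The idea is that $\LP$ is the free monoidal category generated by $\CP$ in the sense that a monoidal functor out of $\LP$ is forced, by monoidality, to be determined by where it sends length-one sequences of colors and length-one sequences of operations — and that restricted data is exactly a morphism of operads into $\M^\otimes$.

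First I would unwind the data of a strict (or strong, depending on the convention fixed earlier for $\moncat$) monoidal functor $F : \LP \to \M$. On objects, $F$ must send the sequence $(a_1, \dots, a_k)$ to $F(a_1) \otimes \dots \otimes F(a_k)$ and the empty sequence to $1_\M$, so $F$ is determined on objects by the function $a \mapsto F(a)$ on colors. On morphisms, an arbitrary morphism $(f_1, \dots, f_k)$ in $\LP$ is the monoidal product $f_1 \otimes \dots \otimes f_k$ of length-one morphisms, so $F$ is determined on all morphisms by its values $F(f)$ on single operations $f : \ua \to b$ of $\CP$; here $F(f)$ is a morphism $F(a_1) \otimes \dots \otimes F(a_j) \to F(b)$ in $\M$, i.e. precisely an operation $F(\ua) \to F(b)$ in $\M^\otimes$. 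Thus the assignment $F \mapsto (a \mapsto F(a), \, f \mapsto F(f))$ lands in $\operad(\CP, \M^\otimes)$; I would check it respects identities and composition by tracing the definitions of composition in $\LP$ and in $\M^\otimes$ (both are built from composition in $\M$ together with $\otimes$).

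Conversely, given $\phi : \CP \to \M^\otimes$, I would define $\hat\phi : \LP \to \M$ on objects by $(a_1, \dots, a_k) \mapsto \phi(a_1) \otimes \dots \otimes \phi(a_k)$ and on a morphism $(f_1, \dots, f_k)$ by $\phi(f_1) \otimes \dots \otimes \phi(f_k)$, where each $\phi(f_i)$ is read as a morphism in $\M$ via the definition of $\M^\otimes$. One checks $\hat\phi$ is a functor (functoriality of $\otimes$ plus $\phi$ preserving operadic composition/identities gives functoriality), that it is (strict) monoidal by construction, and that $F \mapsto$ restricted-data and $\phi \mapsto \hat\phi$ are mutually inverse — the round trips are forced by the "determined by length-one data" observation above. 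Naturality in $\CP$ is immediate from functoriality of $\CL$ and $(-)^\otimes$ on the nose; naturality in $\M$ follows since a monoidal functor $\M \to \cc N$ commutes with all the $\otimes$'s used in both constructions.

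The main obstacle I anticipate is bookkeeping rather than conceptual: one must pin down exactly which flavor of monoidal functor (lax, strong, or strict) and which flavor of $\moncat$ is in force, since the bijection as stated works cleanly when $\LP$ carries the strict monoidal structure from concatenation and morphisms in $\moncat$ are strict monoidal functors; for strong monoidal functors one additionally has to verify that the coherence isomorphisms are uniquely determined (they are, by the universal property, since concatenation in $\LP$ is strictly associative and unital). The second mildly delicate point is the handling of nullary operations and the empty sequence: one must confirm that $\phi$ sending $\emptyset \to b$ to a morphism $1_\M \to \phi(b)$ in $\M$ matches $\hat\phi$ sending the empty sequence to $1_\M$, which is exactly the compatibility built into the definition of $\M^\otimes$, so this goes through without incident.
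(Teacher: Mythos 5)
Your proposal is correct and is exactly the verification the paper leaves implicit (its proof of this lemma is just ``This is easy to verify''): you establish the bijection $\moncat(\LP,\M)\cong\operad(\CP,\M^\otimes)$ by observing that a monoidal functor out of $\LP$ is determined by its values on singleton sequences of colors and operations, and that this restricted data is precisely a morphism of operads into $\M^\otimes$. The points you flag (strict versus strong monoidal functors, and the empty sequence versus nullary operations) are indeed the only delicate ones, and both resolve as you describe.
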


\begin{proof}
    This is easy to verify.
\end{proof}

\begin{remark}
    The adjunction $(\CL , (-)^\otimes)$ is in fact a free-forgetful one. It is apparent how $\CL$ is free, but perhaps it is not clear how $(-)^\otimes$ is forgetful. See \cite[Theorem 3.3.4]{leinster2004higher} for a characterisation of monoidal categories as operads with some extra conditions. 
\end{remark}

\begin{example}[The $\mathsf{Hom}$ operad] \label{example : hom operad}

Let $S$ be a fixed set. There is an operad $\mathsf{Hom}_S$ with set of colors $S \times S$ and, for each sequence of elements $a_1, \dots a_n \in S$, a unique operation $((a_1, a_2), \dots , (a_{n-1}, a_n)) \to (a_1, a_n)$, and for each element $a \in S$ a unique nullary operation $\emptyset \to (a,a)$. Composition is defined in the evident manner. An algebra $\mathsf{Hom}_S \to \set^\times$ is precisely a category with set of objects $S$, while algebras values in a monoidal category $\V$ are $\V$-enriched categories with set of objects $S$. 

This operad appears as Example 2.4 in \cite{moerdijk2007dendroidal} for instance. Our choice of notation is to indicate that, in creating an algebra, we equip the set $S$ with hom-objects in a monoidal category of choice.
    
\end{example}

On the homotopical front, we are interested in the behaviour of the functor $\CL$. The connected components functor $\pi_0 : \sset \to \set$ preserves products and hence induces a functor
\[
\begin{tikzcd}
    \pi_0 :&[-3em] \soperad \arrow[r] &\operad
\end{tikzcd}
\]
which takes connected components of operation spaces in a simplicial operad. 

\begin{lemma}
    The following square commutes (up to natural isomorphism)
    \[\begin{tikzcd}
	\soperad & \scat \\
	\operad & \cat
	\arrow["{\pi_0}"', from=1-1, to=2-1]
	\arrow["{\pi_0}", from=1-2, to=2-2]
	\arrow["\CL", from=1-1, to=1-2]
	\arrow["\CL"', from=2-1, to=2-2]
\end{tikzcd}\]
\end{lemma}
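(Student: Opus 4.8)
The plan is to produce the natural isomorphism by hand, working from the explicit ``coproduct of products'' description of the monoidal envelope recorded earlier in this section (with its monoidal structure forgotten, since the lemma concerns $\scat$ and $\cat$), which makes sense in the $\sset$-enriched setting because $\sset$ has finite products and coproducts. Fix a simplicial operad $\Q$. Neither $\CL$ nor $\pi_0$ disturbs the underlying set of colors, so on objects both composites produce the category with object set $\CL\,\Col(\Q)$, the set of finite sequences of colors of $\Q$; here I would simply use the identity. The content of the lemma is therefore entirely in the hom-sets and in checking that the resulting bijections are functorial and natural.

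For the hom-sets, given sequences $\ua : A \to \Col(\Q)$ and $\ub : B \to \Col(\Q)$ with $A, B \in \dplus$, the enriched monoidal-envelope formula reads
\[
(\CL\Q)(\ua, \ub) \;=\; \coprod_{\alpha : A \to B} \ \prod_{b \in B} \Q(\ua_b, b)
\]
as a simplicial set, the coproduct being over morphisms of $\dplus$. Applying $\pi_0$ and invoking the facts, recalled just before the lemma, that $\pi_0 : \sset \to \set$ preserves coproducts (it is a left adjoint) and finite products (the index set $B$ is finite since $B \in \dplus$), we obtain a natural bijection with $\coprod_{\alpha} \prod_{b} \pi_0(\Q(\ua_b, b))$. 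On the other side, $\Col(\pi_0\Q) = \Col(\Q)$ and $(\pi_0\Q)(\ua_b, b) = \pi_0(\Q(\ua_b, b))$ by the very definition of $\pi_0 : \soperad \to \operad$, so the same envelope formula gives $(\CL(\pi_0\Q))(\ua,\ub) = \coprod_{\alpha} \prod_{b} \pi_0(\Q(\ua_b, b))$ on the nose. Comparing the two expressions furnishes the hom-set component of the desired isomorphism $\eta_\Q : \pi_0(\CL\Q) \to \CL(\pi_0\Q)$.

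It then remains to verify that $\eta_\Q$ is an isomorphism of categories and that it is natural in $\Q$. Identities pose no difficulty, since on both sides $1_{\ua}$ is carried by the identity map $A \to A$ together with the (classes of the) identity operations of $\Q$. For composition one unwinds that composition in $\CL\Q$ is computed levelwise by composing the selected operations of $\Q$ along the composite of the indexing maps in $\dplus$; passing to $\pi_0$ respects this because $\pi_0$ is a functor on operads — it sends the composition maps of $\Q$ to those of $\pi_0\Q$ — and because the comparison $\pi_0(\prod_b Y_b) \cong \prod_b \pi_0(Y_b)$ is itself natural in the $Y_b$. Naturality of $\eta$ in $\Q$ is the same bookkeeping: a morphism $\Q \to \Q'$ of simplicial operads acts compatibly on colors, operation spaces, and hence on the coproduct-of-products presentations, and these actions commute with the $\pi_0$-comparison maps. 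The one genuine ingredient is that $\pi_0$ preserves finite products, which is exactly what is asserted in the paragraph preceding the statement; the only point requiring a moment's care is that every product appearing is indexed by a (finite) object of $\dplus$, so that this preservation actually applies. Everything else is diagram chasing.
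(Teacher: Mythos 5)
Your argument is correct and is essentially the paper's own proof: both identify the object sets as $\CL\,\Col(\Q)$, apply $\pi_0$ to the coproduct-of-products formula for the hom-spaces of the monoidal envelope, and use that $\pi_0$ preserves coproducts and finite products. The additional checks of functoriality and naturality you spell out are implicit in the paper's version but add nothing beyond routine verification.
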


\begin{proof}
    Let $\Q$ be a simplicial operad. The categories $\pi_0(\LQ)$ and $\CL(\pi_0 \Q)$ both have $\CL \Col (\Q)$ as set of objects. 
    Let $\ua : A \to \Col (\Q)$ and $\ub : B \to \Col (\Q)$ be two sequences of colors. The mapping space in $\LQ$ is given by
    $$(\LQ)(\ua, \ub) = \displaystyle \coprod_{\alpha : A \to B} (\bigtimes_{b \in B} \Q( \ua_b,  b))$$
    Since $\pi_0$ preserves coproducts (besides products), we have a natural isomorphism $\pi_0(\LQ)(\ua, \ub) \cong \CL (\pi_0 \Q(\ua, \ub)) $.
    
\end{proof}

The canonical model structure on $\cat$, which has as weak equivalences the equivalences of categories and as fibrations isofibrations\footnote{Recall that a functor $F : \A \to \B$ between categories is an isofibration in case any isomorphism in $\B$ of the form $F(a) \xrightarrow{\cong} b$ lifts to an isomorphism $a \xrightarrow{\cong} b^\prime$ in $\A$.}, extends to operads in straightforward fashion.

\begin{theorem}[Canonical model structure]

There is a model structure on $\operad$ in which a morphism $\phi : \CP \to \Q$ is declared to be:
\begin{itemize}
    \item[-] A weak equivalence if it is an equivalence of operads.
    \item[-] A fibration if the underlying functor $[\phi]_1 : [\CP]_1 \to [\Q]_1$ is an isofibration. 
\end{itemize}
    
\end{theorem}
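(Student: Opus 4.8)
The plan is to transport the canonical model structure on $\cat$ along the adjunction $\cat \subseteq \operad$ together with its right adjoint $[-]_1$, using a standard transfer-of-model-structure argument, but in a slightly more hands-on way since the statement only asks us to exhibit the three classes and check the axioms rather than to obtain it via a general transfer theorem. First I would define the classes: weak equivalences are the equivalences of operads (bijections on all operation sets plus essential surjectivity of $[-]_1$), fibrations are morphisms $\phi$ with $[\phi]_1$ an isofibration, and cofibrations are defined by the left lifting property against trivial fibrations. I would then verify the model category axioms (MC1--MC5) in order: completeness and cocompleteness of $\operad$ (this is routine — $\operad$ is locally presentable, being models of an essentially algebraic theory, or one can build limits colorwise and colimits via the free operad monad), two-out-of-three and retract closure for weak equivalences (immediate from the characterization, since bijectivity on operation sets and essential surjectivity each satisfy two-out-of-three), and likewise retract closure for fibrations.

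The substantive content is MC4 and MC5 — the lifting and factorization axioms. Here the key observation is that an operad morphism $\phi$ is a \emph{trivial fibration} (a fibration that is also a weak equivalence) if and only if $[\phi]_1$ is a trivial fibration in $\cat$ \emph{and} $\phi$ is a bijection on all operation sets of arity $\neq 1$; and a morphism is a weak equivalence if and only if $[\phi]_1$ is an equivalence and $\phi$ is bijective on operation sets. One then identifies an explicit set of generating cofibrations and generating trivial cofibrations: take the images under $\cat \hookrightarrow \operad$ of the usual generators for $\cat$ (the maps $\emptyset \to [0]$, $[0]\sqcup[0]\to[1]$, $[1]\sqcup_{[0]\sqcup[0]}[1] \to [1]$ for cofibrations, and $[0] \to \mathbb{I}$ — the walking isomorphism — for trivial cofibrations), together with, for each $n \neq 1$, boundary-type inclusions of free operads that build a single $n$-ary operation (a map $\partial F_n \to F_n$ where $F_n$ is the operad freely generated by one $n$-ary operation and $\partial F_n$ is the sub-operad of its "boundary colors" with no top operation). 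I would then run the small object argument with these generators — permissible since $\operad$ is locally presentable and these domains are small — to obtain the two functorial factorizations of MC5. The retract argument then gives one half of MC4, and the other half (trivial cofibrations lift against fibrations) follows by checking that the generating trivial cofibrations do so: this reduces, via the adjunction $\cat \rightleftarrows \operad$ and the fact that $[-]_1$ of an operadic fibration is an isofibration, to the corresponding lifting property in the canonical model structure on $\cat$.

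I expect the main obstacle to be a clean verification that the class defined by the left lifting property against trivial fibrations (the cofibrations) is generated by the explicit set above, equivalently that every trivial fibration is detected by right lifting against those generators. The arity-$1$ part is exactly the statement for $\cat$ and can be cited, but one must check carefully that adjoining the free $n$-ary generators ($n \neq 1$) correctly forces bijectivity on $n$-ary operation sets without interfering with the categorical part — in particular that pushouts of these maps interact correctly with the composition structure of an operad, and that a morphism with the right lifting property against all of them really is surjective \emph{and} injective on each operation set. A second, more bookkeeping-heavy point is confirming that $\operad$ is locally presentable so that the small object argument applies; this is standard (operads are algebras for a finitary monad on a presheaf category, or equivalently finite-limit-sketchable) but should be stated. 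Once these are in place, MC1--MC3 are formal, and MC4--MC5 follow from the small object argument plus the transfer of lifting properties across the adjunction, so no further genuine difficulty remains. Alternatively, and perhaps more efficiently, one can invoke Kan's transfer theorem directly: it suffices to check that $\operad$ is cofibrantly generated-friendly (locally presentable), that $[-]_1$ preserves filtered colimits (clear, as it is evaluation-like and colimits of operads are computed with colors and operations levelwise after applying the free monad), and that relative $[-]_1^{-1}(\text{triv.\ cofib.})$-cell complexes are weak equivalences — the last being the ``acyclicity'' condition, which one checks by noting that pushing out the walking-isomorphism generator along an operad map only adds formally invertible unary operations and hence does not change operation sets up to bijection nor the essential image.
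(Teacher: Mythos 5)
Your proposal is a genuine proof sketch, whereas the paper does not prove this theorem at all: it simply cites Theorem 1.6.2 of Weiss's thesis and \cite[Theorem 2]{robertson2011homotopy}, remarking that those proofs are written for symmetric operads but go through without symmetries. So there is nothing in the paper to compare line by line; what you have written is essentially a reconstruction of the standard argument from those references (define cofibrations by lifting, exhibit generators, run the small object argument or Kan transfer, check acyclicity for the walking-isomorphism pushout). That reconstruction is sound in outline, and your identification of the trivial fibrations and of $[0]\to\mathbb{I}$ as the generating trivial cofibration matches the folk model structure, in which the cofibrations turn out to be exactly the morphisms injective on colors.

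One concrete correction to your generating set: for arity $n\neq 1$ you list only the boundary inclusions $\partial F_n \to F_n$, which detect \emph{surjectivity} on $n$-ary operation sets under right lifting but not injectivity. You need, in addition, the collapse maps $F_n \sqcup_{\partial F_n} F_n \to F_n$ identifying two parallel $n$-ary operations (the arity-$n$ analogue of the generator $[1]\sqcup_{[0]\sqcup[0]}[1]\to[1]$ that you correctly include for the categorical part). You flag this verification as the main obstacle yourself, but as stated your generators do not suffice to characterize trivial fibrations. With that addition, and with an explicit description of pushouts along free color-adjunctions to justify the acyclicity claim (that freely adjoining an invertible unary operation does not create new operations between old colors), the argument closes. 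If you intend this for the non-symmetric case as the paper does, no further changes are needed; the absence of symmetric group actions only simplifies the free operad constructions involved.
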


\begin{proof}
    The statement and proof appears as Theorem 1.6.2 in \cite{weiss2007dendroidal}, but also in \cite[Theorem 2]{robertson2011homotopy}. The proofs are written for the symmetric case, but hold without symmetries as well.
\end{proof}

\begin{lemma}
    The functor $\CL : \operad \to \cat$ preserves weak equivalences and fibrations (and hence trivial fibrations).
\end{lemma}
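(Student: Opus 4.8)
The statement asserts that $\CL : \operad \to \cat$ preserves weak equivalences and fibrations. The plan is to verify each half separately by directly unwinding the definitions, using the explicit description of $\LP$ as a monoidal envelope together with the characterization of the model structures involved.

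First I would handle weak equivalences. Let $\phi : \CP \to \Q$ be an equivalence of operads; by definition this means $\phi$ induces bijections on all operation sets $\CP(\ua, b) \to \Q(\phi\ua, \phi b)$ and that $[\phi]_1$ is essentially surjective. I must show $\CL \phi : \LP \to \LQ$ is an equivalence of categories, i.e. fully faithful and essentially surjective. For full faithfulness, I would use the non-internal description
$$\LP(\ua, \ub) = \coprod_{\alpha : A \to B} \Big( \bigtimes_{b \in B} \CP(\ua_b, b) \Big),$$
and observe that $\CL\phi$ acts on this coproduct by reindexing along the identity on $\{\alpha : A \to B\}$ (the source and target sets $A$, $B$ are unchanged) and, on each factor, by the bijection $\CP(\ua_b, b) \cong \Q(\phi\ua_b, \phi b)$. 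A coproduct of products of bijections is a bijection, so $\CL\phi$ is fully faithful. For essential surjectivity, every object of $\LQ$ is a finite sequence of colors of $\Q$; since $[\phi]_1$ is essentially surjective each color of $\Q$ is isomorphic in $[\Q]_1$ to one in the image of $\phi$, and these unary isomorphisms assemble (via the monoidal/concatenation structure, or simply componentwise) into an isomorphism in $\LQ$ from a sequence in the image of $\CL\phi$. Hence $\CL\phi$ is essentially surjective, so an equivalence of categories, which is precisely a weak equivalence in the canonical model structure on $\cat$.

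Next I would handle fibrations. Let $\phi : \CP \to \Q$ be a fibration of operads, meaning $[\phi]_1 : [\CP]_1 \to [\Q]_1$ is an isofibration. I must show $\CL\phi : \LP \to \LQ$ is an isofibration, i.e. every isomorphism in $\LQ$ of the form $\CL\phi(\ua) = \phi\ua \xrightarrow{\cong} \ub$ lifts to an isomorphism in $\LP$ with source $\ua$. The key structural fact is that an isomorphism in $\LP$ between sequences $\ua : A \to \Col\CP$ and $\ub' : B \to \Col\CP$ forces $|A| = |B|$ (a product over $B$ of operation sets can only contain an invertible element when the indexing morphism $\alpha : A \to B$ is a bijection, since nonunary operations are never invertible in the envelope) and consists precisely of a bijection $A \cong B$ together with a componentwise family of invertible unary operations, i.e. a componentwise isomorphism in $[\CP]_1$. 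Thus an isomorphism in $\LQ$ out of $\phi\ua$ amounts to a matching of the indices of $\ua$ with the indices of some $\ub$, together with an isomorphism $\phi a_i \xrightarrow{\cong} b_i$ in $[\Q]_1$ for each index $i$. Using the isofibration property of $[\phi]_1$ componentwise, lift each $\phi a_i \xrightarrow{\cong} b_i$ to an isomorphism $a_i \xrightarrow{\cong} a_i'$ in $[\CP]_1$; the resulting sequence of unary isomorphisms is an isomorphism in $\LP$ with source $\ua$ lying over the given one. Hence $\CL\phi$ is an isofibration.

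Finally, since $\CL$ preserves both weak equivalences and fibrations, it preserves their intersection, the trivial fibrations, giving the parenthetical claim. I expect the main obstacle to be the careful verification in the fibration half that an isomorphism in the monoidal envelope is "nothing but" a permutation-free componentwise family of unary isomorphisms — in particular ruling out the indexing map $\alpha$ being non-bijective by using that genuinely multiary or nullary operations admit no inverses in $\LP$. This is where one must be a little careful rather than purely formal; everything else reduces to manipulating the explicit coproduct-of-products formula for hom-sets in $\LP$.
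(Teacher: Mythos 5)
Your proposal is correct and follows essentially the same route as the paper: the paper likewise treats the weak-equivalence half as immediate from the coproduct-of-products description of hom-sets, and for fibrations observes that an isomorphism $\phi(\ua)\xrightarrow{\cong}\ub$ in $\LQ$ is determined by a sequence of isomorphisms in $[\Q]_1$, which is then lifted term by term. The only difference is one of detail — you justify explicitly (via the indexing maps composing to identities) why an isomorphism in the envelope must be a componentwise family of unary isomorphisms, a point the paper simply asserts.
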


\begin{proof}
    Let $\phi : \CP \to \Q$ be a morphism of operads. It is clear that if $\phi$ is an equivalence of operads, then $\CL \phi$ is an equivalence of categories. Assume $\phi$ is a fibration. Any isomorphism $\phi(\ua) \xrightarrow{\cong} \ub$ in $\LQ$ is determined by a sequence of isomorphisms $(\phi (a_i) \xrightarrow{\cong} b_i)_i$ in $[\Q]_1$. Since $[\phi]_1$ is an isofibration, we can lift the latter sequence term by term to an isomorphism $\ua \to \ub^{\prime}$ in $\LP$. 
\end{proof}

In similar fashion, the Bergner model structure on simplicial categories (\cite{bergner2007model}) extends to simplicial operads.

\begin{theorem}[Model structure on simplicial operads]

The category $\soperad$ of simplicial operads has model structure in which a morphism $\phi : \K \to \Q$ is said to be a weak equivalence if 
\begin{itemize}
    \item[(W1)] For all sequences of colors $\ux$ and color $y$ in $\K$, the morphism of simplicial sets $\phi_{\ux,y} : \K(\ux, y) \to \Q(\phi \ux, \phi y)$ is a weak equivalence of simplicial sets.
    
    \item [(W2)] The morphism of categories $\pi_0[\phi]_1 : \pi_0[\K]_1 \to \pi_0[\Q]_1$ is an equivalence.
    
\end{itemize}
and a fibration if
\begin{itemize}
    \item[(F1)] For all sequences of colors $\ux$ and color $y$ in $\K$, the morphism of simplicial sets $\phi_{\ux,y} : \K(\ux, y) \to \Q(\phi \ux, \phi y)$ is a fibration of simplicial sets.
    
    \item [(F2)] The morphism of categories $\pi_0[\phi]_1 : \pi_0[\K]_1 \to \pi_0[\Q]_1$ is an isofibration.
    
\end{itemize}
    
\end{theorem}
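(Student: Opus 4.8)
The natural approach is to obtain this model structure by transfer along a free–forgetful adjunction, exactly as in Bergner's original treatment of simplicial categories, which the statement is explicitly designed to parallel. First I would set up the forgetful functor $U : \soperad \to \set_{\Col}^{\delta}$ sending a simplicial operad to its underlying collection of operation spaces indexed by the discrete data of colors (or, more precisely, work over the fibre $\soperad_S$ with a fixed color set $S$ first, as Bergner does, and then glue). On each such fibre one has the projective (levelwise) model structure on diagrams of simplicial sets, and $U$ admits a left adjoint given by the free simplicial operad on a collection. I would then invoke Kan's transfer theorem: the candidate fibrations and weak equivalences are created by $U$ on each fibre, so the two-out-of-three property, retract closure, and the lifting axioms for (trivial cofibration, fibration) are automatic once one verifies (a) the relevant cofibrant-generation and smallness hypotheses, and (b) that every map with the left lifting property against all fibrations is a weak equivalence. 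Step (b) is where the real work lies.

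For step (b) the standard device is a path object argument: one needs a functorial factorization of the diagonal $\CP \to \CP \times \CP$ through a simplicial operad $\CP^I$ that is a weak equivalence followed by a fibration. The path object is built pointwise from the path objects $\CP(\ux,y)^{\Delta^1}$ of the simplicial mapping spaces, with colors indexed by $\pi_0[\CP]_1$-worth of "intervals"; here I would import precisely the construction Bergner uses for $\scat$, since the mapping-space data of an operad behaves identically and the only extra bookkeeping is that domains are sequences of colors rather than single colors — a purely notational elaboration. The compatibility of this path object with composition and units uses that $\Delta^1$ has a coalgebra-like structure making $(-)^{\Delta^1}$ lax monoidal enough for the pointwise construction to assemble, which is exactly the $\soperad$ analogue of the $\scat$ case.

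The conditions (W1)/(F1) on mapping spaces are the fibrewise part and come for free from the transferred structure; the conditions (W2)/(F2) involving $\pi_0[\phi]_1$ encode the "change of color set" direction and are handled, as in Bergner, by combining the fibrewise model structures with the canonical model structure on $\cat$ (available here from the Canonical model structure theorem for operads, restricted to its underlying categories via $[-]_1$) along the lines of a Grothendieck-construction or "model structure on a category of bifibrant-like objects" argument. Concretely, one checks that a map is a weak equivalence in the transferred-plus-$\pi_0$ sense iff it is a weak equivalence after base change to any fixed color set and a $\pi_0$-equivalence on underlying categories, and that fibrations admit the expected characterization; the bulk of this is a diagram chase once the fibrewise theory is in place. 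The main obstacle, then, is genuinely the path-object verification in step (b): ensuring the pointwise path object is itself an operad (composition and units strictly preserved) and that the factorization $\CP \to \CP^I \to \CP \times \CP$ has the two required properties simultaneously with respect to both the $\ux,y$-levelwise weak equivalences and the $\pi_0$-isofibration condition. Since we may, per the excerpt, cite the Bergner model structure on $\scat$ and the canonical model structure on $\operad$, I would organize the proof so that the operadic case is deduced by the same transfer-plus-gluing template, flagging only the points where sequences-of-colors replace single colors, none of which obstruct the argument.
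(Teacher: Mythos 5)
Your plan is a faithful outline of the standard Bergner-style argument (fibrewise transfer along the free--forgetful adjunction, a path-object verification for the transfer, then gluing over varying color sets via the $\pi_0$ conditions), which is precisely how the result is established in the literature. The paper itself does not reprove this: it simply cites \cite[Theorem 6]{robertson2011homotopy} for the symmetric case and observes that the proof goes through without symmetries, so your proposal agrees in substance with the (outsourced) proof.
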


\begin{proof}
    This is stated and proved for symmetric operads in \cite[Theorem 6]{robertson2011homotopy}, but holds without symmetries as well.
\end{proof}

\begin{lemma} \label{lemma : Lwefib}
    The functor $\CL : \soperad \to \scat$ preserves weak equivalences and fibrations (and hence, trivial fibrations).
\end{lemma}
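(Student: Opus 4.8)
The plan is to mimic the proof of the analogous unenriched statement (the earlier Lemma asserting that $\CL : \operad \to \cat$ preserves weak equivalences and fibrations), working level by level on simplicial sets. Recall from the earlier description that for a simplicial operad $\Q$, the mapping space in $\LQ$ between sequences $\ux : A \to \Col(\Q)$ and $\uy : B \to \Col(\Q)$ is
\[
(\LQ)(\ux, \uy) = \coprod_{\alpha : A \to B} \Bigl( \bigtimes_{b \in B} \Q(\ux_b, b) \Bigr),
\]
a coproduct of finite products of operation spaces. Since weak equivalences, fibrations, and trivial fibrations of simplicial sets are all stable under finite products and under coproducts, a term-by-term argument should reduce everything to the hypotheses (W1)/(F1) on $\phi$ together with the behaviour of $\phi$ on colors.

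First I would handle fibrations. Let $\phi : \K \to \Q$ be a fibration of simplicial operads; I must check (F1) and (F2) for $\CL\phi : \LK \to \LQ$. For (F1), fix sequences $\ux, \uy$ of objects of $\LK$, i.e. sequences of colors of $\K$. The map $(\CL\phi)_{\ux,\uy}$ decomposes along the coproduct indexed by $\alpha : A \to B$, and on the summand indexed by $\alpha$ it is the product $\bigtimes_{b} \phi_{\ux_b, b}$ of fibrations of simplicial sets, hence a fibration; a coproduct of fibrations over the same base pattern is again a fibration, so (F1) holds. For (F2), note that $[\LK]_1$ is (isomorphic to) $\CL$ applied to the underlying ordinary operad levelwise — more precisely, by the square relating $\pi_0$ and $\CL$ proved earlier in the excerpt, $\pi_0[\LK]_1 \cong \CL(\pi_0[\K]_1)$ and likewise for $\Q$, so $\pi_0[\CL\phi]_1$ is identified with $\CL$ applied to $\pi_0[\phi]_1$. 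Since $\phi$ is a fibration, $\pi_0[\phi]_1$ is an isofibration of categories, and by the already-established Lemma that $\CL : \operad \to \cat$ preserves fibrations, $\CL(\pi_0[\phi]_1)$ is an isofibration. This gives (F2).

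Next, weak equivalences. Let $\phi : \K \to \Q$ be a weak equivalence of simplicial operads. For (W1) applied to $\CL\phi$: as above, $(\CL\phi)_{\ux,\uy}$ is, summand by summand, a finite product of the maps $\phi_{\ux_b,b}$, each of which is a weak equivalence of simplicial sets by (W1) for $\phi$; finite products and coproducts of weak equivalences of simplicial sets are weak equivalences, so $(\CL\phi)_{\ux,\uy}$ is a weak equivalence. For (W2): again using the $\pi_0$–$\CL$ square, $\pi_0[\CL\phi]_1 \cong \CL(\pi_0[\phi]_1)$, and since $\pi_0[\phi]_1$ is an equivalence of categories by (W2) for $\phi$, the earlier Lemma (that $\CL : \operad \to \cat$ preserves weak equivalences) shows $\CL(\pi_0[\phi]_1)$ is an equivalence of categories. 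Hence (W2) holds, and $\CL\phi$ is a weak equivalence. The parenthetical claim about trivial fibrations is then automatic since trivial fibration $=$ fibration $+$ weak equivalence in both model structures.

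The only genuinely delicate point — and the step I would write out most carefully — is the identification $\pi_0[\LK]_1 \cong \CL(\pi_0[\K]_1)$ compatibly with $\phi$, i.e. checking that the square from the earlier lemma interacts correctly with the operation $[-]_1$ of extracting unary operations; one must be sure that passing to the underlying category of unary operations of the envelope, then to $\pi_0$, agrees with first taking $\pi_0$ of the operad and then forming the envelope's underlying category. This is bookkeeping rather than a real obstacle: unary operations in $\LK$ between length-one sequences are exactly operations of $\K$, and more general unary morphisms of $\LK$ are sequences of operations, so $[\LK]_1$ is built from $\Op(\K)$ by the same coproduct-of-products formula, which $\pi_0$ preserves. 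I would also note explicitly that finiteness of the products $\bigtimes_{b \in B}$ is what makes the stability of weak equivalences under products available without any fibrancy hypothesis.
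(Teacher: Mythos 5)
Your proposal is correct and follows essentially the same route as the paper: decompose the mapping spaces of the envelope as a coproduct of finite products of operation spaces to verify (W1)/(F1), and use the commutation of $\pi_0$ with $\CL$ together with the already-established unenriched lemma to verify (W2)/(F2). The only quibble is notational: in the $\pi_0$ step the object you should feed to the unenriched lemma is the morphism of operads $\pi_0\phi$ (so that $\pi_0(\CL\phi)\cong\CL(\pi_0\phi)$), not $\CL$ applied to the underlying unary category $\pi_0[\phi]_1$, whose envelope would contain only lists of unary operations — though your closing remarks make clear you intend the former.
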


\begin{proof}
    Suppose $\phi : \K \to \Q$ is a weak equivalence of simplicial operads. Let $\ua : A \to \Col (\K)$ and $\ub : B \to \Col (\K)$ be two objects in $\CL \K$. The morphism on mapping spaces
    \[
    \begin{tikzcd}
        (\CL \phi)_{\ua, \ub} :&[-2em] \displaystyle \coprod_{\alpha : A \to B} \left(\bigtimes_{b \in B} \K(\ua_b, b)\right) \arrow[r] & \displaystyle \coprod_{\alpha : A \to B} \left(\bigtimes_{b \in B} \Q(\phi \ua_b, \phi b)\right)
    \end{tikzcd}
    \]
    is componentwise a weak equivalence of simplicial sets, and hence it is a weak equivalence itself. The functor $\pi_0 (\CL \phi) : \pi_0(\CL \K) \to \pi_0 (\LQ)$ is also an equivalence of categories because, by the previous lemma we have $\pi_0(\CL \phi) \cong \CL (\pi_0 \phi)$ and, since $\pi_0 \phi$ is an equivalence of operads and $\CL$ preserves equivalences, $\CL (\pi_0 \phi)$ is an equivalence. 

    In case $\phi$ is a fibration, it is clear by the above expression of the map induced by $\CL \phi$ on mapping spaces that the latter is a fibration of simplicial sets (fibrations of simplicial sets are closed under products and disjoint unions). The fact that $\pi_0(\CL \phi)$ is an isofibration, as in the previous argument, follows from the fact that $\CL$ commutes with $\pi_0$ and that on the level of categories $\CL$ preserves isofibrations.
\end{proof}

\begin{remark}
    We have used the letter $\CL$ in denoting both the functor $\CL : \llist \to \set$, which maps a set $X$ to the set of lists $\CL X$, and the monoidal envelope $\CL : \operad \to \cat$. One reason for this slight abuse is the fact that, for an operad $\CP$, the category $\LP$ is formed by taking lists of colors and operations in $\CP$. Further justification will be provided when we construct nerves of operads in the next section. 

    An interesting fact to point out is that the functor $\CL : \llist \to \set$ is right adjoint to the inclusion $\set \subseteq \llist$. On the other hand, the functor $\CL : \operad \to \cat$ is left adjoint when we modify the target to be monoidal categories. But, we can also exhibit $\CL$ as a right adjoint by modifying the notion of morphism between operads.

    More precisely, we may form a category $\widetilde{\operad}$ in which the objects are operads and morphisms between two operads $\CP$ and $\Q$ are monoidal functors $\LP \to \LQ$, which we regard as \emph{listings} between operads. In other words, $\widetilde{\operad}$ is the category of free monoidal categories (in the sense that they are generated by operads). Equivalently, $\widetilde{\operad}$ is the Kleisli category associated the free monoidal category monad on operads. Notice that if $\CP$ and $\Q$ are sets (meaning that they consist only of colors) a listing of operads is the same as a listing of sets. 

    This way, it is easy to see that the inclusion $\cat \subseteq \widetilde{\operad}$, which regards every category as an operad with unary operations only, has as right adjoint the functor $\CL : \widetilde{\operad} \to \cat$ which maps $\CP $ to $ \LP$ and each morphism $\LP \to \LQ$ to itself. Indeed, for a category $\A$ and an operad $\CP$ we have bijections

    $$\cat(\A, \LP) \cong \moncat(\CL \A, \LP) \cong \widetilde{\operad}(\A, \CP)$$
    
\end{remark}

\section{First constructions} \label{section : first constructions}

\subsection{Simplicial lists} \label{subsec : slist}

The category $\llist$ has sets as objects and its morphisms $A \xslashedrightarrow{} X$ are functions $A \to \CL X$, while composition happens by concatenation (Definition \ref{def_list}). We are interested in the following categories:
\begin{itemize}
    \item [-] $\widetilde{\slist} = \llist^{\D{op}}$, the category of simplicial objects in $\llist$. 
    \item[-] $\slist$, the wide subcategory of $\widetilde{\slist}$ with morphisms required to have functions as components. 
    \item[-] $\widetilde{\slisto}$ and $\slisto$, the full subcategories of $\widetilde{\slist}$ and $\slist$ spanned by operadic simplicial lists (Definition \ref{def_operadic}).  The latter are simplicial objects $X : \D{op} \to \llist$ such that all face and degeneracy operations are functions, except possibly for the last face maps $d_n : X_n \xslashedrightarrow{} X_{n-1}$, $n \geq 1$, which are allowed to be listings.
\end{itemize}

For multiple reasons, we concentrate on the categories $\slist$ and $\slisto$. Operadic simplicial lists are important because they “look" like operads. For instance, if $X \in \slisto$, a $1$-simplex $f \in X_1$ has a face $d_0f$ which is a single element of $X_0$, while the face $d_1f$ may be a sequence in $\CL X_0$, thus allowing us to imagine $f$ in the form of a corolla
 \[\begin{tikzcd}
	{} & {} & {} \\
	& f \\
	& {}
	\arrow["{a_k}", no head, from=1-3, to=2-2]
	\arrow["b", no head, from=2-2, to=3-2]
	\arrow["\dots"{description}, draw=none, from=1-2, to=2-2]
	\arrow["{a_1}"', no head, from=1-1, to=2-2]
\end{tikzcd}\]
We will prove in Section \ref{subsec : operadic} that higher simplices in operadic simplicial lists look like leveled trees. 

Then, a key reason to want morphisms between simplicial lists to be functional is the fact that morphisms of operads preserve the number of inputs in operations. For instance, let $\phi : X \to Y$ be a morphism in $\slisto$ and let $f \in X_1$ be a $1$-simplex as above. The simplicial identities dictate that $d_1 \phi(f) = \phi (d_1 f)$. In other words, we must have that $d_1 \phi(f) = (\phi a_1, \dots , \phi a_k)$, so that $\phi(f)$ is depicted as a corolla
\[\begin{tikzcd}
	{} & {} & {} \\
	& \phi(f) \\
	& {}
	\arrow["{\phi (a_k)}", no head, from=1-3, to=2-2]
	\arrow["\phi (b)", no head, from=2-2, to=3-2]
	\arrow["\dots"{description}, draw=none, from=1-2, to=2-2]
	\arrow["{\phi (a_1)}"', no head, from=1-1, to=2-2]
\end{tikzcd}\]
with $k$ input leaves. 

Another important reason to require morphisms to be functional has to do with \emph{colimits}. The category $\llist$ is not cocomplete and hence, neither is $\widetilde{\slist}$. But, since the existence of colimits is not determined by the objects of a category, but by its morphisms, the problem vanishes when working in $\slist$. We prove that $\slist$ is cocomplete in Section \ref{sec : structure}, but it is not difficult to believe this result given that morphisms are component-wise functions of sets.

    \begin{note}

There is also an explanatory account from the double categorical perspective. 
Listings and functions are two notions of morphisms between sets. We may form a Kleisli double category with listings as horizontal morphisms, functions as vertical morphisms and 2-cells being commutative squares in $\llist$ of the form
\[\begin{tikzcd}
	A & X \\
	B & Y
	\arrow["f"', from=1-1, to=2-1]
	\arrow["g", from=1-2, to=2-2]
	\arrow["u", "\shortmid"{marking}, from=1-1, to=1-2]
	\arrow["v"', "\shortmid"{marking}, from=2-1, to=2-2]
\end{tikzcd}\]
This way, we may interpret $\slist$ as the category of functors from $\D{op}$ into the horizontal part of this double category. Then, our notion of morphism coincides with the notion of \emph{vertical transformation} (see Definition 3 in \cite{haderi2023simplicial}), which appears in the study of double colimits.

\end{note}

The functor $\CL : \llist \to \set$, which is right adjoint of the inclusion $\set \subseteq \llist$, gives us a functor 
\[
\begin{tikzcd}
    \CL :&[-3em] \slist \arrow[r] &\sset
\end{tikzcd}
\]
by post-composition. For all $X \in \slist$, the simplicial set $\CL X$ has as $n$-simplices lists of $n$-simplices in $X$. In particular, $\CL X$ is a monoid object in $\sset$. So, this is just like the monoidal envelope functor for operads . In fact, we will see in \ref{subsec : nerve} that it extends the monoidal envelope functor. 

$\CL$ is no longer right adjoint though, but it is so if we extend the domain from $\slist$ to $\widetilde{\slist}$. This is where the latter category becomes useful. For instance, given a simplicial list $X$, we have the following bijections
$$\CL X_n \cong \sset(\D{n}, \CL X) \cong \widetilde{\slist}(\D{n}, X)$$
This tells us that a simplex $x \in X_n$ is represented by a listing $(x) : \D{n} \xslashedrightarrow{} X$ when we regard $x$ as a singleton sequence in $\CL X_n$. This is an important fact, since we don't know much else about what simplices in a general simplicial list look like. We illustrate the latter point with an example.

\begin{example}[A wild simplicial list]

We may define a (truncated) simplicial list $X : \D{op}_{\leq 1} \to \llist$ with $X_0 = \{ a \}$ and $X_1 = \{ s^\prime, s^{\prime \prime} \}$. Let the simplicial operations be as follows:
$$d_0s^\prime = \emptyset \ , \  d_1s^\prime = a \ , \ d_0 s^{\prime \prime} = a \ , \  d_1s^{\prime \prime} = \emptyset \ , \  s_0 a = (s^\prime, s^{\prime \prime})$$
The action of the degeneracy map $s_0$ on $a$ may be depicted operadically as follows:
\[\begin{tikzcd}
	{} \\
	{}
	\arrow["a"', no head, from=1-1, to=2-1]
\end{tikzcd} \ \ 
\mapsto \ \ 
\begin{tikzcd}[column sep = small]
	{} \\
	{s^\prime} & {s^{\prime \prime}} \\
	& {}
	\arrow["a"', no head, from=1-1, to=2-1]
	\arrow["a", no head, from=2-2, to=3-2]
\end{tikzcd}
\]
The simplicial identities are visibly satisfied. The “wild" part has to do with the fact that both $s^\prime$ and $s^{\prime \prime}$ are non-degenerate, while the list $(s^\prime, s^{\prime \prime})$ is. 

We can make the example more wild by letting $X_1 = \{ s^\prime, s^{\prime \prime} , x \}$ with $s^\prime$ and $s^{\prime \prime}$ as above and say that $d_0 x = d_1 x = \emptyset$ while $s_0 a = (s^\prime, s^{\prime \prime}, x)$. This doesn't violate the simplicial identities. The situation can get more complex if we allow higher simplices. We believe this flexibility of general simplicial lists can help model interesting phenomena. 
    
\end{example}

Besides the monoidal envelope, we also have an underlying simplicial set functor 
    \[
    \begin{tikzcd}
        \left[-\right]_1 :&[-3em] \slist \arrow[r] &\sset
    \end{tikzcd}
    \]
which assigns to a simplicial list $X$ the simplicial set whose $n$-simplices are given by $[X]_1([n]) = \slist(\D{n}, X)$. $[-]_1$ is the right adjoint of the inclusion functor $\sset \subseteq \slist$.

\subsection{Multigraphs and free operads} \label{sec_multi}

In ordinary category theory, an important tool in describing categories of interest is the notion of free category generated by a graph (see Appendix \ref{appendix_free}). We develop an analog in the context of operads, that is, free operads generated from \emph{multigraphs}. 

\begin{definition}[Multigraph] Let $\D{op}_{\leq 1}$ be the full subcategory of $\D{}$ spanned by the objects $[0]$ and $[1]$.
 A (reflexive, directed) multigraph is a functor 
 \[
 \begin{tikzcd}
     \M :&[-3em] \D{op}_{\leq 1} \arrow[r] & \spa
 \end{tikzcd}
 \]
 which is operadic when regarded as a simplicial list. 

 A morphism of multigraphs is a natural transformation in which the components are functions. We denote by $\mult$ the resulting category.
\end{definition}

A multigraph $M$ consists of two sets $\M_0$ and $\M_1$ related by operations
\[\begin{tikzcd}
	{\M_1} & {\M_0}
	\arrow["{d_0}", shift left=3, from=1-1, to=1-2]
	\arrow["{d_1}"', "\shortmid"{marking}, shift right=3, from=1-1, to=1-2]
	\arrow["{s_0}"{description}, from=1-2, to=1-1]
\end{tikzcd}\]
which satisfy the simplicial identities. The condition of being operadic specifies that $s_0$ and $d_0$ are functions, while $d_1$ is allowed to be a listing. Therefore, an edge of $\M$ has a unique target in $\M_0$, but possibly a source consisting whole sequence in $\M_0$. 
Our notion of multigraph is a reflexive variant of Leinster's $T$-graphs (\cite[Definition 4.2.4]{leinster2004higher}), when taking $T$ to be the free monoid monad on sets (see also the remark at the end of the section). 
Although more general, multigraphs can be used as models for trees. 

\begin{example}[Trees as multigraphs] \label{example_tree}
Consider the following tree:
\[\begin{tikzcd}[column sep=small]
	{} && {} \\
	{} & f & {} \\
	& g \\
	& {}
	\arrow["{a_1}"', no head, from=1-1, to=2-2]
	\arrow["{a_2}", no head, from=1-3, to=2-2]
	\arrow["{b_1}"', no head, from=2-1, to=3-2]
	\arrow["{b_2}"{description}, no head, from=2-2, to=3-2]
	\arrow["{b_3}", no head, from=2-3, to=3-2]
	\arrow["c", no head, from=3-2, to=4-2]
\end{tikzcd}\]
We may define a multigraph $\M$ with $\M_0 = \{ a_1 , a_2, b_1 , b_2 , b_3 , c \}$ and $\M_1 = \{ f, g \} \amalg \{ 1_x \ : \ x \in \M_0  \}$, with $d_0$, $d_1$ and $s_0$ defined in the evident manner. 
We see that $\M$ encodes all the information about the tree we depicted.

\end{example}

There is a forgetful functor
\[
\begin{tikzcd}
    U :&[-3em] \operad \arrow[r] & \mult
\end{tikzcd}
\]
For an operad $\cc{P}$, the multigraph $U\cc{P}$ is given by
$$U\cc{P}_0 = \Col (\CP) \ \ \ , \ \ \ U\cc{P}_1 = \Op (\CP)$$
where the simplicial operations $d_0$, $d_1$ and $s_0$ assign target, source and identity. 

We discuss and construct the left adjoint free operad functor 
\[
\begin{tikzcd}
    F :&[-3em] \mult \arrow[r]  &\operad
\end{tikzcd}
\]
In some sense, given a multigraph $\M$, it is clear how to construct $F\M$ : compose everything you can, let the degeneracies serve as identities and make sure composition is associative. If we accept that this construction yields a left adjoint functor $F$, most of the results which rely on $F$ may be proved without reference to an explicit construction.
However, for the sake of both rigor and explanatory power, we introduce some combinatorial formalism. 

\begin{definition}[Operation matrix] Let $\M$ be a multigraph. An operation vector in $\M$ 
\[
\begin{bmatrix}
    f_1 & \dots & f_n 
\end{bmatrix}
\]
is a composable sequence of 1-simplices in $\CL \M_1$ such that $f_n$ is a singleton sequence. We denote the set of such operation vectors $\vect(\M)$. 

An operation matrix is a matrix 
\[
\begin{bmatrix}
    f_1^{(1)} & \dots & f_n^{(1)} \\
    \vdots & \ddots & \vdots \\
    f_1^{(k)} & \dots & f_n^{(k)}
\end{bmatrix}
\]
where each row is an operation vector. We denote the set of such operation matrices by $\mat(\M)$. 
    
\end{definition}

\begin{remark}[Empty operations]

In an operation vector, we allow for a symbol $1_\emptyset$, which we interpret as a dummy operation with no inputs and outputs. This originates from the fact that there is an empty list $\emptyset \in \CL \M_0$. $1_\emptyset$ represents the degeneracy of the empty list in $\CL \M_1$.

For instance, in case $f : \emptyset \to a$ is a 1-simplex in $\M$ with output $a$ and empty set of inputs, the vector 
\[
\begin{bmatrix}
    1_\emptyset & 1_\emptyset & f
\end{bmatrix}
\]
is a valid element in $\vect(\M)$.
    
\end{remark}

\begin{notation}[Degenerate operations]

Whenever we write the entry “$1$" in an operation matrix we mean that the $1$-simplex in $\LM$ this entry represents is degenerate. For instance, if $f : (a_1, a_2) \to b$ is an operation in $\M_1$, then in the operation vector 
\[
\begin{bmatrix}
    1 & f
\end{bmatrix}
\]
we read the entry $1$ to mean the $1$-simplex $\binom{s_0a_1}{s_0a_2}$ in $\LM_1$.
    
\end{notation}

\begin{note} \label{note : vectors trees}
    Informally, we can say that an operation vector in $\M$ is a leveled tree of operations in $\M$. For example, a leveled tree in $\M$ depicted as 
    \[\begin{tikzcd}[column sep=small]
	{} && {} \\
	{f_1} && {f_2} \\
	& g \\
	& {}
	\arrow["{a_1}"', no head, from=1-1, to=2-1]
	\arrow["{b_1}"', no head, from=2-1, to=3-2]
	\arrow["{b_2}", no head, from=2-3, to=3-2]
	\arrow["c"', no head, from=3-2, to=4-2]
	\arrow["{a_2}", no head, from=1-3, to=2-3]
\end{tikzcd}\]
    is recorded via the operation vector 
    \[
    \begin{bmatrix}
        \binom{f_1}{f_2} & g
    \end{bmatrix}
    \]
    In this vein, operation matrices record sequences of leveled trees (or, leveled forest). A formal formulation of this idea is given in the next Section \ref{subsec : nerve}, Remark \ref{remark : vectors trees}.
    
\end{note}

Given composable operation matrices $M$ and $N$ (meaning that the output sequence of $M$ coincides with the input sequence of $N$), we may form their \emph{concatenation}, which we denote $M \odot N$, in the evident manner. For instance,
\[
\begin{bmatrix}
    f_1 \\
    f_2
\end{bmatrix} \odot
\begin{bmatrix}
    g
\end{bmatrix} = 
\begin{bmatrix}
    \binom{f_1}{f_2} & g
\end{bmatrix}
\]
We may write concatenation as a function 
\[
\begin{tikzcd}
    \odot :&[-2em] \mat(\M) \times_{\LM_0} \mat(\M) \arrow[r] & \mat(\M)
\end{tikzcd}
\]

In spirit, the free operad $F\M$ has as colors the set $\M_0$ and as operations the set $\vect(\M)$, with composition provided by concatenation. However, we must impose identifications in order to make sure unitality laws are satisfied.
Hence, we are interested in the following equivalence relations relations on $\vect(\M)$ and $\mat(\M)$:
\begin{itemize}
    \item [(U)] (Unitality). Two operation vectors are equivalent if one can be obtained by the other by adding or inserting $1$'s in the vector formation, i.e. this is the equivalence relation generated by 
    \[
    \begin{bmatrix}
    f_1 & \dots & f_n 
\end{bmatrix} \equiv
    \begin{bmatrix}
    f_1 & \dots & 1 & \dots & f_n 
\end{bmatrix}
    \]

    \item[(RU)] (Row-wise unitality). Two operation matrices are equivalent if they are equivalent row-wise.  

    \item[(OU)] (Operadic unitality). We say that two operation matrices $M$ and $N$ are equivalent in case we can write $M = M_1 \odot K \odot M_2$ and $N = M_1 \odot K^\prime \odot M_2$ and we have that $K$ is equivalent to $K^\prime$ under the relation (RU).
\end{itemize}
We denote by $\Vect(\M)$ and $\Mat(\M)$ the quotients of $\vect(\M)$ and $\mat(\M)$ with respect to the operadic unitality relation (OU) respectively. Notice that the relation (OU) is stronger that (RU). 

\begin{lemma}
    The concatenation operation on representatives induces a well-defined operation
    \[
    \begin{tikzcd}
        \odot :&[-2em] \Mat(\M) \times_{\LM_0} \Mat(\M) \arrow[r] & \Mat(\M)
    \end{tikzcd}
    \]
\end{lemma}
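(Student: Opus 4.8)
The plan is to show that the concatenation operation $\odot : \mat(\M) \times_{\LM_0} \mat(\M) \to \mat(\M)$ descends to the quotient $\Mat(\M)$. Since $\Mat(\M)$ is the quotient of $\mat(\M)$ by the equivalence relation (OU), by the universal property of quotients it suffices to check that $\odot$ respects (OU) in each variable: that is, if $M \equiv M'$ under (OU) and $N$ is composable with $M$ (equivalently with $M'$, since (OU) preserves input and output sequences), then $M \odot N \equiv M' \odot N$ under (OU), and symmetrically $N \odot M \equiv N \odot M'$.

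First I would record the easy observation that (OU) does not alter the input sequence or the output sequence of an operation matrix: the relations (U), (RU) only insert or delete degenerate entries $1$ (and $1_\emptyset$), which do not change the boundary data in $\LM_0$, and (OU) is built from (RU) applied to a middle block $K$ flanked by $M_1, M_2$, whose outer boundaries are untouched. Hence composability is well-defined on $\Mat(\M)$, so the fibre product $\Mat(\M) \times_{\LM_0} \Mat(\M)$ makes sense and maps onto $\mat(\M)\times_{\LM_0}\mat(\M)$ modulo the relation.

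Next, the substantive but short step: suppose $M \equiv M'$ via a single generating instance of (OU), so $M = M_1 \odot K \odot M_2$ and $M' = M_1 \odot K' \odot M_2$ with $K \equiv K'$ under (RU). Given $N$ composable with $M$ on the right, we have $M \odot N = M_1 \odot K \odot (M_2 \odot N)$ and $M' \odot N = M_1 \odot K' \odot (M_2 \odot N)$; since $\odot$ is (strictly) associative on representatives — concatenation of matrices is literally associative — this exhibits $M\odot N \equiv M' \odot N$ as another instance of (OU) with the same middle block $K \equiv K'$ and right flank $M_2 \odot N$. The left-hand case, $N \odot M \equiv N \odot M'$, is identical with left flank $N \odot M_1$. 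Since a general (OU)-equivalence is a finite zig-zag of such generating instances, chaining these gives $M \equiv M' \Rightarrow M \odot N \equiv M' \odot N$ and $N \odot M \equiv N \odot M'$ in full generality, and combining the two one-variable statements yields that $\odot$ factors through $\Mat(\M)\times_{\LM_0}\Mat(\M)$.

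I do not anticipate a genuine obstacle here; the only point requiring any care is the bookkeeping that (OU) leaves the boundary sequences fixed, so that "composable" is an invariant notion on $\Mat(\M)$ and the fibre product over $\LM_0$ is the correct domain — everything else is the strict associativity of concatenation together with the fact that (OU) is generated by local moves. One might also remark that well-definedness with respect to the weaker relations (U) and (RU) is subsumed, since (OU) refines them.
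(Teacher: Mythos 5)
Your proof is correct and follows the only natural route; the paper itself simply asserts the lemma is "clear by construction," so your argument is a fully written-out version of what the authors leave implicit. The two points you isolate — that (OU) preserves boundary sequences (so the fibre product over $\CL\M_0$ descends) and that strict associativity of concatenation turns each generating instance of (OU) into another such instance after concatenating with $N$ — are exactly the right checks.
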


\begin{proof}
    This is clear by construction.
\end{proof}

For a multigraph $\M$, each matrix in $\mat(\M)$ splits into its sequence of row vectors. We may write this as a function
\[
\begin{tikzcd}
    \mathsf{split} :&[-3em] \mat(\M) \arrow[r] & \CL \vect(\M)
\end{tikzcd}
\]
However, given a sequence of operation vectors, we cannot obtain an operation matrix from them since the chosen list of rows may contain elements of various lengths. This obstruction vanishes when we apply the relations (U) and (RU), and hence also (OU).

\begin{lemma}[The split-pack bijection] \label{lemma_splitpack}

The function
\[
\begin{tikzcd}
    \mathsf{Split} :&[-3em] \Mat(\M) \arrow[r] & \CL \Vect(\M)
\end{tikzcd}
\]
which splits a matrix into its list of rows is well-defined and a bijection. 
    
\end{lemma}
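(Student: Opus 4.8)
The plan is to exhibit an explicit inverse to $\mathsf{Split}$ and check that the two composites are identities at the level of equivalence classes. The map $\mathsf{split} : \mat(\M) \to \CL\vect(\M)$ is already given; the problem is purely that it does not descend to a map out of $\CL\Vect(\M)$ on the nose, nor is it surjective before quotienting, because a list of row vectors of differing lengths cannot literally be assembled into a rectangular matrix. The relation (U) fixes exactly this: given a list of operation vectors $(v^{(1)}, \dots, v^{(k)})$ with $v^{(i)}$ of length $n_i$, set $n = \max_i n_i$ and pad each $v^{(i)}$ on the left (or in any fixed prescribed positions) with entries "$1$" — that is, degenerate $1$-simplices in $\LM_1$ of the appropriate colors, determined by the input/output sequences so that composability is preserved — to obtain an operation vector of length $n$. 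Stacking these padded rows gives an operation matrix, and I call its (OU)-class $\mathsf{Pack}(v^{(1)}, \dots, v^{(k)})$.

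First I would check $\mathsf{Pack}$ is well-defined on $\CL\Vect(\M)$: if $v^{(i)}$ is replaced by a (U)-equivalent representative, the padded version changes only by inserting or deleting $1$'s within that row, hence the resulting matrix changes by an (RU)-move, which is in particular an (OU)-move (as the paper notes, (OU) is stronger than (RU)). Also different choices of how much to pad each row differ by (RU)-moves for the same reason, so the choice of convention is immaterial after passing to $\Mat(\M)$. Next I would verify $\mathsf{Split}\circ\mathsf{Pack} = \mathrm{id}$: splitting the padded matrix returns the list of padded rows, and each padded row is (U)-equivalent to the original $v^{(i)}$, so the list is unchanged in $\CL\Vect(\M)$. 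For $\mathsf{Pack}\circ\mathsf{Split} = \mathrm{id}$: given $[M]\in\Mat(\M)$, choose a representative $M$ with all rows of a common length $n$ (which $M$ already has, being a genuine matrix); splitting gives its rows and packing re-pads them to some common length $n' \geq n$ and restacks, and the resulting matrix is obtained from $M$ by row-wise insertion of $1$'s, i.e. an (RU)-move, hence $=[M]$ in $\Mat(\M)$.

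The one point requiring a little care — and the likely main obstacle — is bookkeeping the colors when padding: an inserted "$1$" in position $j$ of row $i$ must be the degeneracy $s_0$ applied to the list of colors that is simultaneously the codomain of the $(j-1)$-st surviving entry and the domain of the $j$-th surviving entry of that row, so that the padded row remains a composable sequence in $\CL\M_1$ and the padded rows assemble into a matrix with matching input/output sequences column by column. This is forced, so there is no real choice, but it is the place where one must be careful that "$\mathsf{Pack}$" genuinely lands in $\mat(\M)$ rather than in some formal list of symbols. Once this is observed, well-definedness and the two composite identities are all instances of the single phenomenon "inserting or deleting $1$'s is an (RU)-move, and (RU) $\subseteq$ (OU)", so the proof is short. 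I expect the write-up to amount to: define $\mathsf{Pack}$, observe it respects (U) row-wise hence (OU), and observe both round-trips differ from the identity only by (RU)-moves.
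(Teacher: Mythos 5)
Your overall strategy --- define $\mathsf{Pack}$ by padding rows with $1$'s to a common length, note that padding choices only differ by (RU)-moves, and check the two round trips --- is exactly the paper's (whose own write-up is, if anything, terser). But your verification engages only with the relation (U), while both $\Vect(\M)$ and $\Mat(\M)$ are quotients by the strictly stronger relation (OU), and this leaves two real holes.

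First, the lemma asserts that $\mathsf{Split}$ itself is well-defined on $\Mat(\M)$, and your sketch never addresses this: if $M = M_1 \odot K \odot M_2$ and $N = M_1 \odot K' \odot M_2$ with $K$ and $K'$ (RU)-equivalent, one must check that $M$ and $N$ have the same number of rows and that corresponding rows are (OU)-equivalent in $\vect(\M)$. This is true but not a tautology: row $j$ of $M$ is itself a concatenation of sub-matrices of the three factors (the rows of $M_1$ and $K$ that feed into row $j$ of $M_2$), so the given (RU)-move on $K$ restricts to an (OU)-move on each row. Second, for well-definedness of $\mathsf{Pack}$ you only consider replacing a row $v^{(i)}$ by a \emph{(U)-equivalent} representative; but two representatives of the same class in $\Vect(\M)$ are (OU)-equivalent, and an (OU)-move on a single row vector can involve a decomposition $v^{(i)} = V_1 \odot K \odot V_2$ in which $V_1$ and $K$ have several rows --- the paper's Example \ref{example_freeoperad} identifies $\bigl[\,\binom{1}{f_2} \ \binom{f_1}{1} \ g\,\bigr]$ with $\bigl[\,\binom{f_1}{f_2} \ g\,\bigr]$, and this is not achieved by inserting or deleting $1$'s. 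The repair is to observe that any such column decomposition of $v^{(i)}$ extends to a column decomposition of the padded, stacked matrix whose middle factor contains $K$ among its rows, so the move becomes an (OU)-move of matrices. In short, your ``single phenomenon'' (inserting or deleting $1$'s is an (RU)-move, and (RU) $\subseteq$ (OU)) does not account for all the identifications defining the source and target, so the proof as sketched is incomplete, though the missing steps are routine once identified.
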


\begin{proof}
    The function $\mathsf{Split}$ is well defined since the relation (RU) is defined row by row. We may define the inverse 
    \[
    \begin{tikzcd}
        \mathsf{Pack} :&[-3em] \CL \Vect(\M) \arrow[r] & \Mat(\M)
    \end{tikzcd}
    \]
    to be given by choosing representatives of the same length in the sequence in the domain (this is always possible, by adding or removing $1$'s), packing them into a matrix and mapping the matrix into its class in $\Mat(\M)$. $\mathsf{Split}$ and $\mathsf{Pack}$ are clearly inverses of each other.
\end{proof}

\begin{construction}[Free operad]

Let $\M$ be a multigraph. We define the free operad $F\M$ as follows:
\begin{itemize}
    \item [-] On colors, let $\Col(F\M) = \M_0$.
    \item[-] On operations, let $\Op(F\M) = \Vect(\M)$.
    \item[-]  For a color $a$, let the identity operation be defined by the class of the operation vector consisting of the degenerate $1$-simplex at $a$, i.e. $1_a = [s_0 a]$.
    \item[-] Composition provided by concatenation.  
\end{itemize}
Notice that in defining composition, we are implicitly using the split-pack bijection of the previous lemma, in the following sense. Given an operation vector $G \in \Vect(\M)$ and a sequence of vectors $G_1, \dots , G_k \in \Vect(\M)$ which are composable in the operadic sense, we define the composite as
$$G \circ (G_1 , \dots , G_k) = \mathsf{Pack}(G_1, \dots, G_k) \odot G $$
It is easy to see that this composition operation is unital and associative.
\end{construction}

\begin{example} \label{example_freeoperad}
    Let $\M$ be the multigraph which encodes the following tree
    \[\begin{tikzcd}[column sep=small]
	{} && {} \\
	{f_1} && {f_2} \\
	& g \\
	& {}
	\arrow["{a_1}"', no head, from=1-1, to=2-1]
	\arrow["{b_1}"', no head, from=2-1, to=3-2]
	\arrow["{b_2}", no head, from=2-3, to=3-2]
	\arrow["c"', no head, from=3-2, to=4-2]
	\arrow["{a_2}", no head, from=1-3, to=2-3]
\end{tikzcd}\]
The free operadic composites $g \circ (f_1 , f_2)$, $g \circ f_1$ and $g \circ f_2$ are encoded in $F\M$ by the following operation vectors 
\[
\begin{bmatrix}
    \binom{f_1}{f_2} & g
\end{bmatrix} , 
\begin{bmatrix}
    \binom{f_1}{1_{b_2}} & g
\end{bmatrix} , 
\begin{bmatrix}
    \binom{1_{b_1}}{f_2} & g
\end{bmatrix}
\]

Notice the importance of imposing (OU) on operation vectors. Consider the vector
\[
\begin{bmatrix}
    \binom{1}{f_2} & \binom{f_1}{1} & g
\end{bmatrix}
\]
There is no entry $1$ in this vector. Nonetheless, we do have
\[
\begin{bmatrix}
    \binom{1}{f_2} & \binom{f_1}{1} & g
\end{bmatrix} = 
\begin{bmatrix}
    1 & f_1 \\
    f_2 & 1
\end{bmatrix} \odot 
\begin{bmatrix}
    g
\end{bmatrix}
\]
and since 
\[
\begin{bmatrix}
    1 & f_1 \\
    f_2 & 1
\end{bmatrix} \equiv
\begin{bmatrix}
    f_1 \\
    f_2
\end{bmatrix}
\]
we obtain 
\[
\begin{bmatrix}
    \binom{1}{f_2} & \binom{f_1}{1} & g
\end{bmatrix} \equiv 
\begin{bmatrix}
    \binom{f_1}{f_2} & g
\end{bmatrix}
\]
as operation vectors. Hence, as expected, they represent the same operation in $F\M$.

\end{example}

\begin{proposition}
    The free operad construction $\M \mapsto F\M$ is functorial. Moreover, it defines a left adjoint to the forgetful functor $U : \operad \to \mult$. 
\end{proposition}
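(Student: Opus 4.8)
The plan is to establish functoriality first, then construct the unit and counit (or equivalently the natural bijection on hom-sets) and verify the triangle identities. For functoriality, given a morphism of multigraphs $\psi : \M \to \N$, with components $\psi_0 : \M_0 \to \N_0$ and $\psi_1 : \M_1 \to \N_1$, I would observe that $\psi_1$ extends entrywise to operation matrices and vectors, sending $1$-entries to $1$-entries (since $\psi$ commutes with $s_0$) and respecting concatenation on the nose. Hence it descends to the quotients $\Vect(\M) \to \Vect(\N)$ and $\Mat(\M) \to \Mat(\N)$, because the generating relations (U), (RU), (OU) are visibly preserved. This defines $F\psi : F\M \to F\N$ on colors by $\psi_0$ and on operations by the induced map on $\Vect$; it is a morphism of operads since it respects identities ($[s_0 a] \mapsto [s_0 \psi_0 a]$) and composition (as concatenation and $\mathsf{Pack}$ are preserved). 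Functoriality of $F$ in the two variables $\mathrm{id}$ and composition is then immediate.

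Next I would produce the adjunction. The unit $\eta_\M : \M \to UF\M$ is the morphism of multigraphs which is the identity on $\M_0 = \Col(F\M)$ and which sends a $1$-simplex $f \in \M_1$ to the class of the length-one operation vector $\begin{bmatrix} f \end{bmatrix} \in \Vect(\M) = \Op(F\M)$; one checks this respects $d_0, d_1, s_0$ using that $d_1$ of $\begin{bmatrix} f \end{bmatrix}$ in $F\M$ recovers the input sequence of $f$ and $s_0 a$ maps to $[s_0 a] = 1_a$. Then, given a multigraph $\M$, an operad $\Q$, and a morphism of operads $\phi : F\M \to \Q$, precomposition with $\eta_\M$ gives a morphism of multigraphs $\M \to U\Q$. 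Conversely, given $g : \M \to U\Q$ in $\mult$, I would define $\widehat{g} : F\M \to \Q$ on colors by $g_0$ and on an operation vector $\begin{bmatrix} f_1 & \cdots & f_n \end{bmatrix}$ (with columns $f_i$ possibly themselves lists $\binom{f_i^{(1)}}{\vdots}$) by the iterated composite in $\Q$ of the images $g_1(f_i^{(j)})$ arranged according to the matrix, using the split-pack bijection (Lemma \ref{lemma_splitpack}) to organize the layered composition. The key verification is that $\widehat{g}$ is well-defined on the quotient: inserting or deleting a $1$-entry changes the composite by inserting or deleting an identity operation of $\Q$, which does nothing by unitality in $\Q$, so (U), (RU), and (OU) are all respected; and that $\widehat{g}$ is a morphism of operads, which follows from associativity of composition in $\Q$ (matching $\odot$) and the identity clause. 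Finally $\widehat{g} \circ \eta_\M = g$ and $\widehat{\phi \circ \eta_\M} = \phi$ by unwinding definitions, and naturality in $\M$ and $\Q$ is routine; this gives the natural bijection $\operad(F\M, \Q) \cong \mult(\M, U\Q)$.

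The main obstacle I anticipate is the well-definedness and operad-axiom check for $\widehat{g}$: one must be careful that an arbitrary operation vector, whose columns are lists of $1$-simplices of varying ``heights'' padded by $1$'s, gets assigned a composite in $\Q$ that is genuinely independent of the chosen representative modulo (OU), and that this assignment is strictly (not just up to isomorphism) associative and unital. This is exactly the bookkeeping that the formalism of $\vect(\M)$, $\mat(\M)$, $\odot$, and the split-pack bijection was set up to handle, so the argument should amount to invoking those constructions together with the operad axioms in $\Q$; the content is in checking that the three relations (U), (RU), (OU) correspond precisely to the unitality and associativity identifications forced in any operad. Everything else — functoriality of $F$, the fact that $\eta$ is a natural transformation, and the triangle identities — is a direct diagram chase.
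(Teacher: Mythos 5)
Your proposal is correct and follows essentially the same route as the paper: the unit is the same inclusion of generators $f \mapsto [f]$, and your adjoint transpose $\widehat{g}$ is just the hom-set-bijection packaging of the paper's counit $\epsilon_\CP$ (defined by composing the entries of an operation vector in $\CP$), so the two formulations of the adjunction are interchangeable. You supply more detail than the paper does on well-definedness modulo (U), (RU), (OU), which the paper leaves as "easy to verify," but there is no substantive difference in approach.
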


\begin{proof}
    Functoriality of $F$ is clear. Also, we do have an evident inclusion $\eta: \M \to UF(\M)$ which is identity on $0$-simplices and maps every $1$-simplex $f \in \M_1$ into the operation vector $[f] \in \Vect(UF(\M))$. 

    We also have a natural morphism of operads $\epsilon_\CP : FU(\CP) \to \CP$ which is identity on colors and defined on operations by composition, i.e. 
    \[
    \epsilon 
    \begin{bmatrix}
        f_1 & \dots & f_n
    \end{bmatrix} = 
    f_n \circ \dots \circ f_1
    \]
    It is easy to verify that the triangle identities are satisfied.
\end{proof}

\begin{remark}[Non-reflexive variant]

The notion of multigraph we employed is a special case of the notion of $T$-graph defined by Leinster (\cite[Definition 4.2.4]{leinster2004higher}) when taking $T$ to be the free monoid monad on the category of sets, except that it is reflexive (meaning we include degeneracies). We find the reflexive version a more natural recipient for the forgetful functor from operads. 

The two notions are functionally the same: we can turn any non-reflexive multigraph into a reflexive one by adding degeneracies and then apply the free operad construction as above. A saddle issue to point out is the fact that defining the free operad construction without degeneracies in the multigraph would require us to formulate a notion of tree of operations, as operation vectors would not suffice. Following the general pattern described in the introduction, we want to work with leveled trees, and hence the utility of reflexive multigraphs. 
    
\end{remark}

\subsection{Interlude : free resolutions of operads} \label{subsec : free resolution}

Let $\CP$ be an operad and $\Q$ be a simplicial\footnote{$\Q$ may also be  assumed to be enriched in topological spaces, but our default setting is enrichment over simplicial sets.} operad. There are various ways in which one can codify the notion of a $\CP$-indexed homotopy coherent algebra in $\Q$, like using the classical $W$-construction of Boardman and Vogt (\cite{boardman2006homotopy}, \cite[Section 1.7]{heuts2022simplicial}), a simplicial version of which is constructed  in \cite[Section 2.7.7]{heuts2022simplicial} in terms of trees. The idea is the same: replace $\CP$ with a thick variant $W\CP$ and study morphisms $W\CP \to \Q$.

Our formalism offers a direct path towards thickening.  The free-forgetful adjunction $F : \mult \leftrightarrows \operad : U$ induces a free resolution functor
\[
\begin{tikzcd}
    FU_* :&[-3em] \operad \arrow[r] & \operad_\D{}
\end{tikzcd}
\]
and thus, for each operad $\CP$, we obtain a thickened version $FU_* \CP$.
This way, in analogy with traditional simplicial theory, we could define a \emph{homotopy coherent algebra} of an operad $\CP$ valued in a simplicial operad $\Q$ to be a morphism of simplicial operads $FU_* \cc{P} \rightarrow \cc{Q}$.

Following intuition from the free resolution for categories (Appendix \ref{appendix_free}) and our constructions in the previous section, we could sketch a description $FU_* \CP$ as follows:
\begin{itemize}
    \item [-] The colors of $FU_* \CP$ are those of $\CP$.
    
    \item[-] The $0$-operations are represented by operation vectors in $\CP$
    \[
    \begin{bmatrix}
        \cdot & \dots & \cdot
    \end{bmatrix}
    \]
    
    \item[-] The $1$-operations are represented by operation vectors in $FU \CP$. These will be vectors whose entries are operation matrices in $\CP$, for instance 
    \[
    \begin{bmatrix}
        \begin{bmatrix}
            \cdot & \cdot \\
            \cdot & \cdot
        \end{bmatrix} 
        \begin{bmatrix}
            \cdot \cdot
        \end{bmatrix}
    \end{bmatrix}
    \]

    \item [-] The face map $d_0$ concatenates the matrices inside the operation vector. This can be thought of as removing the inner brackets of the matrices. For instance,
    \[
   d_0 \begin{bmatrix}
        \begin{bmatrix}
            \cdot & \cdot \\
            \cdot & \cdot
        \end{bmatrix} 
        \begin{bmatrix}
            \cdot \cdot
        \end{bmatrix}
    \end{bmatrix} = 
    \begin{bmatrix}
        \binom{\cdot}{\cdot} & \binom{\cdot}{\cdot} & \cdot & \cdot
    \end{bmatrix}
        \]
        The face map $d_1$ composes the entries inside each (row of each) matrix and then removes brackets
        \[
        d_1 \begin{bmatrix}
        \begin{bmatrix}
            \cdot & \cdot \\
            \cdot & \cdot
        \end{bmatrix} 
        \begin{bmatrix}
            \cdot \cdot
        \end{bmatrix}
    \end{bmatrix} = 
    \begin{bmatrix}
        \binom{\cdot \circ \cdot}{\cdot \circ \cdot} & \cdot & \cdot
    \end{bmatrix}
        \]
        The degeneracy $s_0$ acts by applying inner brackets
        \[
        s_0 \begin{bmatrix}
            \cdot & \dots & \cdot 
        \end{bmatrix} = 
        \begin{bmatrix}
            \begin{bmatrix}
            \cdot & \dots & \cdot 
        \end{bmatrix}
        \end{bmatrix}
        \]

        \item[-] The $2$-operations will be represented by operation vectors whose entries are matrices whose entries are matrices. And so on in higher dimension. Faces are obtained by removing brackets and concatenating, except for the face $d_0$ which composes entries. Degeneracies are formed by adding a layer of brackets. 

\end{itemize}
While the above sketch provides some intuition, there are relations in each layer of the resolution. This makes an explicit form of the resolution hard to track! We illustrate with an example.

\begin{example}
    Let $\cc{P}$ be the free operad discussed in Example \ref{example_freeoperad}, generated by the multigraph (which encodes a tree) depicted as 
     \[\begin{tikzcd}[column sep=small]
	{} && {} \\
	{f_1} && {f_2} \\
	& g \\
	& {}
	\arrow["{a_1}"', no head, from=1-1, to=2-1]
	\arrow["{a_2}", no head, from=1-3, to=2-3]
	\arrow["{b_1}"', no head, from=2-1, to=3-2]
	\arrow["{b_2}", no head, from=2-3, to=3-2]
	\arrow["c", no head, from=3-2, to=4-2]
\end{tikzcd}\]
Let us compute the mapping space $FU_*\CP (a_1, a_2 ; c)$.

We have four classes of operation vectors with inputs $a_1, a_2$ and output $c$:
\[
\begin{bmatrix}
    g \circ \binom{f_1}{f_2} 
\end{bmatrix}
,
\begin{bmatrix}
    \binom{1}{f_2} & g \circ \binom{f_1}{1} 
\end{bmatrix}
,
\begin{bmatrix}
    \binom{f_1}{1} & g \circ \binom{1}{f_2}
\end{bmatrix}
,
\begin{bmatrix}
    \binom{f_1}{f_2} & g
\end{bmatrix}
\]
We also have the following configuration of $1$-simplices between the above $0$-simplices:

\begin{center}
    \begin{tikzpicture}
    \node (A) at (0,3) { $\begin{bmatrix}     g \circ \binom{f_1}{f_2}  \end{bmatrix}$};
    \node (B) at (6,3) {$\begin{bmatrix}     \binom{1}{f_2} & g \circ \binom{f_1}{1}  \end{bmatrix}$};
    \node (C) at (0, 0) {$\begin{bmatrix}     \binom{f_1}{1} & g \circ \binom{1}{f_2} \end{bmatrix}$};
    \node (D) at (6,0) {$\begin{bmatrix}     \binom{f_1}{f_2} & g \end{bmatrix}$};
    \path[->,font=\scriptsize,>=angle 90]
(A) edge node[above]{$\begin{bmatrix} \begin{bmatrix}     \binom{1}{f_2} & g \circ \binom{f_1}{1}  \end{bmatrix} \end{bmatrix}$} (B)
(A) edge node[left]{$\begin{bmatrix} \begin{bmatrix}     \binom{f_1}{1} & g \circ \binom{1}{f_2} \end{bmatrix} \end{bmatrix}$} (C)
(B) edge node[right]{$\begin{bmatrix} \begin{bmatrix} 1 \\ f_2 \end{bmatrix} & \begin{bmatrix} \binom{f_1}{1} & g \end{bmatrix} \end{bmatrix}$} (D)
(C) edge node[below]{$\begin{bmatrix} \begin{bmatrix} f_1 \\ 1 \end{bmatrix} & \begin{bmatrix} \binom{1}{f_2} & g \end{bmatrix} \end{bmatrix}$} (D)
(A) edge node[above right] {$\begin{bmatrix} \begin{bmatrix}  \binom{f_1}{f_2} & g \end{bmatrix} \end{bmatrix}$} (D)
;
\end{tikzpicture}
\end{center}

Notice that the edges on the top, the right and the diagonal symbolically behave as in the free resolution for categories, in the sense that the faces are apparent. For the edge on the bottom, the face $d_1$ is clear, but the face $d_0$ is correctly depicted by virtue of the relation (OU) since we have
\[
d_0 \begin{bmatrix} \begin{bmatrix} f_1 \\ 1 \end{bmatrix} & \begin{bmatrix} \binom{1}{f_2} & g \end{bmatrix} \end{bmatrix} =  \begin{bmatrix} \binom{f_1}{1}  &  \binom{1}{f_2} & g \end{bmatrix} 
\equiv
\begin{bmatrix}
    \binom{f_1}{f_2} & g
\end{bmatrix}
\]
Similarly for the edge on the right. 

Finally, we do have $2$-simplices which fill the top and bottom triangle, written respectively
\[
\begin{bmatrix}
    \begin{bmatrix} \begin{bmatrix} 1 \\ f_2 \end{bmatrix} & \begin{bmatrix} \binom{f_1}{1} & g \end{bmatrix} \end{bmatrix}
\end{bmatrix} 
,
\begin{bmatrix}
    \begin{bmatrix} \begin{bmatrix} f_1 \\ 1 \end{bmatrix} & \begin{bmatrix} \binom{1}{f_2} & g \end{bmatrix} \end{bmatrix}
\end{bmatrix}
\]
as expected. We conclude that 
$$FU_*\CP (a_1, a_2 ; c) \cong \D{1} \times \D{1}$$

\end{example}

\begin{note}
    The above computation agrees with the simplicial version of the $W$-construction in \cite[Section 2.7.7]{heuts2022simplicial}. In the language of trees, the mapping space between $(a_1, a_2)$ and $c$ in the above example is taken by definition to be the product over inner leaves between $a_1$, $a_2$ and $c$ of copies of $\D{1}$. Since we have two inner leaves $b_1$ and $b_2$, the mapping space is $\D{1} \times \D{1}$. We believe the two constructions are isomorphic, but here we only prove they have the same homotopy type.
\end{note}

While the mapping spaces in the free resolution $FU_* \CP$, as the above example indicates, may be hard to describe in explicit terms, this resolution possesses a number of good properties by virtue of the manner in which it is constructed. 

\begin{proposition} \label{prop : weakeqop}
    The morphism $FU_* \CP \to \CP$ induced by composing in $\CP$ is a weak equivalence of simplicial operads, where $\CP$ is regarded as discrete.
\end{proposition}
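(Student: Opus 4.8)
\section*{Proof proposal}

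The plan is to use the classical fact that a bar resolution arising from an adjunction becomes \emph{split} after passage to the right adjoint. Recall that $FU_*\CP$ is the simplicial operad with $(FU_*\CP)_n = (FU)^{n+1}\CP$, whose face maps are assembled from the counit $\epsilon \colon FU \Rightarrow \mathrm{id}_{\operad}$, whose degeneracies are assembled from the unit $\eta \colon \mathrm{id}_{\mult} \Rightarrow UF$, and which is augmented over the discrete operad $\CP$ by $\epsilon_\CP$. The first observation I would make is that neither $F$ nor $U$ alters the underlying set of colors, and both $\epsilon$ and $\eta$ are the identity on colors; hence the simplicial set of colors of $FU_*\CP$ is constant equal to $\Col(\CP)$ and the augmentation is the identity there. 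This reduces checking the two conditions (W1) and (W2) for a weak equivalence of simplicial operads to a statement about mapping spaces, and renders the (W2) verification formal once (W1) is in hand.

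Next I would invoke splitness. Applying $U$ degreewise to the augmented simplicial operad $FU_*\CP \to \CP$ produces an augmented simplicial multigraph which admits extra degeneracies: at level $n$ the map $s_{-1} \colon U(FU)^n\CP \to UF\bigl(U(FU)^n\CP\bigr) = U(FU)^{n+1}\CP$ is the unit $\eta_{U(FU)^n\CP}$, and the requisite simplicial identities are checked exactly as in the standard comonad-resolution argument recalled in Appendix~\ref{appendix_free}. A split augmented simplicial object in any category is a simplicial homotopy equivalence onto its augmentation. Evaluating the two ``components'' $\Col$ and $\Op$ of a multigraph, this says in particular that the simplicial set $\Op(FU_*\CP)$ is simplicially homotopy equivalent, via a homotopy compatible with the structure maps $d_0, d_1, s_0$, to the discrete simplicial set $\Op(\CP)$.

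The point that requires care is that (W1) demands a weak equivalence on each individual mapping space, not merely on the total set of operations. Here I would note that every map in play --- the faces and degeneracies of the resolution, which are morphisms of operads, and the extra degeneracy $\eta$, which is a morphism of multigraphs sending an operation $f$ to the length-one operation vector $[f]$ --- preserves source and target. Therefore, for a fixed sequence of colors $\ux$ and color $y$, the subsets $(FU)^{n+1}\CP(\ux, y) \subseteq \Op\bigl((FU)^{n+1}\CP\bigr)$ assemble into a simplicial set $FU_*\CP(\ux, y)$ on which the contracting homotopy restricts, giving a simplicial homotopy equivalence $FU_*\CP(\ux, y) \to \CP(\ux, y)$ onto the discrete target. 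This is precisely (W1). Condition (W2) then follows for free: the colors are constant and the unary mapping spaces $FU_*\CP(x,y)$ are homotopy equivalent to the discrete sets of unary operations of $\CP$, so $\pi_0[FU_*\CP]_1 \to \pi_0[\CP]_1 = [\CP]_1$ is a bijection on objects and on hom-sets, hence an isomorphism of categories and in particular an equivalence.

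I expect the only genuine obstacle to be bookkeeping: setting up the extra degeneracies with the correct indexing and verifying their simplicial identities, together with the routine-but-fiddly check that the contracting homotopy respects the decomposition of operations by source and target. Both are instances of patterns already exploited elsewhere in the paper, so I do not anticipate a conceptual difficulty.
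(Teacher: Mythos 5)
Your proposal is correct and follows essentially the same route as the paper's proof: exhibit the standard extra degeneracies on the augmented resolution $FU_*\CP \dashrightarrow \CP$ (coming from the unit after applying $U$), observe that everything is the identity on colors and preserves sources and targets so the contracting homotopy restricts to each mapping space $(FU_*\CP)(\ux,y) \dashrightarrow \CP(\ux,y)$, and conclude (W1) from Lemma \ref{lemma_contractible} with (W2) following formally. Your write-up is somewhat more explicit than the paper's about where the splitting lives and about the (W2) check, but there is no substantive difference in approach.
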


\begin{proof}
We have an augmentation with extra-degeneracies $FU_* \CP \dashrightarrow \CP$, obtained exactly as in the classical case of categories (see appendices \ref{appendix_Aug} and \ref{appendix_free}). The augmentation induces a morphism of simplicial operads $FU_* \CP \to \CP$ (regarding $\CP$ as discrete) which is identity on colors. Moreover, since we can carry the original augmentation to operation spaces, meaning that for all sequences of colors $\ua$ and color $b$ of $\CP$ we have an augmentation of simplicial sets $(FU_* \CP)(\ua, b) \dashrightarrow \CP(\ua, b)$ with extra degeneracies, we deduce that the induced map $(FU_* \CP)(\ua, b) \to \CP(\ua, b)$ is a weak equivalence of simplicial sets.
    
\end{proof}

\begin{remark}[Operad valued simplicial computads and cofibrancy]

Using the notion of multigraph and the free operad construction, we can also implement the classical idea of simplicial computad (\cite[Section 16.2]{riehl2014categorical}) in the context of operads. We sketch this idea here, but do not proceed with proofs.

Let $\CP$ be an operad. An operation $h \in \Op(\CP)$ is said to be \emph{atomic} if, in case $h = g \circ (f_1, \dots , f_k)$  for some operations $g, f_1, \dots ,f_k$, either $g$ is an identity or $f_1, \dots , f_k$ are all identities. We say that an operad $\CP$ is \emph{free} if it is freely generated by the multigraph of its atomic operations.

Next, we may define an \emph{operad valued simplicial computad} to be a discrete simplicial object
\[
\begin{tikzcd}
    \cc{A} :&[-3em] \D{op} \arrow[r] & \operad
\end{tikzcd}
\]
such that 
\begin{itemize}
    \item [-] For each $[n] \in \D{}$, $\A_n$ is free.
    \item[-] Degeneracies of atomic operations are atomic. 
\end{itemize}

We believe it is true that operad-valued simplicial computads are the cofibrant objects in $\soperad$. This would immediately imply that, for every operad $\CP$, the simplicial operad $FU_* \CP$ is cofibrant and hence it serves as a cofibrant replacement. 

We leave the above claim without proof for two reasons. First, it should follow from well-known arguments (see \cite[Lemma 16.2.2]{riehl2014categorical}) which rely on niceness properties of the model structure which $\soperad$ possesses. Second, we do not use this characterization of cofibrant objects in our proofs. In fact, we will introduce another model for the cofibrant replacement of an operad in Section \ref{section : coherent nerve} which has nicer features.
    
\end{remark}

We end with a proposition about the relationship between $\CL$ and $FU_*$. Notice that the square 
\[\begin{tikzcd}
	\operad && \cat \\
	\soperad && \scat
	\arrow["{FU_*}"', from=1-1, to=2-1]
	\arrow["\CL", from=1-1, to=1-3]
	\arrow["\CL"', from=2-1, to=2-3]
	\arrow["{FU_*}", from=1-3, to=2-3]
\end{tikzcd}\]
does not commute. In fact, the above square is far from being commutative. For an operad $\CP$, the resolution $FU_* (\LP)$ has more simplices in the mapping spaces compared to the simplicial category $\CL FU_* \CP$. Nonetheless, the aforementioned simplicial categories have the same homotopy type.

\begin{proposition} \label{prop : natwe}
    There is a natural weak equivalence
    \[\begin{tikzcd}
	\operad && \cat \\
	\soperad && \scat
	\arrow["{FU_*}"', from=1-1, to=2-1]
	\arrow["\CL", from=1-1, to=1-3]
	\arrow["\CL"', from=2-1, to=2-3]
	\arrow["{FU_*}", from=1-3, to=2-3]
	\arrow["\sim"{description}, shorten <=5pt, shorten >=9pt, Rightarrow, from=1-3, to=2-1]
\end{tikzcd}\]
\end{proposition}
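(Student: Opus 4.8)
The plan is to construct, for each operad $\CP$, a natural zig-zag (or better, a single map) between the simplicial categories $\CL(FU_* \CP)$ and $FU_*(\LP)$ and argue it is a weak equivalence in $\scat$. Both simplicial categories have the same set of objects, namely $\CL \Col(\CP)$: on the one hand $\CL(FU_*\CP)$ has objects sequences of colors of $FU_*\CP$, and the colors of $FU_*\CP$ are exactly those of $\CP$; on the other $FU_*(\LP)$ has objects those of $\LP$, which are again sequences of colors of $\CP$. The natural candidate for a comparison map is the canonical morphism of simplicial categories $\CL(FU_*\CP) \to FU_*(\LP)$ induced as follows. The counit-type idea: $\CL$ is the monoidal envelope and a monoidal functor $\LP \to \cc{C}$ corresponds to an operad map $\CP \to \cc{C}^\otimes$; applied levelwise to the resolution one gets a map of simplicial categories. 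More concretely, an operation vector in $FU\CP$ concatenates into a single operation of $\CP$, hence into a length-one morphism of $\LP$, and this is compatible with the simplicial structure, producing a simplicial functor $\CL(FU_*\CP) \to FU_*(\LP)$ which is the identity on objects (or rather, one should check which direction is natural — the diagram in the statement has the $2$-cell pointing from $\CL \circ (\text{top }FU_*)$ to $(\text{bottom }FU_*) \circ \CL$, i.e. a natural transformation $\CL \circ FU_* \Rightarrow FU_* \circ \CL$, so the map goes $\CL(FU_*\CP) \to FU_*(\LP)$).

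Once the comparison map $\Phi_\CP : \CL(FU_*\CP) \to FU_*(\LP)$ is written down and naturality in $\CP$ is checked (which is routine from functoriality of $F$, $U$, $\CL$ and the simplicial bar construction $(-)_*$), the content is showing $\Phi_\CP$ is a weak equivalence in the Bergner model structure. Since $\Phi_\CP$ is the identity on objects, condition (W2) about $\pi_0[-]_1$ is immediate, so it suffices to verify (W1): for each pair of sequences $\ua : A \to \Col(\CP)$, $\ub : B \to \Col(\CP)$, the induced map of simplicial sets
\[
\CL(FU_*\CP)(\ua,\ub) \longrightarrow FU_*(\LP)(\ua,\ub)
\]
is a weak homotopy equivalence. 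Here I would use Proposition \ref{prop : weakeqop} twice: that proposition, applied to $\CP$, gives that $(FU_*\CP)(\ux,y) \to \CP(\ux,y)$ is a weak equivalence for all $\ux, y$, and since $\CL$ on the level of simplicial categories is built out of (co)products of such mapping spaces — explicitly $\CL(FU_*\CP)(\ua,\ub) = \coprod_{\alpha : A \to B} \prod_{b\in B} (FU_*\CP)(\ua_b, b)$ — and weak equivalences of simplicial sets are closed under finite products and arbitrary coproducts, we get that $\CL(FU_*\CP)(\ua,\ub) \to \CL(\CP)(\ua,\ub) = (\LP)(\ua,\ub)$ is a weak equivalence. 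On the other side, applying Proposition \ref{prop : weakeqop} to the \emph{category} $\LP$ (viewed as an operad with only unary operations), we get that $FU_*(\LP)(\ua,\ub) \to (\LP)(\ua,\ub)$ is a weak equivalence. If the triangle of these maps over $(\LP)(\ua,\ub)$ commutes — which it should, since all three maps are "compose everything" maps and $\Phi_\CP$ is designed to be compatible — then by 2-out-of-3 the map $\CL(FU_*\CP)(\ua,\ub) \to FU_*(\LP)(\ua,\ub)$ is a weak equivalence, completing (W1).

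The main obstacle I anticipate is not homotopical but bookkeeping: precisely defining $\Phi_\CP$ so that (i) it is genuinely simplicial (commutes with all faces and degeneracies of the two bar-type resolutions, keeping in mind the relations (U), (RU), (OU) that are imposed in the free operad construction), and (ii) the resulting triangle over $(\LP)(\ua,\ub)$ commutes strictly rather than up to a further homotopy. The subtlety is that a simplex of $\CL(FU_*\CP)$ is a \emph{list} of operation-vectors-of-operation-matrices-of-\dots\ in $\CP$, while a simplex of $FU_*(\LP)$ is an operation-vector-of-operation-matrices-of-\dots\ whose innermost entries are \emph{lists of operations} of $\CP$, i.e. morphisms of $\LP$; the map $\Phi_\CP$ must reorganize "outer list of nested matrices" into "nested matrices of inner lists", and one must check this reshuffling respects all the equivalence relations and all the simplicial operations (bracket-removal, concatenation, and the $d_0$ that composes). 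This is the analogue of the Eilenberg–Zilber / interchange shuffle, and writing it cleanly is where the real work lies; once it is in place, the weak-equivalence part is a short application of Proposition \ref{prop : weakeqop} and 2-out-of-3 as sketched above.
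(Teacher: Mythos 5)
Your overall strategy---an identity-on-objects comparison map between $FU_*(\LP)$ and $\CL(FU_*\CP)$, compatible with the two augmentations to $\LP$, followed by two-out-of-three using Proposition \ref{prop : weakeqop} and the fact that $\CL$ preserves weak equivalences (Lemma \ref{lemma : Lwefib})---is exactly the paper's, and the homotopical half of your argument is sound. The gap is in the comparison map itself: you have it going in the wrong direction, and in that direction no canonical simplicial functor exists. The paper's map goes $D_\CP : FU_*(\LP) \to \CL(FU_*\CP)$. At the bottom level a morphism of $FU(\LP)$ is a composable chain of morphisms of $\LP$, i.e.\ an operation matrix in $\CP$, and $D_0$ \emph{splits} it into its list of rows (Lemma \ref{lemma_splitpack}); the higher levels are then obtained recursively as composites of the form $D_0 \circ FU(D_0)$, ``pulling $\CL$ from left to right.'' This direction is forced. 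Your direction would require the inverse operation $\mathsf{Pack}$: pad a list of operation vectors with $1$'s to a common length and stack them into a matrix, i.e.\ into a chain of morphisms of $\LP$. That is well defined into $\Mat(\M)=\mat(\M)/(\mathrm{OU})$ precisely because of the row-wise unit relation, but it is \emph{not} well defined into $FU(\LP)(\ua,\ub)$, where the only relation is deletion of identity morphisms of $\LP$, i.e.\ of columns consisting entirely of identities. Concretely, the element $([f_1],[f_2])$ of $\CL FU(\CP)$ also has the representative $([1,f_1],[f_2,1])$; these pack to the length-one chain $\left[\binom{f_1}{f_2}\right]$ and to the length-two chain $\left[\binom{1}{f_2} \ \binom{f_1}{1}\right]$ respectively, and the latter two are distinct morphisms of $FU(\LP)$ since neither column is an identity of $\LP$. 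This asymmetry is exactly why the paper remarks that $FU_*(\LP)$ has strictly more simplices than $\CL(FU_*\CP)$: the canonical map must go from the bigger object to the smaller one.

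Your alternative description of the map---``an operation vector concatenates into a single operation of $\CP$, hence a length-one morphism of $\LP$''---fails for a separate reason: it does not respect composition. Composition in $\CL(FU_*\CP)$ concatenates operation vectors, whereas composing the proposed images produces a chain of length two in $FU(\LP)$, and length-one and length-two chains are not identified in a free category unless one entry is an identity. So the ``real work'' you defer is not mere bookkeeping in your direction; it is where the construction breaks. Once the map is reversed and defined by splitting, the rest of your argument (both structure maps to $\LP$ are weak equivalences---one by the classical resolution of the category $\LP$, the other by applying $\CL$ to the weak equivalence of Proposition \ref{prop : weakeqop}---hence so is $D_\CP$ by two-out-of-three) coincides with the paper's proof.
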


\begin{proof}
    For a fixed operad $\CP$ we define a simplicial functor $D : FU_*(\LP) \to \CL( FU_* \CP)$ as follows. On the level of $0$-simplices in the mapping spaces, notice that we do have a functor
    \[
    \begin{tikzcd}
        D_0 : &[-3em] FU(\LP) \arrow[r] & \CL FU(\CP)
    \end{tikzcd}
    \]
    Morphisms in $FU(\LP)$ are represented by chains of morphisms in $\LP$, which are just operation matrices in $\CP$. By splitting these matrices into rows, we obtain a list of representatives for morphisms in $\CL FU(\CP)$. It is not difficult to check that we obtain a well-defined functor $D_0$ with such a splitting process. 
    Notice that, by construction, we have a commutative triangle
    \[\begin{tikzcd}
	{FU(\LP)} && {\CL FU(\CP)} \\
	& \LP
	\arrow["{\epsilon_{\LP}}"', from=1-1, to=2-2]
	\arrow["{\CL \epsilon_\CP}", from=1-3, to=2-2]
	\arrow["{D_0}", from=1-1, to=1-3]
\end{tikzcd}\]
    Moreover, $D_0$ is natural in $\CP$, so that we may regard it as a natural transformation $D_0 : FU \CL \Rightarrow \CL FU$.

    On the next iteration of both resolutions, we define the functor $D_1$ as the composite
    \[
    \begin{tikzcd}
    D_1 :&[-2em] FUFU(\LP) \arrow[r, "{FU(D_0)}"] & FU (\CL FU \CP) \arrow[r, "{D_0}"] & \CL (FUFU\CP) 
    \end{tikzcd}
    \]
    In higher dimension, we define $D_n : FU^{(n)} \to \CL (FU^{(n)} \CP)$ recursively by “pulling $\CL$ from the left to the right step by step".
    In conclusion, we obtain a morphism of simplicial categories $D : FU_*(\LP) \to \CL FU_*(\CP)$ which is natural in $\CP$.
    
    By construction, we have a commutative triangle
    \[\begin{tikzcd}
	{FU_*(\LP)} && {\CL FU_*(\CP)} \\
	& \LP
	\arrow["\sim"', from=1-1, to=2-2]
	\arrow["\sim", from=1-3, to=2-2]
	\arrow["D", from=1-1, to=1-3]
\end{tikzcd}\]
    The weak equivalence on the left is the structure map of the resolution of $\LP$ as a category. The map on the right is obtained by applying $\CL$ to the structure map $FU_*\CP \to \CP$ of the resolution for $\CP$. The latter is a weak equivalence by Proposition \ref{prop : weakeqop}. Thus, the morphism displayed in the above diagram is a weak equivalence since  $\CL$ preserves weak equivalences (Lemma \ref{lemma : Lwefib}).  We conclude that $D$ is also a weak equivalence by the two-out-of-three property of weak equivalences. 
\end{proof}

\subsection{Nerves and quasi-operads} \label{subsec : nerve}

Now we fill in the details for the sketch of the nerve construction provided in the introduction. We begin by encoding leveled trees as multigraphs.

\begin{notation}[Simplices of $\nd$] 

We denote elements of $\nd$ by Greek letters $\alpha$, $\beta$ , \dots . Given $\alpha : \D{n} \to \dplus$, we denote by $A_i$ the set $\alpha(i)$ and by $\alpha_i : A_{i-1} \to A_i$ the image under $\alpha$ of the arrow $(i-1) \to i$ in $\D{}$, $1 \leq i \leq n$, so that $\alpha$ is depicted as
\[\begin{tikzcd}
	{\alpha : } &[-2em] {A_0} & {A_1} & \dots & {A_n}
	\arrow["{\alpha_1}", from=1-2, to=1-3]
	\arrow["{\alpha_2}", from=1-3, to=1-4]
	\arrow["{\alpha_n}", from=1-4, to=1-5]
\end{tikzcd}\]

We denote by $\alpha_{i,j} : A_i \to A_j$ the image of the arrow $i \to j$ in $\D{n}$, for all $0 \leq i \leq j \leq n$.

Similarly, for a simplex $\beta$, we use the symbols $B_i$, $\beta_i$ and $\beta_{i,j}$.
    
\end{notation}

\begin{construction}[Key free operads] \label{const_key}

Let $\alpha \in \nd_n$.
We define (the non-reflexive) multigraph $M_\alpha$ with sets of simplices
$$M_{\alpha, 0} = \coprod_{i = 0}^n A_i \ \ \ , \ \ \ M_{\alpha, 1} = \coprod_{i=1}^n \{ p_i^{(a)} \ : \ a \in A_i \}$$

Let $d_0 p_i^{(a)} = a$ and $d_1 p_i^{(a)} = \alpha_i^{-1}(a)$ in case $a \in A_i$, $1 \leq i \leq n$. The sets $A_i$ are ordered, so we may regard the preimage $\alpha_i^{-1}(a)$ as a sequence in $M_{\alpha , 0}$. The simplicial identities are clearly satisfied. 

We define the operad
$$T_\alpha = FM_\alpha$$
In particular, given indices $i \leq j$, and $a \in A_j$ we have an operation 
$$p_{i,j}^{(a)} : \alpha_{i,j}^{-1}(a) \to a$$
in $T_\alpha$ obtained by the evident chain of generating operations in $M_\alpha$. These operations satisfy
$$p_{i,j}^{(a)} \circ \left(p_{j,k}^{(b)} \right)_{b \in \alpha_{i,j}^{-1}(a)} = p_{i,k}^{(a)}$$
for all indices $i \leq j \leq k$. Under this notation, we have $1_a = p_{i,i}^{(a)}$.
\end{construction}

\begin{remark} \label{remark : vectors trees}
    Let $\M$ be a multigraph. Following up on Note \ref{note : vectors trees}, we can say that an operation vector in $\M$ is a morphism of multigraphs $M_\alpha \to \M$ for some rooted simplex $\alpha \in \nd^{root}$, while allowing $\alpha$ to be a general simplex recovers the notion of operation matrix.
\end{remark}

\begin{lemma}
    The construction $\alpha \mapsto T_\alpha$ is functorial in the simplex category, i.e. we have a functor
    \[
    \begin{tikzcd}
        T_* :&[-3em] \int \nd \arrow[r] & \operad
    \end{tikzcd}
    \]
\end{lemma}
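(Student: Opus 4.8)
The plan is to read off $T_*$ from the free-forgetful adjunction $F \dashv U$ between $\mult$ and $\operad$. Recall that $\int \nd$, the category of elements of the simplicial set $\nd$, has objects the pairs $([n],\alpha)$ with $\alpha \in \nd_n$, and a morphism $([k],\beta) \to ([n],\alpha)$ is a map $\theta : [k] \to [n]$ in $\D{}$ with $\theta^*\alpha = \beta$. So it suffices to produce, for each such $\theta$, a morphism of operads $T_\theta : T_{\theta^*\alpha} \to T_\alpha$ and to check $T_{\mathrm{id}} = \mathrm{id}$ and $T_{\theta \circ \psi} = T_\theta \circ T_\psi$.

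Fix $\theta : [k] \to [n]$ and write $\beta = \theta^*\alpha$, so that $B_i = A_{\theta(i)}$ and $\beta_i = \alpha_{\theta(i-1),\theta(i)}$ for $1 \le i \le k$. Since $T_\beta = F M_\beta$ is free, a morphism $T_\beta \to T_\alpha$ is the same as a morphism of multigraphs $M_\beta \to U T_\alpha$, and I would define it by sending the colour $b$ in the $i$-th summand $B_i \subseteq M_{\beta,0}$ to the same element $b$ in the $\theta(i)$-th summand $A_{\theta(i)} \subseteq M_{\alpha,0} = \Col(T_\alpha)$, and by sending the generating operation $p_i^{(b)}$ to the operation $p_{\theta(i-1),\theta(i)}^{(b)} \in \Op(T_\alpha)$ of Construction \ref{const_key} (which is $1_b = p_{m,m}^{(b)}$ when $\theta(i-1) = \theta(i) = m$). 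The only thing to verify for this to be a genuine map of multigraphs is compatibility with source, target and degeneracy: $d_0$ and $s_0$ are preserved on the nose, while for $d_1$ one uses exactly the identity $\beta_i^{-1}(b) = \alpha_{\theta(i-1),\theta(i)}^{-1}(b)$, whose image as a sequence of colours of $T_\alpha$ is precisely the domain of $p_{\theta(i-1),\theta(i)}^{(b)}$.

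For functoriality, both sides of $T_{\theta \circ \psi} = T_\theta \circ T_\psi$ are morphisms out of the free operad $T_{(\theta\psi)^*\alpha}$, so it is enough to compare them on generators; on colours there is nothing to check. On a generating operation $p_j^{(c)}$ of $T_{\psi^*\beta}$, the composite $T_\theta \circ T_\psi$ first sends it to $p_{\psi(j-1),\psi(j)}^{(c)} \in \Op(T_\beta)$, i.e. to the evident chain of generators $p_m^{(b)}$ of $T_\beta$; applying $T_\theta$ replaces each $p_m^{(b)}$ by $p_{\theta(m-1),\theta(m)}^{(b)}$, and the resulting composite telescopes, by the relation $p_{i,j}^{(a)} \circ (p_{j,k}^{(b)})_b = p_{i,k}^{(a)}$ of Construction \ref{const_key}, to $p_{\theta(\psi(j-1)),\theta(\psi(j))}^{(c)} = T_{\theta\psi}(p_j^{(c)})$; the unitality case $T_{\mathrm{id}} = \mathrm{id}$ is the instance $\theta = \mathrm{id}$. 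The only points needing care are the mild bookkeeping around the reflexive completion of $M_\alpha$ (and colour collisions when $\theta$ fails to be injective) and this telescoping step; I do not expect either to be a real obstacle, so the main work is simply organizing these verifications cleanly.
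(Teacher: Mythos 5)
Your proposal is correct and follows essentially the same route as the paper: the paper also defines $\lambda_\theta$ on the generating multigraph by $p_i^{(b)} \mapsto p_{\theta(i-1),\theta(i)}^{(b)}$ and appeals to the composition relation $p_{i,j}^{(a)} \circ (p_{j,k}^{(b)})_b = p_{i,k}^{(a)}$, though it leaves the telescoping verification implicit ("this construction clearly respects composition") where you spell it out.
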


\begin{proof}
    Let $\alpha \in \nd_n$, $\theta : [k] \to [n]$ be a morphism in $\D{}$ and say $\beta = \theta^* \alpha$. We specify the induced morphism of operads $\lambda_\theta : T_\beta \to T_\alpha$ by saying that on the generating multigraph $M_\beta$ we have
    $$\lambda_\theta (p_i^{(b)} )= p_{\theta(i-1), \theta(i)}^{(b)}$$
    for all $i = 1, \dots, k-1$ and $b \in B_i$. This construction clearly respects composition.

\end{proof}

\begin{notation}
    For this section, we retain the notation $\lambda_\theta$ in the proof of the above lemma.
\end{notation}

\begin{remark}
    The morphisms $\lambda_\theta$ can be interpreted as follows. In case $\theta = d_0$ or $\theta = d_n$, the induced morphism $\lambda_\theta : T_{d_i \alpha} \to T_\alpha$ is the evident inclusion. In case $\theta = d_i$ is an inner face map, the induced morphism $\lambda_{d_i} : T_{d_i\alpha} \to T_\alpha$ is obtained by composing in the $i$-th level of the operations. In case $\theta = s_i$ is a degeneracy map, the induced morphism $\lambda_{s_i} : T_{s_i \alpha} \to T_\alpha$ is obtained by mapping the $i$-th level of operations in the domain to identities in the codomain.
\end{remark}

For each $[n] \in \D{}$, the set $\nd_n$ has the structure of a monoid given by ordinal sum. To be more precise, for $\alpha, \beta \in \nd_n$ we define 
$$(\alpha \amalg \beta)(n) = A_n \amalg B_n.$$

\begin{lemma} \label{lemma : coprod}
    For all $[n] \in \D{}$ and $\alpha, \beta \in \nd_n$ we have $T_{\alpha \amalg \beta} \cong T_\alpha \amalg T_\beta$.
\end{lemma}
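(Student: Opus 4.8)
# Proof Proposal for Lemma~\ref{lemma : coprod}

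The plan is to verify the isomorphism $T_{\alpha \amalg \beta} \cong T_\alpha \amalg T_\beta$ by tracking it through the free operad construction, using the fact that $F = FM_{(-)}$ is a left adjoint and hence preserves colimits. First I would recall that for $\alpha, \beta \in \nd_n$ the ordinal-sum simplex $\alpha \amalg \beta$ is the simplex whose value at level $i$ is $A_i \amalg B_i$, with structure maps $\alpha_i \amalg \beta_i$; in particular its generating multigraph $M_{\alpha \amalg \beta}$ has $M_{\alpha \amalg \beta, 0} = \coprod_i (A_i \amalg B_i)$ and $M_{\alpha \amalg \beta, 1} = \coprod_i \{ p_i^{(a)} : a \in A_i \amalg B_i \}$, and the source/target assignments $d_0, d_1$ respect the partition into the $\alpha$-part and the $\beta$-part (since $(\alpha_i \amalg \beta_i)^{-1}(a)$ lies entirely in $\coprod_i A_i$ when $a \in A_i$, and entirely in $\coprod_i B_i$ when $a \in B_i$). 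Hence at the level of multigraphs we have a canonical identification $M_{\alpha \amalg \beta} \cong M_\alpha \amalg M_\beta$, the coproduct being taken in $\mult$, which is computed componentwise on $0$- and $1$-simplices.

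Next I would invoke the free-forgetful adjunction $F : \mult \rightleftarrows \operad : U$ from the proposition preceding Construction~\ref{const_key}. Since $F$ is a left adjoint, it preserves all colimits, in particular binary coproducts, so
\[
T_{\alpha \amalg \beta} = F M_{\alpha \amalg \beta} \cong F(M_\alpha \amalg M_\beta) \cong F M_\alpha \amalg F M_\beta = T_\alpha \amalg T_\beta,
\]
and the isomorphism is natural in the evident sense. The only genuinely content-bearing step is the first identification $M_{\alpha \amalg \beta} \cong M_\alpha \amalg M_\beta$, which requires checking that no generating operation of $M_{\alpha \amalg \beta}$ mixes colors from the $\alpha$-side with colors from the $\beta$-side — this is immediate from the definition of ordinal sum, since a fibre $(\alpha_i \amalg \beta_i)^{-1}(a)$ is, by construction of the disjoint union of maps in $\dplus$, contained in $A_{i-1}$ or in $B_{i-1}$ according to whether $a$ lies in $A_i$ or $B_i$; the orders also match, as the order on $A_{i-1} \amalg B_{i-1}$ restricts to the original orders.

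I expect the main obstacle to be essentially bookkeeping rather than mathematics: one must be slightly careful about what coproduct of operads means — concretely, $T_\alpha \amalg T_\beta$ is the operad whose colors are $\Col(T_\alpha) \amalg \Col(T_\beta)$ and whose operations are those of $T_\alpha$ together with those of $T_\beta$, with no new operations mixing the two color-classes (there are no operations whose input list contains colors from both sides, since such a list could never arise from concatenation of generating operations once the generators are segregated). Verifying this matches $F$ applied to $M_\alpha \amalg M_\beta$ amounts to unwinding Construction~\ref{const_key} for the free operad: an operation vector in $M_\alpha \amalg M_\beta$ is a composable sequence of $1$-simplices, and composability forces all entries to come from the same summand, so $\Vect(M_\alpha \amalg M_\beta) \cong \Vect(M_\alpha) \amalg \Vect(M_\beta)$, with the (OU) relation respecting the decomposition. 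Alternatively, and more cleanly, one simply cites that $F$ preserves coproducts and leaves the multigraph identification as the one thing to check. I would present the argument in this second, shorter form.
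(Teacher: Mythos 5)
Your proposal is correct and matches the paper's argument, which consists precisely of the observation that $M_{\alpha \amalg \beta} \cong M_\alpha \amalg M_\beta$ at the level of multigraphs (with the preservation of coproducts by the left adjoint $F$ left implicit). Your additional verifications — that the fibres $(\alpha_i \amalg \beta_i)^{-1}(a)$ do not mix the two summands and that operation vectors in $M_\alpha \amalg M_\beta$ segregate by composability — are exactly the details the paper elides with ``this is clear.''
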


\begin{proof}
    This is clear. In fact, on the level of multigraphs we have $M_{\alpha \amalg \beta} \cong M_\alpha \amalg M_\beta$.
\end{proof}

One reason the operads $T_\alpha$ are key is the following proposition.

\begin{proposition} \label{prop : simplex classification}
    Let $\CP$ be an operad. There is bijection
    $$N(\LP)_n \cong \coprod_{\alpha \in \nd_n} \operad(T_\alpha, \CP)$$
    for all $[n] \in \D{}$ which is natural in $\CP$ and $[n]$.
\end{proposition}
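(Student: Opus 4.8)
The plan is to identify an $n$-simplex of $N(\LP)$ with the combinatorial data that a simplex $\alpha \in \nd_n$ together with a morphism $T_\alpha \to \CP$ provides. Recall that $N(\LP)_n$ consists of composable chains of $n$ morphisms in the monoidal envelope $\LP$, i.e. diagrams
\[
\un_0 \xrightarrow{\;g_1\;} \un_1 \xrightarrow{\;g_2\;} \cdots \xrightarrow{\;g_n\;} \un_n
\]
where each $\un_i$ is a sequence of colors of $\CP$ and each $g_i$ is a morphism of $\LP$. Using the non-internal description of $\LP$ recalled earlier in the excerpt, a sequence of colors $\un_i$ is the same as a function $A_i \to \Col(\CP)$ for a finite ordered set $A_i$, and a morphism $g_i : \un_{i-1} \to \un_i$ in $\LP$ consists of an order-preserving (monotone) map $\alpha_i : A_{i-1} \to A_i$ together with, for each $a \in A_i$, an operation of $\CP$ with inputs the restriction of $\un_{i-1}$ to $\alpha_i^{-1}(a)$ and output the color $\un_i(a)$. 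So an $n$-simplex of $N(\LP)$ packages exactly: a chain $A_0 \xrightarrow{\alpha_1} \cdots \xrightarrow{\alpha_n} A_n$ of monotone maps of finite ordered sets --- which is precisely the datum of a simplex $\alpha \in \nd_n$ --- plus, for every level $i$ and every $a \in A_i$, an operation $p_i^{(a)}$-labelled element of $\CP(\alpha_i^{-1}(a), a)$, subject to no further constraints.

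The key step is then to observe that this labelling data is exactly a morphism of operads $T_\alpha \to \CP$. Indeed, by Construction \ref{const_key}, $T_\alpha = FM_\alpha$ is the free operad on the multigraph $M_\alpha$ whose $1$-simplices are the symbols $p_i^{(a)}$ with $d_0 p_i^{(a)} = a$ and $d_1 p_i^{(a)} = \alpha_i^{-1}(a)$. By the free--forgetful adjunction $F \dashv U$ of the previous section,
\[
\operad(T_\alpha, \CP) = \operad(FM_\alpha, \CP) \cong \mult(M_\alpha, U\CP),
\]
and a morphism of multigraphs $M_\alpha \to U\CP$ is precisely a choice, for each $i$ and each $a \in A_i$, of an operation of $\CP$ with the prescribed source $\alpha_i^{-1}(a)$ and target $a$ (plus the forced behaviour on degeneracies, which carries no extra data since $M_\alpha$ is the non-reflexive multigraph). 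Assembling over all $\alpha \in \nd_n$ gives the claimed bijection
\[
N(\LP)_n \;\cong\; \coprod_{\alpha \in \nd_n} \operad(T_\alpha, \CP).
\]

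It remains to check naturality in $\CP$ and in $[n]$. Naturality in $\CP$ is immediate, since both sides are built functorially: a morphism $\CP \to \Q$ postcomposes the labelling operations on the left and postcomposes with $T_\alpha \to \CP$ on the right, and the adjunction bijection is natural. Naturality in $[n]$ is the point that requires a small check: given $\theta : [k] \to [n]$, the simplicial operation $\theta^*$ on $N(\LP)$ restricts a chain of $n$ composable morphisms to a chain of $k$ composable morphisms by composing and inserting identities in $\LP$, and on the indexing side it sends $\alpha$ to $\theta^*\alpha = \beta$; one must verify that under the bijection this matches precomposition with the operad map $\lambda_\theta : T_\beta \to T_\alpha$ constructed in the functoriality lemma above. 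This is a direct unwinding: $\lambda_\theta(p_i^{(b)}) = p^{(b)}_{\theta(i-1),\theta(i)}$, and the composite generating operation $p^{(b)}_{\theta(i-1),\theta(i)}$ in $T_\alpha$ is sent by a labelling $T_\alpha \to \CP$ exactly to the composite of the corresponding operations of $\CP$, which is precisely the operation appearing in the $i$-th spot of the restricted chain $\theta^* g$.

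The main obstacle, such as it is, is bookkeeping rather than conceptual: one has to be careful that the ``no further constraints'' claim is correct, i.e. that freeness of $T_\alpha$ really does mean an operad map out of it is unconstrained labelling data with no coherence to impose (this is where it matters that $M_\alpha$ is taken \emph{non-reflexive}, so that identities are not among the generators and cause no redundancy), and that the compatibility with the maps $\lambda_\theta$ is stated for \emph{all} $\theta$, including inner faces (which compose a level), outer faces (inclusions), and degeneracies (which send a level to identities) --- each of which one should at least spot-check against the description of $\theta^*$ on chains in $\LP$.
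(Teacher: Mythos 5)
Your proposal is correct and follows essentially the same route as the paper: unwind an $n$-simplex of $N(\LP)$ via the non-internal description of $\LP$ into an underlying chain $\alpha \in \nd_n$ plus levelwise labelling data, identify the latter with a morphism of multigraphs $M_\alpha \to U\CP$, and pass through the free--forgetful adjunction to get $\operad(T_\alpha,\CP)$. The only difference is one of thoroughness: the paper merely states the naturality square for $\theta^*$ versus $\lambda_\theta^*$, whereas you actually sketch the verification, which is a welcome addition rather than a divergence.
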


\begin{proof}
    Consider an $n$-simplex in $N(\LP)_n$
    \[\begin{tikzcd}
	{\ua^{(0)}} & {\ua^{(1)}} & \dots & {\ua^{(n)}}
	\arrow[from=1-1, to=1-2]
	\arrow[from=1-2, to=1-3]
	\arrow[from=1-3, to=1-4]
\end{tikzcd}\]
Each object in the chain is a sequence $\ua^{(i)} : A_i \to \Col(\CP)$, $A_i \in \dplus$, and underlying each morphism in the chain there is a morphism $\alpha_{i} : A_{i-1} \to A_i$ in $\dplus$. This determines and is determined by specifying an $n$-simplex $\alpha \in \nd$. The rest of the data determines and is determined by a morphism of multigraphs $M_\alpha \to U\CP$, or equivalently a morphism of operads $T_\alpha \to \CP$. 

Naturality in $\CP$ is evident. By naturality in $[n]$ we mean that for all $\theta : [k] \to [n]$ in $\D{}$ the following square commutes
\[\begin{tikzcd}
	{N(\LP)_n} & {\displaystyle \coprod_{\alpha \in \nd_n} \operad(T_\alpha, \CP)} \\
	{N(\LP)_k} & {\displaystyle \coprod_{\beta \in \nd_k} \operad(T_\beta, \CP)}
	\arrow["{\theta^*}"', from=1-1, to=2-1]
	\arrow["\cong", from=1-1, to=1-2]
	\arrow["\cong"', from=2-1, to=2-2]
	\arrow["{\coprod \lambda_\theta^*}", from=1-2, to=2-2]
\end{tikzcd}\]
\end{proof}

\begin{definition}[Rooted simplex]
    A simplex $\alpha \in \nd_n$ is called rooted if $A_n \cong [0]$. We denote by $\nd_n^{root}$ the set of rooted $n$-simplices in $\nd$.
\end{definition}

The multigraphs $M_\alpha$ are key in understanding the simplicial aspects of operad theory, since we can perform simplicial operations on simplices of $\dplus$. For a rooted simplex $\alpha$, the multigraph $M_\alpha$ encodes precisely the data for a rooted tree. While in general, $M_\alpha$ records a forest consisting of a sequence of rooted trees. This sequencing is precisely what generates listings in our approach.

\begin{construction} [Root restriction] \label{const : rooted restriction}

Let $\alpha \in \nd_n$ and $a \in A_n$. The rooted restriction of $\alpha$ at $a$ is defined to be the simplex $\alpha_a \in \nd_n^{root}$ given by pulling back the chain of morphisms in $\alpha$ along the inclusion $\{a \} \subseteq A_n$ as indicated in the diagram
\[\begin{tikzcd}
	{\alpha_a:} &[-2em] {A_{0,a}} & \dots & {A_{n-1,a}} & {\{a \}} \\
	{\alpha :} &[-2em] {A_0} & \dots & {A_{n-1}} & {A_{n}}
	\arrow["{\alpha_{n}}", from=2-4, to=2-5]
	\arrow[from=1-2, to=1-3]
	\arrow["{\alpha_1}", from=2-2, to=2-3]
	\arrow["{\alpha_{n-1}}", from=2-3, to=2-4]
	\arrow[from=1-3, to=1-4]
	\arrow[hook', from=1-5, to=2-5]
	\arrow[hook', from=1-4, to=2-4]
	\arrow[hook', from=1-2, to=2-2]
	\arrow[from=1-4, to=1-5]
\end{tikzcd}\]
    
\end{construction}

\begin{lemma}[Rooted decomposition] \label{lemma : rooted decomposition}
    For all $[n] \in \D{}$, level-wise coproduct of ordered sets exhibits $\nd_n$ as a free monoid generated by the set $\nd_n^{root}$, i.e.
    $$\nd_n \cong \CL \nd_n^{root}$$
\end{lemma}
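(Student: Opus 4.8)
The plan is to exhibit an explicit, mutually inverse pair of functions between $\nd_n$ and $\CL \nd_n^{root}$ and to check that these are natural enough to be identified with the unit map of the free monoid. Fix $[n] \in \D{}$. In one direction, I send a simplex $\alpha \in \nd_n$, depicted as a chain $A_0 \to \dots \to A_n$ of finite ordered sets, to the sequence of its rooted restrictions: writing $A_n = \{a_1 < \dots < a_m\}$ as an ordered set, I take $\alpha \mapsto (\alpha_{a_1}, \dots, \alpha_{a_m}) \in \CL \nd_n^{root}$, where each $\alpha_{a_j}$ is the rooted restriction of Construction \ref{const : rooted restriction}. Each $\alpha_{a_j}$ is rooted because its top set is $\{a_j\} \cong [0]$, so this is well-defined. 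In the other direction, given a list $(\beta^{(1)}, \dots, \beta^{(m)})$ of rooted $n$-simplices, I form the chain whose $i$-th set is the ordered coproduct $B^{(1)}_i \amalg \dots \amalg B^{(m)}_i$ and whose maps are the coproducts of the structure maps of the $\beta^{(j)}$; this is precisely the iterated ordinal-sum operation $\amalg$ on $\nd_n$ (the monoid structure noted just before Lemma \ref{lemma : coprod}), applied to the list, and it lands in $\nd_n$.

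The key steps, in order: (1) verify that rooted restriction at $a$ followed by ordinal sum over all $a \in A_n$ recovers $\alpha$ — this is because pulling back the chain $\alpha$ along each singleton inclusion $\{a\} \hookrightarrow A_n$ and then taking the coproduct of the resulting chains reassembles the original chain, since $A_i = \coprod_{a \in A_n}\alpha_{i,n}^{-1}(a)$ as ordered sets and the maps $\alpha_i$ respect this decomposition; (2) verify the reverse composite is the identity — a rooted simplex $\beta$ has top set a singleton, so the ordinal sum of a list of rooted simplices has top set of cardinality $m$, and restricting at the $j$-th element returns exactly $\beta^{(j)}$, since that element's fibre under the composite map in dimension $i$ is exactly $B^{(j)}_i$; (3) observe that under the inclusion $\nd_n^{root} \subseteq \nd_n$ the ordinal-sum monoid structure makes $\nd_n$ into a free monoid on $\nd_n^{root}$: every element decomposes uniquely as a finite sum of rooted simplices (the rooted ones are exactly the indecomposables, as $\alpha$ is a nontrivial sum iff $A_n$ has at least two elements, and the empty sum is the simplex with $A_n = \emptyset$), which is precisely the universal property of $\CL \nd_n^{root}$.

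I expect the main obstacle to be purely bookkeeping: making the identification $A_i \cong \coprod_{a \in A_n} \alpha_{i,n}^{-1}(a)$ of \emph{ordered} sets precise and checking it is compatible with all the structure maps $\alpha_i$ simultaneously, so that the two constructions are genuinely inverse as operations on chains (not merely on their top sets). Once that compatibility is nailed down, steps (1) and (2) are immediate, and step (3) is the standard characterization of a free monoid by unique factorization into indecomposables. No naturality in $[n]$ is claimed in the statement, so I would not need to track simplicial functoriality here — although it is worth remarking in passing that the isomorphism is natural, since $\nd_\bullet \cong \CL \assoc$ as noted in the associative-operad example and this lemma is just the levelwise shadow of that fact.
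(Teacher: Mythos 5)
Your proposal is correct and follows essentially the same route as the paper: the paper's proof simply writes $\alpha = \coprod_{a \in A_n} \alpha_a$ and notes that this representation is unique because $A_n$ is ordered, which is exactly your rooted-restriction/ordinal-sum pair with the verification details spelled out. The extra care you take about the ordered identification $A_i \cong \coprod_{a \in A_n} \alpha_{i,n}^{-1}(a)$ (which holds since the $\alpha_{i,n}$ are order-preserving, so fibres are consecutive intervals) is exactly the bookkeeping the paper leaves implicit.
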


\begin{proof}
    For each $\alpha \in \nd_n$ we have
    $$\alpha = \coprod_{a \in A_n} \alpha_a$$
    This representation is unique, given that $A_n$ is ordered.
\end{proof}

\begin{example}[$\nd^{root}$ as a simplicial list]

The nerve $\nd$ is a simplicial set by construction, but restricting to sets of rooted simplices breaks the simplicial structure since some simplicial operations may cut the root of the trees these simplices encode. However, we can assemble the sets of rooted simplices $\nd^{root}_n$, $[n] \in \D{}$, in a simplicial list
\[
\begin{tikzcd}
    \nd^{root} :&[-2em] \D{op} \arrow[r] & \llist
\end{tikzcd}
\]
The action of a morphism $\theta : [k] \to [n]$ in $\D{}$ on a rooted simplex $\alpha \in \nd_n^{root}$ is given by the rooted decomposition of the simplex $\theta^* \alpha \in \nd_k$, i.e.
$$\theta^* (\alpha) = ((\theta^* \alpha)_a)_{a \in A_{\theta((k)}}$$
In fact, this is an operadic simplicial list since, if $\theta(k) = n$, we will have $A_{\theta(k)} = A_n \cong [0]$ and the output sequence on the right will have a single element.
\end{example}

\begin{construction}[List nerves of operads]

For an operad $\CP$ we construct the \emph{list nerve} $N^l \CP \in \slist$ with set of $n$-simplices given by
$$N^l(\CP)_n = \coprod_{\alpha \in \nd_n^{root}} \operad(T_\alpha, \CP)$$
for all $[n] \in \D{}$. 

Let $\theta : [k] \to [n]$ be a morphism in $\D{}$, $\alpha \in \nd_n^{root}$ and $x : T_\alpha \to \CP$ be an $n$-simplex in the list nerve of $\CP$. The simplex $\theta^*\alpha \in \nd_k$ has a rooted decomposition  $\theta^* \alpha = \displaystyle \coprod_{a \in A_{\theta(k)}} (\theta^* \alpha)_a$ (Lemma \ref{lemma : rooted decomposition}), which induces an isomorphism $$T_\alpha \cong \displaystyle \coprod_{a \in A_{\theta(k)}} T_{(\theta^* \alpha)_a} $$ (Lemma \ref{lemma : coprod}). For $a \in A_{\theta(k)}$, let $(\theta, a)^* x \in N(\CP)_k$ be the $k$-simplex represented by the composite 
\[\begin{tikzcd}
	{(\theta, a)^*x} : &[-2em] {T_{(\theta^* \alpha)_a}} \arrow[r, "\subseteq"] & {\displaystyle \coprod_{a \in A_{\theta(k)}} T_{(\theta^* \alpha)_a} \cong T_{\theta^*\alpha}} \arrow[r, "{\lambda_\theta}"] & {T_\alpha} \arrow[r, "x"] & \CP
\end{tikzcd}\]
Finally, we define the list $\theta^* x \in \CL N^l(\CP)_k$ as
$$\theta^* x = ((\theta, a)^* x)_{a \in A_{\theta(k)}}$$
The simplicial identities are evident. Also, the simplicial list $N^l(\CP)$ is clearly operadic. 

The nerve construction extends to a fully-faithful functor
\[
\begin{tikzcd}
    N^l :&[-3em] \operad \arrow[r] & \slist
\end{tikzcd}
\]
This is easy to check, but we note that fully-faithfulness is due to the fact that morphisms between simplicial lists are defined to have functional components. 

\end{construction}

\begin{example}
    We have an isomorphism of simplicial lists
    $$N^l \assoc \cong \nd^{root}$$
    Indeed, for each $\alpha \in \nd^{root}$ there is a unique $\alpha$-shaped simplex $T_\alpha \to \assoc$. Therefore we have $(N^l \assoc)_n \cong \nd_n^{root}$.
\end{example}

\begin{proposition} \label{prop : L commutes N}
    The following squares commute (up to isomorphism)
    \[\begin{tikzcd}
	\cat & \operad \\
	\sset & \slist
	\arrow["\subseteq", from=1-1, to=1-2]
	\arrow["\subseteq"', from=2-1, to=2-2]
	\arrow["N"', from=1-1, to=2-1]
	\arrow["N^l", from=1-2, to=2-2]
\end{tikzcd} \ \  , \ \ 
\begin{tikzcd}
	\operad & \cat \\
	\slist & \sset
	\arrow["N^l"', from=1-1, to=2-1]
	\arrow["N", from=1-2, to=2-2]
	\arrow["\CL", from=1-1, to=1-2]
	\arrow["\CL"', from=2-1, to=2-2]
\end{tikzcd}
\]
\end{proposition}

\begin{proof}
    The square on the left says that the nerve construction for operads extends the ordinary nerve construction for categories. This is clear, since, for a category $\C$ and $\alpha \in \nd^{root}$, there is a morphism $T_\alpha \to \C$ if and only if $A_i \cong [0]$ for all $i$. The latter implies that $T_\alpha \cong \D{n}$ (regarding $\D{n}$ as a category and subsequently as an operad with unary operations only). Hence, the operadic nerve of $\C$ gives back the categorical nerve. 

    The square on the right says that the functor $\CL : \slist \to \sset$ extends the monoidal envelope functor (in particular, this justifies notation). Proposition \ref{prop : simplex classification} and Lemma \ref{lemma : rooted decomposition} provides us respectively the isomorphisms $\nd_n \cong \CL(\nd_n^{root})$ and $N(\LP)_n \cong \amalg_{\alpha \in \nd_n} \operad(T_\alpha, \CP)$. From this we deduce the sequence of equalities
    \begin{align*}
        \CL (N^l\CP)_n &= \CL (N^l(\CP)_n) \\
        &= \CL \left( \coprod_{\alpha \in \nd_n^{root}} \operad(T_\alpha, \CP) \right) \\
        &\cong \coprod_{(\alpha_1, \dots , \alpha_k) \in \CL(\nd_n^{root})} \operad(\amalg_i T_{\alpha_i} , \CP) \\
        &\cong \coprod_{\alpha \in \nd_n} \operad(T_\alpha, \CP) \\
        &\cong N(\LP)_n
    \end{align*}
    where the isomorphism of the last line is natural in $\CP$ and compatible with
simplicial identities by the naturality in $[n]$, hence, the commutation of the
square up to natural isomorphism.
\end{proof}

\begin{theorem}[Nerve Theorem]

A simplicial list $X$ is (isomorphic to the list nerve of) an operad if and only if $X$ is operadic and $\CL X$ is (isomorphic to the nerve of) a category.
    
\end{theorem}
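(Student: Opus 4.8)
The plan is as follows. The forward implication I would dispatch immediately: if $X = N^l(\CP)$ then $X$ is operadic by construction of the list nerve, and the commutativity of the right-hand square in the preceding lemma gives $\CL X \cong N(\LP)$, the nerve of the underlying category of the monoidal envelope. So the substance is the converse. Assume $X$ is operadic and fix an isomorphism $\CL X \cong N(\C)$. Since $\CL X$ is a monoid object in $\sset$ under concatenation of lists of simplices, and $N : \cat \to \sset$ is fully faithful and preserves finite products, I would transport this structure and take $\C$ to be a \emph{strict monoidal} category with $\CL X \cong N(\C)$ an isomorphism of monoid objects. The key structural observation to record is that $(\CL X)_n = \CL X_n$ is the free monoid on the set $X_n$, compatibly with all faces and degeneracies; hence the tensor product of $\C$, on objects and on morphisms, is just concatenation of lists, and every structure map of $\CL X$ is a monoid homomorphism, so is determined by its restriction to the generators $X_n$.

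Next I would reconstruct an operad $\CP$ from $\C$. Set $\Col(\CP) = X_0$ and $\Op(\CP) = X_1$, viewing $f \in X_1$ as an operation $f : d_1 f \to d_0 f$; this is legitimate \emph{precisely because $X$ is operadic}, so that $d_0 : X_1 \to X_0$ is a function (single-color target) while $d_1 : X_1 \to \CL X_0$ is a listing (list-of-colors domain). Take $1_a := s_0 a$; the simplicial identities give $d_0 s_0 a = a$ and $d_1 s_0 a = (a)$. To compose $g : \ub \to c$, $\ub = (b_1, \dots, b_k)$, with operations $f_i : \ua^{(i)} \to b_i$, note that the list $(f_1, \dots, f_k) \in \CL X_1$ is a morphism of $\C$ whose codomain is $\ub$, the domain of $g$, so $g \circ_\C (f_1, \dots, f_k)$ is defined in $\C$; its codomain is the length-one list $(c)$, and since the target map $\mor(\C) \to \Ob(\C)$ is a monoid homomorphism, a morphism of $\C$ with length-one codomain is itself a length-one list, hence lies in $X_1$. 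Declare this composite to be $g \circ (f_1, \dots, f_k)$. Associativity and unitality then follow from those of $\C$ and bifunctoriality of $\otimes$, using $\mathrm{id}_{\ub} = (s_0 b_1, \dots, s_0 b_k) = (1_{b_1}, \dots, 1_{b_k})$. A routine computation with the interchange law $(k \otimes k') \circ (H \otimes H') = (k \circ H) \otimes (k' \circ H')$ then identifies $\LP$ with $\C$ as monoidal categories, on the nose on objects and morphisms.

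Finally, combining $\CL X \cong N(\C)$, $N(\C) \cong N(\LP)$, and $N(\LP) \cong \CL N^l(\CP)$ (the preceding lemma once more) yields an isomorphism of monoid objects $\CL X \cong \CL N^l(\CP)$ in $\sset$. Level-wise this is an isomorphism of free monoids $\CL X_n \cong \CL(N^l(\CP)_n)$, hence restricts to a bijection of their canonical generating sets, which are exactly $X_n$ and $N^l(\CP)_n$. Since for any simplicial list the listing $\theta^* : X_n \to X_k$ is the restriction to length-one lists of the monoid homomorphism $\CL(\theta^*) : \CL X_n \to \CL X_k$, these bijections automatically commute with all $\theta^*$ and assemble into an isomorphism $X \cong N^l(\CP)$ in $\slist$.

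I expect the last descent step to be the main obstacle: one recovers $X$ from $\CL X$ as the level-wise sets of non-unit indecomposables with the listing structure induced from the free-monoid structure, and making this work cleanly is exactly what forces $X$ to be operadic and requires keeping every isomorphism in sight an isomorphism of monoid objects so that it respects the decomposition of lists. The reconstruction of $\CP$ together with the identification $\LP \cong \C$ — essentially Leinster's characterization of monoidal envelopes rendered in the present language — is the other point demanding care, though there the verifications become routine once the level-wise freeness of $\CL X$ is exploited.
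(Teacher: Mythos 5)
Your proof is correct and the core of it — reconstructing the operad with colors $X_0$, operations $X_1$, identities from degeneracies, and composition borrowed from the category $\CL X$, with operadicity forcing composites to be singletons — is exactly the paper's argument. Where you diverge is in the final identification $X \cong N^l(\CP)$. The paper does this directly: a simplex $x \in X_n$ is determined by its spine, which yields a rooted $\alpha \in \nd_n$ and a morphism $T_\alpha \to \CP_X$ (rootedness coming from operadicity of $X$), so $X_n$ is identified with $\coprod_{\alpha} \operad(T_\alpha, \CP_X)$ simplex by simplex. You instead work one level up: you keep every isomorphism a morphism of monoid objects, identify $\LP$ with the strict monoidal category $\C$ via the interchange law, obtain $\CL X \cong \CL N^l(\CP)$ as monoid objects, and descend by observing that a level-wise isomorphism of free monoids restricts to a bijection of the canonical generators, which automatically intertwines the listings $\theta^*$ since these are restrictions of the free extensions $\CL(\theta^*)$. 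This buys you a clean, structural descent that avoids re-analyzing simplices via spines, at the cost of having to verify that the isomorphism $N(\LP) \cong \CL N^l(\CP)$ from the preceding lemma really is a monoid isomorphism (it is — rooted decomposition respects concatenation — but the lemma does not state this, so you should say it). One small point to make explicit: your claim that ``a morphism of $\C$ with length-one codomain is itself a length-one list'' does not follow from the target map being a monoid homomorphism alone (a generator could a priori map to the empty list); it follows because operadicity makes $d_0 : X_1 \to X_0$ a genuine function, so $d_0$ sends generators to generators and hence preserves length — which is precisely the computation the paper performs with $d_0\underline{h} = d_0 g = c$.
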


\begin{proof}
    One direction is clear. For an operad $\CP$, the nerve $N^l \CP$ is operadic by construction and we have $\CL N^l(\CP) \cong N(\LP)$ by the previous proposition.

    For the converse, let $X \in \slisto$ such that $\CL X \cong N(\C)$ for some category $\C$. We may define an operad $\CP_X$ as follows:
    \begin{itemize}
        \item [-] The set of colors is given by $X_0$.
        \item[-] The set of operations is given by $X_1$.
        \item[-] Identity operations are given by degeneracies. This is well-defined since $s_0 : X_0 \to X_1$ is a function by $X$ being operadic.
        \item[-] Composition is given by the composition operation in $\C$, coupled with the fact that $X$ is operadic.
    \end{itemize}
    To formalize the last item somewhat, given a $1$-simplex $g : \ub \to c$ and a sequence of $1$-simplices $\underline{f} = (f_i : \ua_i \to b_i)_i \in \CL X_1$ which is composable with $g$, their composite exists in $\C$ (under the identification $\lx \cong \C$). A priory this only gives us back a sequence $g \circ \underline{f} = \underline{h} \in \CL X_1$. This sequence is forced to be a singleton (and hence an element of $X_1$) by the fact that $X$ is operadic. Indeed, we do have that $d_0 \underline{h} = d_0 g$ by the simplicial identities and since $d_0 g = c$, we conclude that $\underline{h} = h \in X_1$. In similar fashion, we may argue that associativity and unitality of composition is inherited by $\C$. 

    It is also easy to check that $X \cong N^l(\CP_X)$. An $n$-simplex $x \in X_n$, when regarded as an element $(x) \in N^l(\CL X)_n$, is uniquely determined by its spine. The latter determines and is determined by a choice of $\alpha \in \nd_n$ and a morphism $M_\alpha \to U\CP_X$. Note that since $X$ is operadic, the last vertex of any simplex $x \in X$ is a singleton sequence in $X_0$. This forces the corresponding $\alpha$ to be rooted.  
\end{proof}

\begin{remark}
    The fact that $\CL X$ is a category can be transposed as a strict inner horn filling condition in $\widetilde{\slist}$, i.e. if $\Lambda_i^n \subseteq \D{n}$ is a horn inclusion, $0 < i < n$, then any listing $\Lambda_i^n \xslashedrightarrow{} X$ extends uniquely to a listing $\D{n} \xslashedrightarrow{} X$.
    \[\begin{tikzcd}
	{\Lambda_i^n} & X \\
	{\D{n}}
	\arrow["\subseteq"', from=1-1, to=2-1]
	\arrow["\shortmid"{marking}, from=1-1, to=1-2]
	\arrow["{\exists ! \  \text{listing}}"', dashed, from=2-1, to=1-2]
\end{tikzcd}\]
    The condition of $X$ being operadic ensures that composition of operations does not behave erratically, but provides well-defined operations (rather than sequences) where it is reasonable to expect so. We obtain our model for $\infty$-operads by requiring the inner horn filling condition to be weak, i.e. by removing the uniqueness condition. 
\end{remark}

\begin{definition}[Quasi-operad]

A  \emph{quasi-operad} is an operadic simplicial list $P$ such that the simplicial set $\CL P$ is an quasi-category.
    
\end{definition}

\begin{example}
    For an operad $\CP$, the simplicial list $N^l(\CP)$ is a quasi-operad.
\end{example}

\begin{note}
    As trivial as it is from the nerve theorem, the above example is important because it tells us that our model for $\infty$-operads captures the theory
of $1$-operads. In Section \ref{subsection : coherent nerve} we provide another class of examples coming from operads enriched in spaces. 

Even without this promise, we can see how regarding quasi-operads as $\infty$-operads is justified by the well-established quasi-categorical model for $\infty$-categories. An quasi-operad $P$ will encode the following:
\begin{itemize}
    \item [-] A set of colors given by $P_0$.
    \item[-] A set of operations given by the set $P_1$, which have a sequence of colors as source and a single color as target.
    \item[-] A composition operation which is well-defined, unital and associative up to coherent homotopy recorded in the $\infty$-category $\CL P$, which is well-behaved by virtue of $\CP$ being operadic as a simplicial list.   
\end{itemize}

A full justification for quasi-operads to be a model for $\infty$-operads would be to construct a model structure on $\slisto$ with fibrant objects being quasi-operads and which is Quillen equivalent to the model structure on simplicial operads. We do not pursue this line in this paper, but intend to do so in a sequel paper. 
\end{note}

\begin{note}[Compositional networks]

As mentioned in the introduction, the nerve theorem for operads leads us to consider a more general type of structure which we named \emph{compositional network} in Definition \ref{def: conet}. These are (not neccesarily operadic) simplicial lists $X$ such that $\lx$ is (the nerve of) a category. 

We could read the nerve theorem for operads to say that operads are operadic compositional networks. Similarly, one can easily imagine that properads may be identified with properadic compositional networks. At this point one can only wonder whether there are algebraic structures of interest which can be modeled by more general compositional networks. The terminology is inspired by current developments in applied category theory, where networks of composable operations (mostly expressed in the language of monoidal categories) play a prominent role (see \cite{fong2018seven} for many examples).
    
\end{note}

\section{The structure of simplicial lists} \label{sec : structure}

\subsection{The perfect-function factorization}

\begin{definition}
    A listing $u : A \xslashedrightarrow{} X$ is said to be \emph{perfect} if
    \begin{itemize}
        \item [-] Each element $x \in X$ appears as a term in $u(a)$ for a unique $a \in A$.
        \item[-] For each $a \in A$, the sequence $u(a)$ has as terms distinct elements of $X$. 
    \end{itemize}
    We say that a listing $u : A \xslashedrightarrow{} X$ between simplicial lists is \emph{perfect} if it is so pointwise, i.e. if the listing $u_n : A_n \xslashedrightarrow{} X_n$ is perfect for all $n \geq 0$.
\end{definition}

\begin{example}
    We depict a few examples of listings $\{ a , b \} \xslashedrightarrow{} \{ x, y, z, w \}$
\[\begin{tikzcd}[row sep=small]
	& x \\
	a & y \\
	b & z \\
	& w \\
	{} & {}
	\arrow[from=2-1, to=1-2]
	\arrow[from=2-1, to=2-2]
	\arrow[from=3-1, to=4-2]
	\arrow[from=2-1, to=3-2]
	\arrow["{\text{perfect}}", draw=none, from=5-1, to=5-2]
\end{tikzcd}  \ \ , \ \ 
\begin{tikzcd}[row sep=small]
	& x \\
	a & y \\
	b & z \\
	& w \\
	{} & {}
	\arrow[from=2-1, to=1-2]
	\arrow[from=2-1, to=2-2]
	\arrow[from=2-1, to=3-2]
	\arrow["{\text{perfect}}", draw=none, from=5-1, to=5-2]
	\arrow[from=2-1, to=4-2]
\end{tikzcd} \ \  , \ \  
\begin{tikzcd}[row sep=small]
	& x \\
	a & y \\
	b & z \\
	& w \\
	{} & {}
	\arrow[from=2-1, to=1-2]
	\arrow[from=2-1, to=2-2]
	\arrow[from=2-1, to=3-2]
	\arrow["{\text{not perfect}}", draw=none, from=5-1, to=5-2]
	\arrow[from=3-1, to=3-2]
	\arrow[from=3-1, to=4-2]
\end{tikzcd} \ \ , \ \ 
\begin{tikzcd}[row sep=small]
	& x \\
	a & y \\
	b & z \\
	& w \\
	{} & {}
	\arrow[from=2-1, to=1-2]
	\arrow[from=2-1, to=2-2]
	\arrow["{\text{not perfect}}", draw=none, from=5-1, to=5-2]
	\arrow[from=3-1, to=3-2]
	\arrow[from=3-1, to=4-2]
	\arrow[curve={height=12pt}, from=2-1, to=2-2]
\end{tikzcd}
\]
\end{example}

\begin{example}
    A function is perfect if and only if it is an isomorphism.
\end{example}

\begin{proposition} \label{prop : factorisation}
    Every listing $u : A \xslashedrightarrow{} X$ may be factored as a perfect listing $\title{u} : A \xslashedrightarrow{} U$ followed by a function $f_u : U \to X$
    \[\begin{tikzcd}
	A && X \\
	& U
	\arrow["u", "\shortmid"{marking}, from=1-1, to=1-3]
	\arrow["{\tilde{u}}"', "\shortmid"{marking}, from=1-1, to=2-2]
	\arrow["{f_u}"', from=2-2, to=1-3]
\end{tikzcd}\]
Moreover, the choice of $U$ is unique (up to isomorphism).
\end{proposition}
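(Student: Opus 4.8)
The plan is to construct $U$ as the "set of arrows" of the span associated to $u$, exactly as described in the early remark on listings. Concretely, for a listing $u : A \xslashedrightarrow{} X$ of sets (i.e.\ a function $u : A \to \CL X$), I would set
\[
U = \{ (a, i) : a \in A,\ 1 \le i \le \ell(a) \},
\]
where $\ell(a)$ denotes the length of the sequence $u(a)$. Then I define $\tilde{u} : A \xslashedrightarrow{} U$ by $\tilde{u}(a) = ((a,1), (a,2), \dots, (a, \ell(a)))$, and the function $f_u : U \to X$ by $f_u(a, i) = $ the $i$-th term of $u(a)$. The composite $f_u \circ \tilde u$ applied to $a$ concatenates the singleton lists $f_u(a,1) * \dots * f_u(a, \ell(a)) = u(a)$, which is $u(a)$, so the triangle commutes.

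\textbf{Perfectness.} The two defining conditions are immediate from the construction: each element $(a,i) \in U$ appears as a term in $\tilde u(a')$ only for $a' = a$ (and there exactly once), so it appears for a unique element of $A$; and for fixed $a$ the terms $(a,1), \dots, (a,\ell(a))$ of $\tilde u(a)$ are pairwise distinct since their second coordinates differ. Hence $\tilde u$ is perfect.

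\textbf{The simplicial case.} For a listing $u : A \xslashedrightarrow{} X$ between simplicial lists, I would apply the above construction levelwise, setting $U_n$ to be the above set built from $u_n : A_n \xslashedrightarrow{} X_n$. The point requiring a small check is that $U$ assembles into a simplicial list (a functor $\D{op} \to \llist$) with $\tilde u$ natural as a listing and $f_u$ natural as a function. Given $\theta : [n] \to [m]$, the listing $\theta^* : X_m \xslashedrightarrow{} X_n$ together with naturality of $u$ forces the action of $\theta$ on $U_m$: an element $(a, i) \in U_m$ has $f_u(a,i) = x$ the $i$-th term of $u_m(a)$, and one transports it using how $\theta^* : A_m \xslashedrightarrow{} A_n$ acts on $a$ together with the bookkeeping of which terms of the $u_n$-values the terms of $\theta^* a$ produce. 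More cleanly: the span picture gives $U = \{(a,x) \in A \times X : x \text{ is a term of } u(a)\}$ internally, and one simply takes the evident simplicial structure making both projections-with-order natural; the source map $s$ is the listing $\tilde u$ and $t = f_u$. Then perfectness at each level is what was already verified.

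\textbf{Main obstacle.} The only genuinely fiddly point is the naturality of the simplicial structure on $U$: one must confirm that the face and degeneracy listings on $A$ and the listings $\theta^*$ on $X$ are compatible in the way needed for $\theta^* : U_m \xslashedrightarrow{} U_n$ to be well-defined and to make both $\tilde u$ and $f_u$ simplicial maps (of the appropriate flavour — $\tilde u$ a listing, $f_u$ a function). This is a diagram chase using the naturality square for $u$ and the concatenation-composition law in $\llist$; it is routine but is the place where an error could hide, so I would write it out carefully rather than declaring it "evident."
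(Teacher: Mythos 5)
Your construction of $U=\{(a,i): a\in A,\ 1\le i\le |u(a)|\}$ with $\tilde u(a)=((a,1),\dots,(a,|u(a)|))$ and $f_u(a,i)=u(a)_i$ is exactly the paper's proof, and your verifications of commutativity and perfectness are correct. The simplicial-level concerns you raise in the second half are not part of this proposition (which is a statement about listings of sets); the paper defers that to a separate functoriality proposition and corollary, so you have in effect proved the right statement and then correctly flagged where the real work of the next step lies.
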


\begin{proof}
    Let $u : A \xslashedrightarrow{} X$ be a listing. Define the set
    $$U = \left\{ (a, i)  :  a \in A  ,  1 \leq i \leq |u(a)| \right\}$$
    where $| u(a) |$ indicates the length of the sequence $u(a)$ for $a \in A$. There is a listing $\Tilde{u} : A \xslashedrightarrow{} U$, $a \mapsto ((a,1), \dots , (a, |u(a)|))$ which is perfect and a function $f_u : U \to X$, $(a,i) \mapsto u(a)_i$ where $u(a)_i$ is the $i$-th term of the sequence $u(a)$. This provides the perfect-function factorization.

    In case of another perfect-function factorization of $u$, say 
    \[\begin{tikzcd}
	A && X \\
	& {U^\prime}
	\arrow["u", "\shortmid"{marking}, from=1-1, to=1-3]
	\arrow["{\tilde{u}^\prime}"', "\shortmid"{marking}, from=1-1, to=2-2]
	\arrow["{f_u^\prime}"', from=2-2, to=1-3]
\end{tikzcd}\]
notice that we must have $|\tilde{u}^\prime(a)| = |u(a)| =  |\tilde{u}(a)|$ for all $a \in A$. Thus,  there is a well-defined map $U \to U^\prime$, $(a,i) \mapsto \tilde{u}^\prime(a)_i$, which is a bijection since $\tilde{u}^\prime$ is a perfect listing.
\end{proof}

    For the rest of the paper, we refer to the above factorization as \emph{the perfect-function} factorization of a listing.

\begin{note}
    A perfect listing $A \xslashedrightarrow{} X$ can be identified with a function $X \to A$ whose fibres are ordered. The above lemma is trivial from this angle, since every listing can be presented as a span $A \leftarrow U \to X$ where the fibres of $U \to A$ are ordered. 
\end{note}

\begin{proposition}
The perfect-function factorization is functorial, in the sense that it extends into a functor
\[
\begin{tikzcd}
    \spa^{\D{1}} \arrow[r] & \spa^{\D{1}} \times_\spa  \widetilde{\set^{\D{1}}} \\ [-2em]
    A \xslashedrightarrow{u} X \arrow[r, maps to] & (A \xslashedrightarrow{\tilde{u}} U, U \xrightarrow{f_u} X)
\end{tikzcd}
\]
where $\widetilde{\set^{\D{1}}}$ is the subcategory of $\spa^{\D{1}}$ consisting of functions. 
\end{proposition}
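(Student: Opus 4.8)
The plan is to show that (perfect listings, functions) is an orthogonal factorisation system on $\spa$, and then to invoke the standard fact that an orthogonal factorisation system induces a functorial factorisation. The resulting functor is exactly the one claimed, because the second leg $f_u$ produced by the perfect-function factorisation is always a function and $\widetilde{\set^{\D{1}}}$ is the full subcategory of $\spa^{\D{1}}$ on those arrows which are functions.

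First I would dispatch the easy closure properties. The isomorphisms of $\spa$ are precisely the bijections: a listing with a two-sided inverse cannot send any element to a list of length $\geq 2$, since an extra term would be annihilated by the inverse and force a length mismatch, nor to the empty list; hence it is a function, and then a bijection. A bijection, viewed as a listing with singleton values, is both perfect and a function, so both classes contain all isomorphisms. Functions are evidently closed under composition. Perfect listings are closed under composition: given perfect $u : A \xslashedrightarrow{} X$ and $v : X \xslashedrightarrow{} Y$, in $(vu)(a) = v(u(a)_1) * \dots * v(u(a)_k)$ each $y \in Y$ lies in a unique block $v(x)$, hence in $(vu)(a)$ exactly for the unique $a$ having $x$ among the terms of $u(a)$; and the terms of $(vu)(a)$ are distinct since the $u(a)_i$ are distinct and no $y$ can occur in two blocks $v(u(a)_i), v(u(a)_j)$ with $i \neq j$.

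The heart of the argument is the orthogonality, with perfect listings on the left. Consider a commuting square with left leg a perfect listing $p : A \xslashedrightarrow{} Y$, right leg a function $f : C \to Z$, top $g : A \xslashedrightarrow{} C$ and bottom $h : Y \xslashedrightarrow{} Z$, so $f g = h p$. Since $p$ is perfect, every $b \in Y$ is the $i$-th term $p(a)_i$ of a unique list $p(a)$, for a unique $i$. As $f$ is a function, $f(g(a))$ equals $g(a)$ with each term replaced by its $f$-image; combined with $f g = hp$ this gives $f(g(a)) = h(p(a)) = h(p(a)_1) * \dots * h(p(a)_{|p(a)|})$, so in particular $|g(a)| = \sum_j |h(p(a)_j)|$. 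Hence $g(a)$ is cut uniquely into consecutive blocks $g(a) = w_1^{(a)} * \dots * w_{|p(a)|}^{(a)}$ with $|w_j^{(a)}| = |h(p(a)_j)|$. I would then define the diagonal by $\ell(p(a)_i) := w_i^{(a)}$; this is well defined by the uniqueness of $(a,i)$. One checks $(\ell p)(a) = w_1^{(a)} * \dots * w_{|p(a)|}^{(a)} = g(a)$ and that $f(\ell(p(a)_i)) = f(w_i^{(a)})$ is the $i$-th block of $f(g(a)) = h(p(a)_1) * \dots$, namely $h(p(a)_i)$; thus both triangles commute. For uniqueness, any filler $\ell'$ has $|\ell'(b)| = |h(b)|$ and $\ell'(p(a)_1) * \dots * \ell'(p(a)_{|p(a)|}) = g(a)$, so the $\ell'(p(a)_j)$ realise the same block decomposition of $g(a)$, forcing $\ell' = \ell$.

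With the orthogonal factorisation system established, the induced functorial factorisation acts on a morphism $u \to u'$ of $\spa^{\D{1}}$ (a square $u' a = x u$, with $a$ on domains, $x$ on codomains) by taking the unique diagonal filler of the square whose left leg is $\tilde u$, right leg is $f_{u'}$, top is $\tilde{u'} a$ and bottom is $x f_u$ (this square commutes because $f_{u'} \tilde{u'} a = u'a = xu = x f_u \tilde u$). This filler is a listing $U \xslashedrightarrow{} U'$ between the middle objects, the two triangles are exactly the two squares needed for a morphism in $\spa^{\D{1}} \times_\spa \widetilde{\set^{\D{1}}}$, and preservation of identities and composites follows from uniqueness of diagonal fillers in the usual way. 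I expect the orthogonality verification — really the bookkeeping of slicing a list into consecutive blocks of prescribed lengths, and seeing that perfectness of the left leg is exactly what makes this assignment well defined and unique — to be the only genuine obstacle; everything else is purely formal, following general facts about orthogonal factorisation systems.
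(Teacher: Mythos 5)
Your proof is correct, but it takes a genuinely different route from the paper's. The paper argues by writing down the comparison listing $r : U \xslashedrightarrow{} V$ between the middle objects of the two factorisations explicitly: for $(a,i) \in U$ it sets $r(a,i)$ to be the lexicographically ordered set of pairs $(b,j)$ with $b$ a term of $p(a)$ and $v(b)_j$ a term of $q(u(a)_i)$, and then checks commutativity with the solid arrows, leaving the uniqueness of $r$ and the preservation of identities and composites to the reader. You instead package (perfect listings, functions) as an orthogonal factorisation system: closure under isomorphisms and composition, existence of the factorisation (the preceding proposition), and the unique diagonal filler obtained by slicing $g(a)$ into consecutive blocks of lengths $|h(p(a)_j)|$ — an assignment that is total and well defined precisely because the left leg is perfect, and whose uniqueness follows since a function on the right forces the block lengths. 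Your route derives functoriality, and the uniqueness of the induced map (hence of $U$ up to unique isomorphism, which the paper records as a corollary), formally from uniqueness of fillers, at the cost of the orthogonality verification; the paper's route is shorter but leaves more routine checking implicit. By your own uniqueness argument, your block-cutting filler and the paper's explicit $r$ necessarily coincide.
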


\begin{proof}
    We rephrase the statement of the proposition as follows. Given the solid arrow diagram of listings and functions 
\[\begin{tikzcd}[row sep=small]
	A && B \\
	& U && V \\
	X && Y
	\arrow["u"', "\shortmid"{marking}, from=1-1, to=3-1]
	\arrow["v"', "\shortmid"{marking}, from=1-3, to=3-3]
	\arrow["p", "\shortmid"{marking}, from=1-1, to=1-3]
	\arrow["q"', "\shortmid"{marking}, from=3-1, to=3-3]
	\arrow["{\tilde{u}}", "\shortmid"{marking}, from=1-1, to=2-2]
	\arrow["{f_u}", from=2-2, to=3-1]
	\arrow["{\tilde{v}}", "\shortmid"{marking}, from=1-3, to=2-4]
	\arrow["{f_v}", from=2-4, to=3-3]
	\arrow[curve={height=6pt}, dashed, from=2-2, to=2-4]
\end{tikzcd}\]
which depicts listings $u$ and $v$ and a morphism $(p,q) : u \rightarrow v$ in $\llist^{\D{1}}$ (i.e. a commutative square in $\llist$), there is a unique listing ( the dotted arrow) $r : U \xslashedrightarrow{} V$ which makes the diagram commute. 

Using notation from the proof of the previous proposition, for $(a, i) \in U$, we define
$$r(a,i) = \{ (b,j)  :  b \in p(a) \ \text{and} \ v(b)_j \in q(u(a)_i) \}$$
The above set, in lexicographic order, represents a sequence in $V$. It is not difficult to check that $r$ commutes with the solid arrows in the above diagram. 
\end{proof}

\begin{corollary}
    Every listing $u : A \xslashedrightarrow{} X$ between simplicial lists factors functorially as a perfect listing $\tilde{u} : A \xslashedrightarrow{} U$ followed by a functional morphism $f_u : U \to X$. In particular, the simplicial list $U$ is unique up to unique isomorphism.
\end{corollary}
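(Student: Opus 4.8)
The plan is to deduce the corollary from the set-level functoriality proposition just proved, by applying it in each simplicial degree and then exploiting the uniqueness assertion in that proposition to assemble the levelwise data into a genuine simplicial object and into natural transformations.

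First, for each $n \geq 0$ apply the set-level perfect--function factorisation to the listing $u_n : A_n \xslashedrightarrow{} X_n$, obtaining a set $U_n$, a perfect listing $\tilde u_n : A_n \xslashedrightarrow{} U_n$, and a function $(f_u)_n : U_n \to X_n$ with $(f_u)_n \circ \tilde u_n = u_n$. To equip $n \mapsto U_n$ with the structure of a simplicial list, fix a morphism $\theta : [m] \to [n]$ in $\D{}$. Because $u$ is a morphism of simplicial objects in $\spa$, the naturality square of $u$ at $\theta$ is a morphism $(\theta^*, \theta^*) : u_n \to u_m$ in $\spa^{\D{1}}$; feeding it to the functoriality proposition yields a unique listing $\theta^*_U : U_n \xslashedrightarrow{} U_m$ making the induced prism of listings and functions commute. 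We declare this to be the action of $\theta$ on $U$. That $\theta \mapsto \theta^*_U$ is a functor $\D{op} \to \spa$ is immediate from the uniqueness clause: $\mathrm{id}_{U_n}$ solves the defining prism of $(\mathrm{id}_{[n]})^*_U$, and for $[k]\xrightarrow{\psi}[m]\xrightarrow{\theta}[n]$ the pasted prism is solved both by $(\theta\psi)^*_U$ and by $\psi^*_U \theta^*_U$, so these coincide. By construction the naturality squares of $\tilde u$ and of $f_u$ are the two remaining faces of these prisms, whence $\tilde u : A \to U$ is a listing between simplicial lists that is perfect (being perfect in each degree) and $f_u : U \to X$ is a morphism in $\slist$ (being functional in each degree). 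This gives the factorisation.

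Next, for functoriality in $u$: given a morphism $(p, q) : u \to v$ in $\widetilde{\slist}^{\D{1}}$, the set-level proposition provides, in each degree $n$, a unique listing $r_n : U_n \xslashedrightarrow{} V_n$ compatible with $p_n, q_n$ and the factorisation data. For $\theta : [m] \to [n]$, both $r_m \circ \theta^*_U$ and $\theta^*_V \circ r_n$ are listings $U_n \xslashedrightarrow{} V_m$ fitting the commuting diagram determined by $(p,q)$ and $\theta$, so they agree by uniqueness; hence $r = (r_n)_n$ is a morphism in $\widetilde{\slist}$, and that $u \mapsto (\tilde u, f_u)$ respects identities and composites follows the same way. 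Uniqueness of $U$ up to unique isomorphism is then the usual corollary: two factorisations are compared by applying this functoriality to $(\mathrm{id}_A, \mathrm{id}_X) : u \to u$, which produces comparison morphisms in both directions whose composites are forced to be identities by the uniqueness clause, and any structure-preserving morphism between the middle objects is unique for the same reason.

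I expect no genuinely new mathematical difficulty here: the argument is a ``levelwise construction $+$ uniqueness $\Rightarrow$ coherence'' bootstrap over the already-established set-level statement. The only real work is bookkeeping --- keeping the simultaneous contravariant actions on $A$, $X$, $U$ (and $V$) straight, and checking that the prisms one pastes together are exactly the diagrams characterised by the uniqueness clause of the functoriality proposition. That, rather than any conceptual obstacle, is where care is needed.
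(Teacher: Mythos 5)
Your proposal is correct and follows essentially the same route as the paper: the paper's (very terse) proof likewise applies the set-level perfect--function factorisation in each simplicial degree and invokes the functoriality proposition to transport the simplicial structure maps to $U$ and to obtain the comparison/uniqueness statements. You have simply spelled out the ``levelwise construction plus uniqueness of fillers'' bookkeeping that the paper leaves implicit.
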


\begin{proof}
    This follows by applying the perfect-function factorization pointwise and then the above proposition. More precisely, for each $[n] \in \D{}$ the listing $u_n : A_n \xslashedrightarrow[]{} X_n$ has a perfect-function factorization $A_n \xslashedrightarrow[]{\tilde{u}_n} U_n \xrightarrow{(f_u)_n} X$ and for each $\theta : [k] \to [n]$ in $\D{}$ the commutative square in $\llist$ 
    \[\begin{tikzcd}
	{A_n} && {X_n} \\
	{A_k} && {X_k}
	\arrow["{u_n}", "\shortmid"{marking}, from=1-1, to=1-3]
	\arrow["{\theta^*}"', "\shortmid"{marking}, from=1-1, to=2-1]
	\arrow["{\theta^*}", "\shortmid"{marking}, from=1-3, to=2-3]
	\arrow["{u_k}"', "\shortmid"{marking}, from=2-1, to=2-3]
\end{tikzcd}\]
induces a listing $\theta^* : U_n \xslashedrightarrow[]{} U_k$ (the dotted arrow in terms of the diagram in the proof of the previous proposition).
    Canonical choice follows from the fact that invertible listings are isomorphisms.
\end{proof}

\subsection{Fundamental simplicial lists}

A nice feature of simplicial sets is that for a simplicial set $X$, an $n$-simplex $x \in X_n$ is represented by a morphism $x : \D{n} \to X$. Moreover, each simplicial set $X$ is the colimit of its simplices. These facts are consequences of the category of simplicial sets being a presheaf category, but they also capture the geometric aspects of simplex formations. We develop a similar understanding for simplicial lists.

Let $X \in \slist$. An $n$-simplex $x \in X_n$, when regarded as a singleton sequence $(x) \in \CL X_n$, is represented by a listing $(x) : \D{n} \xslashedrightarrow{} X$. By applying the perfect-function factorization
\[\begin{tikzcd}
	{\D{n}} && X \\
	& {U_x}
	\arrow["{(x)}", "\shortmid"{marking}, from=1-1, to=1-3]
	\arrow["perfect"', "\shortmid"{marking}, from=1-1, to=2-2]
	\arrow["function"', from=2-2, to=1-3]
\end{tikzcd}\]
we may represent $x$ functionally via a morphism $U_x \to X$ for some simplicial list $U_x$ which is determined up to unique isomorphism. We may say that the simplex $x$ is of \emph{shape} $U_x$. This way, we are lead to the following class of simplicial lists which serve as candidate shapes for simplices.

\begin{definition}[Fundamental simplicial list]

A simplicial list $U$ is said to be \emph{fundamental} if it is equipped with a perfect listing $u : \D{n} \xslashedrightarrow{} U$ for some $n \geq 0$, such that $u(1_{[n]})$ is a singleton sequence. In this case we identify $u \in U_n$ as an $n$-simplex referred to as the \emph{fundamental $n$-simplex} of $U$. 

We denote the collection of fundamental simplicial lists indexed by a perfect listing from $\D{n}$ by $\F{n}$ and by $\CF \subseteq \slist$ the full subcategory of simplicial lists spanned by the fundamental ones.  
    
\end{definition}

\begin{notation}
    We write $(U,u) \in \F{n}$ to indicate that $U$ is a fundamental simplicial list indexed by a perfect listing from $\D{n}$ with fundamental simplex $u$. 
\end{notation}

\begin{example}
    For all $n \geq 0$, the simplicial set $\D{n}$ is a fundamental simplicial list via the identity morphism. The fundamental $n$-simplex is the identity function $1_{[n]} \in \D{n}_n$.
\end{example}

Being indexed by objects of $\D{}$, fundamental simplicial lists inherit a number of pleasant properties.

\begin{lemma} \label{lemma : properties of F}
    Let $(U,u) \in \F{n}$, $n \geq 0$. 
    \begin{itemize}
        \item [(i)] Each simplex $u^\prime \in U_k$, $k \geq 0$, appears exactly once in sequence $\theta^* u$ for a unique simplicial operation $\theta : [k] \to [n]$ in $\D{}$.

        \item[(ii)] For a simplicial list $X$, a morphism $U \to X$ is uniquely determined by the image of $u$.

        \item[(iii)] Let $(V, v) \in \F{k}$, $k \geq 0$. A morphism $\gamma : V \to U$ in $\CF$ is uniquely determined by a simplicial operation $\theta : [k] \to [n]$ and an index $i$ such that $\gamma(v) = (\theta^* u)_i$. 
    \end{itemize}
\end{lemma}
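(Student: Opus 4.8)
The plan is to deduce all three parts from the single structural fact that $U$ is obtained from $\D{n}$ via the perfect-function factorisation of a listing $u : \D{n} \xslashedrightarrow{} U$, using the explicit pointwise description of that factorisation given in the proof of the Perfect-Function Factorisation Proposition. First I would unwind the definition: since $u$ is perfect, its construction from $\D{n}$ produces, in each degree $k$, the set $U_k$ whose elements are exactly the terms of the sequences $u(\sigma)$ as $\sigma$ ranges over $(\D{n})_k = \D{}([k],[n])$; perfectness says precisely that each such term appears in $u(\sigma)$ for a unique $\sigma$ and with no repetition within a fixed sequence. Translating $\sigma \in (\D{n})_k$ into a simplicial operation $\theta : [k] \to [n]$ and $u(\sigma)$ into $\theta^* u$ gives part (i) immediately; the only thing to check is that this bookkeeping is compatible with the simplicial structure on $U$, i.e. that "term of $\theta^* u$" is well-defined independent of how we factor $\theta$, which follows from functoriality of $U$ and the fact that $u$ is a morphism in $\widetilde{\slist}$ (a listing is natural in $[n]$).

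For part (ii), I would argue that any morphism $F : U \to X$ is determined by $F(u)$ because every simplex $u' \in U_k$ equals $(\theta^* u)_i$ for some $\theta$ and $i$ by (i), and then naturality forces $F_k(u') = F_k((\theta^* u)_i) = (\theta^* F_n(u))_i$, which is a completely determined element of $X_k$ once $F_n(u) \in X_n$ is fixed. One should also remark that the assignment $F_n(u) \mapsto F$ is not just injective but bijective onto those $x \in X_n$ which are "of shape $U$" — but since the lemma only asserts uniqueness, injectivity is all that is strictly needed, and conversely existence is exactly the isomorphism $X_n \cong \coprod_{U \in \F{n}} \slist(U,X)$ recorded just above the lemma. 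Part (iii) is then the special case $X = U'$ — more precisely, apply (ii) with $X$ the underlying simplicial list of $U$: a morphism $\gamma : V \to U$ in $\CF$ is a morphism of simplicial lists, hence determined by $\gamma(v) \in U_k$, and by (i) that element is $(\theta^* u)_i$ for a unique pair $(\theta, i)$; conversely any such pair defines a morphism via the formula $\gamma(v') = (\psi^* (\theta^* u)_i) \cdot(\ldots)$ obtained by precomposition, so the data $(\theta, i)$ classifies $\gamma$.

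The step I expect to require the most care is the compatibility in (i): showing that, as $\theta$ ranges over all of $\D{}([k],[n])$ and $i$ over term-positions of $\theta^* u$, these $(\theta, i)$ index $U_k$ bijectively, and that this indexing transforms correctly under a further operation $\psi : [j] \to [k]$ — i.e. that $\psi^*$ applied to the term $(\theta^* u)_i$ is the subsequence of $(\theta\psi)^* u$ cut out by the part of $\psi^*(\theta^* u)$ sitting below position $i$. This is exactly the "listing is natural" compatibility that was used implicitly when defining the simplicial structure on $U$ in the factorisation, and the clean way to state it is: the perfect listing $u$ exhibits $U$ as the "category of elements" style reindexing of $\D{n}$ along the term-counting, and all three claims are the evident consequences. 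Everything else is routine, so I would keep the writeup short, spending the bulk of it on the explicit description in (i) and letting (ii) and (iii) follow in a sentence each.
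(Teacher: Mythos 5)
Your proposal is correct and follows essentially the same route as the paper: part (i) is read off from pointwise perfectness of $u_k : \D{n}_k \xslashedrightarrow{} U_k$ (noting $u_k(\theta) = \theta^* u$), part (ii) follows by naturality from (i), and part (iii) is (ii) applied to $\gamma : V \to U$ combined with (i). The extra compatibility check you flag in (i) and the bijectivity remark in (ii) go beyond what the uniqueness statements require, but they are harmless.
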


\begin{proof}
    \begin{itemize}
        \item [(i)]  Since $u : \D{n} \xslashedrightarrow{} U$ is perfect, at the $k$-th level the listing between sets $u_k : \D{n}_k \xslashedrightarrow{} U_k$ is perfect. Thus, given a simplex $u^\prime \in U_k$ there is a unique $\theta : [k] \to [n]$ and index $i$ such that $u^\prime$ is the $i$-th term of the sequence $\theta^* u$.

        \item[(ii)] Let $f : U \to X$ be a morphism of simplicial lists and let $x = f(u)$. Since any other simplex $u^\prime \in U_k$ is of the form $(\theta^* u)_i$ by (i), it follows that $f(u^\prime) = (\theta^* x)_i$ in $X$ as $f$ is structure preserving. This means that the image of $u$ determines the image of any other simplex of $U$ under $f$.

        \item[(iii)]  In case $(V,v) \in \F{k}$ and $\gamma : V \to U$ is a morphism of simplicial lists, by (ii), it is uniquely determined by the image of the fundamental simplex $\gamma(v) \in U_k$. By (i), $\gamma(v) = (\theta^* u)_i$ for some $\theta : [k] \to [n]$ and index $i$. 
    \end{itemize}
\end{proof}

\begin{notation}
    For $(U,u) \in \F{n}$ and $(V,v) \in \F{k}$
    we denote by $\gamma_{\theta, i} : V \to U$ the unique morphism in $\CF$ such that $\gamma_{\theta, i}(v) = (\theta^* u)_i$. Here, $\theta : [k] \to [n]$ is a morphism in $\D{}$ and $i$ is an indexing number. 
\end{notation}

\begin{remark}
    It follows from the above lemma that in case two fundamental simplicial lists are isomorphic, then this isomorphism is unique. Indeed, an invertible morphism $(U, u) \to (V,v)$ has to map $u \mapsto v$. This justifies the following abuse: we identify $\F{n}$ with the set of isomorphism classes of fundamental simplicial list indexed by $\D{n}$ and thus regard $\cc{F}$ to be skeletal. This is mostly to avoid set-theoretic issues, as for instance in the following proposition.
\end{remark}

As mentioned in the beginning of the section, fundamental simplicial lists are designed to be \emph{simplex classifiers}. 

\begin{proposition}[Classification of simplices]
Let $X$ be a simplicial list. There is a natural isomorphism
$$X_n \cong \coprod_{U \in \F{n}} \slist(U , X)$$
\end{proposition}

\begin{proof}
    Let $x \in X_n$. As pointed out previously, by regarding $(x) \in \lx_n$ as a singleton sequence, we may present it with a listing $(x) : \D{n} \xslashedrightarrow[]{} X$. This listing factors as a perfect listing $u_x : \D{n} \xslashedrightarrow[]{} U_x$ followed by a function $f_x : U \to X$ for a unique $U_x$. Notice that $u_x(1_{[n]})$ has to be a singleton sequence in $\CL (U_x)_n$ since $f_x \circ u_x$ classifies a singleton sequence in $\lx_n$ and $f_x$ is a function pointwise. Thus, $U_x \in \F{n}$ and we have a well-defined mapping $x \mapsto (f_x : U_x \to X)$. 

    On the other hand, given $u : \D{n} \xslashedrightarrow[]{} U$ in $\F{n}$ and a map $f : U \to X$, we obtain a simplex $f(u) \in X_n$ (Lemma \ref{lemma : properties of F} (ii)). This gives us a mapping $(f : U \to X) \mapsto f(u)$.

    By the uniqueness of the perfect-function factorization, these mappings are inverses of each other. Naturality is clear.
\end{proof}

Let $X$ be a simplicial list. In light of the above, we can see that, besides the fact that simplices $x \in X_n$ are represented by morphisms $U \to X$, $u \mapsto x$ from fundamental simplicial lists, a simplicial relation in $X$ between simplices of the form $y = (\theta^* x)_i$ is represented by a commutative triangle
\[\begin{tikzcd}
	V && U \\
	& X
	\arrow["x", from=1-3, to=2-2]
	\arrow["y"', from=1-1, to=2-2]
	\arrow["{\gamma_{\theta, i}}", from=1-1, to=1-3]
\end{tikzcd}\]

Commutative triangles as a above are precisely the morphisms in the comma category $(\CF \downarrow X)$. We regard the latter as the \emph{category of simplices} of $X$ and denote it by $\Spx(X)$. In other words, $\Spx(X)$ is defined by saying that:
\begin{itemize}
    \item [-] Simplices $x \in X_n$, $n \geq 0$, are the objects.
    \item[-] There is a morphism $y \to x$ between $x \in X_n$ and $y \in X_k$ in case of a simplicial relation $y = (\theta^* x)_i$.
\end{itemize}
There is a natural forgetful functor 
\[
\begin{tikzcd}
    \chi_X : &[-3em] \Spx(X) \arrow[r] & \slist \\[-2em]
    & U \to X \arrow[r, maps to] & U
\end{tikzcd}
\]

\begin{proposition}[Colimit of simplices] \label{prop : colimit of simplices}

Every simplicial list $X$ is a colimit of its simplices, i.e. there is an isomorphism
$$X \cong \displaystyle \colim_{U \to X} U$$
    
\end{proposition}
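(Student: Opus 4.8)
The plan is to verify the universal property of the colimit directly, without invoking cocompleteness of $\slist$: I will show that the tautological cocone under $\chi_X$, whose leg at $(U,x)\in\Spx(X)$ is the structure map $x:U\to X$, is initial among cocones. Concretely, for every $Y\in\slist$ I want to show that post-composition $f\mapsto (f\circ x)_{(U,x)}$ is a bijection
\[
\slist(X,Y)\;\xrightarrow{\;\cong\;}\;\{\text{cocones }\chi_X\Rightarrow Y\},
\]
natural in $Y$. That $f\mapsto (f\circ x)_{(U,x)}$ lands in cocones is immediate: a morphism $\gamma:(V,y)\to(U,x)$ in $\Spx(X)=(\CF\downarrow X)$ is, by definition, a $\gamma:V\to U$ in $\CF$ with $x\gamma=y$, so $(f\circ x)\circ\gamma=f\circ y$; naturality in $Y$ is clear. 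So the whole content is the construction of an inverse.

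Given a cocone $(\lambda_{(U,x)}:U\to Y)_{(U,x)}$, I would define $f:X\to Y$ levelwise by $f_n(z)=\lambda_{(U_z,\bar z)}(u_z)$. Here, using the isomorphism $X_n\cong\coprod_{U\in\F{n}}\slist(U,X)$, I write $(U_z,\bar z)$ for the canonical shape of $z\in X_n$, i.e. $\bar z:U_z\to X$ with $\bar z(u_z)=z$ for $u_z$ the fundamental simplex of $U_z$ — equivalently $(U_z,\bar z)$ is produced by the functorial perfect–function factorisation of the listing $(z):\D{n}\xslashedrightarrow{}X$. Each $f_n$ is a genuine function since the $\lambda_{(U,x)}$ are functional morphisms. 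The first substantial step is to check that $f$ is a morphism of simplicial lists, i.e. that for $\theta:[k]\to[n]$ and $z\in X_n$ with $\theta^* z=(z_1,\dots,z_m)$ one has $\theta^*\!\big(f_n(z)\big)=\big(f_k(z_1),\dots,f_k(z_m)\big)$. For this I use Lemma \ref{lemma : properties of F}(iii): by uniqueness of the perfect–function factorisation, the shape $U_{z_j}$ of $z_j$ is also the shape of the $j$-th term $(\theta^* u_z)_j$ of $U_z$, so there is a map $\gamma_{\theta,j}:U_{z_j}\to U_z$ with $\gamma_{\theta,j}(u_{z_j})=(\theta^* u_z)_j$, and then $\bar z\circ\gamma_{\theta,j}=\overline{z_j}$ by Lemma \ref{lemma : properties of F}(ii); this exhibits a morphism $(U_{z_j},\overline{z_j})\to(U_z,\bar z)$ in $\Spx(X)$. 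The cocone identity gives $\lambda_{(U_z,\bar z)}\big((\theta^* u_z)_j\big)=\lambda_{(U_{z_j},\overline{z_j})}(u_{z_j})=f_k(z_j)$, while applying naturality of the simplicial-list morphism $\lambda_{(U_z,\bar z)}:U_z\to Y$ to $u_z$ gives $\theta^*\big(\lambda_{(U_z,\bar z)}(u_z)\big)=\big(\lambda_{(U_z,\bar z)}((\theta^* u_z)_1),\dots,\lambda_{(U_z,\bar z)}((\theta^* u_z)_m)\big)$; combining these with the definition of $f$ yields exactly the required identity. Hence $f\in\slist(X,Y)$.

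Finally I would check that the two assignments are mutually inverse. Starting from $f\in\slist(X,Y)$, the recipe returns $z\mapsto (f\circ\bar z)(u_z)=f(\bar z(u_z))=f(z)$, which is $f$ again by Lemma \ref{lemma : properties of F}(ii). Conversely, starting from a cocone $\lambda$ and an arbitrary $(U,x)\in\Spx(X)$ with fundamental simplex $u\in U_n$ and $z:=x(u)$, the perfect–function factorisation of $(z)$ identifies $(U,x)$ with the canonical shape $(U_z,\bar z)$: there is a unique isomorphism $j:U_z\to U$ in $\CF$ with $j(u_z)=u$ and $x\circ j=\bar z$, hence an isomorphism $(U_z,\bar z)\xrightarrow{\ \cong\ }(U,x)$ in $\Spx(X)$. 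Then $(f\circ x)(u)=f(z)=\lambda_{(U_z,\bar z)}(u_z)=\lambda_{(U,x)}(j(u_z))=\lambda_{(U,x)}(u)$, so $f\circ x=\lambda_{(U,x)}$ by Lemma \ref{lemma : properties of F}(ii). Naturality of the bijection in $Y$ is straightforward, completing the proof.

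I expect the main obstacle to be precisely the verification in the second paragraph that the candidate $f$ is natural — a bona fide morphism of simplicial lists rather than merely a levelwise family of functions. This is where one must marry the behaviour of the shape assignment $z\mapsto(U_z,\bar z)$ under the listing-valued operations $\theta^*$ (controlled by Lemma \ref{lemma : properties of F} together with functoriality and uniqueness of the perfect–function factorisation) with the fact that the cocone legs $\lambda_{(U,x)}$ are themselves functional morphisms of simplicial lists. Everything else — landing in cocones, being mutually inverse, naturality in $Y$ — is routine bookkeeping around the comma category $\Spx(X)=(\CF\downarrow X)$ and parts (ii)–(iii) of Lemma \ref{lemma : properties of F}.
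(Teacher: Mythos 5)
Your proposal is correct and follows essentially the same route as the paper: define $f$ on each simplex via the cocone leg at its canonical fundamental shape, and use the fact that every simplicial relation in $X$ is represented by a morphism in $\Spx(X)$ to verify naturality. The paper's own proof is simply a terser version of this argument; your expanded verification of the second paragraph (and of uniqueness via Lemma \ref{lemma : properties of F}(ii)) fills in exactly the details the paper leaves implicit.
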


The above proposition can be derived as a corollary of (the proof) of the following key theorem of this paper, so we present its proof afterwards.

\begin{theorem}[Presheaf Theorem]
    There is an equivalence of categories
    $$\slist \cong \set^{\CF^{op}}$$
\end{theorem}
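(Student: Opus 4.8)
\textbf{The plan} is to exhibit the equivalence as a restricted-Yoneda (``nerve'') functor paired with an explicitly constructed inverse. Define
\[
N\colon\slist\longrightarrow\set^{\CF^{op}},\qquad N(X)(U)=\slist(U,X),
\]
a morphism $\gamma\colon V\to U$ of $\CF$ acting by precomposition. The fundamental simplicial lists are parametrised by the objects $[n]$ of $\Delta$ and by shapes over them, so $\CF$ is essentially small and the target makes sense (universes as in the conventions). One must show $N$ is fully faithful and essentially surjective, the latter by producing a hand-built $R$ with $NR\cong\mathrm{id}$ and $RN\cong\mathrm{id}$.

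\emph{Full faithfulness of $N$} is precisely the assertion that $\CF$ is a \emph{dense} subcategory of $\slist$, which is the content of the Colimit of Simplices Proposition: every $X$ is the colimit of $\chi_X\colon\Spx(X)\to\slist$. Explicitly, a natural family $\big(\slist(U,X)\to\slist(U,Y)\big)_{U\in\CF}$ transports the universal cocone $\chi_X\Rightarrow X$ to a cocone $\chi_X\Rightarrow Y$, hence to a unique morphism $X\to Y$, and this is inverse to $N$ on hom-sets.

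\emph{The inverse.} For a presheaf $F$ put $R(F)_n=\coprod_{(U,u)\in\F{n}}F(U)$ --- ``an $n$-simplex of shape $U$ decorated by an element of $F(U)$''. The listing structure is dictated by Lemma~\ref{lemma : properties of F}: for $\theta\colon[k]\to[n]$ write $\theta^*u=\big((\theta^*u)_1,\dots,(\theta^*u)_\ell\big)\in\CL U_k$; by (i) and (iii) each term $(\theta^*u)_i$ is the image of the fundamental simplex of a uniquely determined $V_i\in\F{k}$ under the uniquely determined morphism $\gamma_{\theta,i}\colon V_i\to U$, and one declares
\[
\theta^*(U,s)=\big((V_1,F(\gamma_{\theta,1})(s)),\dots,(V_\ell,F(\gamma_{\theta,\ell})(s))\big)\in\CL R(F)_k .
\]
I expect the \textbf{main obstacle} to be verifying that this is functorial in $\Delta$, i.e.\ that $R(F)$ is a bona fide simplicial list: for composable $\psi\colon[j]\to[k]$ and $\theta\colon[k]\to[n]$ one must match $(\theta\psi)^*$ with $\psi^*\theta^*$ term by term, which unwinds to the compatibility of the structural morphisms $\gamma_{\theta,i}$ under composition (an index chase through concatenation of lists) together with functoriality of $F$. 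Functoriality of $R$ in $F$ is coordinatewise and immediate.

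\emph{$R$ and $N$ are mutually inverse.} For $RN\cong\mathrm{id}$: $R(N(X))_n=\coprod_{U\in\F{n}}\slist(U,X)\cong X_n$ by the isomorphism recorded just before the theorem, and the induced listings agree because $f\colon U\to X$, being a morphism of simplicial lists, satisfies $f\big((\theta^*u)_i\big)=\big(\theta^*f(u)\big)_i$ --- exactly the recipe above. For $NR\cong\mathrm{id}$: the tautological $R(F)_n=\coprod_{U'\in\F{n}}F(U')$ is the decomposition of $R(F)_n$ \emph{by shape}, since the perfect--function factorisation of the listing $(U',s)\colon\D{n}\xslashedrightarrow{}R(F)$ is visibly $\D{n}\xslashedrightarrow{\;u'\;}U'\to R(F)$, so $(U',s)$ has shape $U'$; comparing with $R(F)_n\cong\coprod_{U'\in\F{n}}\slist(U',R(F))$ yields $\slist(U,R(F))\cong F(U)$, naturally in $U$ by the definition of the $\theta^*$ on $R(F)$. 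Hence $\slist\cong\set^{\CF^{op}}$. (If $\slist$ is known to be cocomplete one may instead take $R(F)=\colim_{(U,s)\in\int F}U$, a left adjoint to $N$; density gives $RN\cong\mathrm{id}$, the unit is invertible on the representables $N(U)$ since $\int N(U)$ has terminal object $(U,1_U)$, and it propagates to all presheaves once one checks each fundamental simplicial list is tiny in $\slist$ --- a point the explicit $R$ makes moot.)
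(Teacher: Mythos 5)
Your proposal is correct and follows essentially the same route as the paper: the restricted-Yoneda functor $N(X)(U)=\slist(U,X)$ is the paper's $H_\bullet$, your explicit inverse $R(F)_n=\coprod_{U\in\F{n}}F(U)$ with listings induced by the $\gamma_{\theta,i}$ is the paper's $Y_\bullet$, and the two verifications ($RN\cong\mathrm{id}$ via the shape decomposition of $X_n$, $NR\cong\mathrm{id}$ via every simplex in the $F(U)$-component being $U$-shaped) match the paper's. The added density argument for full faithfulness and the colimit-formula remark are correct but redundant given the explicit mutual inverses.
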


\begin{proof}
    Let $X \in \slist$. We may form the functor
    \[
    \begin{tikzcd}
        H_X :&[-3em] \CF^{op} \arrow[r] &\set \\[-2em]
        & U  \arrow[r, maps to] & \slist(U ,X)
    \end{tikzcd}
    \]
    Moreover, $X \mapsto H_X$ gives us a functor $H_\bullet : \slist \to \set^{\CF^{op}}$.

    Let $G : \CF^{op} \to \set$ be a presheaf. We defıne a simplicial list 
    \[
    \begin{tikzcd}
        Y_G : &[-3em] \D{op} \arrow[r] & \llist \\[-2em]
        & \left[n \right]  \arrow[r, maps to] & \displaystyle \coprod_{U \in \F{n}} G(U)
    \end{tikzcd}
    \]
    If we denote $G(\gamma_{\theta, i}) = G_{\theta, i}$, given $x \in G(U)$, we define $(\theta^* x)_i = G_{\theta, i}(x)$. The mapping $G \mapsto Y_G$ gives rise to a functor $Y_\bullet : \set^{\CF^{op}} \to \slist$.

    Let us see that the above functors are weak inverses of each other. One direction is easy. For $X \in \slist$, the simplicial list $Y_{H_X}$ has as $n$-simplices the set $\displaystyle \coprod_{U \in \F{n}} \slist(U, X)$. The latter is naturally isomorphic to $X_n$ (Proposition \ref{prop : simplex classification}).

    For the other direction, let $G : \CF^{op} \to \set$ be a presheaf. The presheaf $H_{Y_G}$ assigns to an object $U \in \F{n}$ the set $\slist(U, Y_G)$. But, every simplex of $Y_G$ which belongs to the set $G(U)$ is $U$-shaped. This is so because if we are given $y \in G(U)$, we can form a morphism of simplicial lists $\eta_y : U \to Y_G$, $u \mapsto y$. This much information is enough to construct $\eta_y$, since given a simplex $\gamma_{\theta, i} : V \to U$ in $U$ (Lemma \ref{lemma : properties of F} (iii)) there is a structure function $G_{\theta, i} : G(U) \to G(V)$ and we can declare $\eta_y(\gamma_{\theta,i}) = G_{\theta,i}(y)$. 
    
    Thus, we obtain a bijection $G(U) \cong \slist(U, Y_G)$, $y \mapsto \eta_y$, which is natural in $U$ by construction and therefore results in a natural isomorphism $G \cong H_{Y_G}$.
\end{proof}

\begin{corollary}
    The category $\slist$ is complete and cocomplete.
\end{corollary}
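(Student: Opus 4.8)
The plan is to obtain this as an immediate consequence of the Presheaf Theorem. Since that theorem provides an equivalence $\slist \cong \set^{\CF^{op}}$, and an equivalence of categories preserves and reflects all limits and colimits, it suffices to verify that the presheaf category $\set^{\CF^{op}}$ is complete and cocomplete.

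First I would record that $\CF$ is essentially small. Indeed, if $(U, u) \in \F{n}$, then perfectness of the indexing listing $u$ forces, for each $k$, a bijection $U_k \cong \coprod_{\theta : [k] \to [n]} \{\text{terms of } \theta^* u\}$; since $\D{n}_k$ is finite and each $\theta^* u$ is a \emph{finite} sequence, every $U_k$ is a finite set. Hence, up to isomorphism, there is only a set's worth of fundamental simplicial lists, and $\CF$ may be replaced by a small skeleton. (Alternatively, one may simply invoke the Grothendieck-universe conventions fixed above and ignore this point.)

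Then I would appeal to the standard fact that, for any small category $\C$, the functor category $\set^{\C}$ is complete and cocomplete, with limits and colimits computed pointwise: given a small diagram $D : I \to \set^{\CF^{op}}$, the presheaves $U \mapsto \lim_{i \in I} D(i)(U)$ and $U \mapsto \colim_{i \in I} D(i)(U)$, equipped with the structure maps induced functorially by $\CF$, satisfy the universal properties of $\lim D$ and $\colim D$ respectively — each verification reduces instantly to the corresponding universal property in $\set$, which is itself bicomplete. Transporting back along the equivalence of the Presheaf Theorem shows that $\slist$ is complete and cocomplete.

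I do not anticipate any genuine obstacle; the only step worth a sentence is the essential smallness of $\CF$, which is handled by the finiteness observation above. As a bonus, under the identification $X_n \cong \coprod_{U \in \F{n}} \slist(U, X)$ one sees that (co)limits in $\slist$ may be computed levelwise, recovering in particular the cocompleteness used informally in Section~\ref{subsec : slist}.
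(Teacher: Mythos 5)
Your proof is correct and is essentially the paper's intended argument: the corollary is drawn immediately from the Presheaf Theorem, and your verification of the essential smallness of $\CF$ (each $U_k$ finite because the indexing listing from $\D{n}$ is perfect) is the right way to make that step honest. The one caveat concerns your closing ``bonus'' remark: colimits in $\slist$ are indeed computed levelwise in the sets $X_n$, but limits are \emph{not} --- they are pointwise only in the presheaf variable $U \in \CF$, since for instance $(X \times Y)_n \cong \coprod_{U \in \F{n}} \bigl(\slist(U,X) \times \slist(U,Y)\bigr)$ differs from $X_n \times Y_n$, as the paper's note on the two notions of ``pointwise'' records; so the parenthetical ``(co)'' should be dropped for the limit half of that sentence.
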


\begin{remark}
    Notice that the equivalence $\slist \cong \set^{\cc{F}^{op}}$, $X \mapsto H_X$, constructed in the above proof is easily identified as the right adjoint induced by the inclusion $\cc{F} \subseteq \slist$ via Yoneda extension.
\end{remark}

\begin{proof} (of Proposition \ref{prop : colimit of simplices})

Let $X$ be a simplicial list. Following up on the above remark and using notation from the above theorem we have that
\begin{align*}
    X &\cong Y_{G_X} \\
    &\cong \displaystyle \colim_{U \in H_X(U)} U \\
    &\cong \displaystyle \colim_{U \to X} U
\end{align*}
    
\end{proof}

\begin{note}[Two notions of pointwise] \label{note_pointwise}

We have two notions of pointwise for simplicial lists: one from the simplicial structure and one from the presheaf structure. For example, limits of simplicial lists are computed pointwise in the presheaf structure, but they are not constructed simplicially pointwise. For instance, for simplicial lists $X$ and $Y$ we have that $$(X \times Y)_n \cong \displaystyle \coprod_{U \in \F{n}} (\slist(U, X) \times \slist(U, Y))$$ which means that the cartesian product in $\slist$ is formed by pairs of simplices of the same shape. This is different from $X_n \times Y_n$. On the other hand, colimits are computed pointwise in both senses. 
    
\end{note}

\subsection{Types of simplicial lists}

We have already encountered two types of simplicial lists, simplicial sets and operadic simplicial lists. Both are defined by requiring that a certain class of simplicial operations are functions. This leads us to consider the following generalisation.

\begin{definition}
    Let $S \subseteq \mor(\D{})$ be a subset of morphisms in $\D{}$. We say that a simplicial list $X$ is of type $S$ if the structure map $\theta^*$ of $X$ is a function for all $\theta \in S$. We denote by $\slist_S \subseteq \slist$ the full subcategory spanned by simplicial lists of type $S$.
\end{definition}

\begin{example}
    If $S = \mor(\D{})$, then $\slist_S = \sset$.
\end{example}

\begin{example}
    If $S = \{ \theta : [k] \to [n] : \theta(k) = n \}$, then $\slist_S = \slisto$. 
\end{example}

\begin{lemma}
    Let $S \subseteq \mor(\D{})$. If there is a morphism $f : X \to Y$ in $\slist$ and $Y$ is of type $S$, then so is $X$.
\end{lemma}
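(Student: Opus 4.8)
The plan is to exploit naturality of the morphism $f : X \to Y$ together with the one feature that distinguishes morphisms in $\slist$ from morphisms in $\widetilde{\slist}$, namely that each component $f_n : X_n \to Y_n$ is an honest function rather than a mere listing. The single observation that makes everything work is a length bookkeeping fact about Kleisli composition in $\llist$ (Definition \ref{def_list}): post-composing a listing with a function (viewed as a length-one listing) does not change the length of the output sequences, and pre-composing with a function does not change the length of the output sequence either. Concretely, if $u : A \xslashedrightarrow{} B$ sends $a$ to $(b_1, \dots, b_k)$ and $g : B \to C$ is a function, then $g \circ u$ sends $a$ to $(g b_1, \dots, g b_k)$, still of length $k$; and if $h : A \to B$ is a function and $v : B \xslashedrightarrow{} C$ a listing, then $v \circ h$ sends $a$ to $v(h(a))$, of the same length as $v$ at $h(a)$.

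With this in hand, I would fix a morphism $\theta : [n] \to [m]$ in $\D{}$ with $\theta \in S$ and an element $x \in X_m$, write $\theta^* x = (y_1, \dots, y_k) \in \CL X_n$, and aim to show $k = 1$, which is exactly the assertion that $\theta^* : X_m \to X_n$ is a function. The naturality square for $f$ at $\theta$ is a commuting square in $\llist$ whose horizontal edges are the structure maps $\theta^* : X_m \xslashedrightarrow{} X_n$ and $\theta^* : Y_m \xslashedrightarrow{} Y_n$ and whose vertical edges are the components $f_m, f_n$, which are functions. Chasing $x$ around: along the route through $f_n$, the output is $(f_n y_1, \dots, f_n y_k)$, a sequence of length $k$ by the first half of the observation; along the route through $f_m$, since $\theta \in S$ and $Y$ is of type $S$ the map $\theta^* : Y_m \to Y_n$ is a function, so the output is the singleton sequence $\bigl(\theta^*(f_m x)\bigr)$, of length $1$. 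Commutativity of the square forces these two sequences to coincide, hence $k = 1$. Since $\theta \in S$ and $x \in X_m$ were arbitrary, every $\theta^*$ with $\theta \in S$ is a function, i.e. $X$ is of type $S$.

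I do not anticipate a genuine obstacle here; this is purely a diagram chase in $\llist$ and uses neither the presheaf structure nor fundamental simplicial lists. The only points requiring care are conventional: the contravariance $\theta : [n] \to [m] \mapsto \theta^* : X_m \to X_n$, the identification of a function with a length-one listing when composing in $\llist$, and the fact that the length comparison simultaneously excludes the degenerate case $k = 0$ (the empty concatenation, of length $0 \neq 1$) and the case $k \geq 2$, leaving $k = 1$ as the only possibility.
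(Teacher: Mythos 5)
Your proof is correct and follows the same route as the paper: the paper also chases the naturality square for $f$ at $\theta$, notes the bottom map is a function because $Y$ is of type $S$, and concludes "by inspection" that the top map is too. Your length-bookkeeping argument is precisely the content of that "inspection," so nothing further is needed.
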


\begin{proof}
    Let $\theta : [k] \to [n]$ be a morphism in $S$. We have a commutative square in $\llist$
    \[\begin{tikzcd}
	{X_n} & {X_k} \\
	{Y_n} & {Y_k}
	\arrow["{\theta^*}"', from=2-1, to=2-2]
	\arrow["{f_n}"', from=1-1, to=2-1]
	\arrow["{f_k}", from=1-2, to=2-2]
	\arrow["{\theta^*}", "\shortmid"{marking}, from=1-1, to=1-2]
\end{tikzcd}\]
    Since $Y$ is of type $S$, the map on the bottom is a function. It follows by inspection that the map on the top is also a function.
\end{proof}

\begin{proposition}
    Let $S \subseteq \mor(\D{})$, and let $\CF_S \subseteq \CF$ be the full subcategory spanned by objects of type $S$. Then, there is an equivalence of categories
    $$\slist_S \cong \set^{\CF_S^{op}}$$
\end{proposition}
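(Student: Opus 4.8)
The plan is to deduce the statement from the Presheaf Theorem $\slist \cong \set^{\CF^{op}}$ by locating $\slist_S$, inside $\set^{\CF^{op}}$, as a subcategory which is visibly a copy of $\set^{\CF_S^{op}}$. Write $H_\bullet \colon \slist \xrightarrow{\ \sim\ } \set^{\CF^{op}}$ for the equivalence of that theorem, so $H_X(U) = \slist(U,X)$, and let $\cc{E}_S \subseteq \set^{\CF^{op}}$ be the full subcategory of presheaves $G$ such that $G(U) = \emptyset$ for every fundamental simplicial list $U$ which is not of type $S$.

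First I would check that $H_\bullet$ restricts to an equivalence $\slist_S \xrightarrow{\ \sim\ } \cc{E}_S$. Since $H_\bullet$ is already an equivalence and $\slist_S$, $\cc{E}_S$ are full, it is enough to show that $X \in \slist$ is of type $S$ if and only if $H_X \in \cc{E}_S$. If $X$ is of type $S$ and $U \to X$ is any morphism from a fundamental simplicial list, then $U$ is of type $S$ by the preceding Lemma, so $H_X(U) = \slist(U,X) = \emptyset$ whenever $U$ is not of type $S$; this gives one implication. Conversely, assume $H_X$ vanishes on fundamental lists not of type $S$ and fix $\theta \colon [k]\to[n]$ in $S$. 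Any $x \in X_n$ is represented by a morphism $f \colon U \to X$ with $(U,u)\in\F{n}$ and $f(u) = x$; as $H_X(U)\neq\emptyset$, the hypothesis forces $U$ of type $S$, hence $\theta^* u\in U_k$ is a singleton, and naturality of $f$ in $\llist$ (a functional morphism sends a singleton sequence to a singleton sequence) shows that $\theta^* x$ is a singleton in $X_k$. Since every element of $X_n$ arises this way, $\theta^*$ is a function, so $X$ is of type $S$.

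Next I would show $\cc{E}_S\cong\set^{\CF_S^{op}}$, which is a general fact once one knows that the full subcategory $\CF_S\subseteq\CF$ is closed under precomposition: any morphism $\gamma\colon V\to U$ in $\CF$ with $U$ of type $S$ has $V$ of type $S$, again by the Lemma. Restriction along $\CF_S\hookrightarrow\CF$ gives $r\colon \set^{\CF^{op}}\to\set^{\CF_S^{op}}$, and closure under precomposition makes ``extension by $\emptyset$'' — send $G'$ to the presheaf agreeing with $G'$ on $\CF_S$ and equal to $\emptyset$ elsewhere — a well-defined functor $\set^{\CF_S^{op}}\to\set^{\CF^{op}}$ landing in $\cc{E}_S$. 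One then checks directly that $r$ and extension-by-$\emptyset$ are mutually inverse (in fact they give an isomorphism of categories $\cc{E}_S\cong\set^{\CF_S^{op}}$). Composing with the first step yields $\slist_S\cong\set^{\CF_S^{op}}$, the composite sending $X$ to the presheaf $U\mapsto\slist(U,X)$ on $\CF_S$; taking $S=\mor(\D{})$ and $S=\{\theta\colon[k]\to[n]\mid\theta(k)=n\}$ recovers the two Examples above.

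The main obstacle, such as it is, is the converse half of the first step: translating the set-theoretic vanishing of $H_X$ back into the operadic-type condition on the structure maps of $X$. The delicate point there is the bookkeeping through naturality in $\llist$ — one must genuinely use that a morphism in $\slist$ has functional components, so that it carries the (necessarily singleton) sequence $\theta^* u$ to a singleton. Everything else is either an application of the preceding Lemma or a formal consequence of the Presheaf Theorem, and no new combinatorics is needed. A more pedestrian alternative would be to re-run the proof of the Presheaf Theorem with $\F{n}$ replaced everywhere by $\F{n}\cap\CF_S$, checking that the reconstructed simplicial list $Y_G$ is of type $S$; this avoids $\cc{E}_S$ at the cost of repeating that argument.
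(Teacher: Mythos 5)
Your proof is correct and follows essentially the same route as the paper's: both reduce to the Presheaf Theorem via the preceding Lemma, which guarantees that $\slist(U,X)=\emptyset$ whenever $X$ is of type $S$ and $U$ is not. The paper's own proof is only the forward half of your first step (two sentences, stopping at ``hence the desired conclusion''); your verification of the converse — that vanishing of $H_X$ outside $\CF_S$ forces $X$ to be of type $S$, using that a functional morphism carries the singleton $\theta^*u$ to a singleton — and your check that extension-by-$\emptyset$ is well defined are exactly the details the paper leaves implicit, and they are carried out correctly.
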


\begin{proof}
    Let $X \in \slist_S$. In the presheaf formation of $X$, by the previous lemma, the set $\slist(U, X) = \emptyset$ unless $U \in \CF_S$. Hence the desired conclusion.
\end{proof}

\begin{notation}
    We denote by $\Upsilon$ the category of fundamental operadic simplicial lists.
\end{notation}

\begin{corollary}
    There is an equivalence of categories 
    $$\slisto \cong \set^{\Upsilon^{op}}$$
\end{corollary}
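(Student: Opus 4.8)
The plan is to deduce this directly from the preceding Proposition, with essentially no new work beyond unwinding definitions. I would take $S = \{\theta : [k] \to [n] \mid \theta(k) = n\} \subseteq \mor(\D{})$. By the Example immediately above the Proposition, a simplicial list is of type $S$ precisely when it is operadic in the sense of Definition \ref{def_operadic}; indeed these are literally the same condition, since Definition \ref{def_operadic} requires $\theta^* : X_n \to X_k$ to be a function for every $\theta$ with $\theta(k) = n$, which is exactly membership in $\slist_S$ for this $S$. Hence $\slist_S = \slisto$.

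Next I would identify $\Upsilon$ with $\CF_S$. By the Notation just introduced, $\Upsilon$ is the category of fundamental operadic simplicial lists, i.e. the full subcategory of $\CF$ spanned by those fundamental simplicial lists $U$ that are operadic. By the previous paragraph, \emph{operadic} and \emph{of type $S$} name the same property, so $\Upsilon = \CF_S$ on the nose. Applying the Proposition with this choice of $S$ then gives $\slist_S \cong \set^{\CF_S^{op}}$, and substituting the two identifications $\slist_S = \slisto$ and $\CF_S = \Upsilon$ yields the desired equivalence $\slisto \cong \set^{\Upsilon^{op}}$.

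Honestly, there is no real obstacle here: all the content already sits in the Presheaf Theorem, in the Proposition on types of simplicial lists, and in the Lemma guaranteeing that being of type $S$ is inherited by the domain of any morphism into a type-$S$ object (this last point is what forces $\set^{\CF_S^{op}}$, rather than a larger presheaf category, to be the correct target). The only thing that could conceivably go wrong is a clash of conventions in what \emph{operadic} means, so the single step worth spelling out in the writeup is the equality $\slisto = \slist_S$ together with the corresponding $\Upsilon = \CF_S$.

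I would also flag — though this corollary does not claim it — that a substantive follow-up is to verify the explicit combinatorial description of $\Upsilon$ promised in the introduction (objects the rooted simplices of $\nd$, morphisms the pairs $(\theta, a)$). That requires understanding the shapes of simplices in operadic simplicial lists via the multigraphs $M_\alpha$ and the rooted decomposition of Lemma \ref{lemma : rooted decomposition}, and is logically independent of the formal equivalence proved here.
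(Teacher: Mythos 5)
Your proposal is correct and is exactly the paper's intended derivation: the corollary is stated immediately after the Proposition on types of simplicial lists precisely so that it follows by taking $S = \{\theta : [k] \to [n] \mid \theta(k) = n\}$ and observing that $\slist_S = \slisto$ and $\CF_S = \Upsilon$. You are also right that the explicit combinatorial description of $\Upsilon$ is a separate matter, which the paper handles later in the section on operadic simplicial lists.
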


\begin{remark}
    The reason we choose this level of generality, besides the easy derivation of the above important corollary, is that we believe interesting classes emerge by letting $S$ vary. For instance, we would call a simplicial list \emph{properadic} in case $S = \{ \theta : [k] \to [n] : \theta(0)= 0 , \theta(k) = n \}$. Choosing the subset $S$ of interest restricts the type of shapes from $\CF$ to $\CF_S$ according to the type of structure we are studying. In this vein, simplicial sets are the \emph{categorical} simplicial lists.
\end{remark}

\subsection{Operadic simplicial lists} \label{subsec : operadic}

While it is important to know that $\slisto$ is a presheaf category, in order to work properly with the presheaf structure we need a good understanding of the base $\Upsilon$. We construct and classify fundamental operadic simplicial lists.

\begin{construction} \label{const : Ua}
    Let $\alpha \in \nd_n$ for some $n \geq 0$. Recall the usual convention of writing $\alpha$ as
\[\begin{tikzcd}
	{\alpha : } &[-2em] {A_0} & {A_1} & \dots & {A_n}
	\arrow["{\alpha_1}", from=1-2, to=1-3]
	\arrow["{\alpha_2}", from=1-3, to=1-4]
	\arrow["{\alpha_n}", from=1-4, to=1-5]
\end{tikzcd}\]
and writing $\alpha_{i,j} : A_i \to A_j$ for the appropriate composite of morphisms in the chain. We define the simplicial list $U_\alpha$ with set of $k$-simplices given by
$$U_{\alpha,k} = \coprod_{\theta : [k] \to [n]} A_{\theta(k)}$$
In other words, a $k$-simplex in $U_\alpha$ is a pair $(\theta, a)$, where $\theta : [k] \to [n]$ is a morphism in $\D{}$ and $a \in A_{\theta(k)}$, for all $k \geq 0$.

Let $\eta : [l] \to [k]$ be a morphism in $\D{}$ and $(\theta, a)$ be a $k$-simplex in $U_\alpha$. The action of $\eta$ on this simplex is given by
$$\eta^*(\theta , a) = \{ (\theta \eta , b) \ : \ \alpha_{\theta \eta (l), \theta(k)}(b) = a \}$$
The order on the set on the right is inherited from $A_{\theta \eta (l)}$ and we regard it as a sequence of $l$-simplices. 
We see that $U_\alpha$ is operadic, since, in case $\eta(l) = k$, the sequence on the right is just the singleton $(\theta \eta, a)$.

Moreover, there is an evident listing 
\[
\begin{tikzcd}
    u_\alpha : &[-3em] \D{n}  \arrow[r] & U_\alpha \\[-2em]
    & \theta : [k] \to [n]  \arrow[r, maps to] & ((\theta, a))_{a \in A_{\theta(k)}}
\end{tikzcd}
\]
which is perfect since, by construction, every simplex in $U_\alpha$ appears exactly once as a term in an image sequence of the $u_\alpha$. 
\end{construction}

\begin{theorem}
    An operadic simplicial list $U$ is fundamental if and only if $U \cong U_\alpha$ for some $\alpha \in \nd^{root}$. 
\end{theorem}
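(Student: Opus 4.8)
The plan is to prove the two implications separately. The reverse one is immediate from Construction \ref{const : Ua}: if $U \cong U_\alpha$ with $\alpha \in \nd_n^{root}$, then $U_\alpha$ is operadic and carries the perfect listing $u_\alpha : \D{n} \xslashedrightarrow{} U_\alpha$, and since $A_n \cong [0]$ the value $u_\alpha(1_{[n]}) = ((1_{[n]},a))_{a\in A_n}$ is a singleton, so $U_\alpha$, hence $U$, is fundamental. For the forward implication, fix an operadic $(U,u) \in \F{n}$ with fundamental simplex $u \in U_n$. First I would extract the ordered sets of $\alpha$: for $0 \le i \le n$ let $\iota_i : [0] \to [n]$ be the vertex with value $i$ and set $A_i = \iota_i^* u \in \CL U_0$, regarded as a finite ordered set. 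Perfectness of $u$ at level $0$ gives $U_0 = \coprod_{i=0}^n A_i$ with each $A_i$ carrying distinct terms, and operadicity applied to $\iota_n$ (which hits the top of $[n]$) forces $\iota_n^* : U_n \to U_0$ to be a function, so $|A_n| = 1$. The tool that makes everything run is what I will call the \emph{last-vertex bijection}: for each $k$ the map $\iota_k^* : U_k \to U_0$ (here $\iota_k$ is the last vertex of $[k]$) is a function by operadicity, and for every $\theta : [k] \to [n]$ it carries the set of terms of $\theta^* u$ order-isomorphically onto $A_{\theta(k)}$, because $\iota_k^*(\theta^* u) = (\theta\iota_k)^* u = \iota_{\theta(k)}^* u = A_{\theta(k)}$ and $\iota_k^*$ acts termwise onto a list with distinct terms.

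Next I would build $\alpha$ and a candidate isomorphism. For $1 \le i \le n$, apply the last-vertex bijection to the edge $e : [1] \to [n]$, $0 \mapsto i-1$, $1 \mapsto i$: the terms of $e^* u$ biject with $A_i$ via $d_0$, say $g_a$ is the one with $d_0 g_a = a$. Since $d_1^*(e^*u) = \iota_{i-1}^* u = A_{i-1}$ has distinct terms, the sublists $(d_1 g_a)_{a\in A_i}$ form an ordered partition of $A_{i-1}$, and setting $\alpha_i(b) = a$ when $b$ occurs in $d_1 g_a$ defines an order-preserving map $\alpha_i : A_{i-1} \to A_i$; hence $\alpha := (A_0 \xrightarrow{\alpha_1} \cdots \xrightarrow{\alpha_n} A_n)$ is an $n$-simplex of $\nd$, and it is rooted since $|A_n| = 1$. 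Then I would define $\phi : U \to U_\alpha$ by $\phi_k(w) = (\theta, \iota_k^* w)$, where $\theta : [k] \to [n]$ is the unique operation with $w$ a term of $\theta^* u$ (Lemma \ref{lemma : properties of F}(i)). By the last-vertex bijection each $\phi_k$ is a bijection, with inverse sending $(\theta,a)$ to the unique term of $\theta^*u$ with last vertex $a$, and one checks directly that $\phi_n(u)$ is the fundamental simplex of $U_\alpha$.

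The remaining task is to show $\phi$ commutes with the action of every $\eta : [l]\to[k]$, and this is the crux of the proof. Using that every $w \in U_k$ is a term of some $\theta^* u$ and that the structure maps of both $U$ and $U_\alpha$ distribute over list concatenation, naturality of $\phi$ reduces to two facts. The first is that applying $\phi_l$ termwise to $\psi^* u$ returns $\psi^*(u_\alpha)$ for every $\psi : [l]\to[n]$; this is immediate, the left side being $((\psi,b))_{b\in A_{\psi(l)}}$ by the last-vertex bijection, and the right side the same list since the constraint in the $U_\alpha$-action at the fundamental simplex is vacuous (its target index is $n$ and $A_n$ is a singleton). The second — and this is the genuine combinatorial content — is the block-length identity $|\eta^* w| = |\alpha_{\theta\eta(l),\theta(k)}^{-1}(\iota_k^* w)|$ for $w$ a term of $\theta^* u$: informally, the ``$\eta$-descendants'' of a simplex inside the fundamental list of $U$ are partitioned exactly along the fibres of the composite structure maps $\alpha_{p,q}$ of Construction \ref{const_key}. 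Given both, the two $m$-fold decompositions of the (equal, by the first fact) lists obtained by applying $\phi_l$ termwise to $(\theta\eta)^* u$ and by computing $(\theta\eta)^*(u_\alpha)$ coincide block by block, which is exactly naturality; then $\phi$ is a levelwise-bijective morphism in $\slist$, hence an isomorphism, and since $\alpha$ is rooted the theorem follows. The hard part will be the second fact: it is the precise incarnation of the principle that higher simplices of operadic simplicial lists are levelled trees, and I would prove it by induction, the consecutive-level case $q = p+1$ being essentially the construction of the maps $\alpha_i$ above, with the inductive step tracking how $\theta^* u$ behaves under precomposing $\theta$ with a coface or codegeneracy via the simplicial identities in $U$.
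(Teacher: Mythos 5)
Your proposal is correct and follows essentially the same route as the paper: both directions, the identification $A_\theta \cong A_{\theta(k)}$ via the last-vertex map and operadicity (your ``last-vertex bijection'' is exactly the paper's commutative square with $t_k^*$), and the recovery of the maps $\alpha_i$ from the edges of $[n]$ all match the paper's argument. You are in fact somewhat more explicit than the paper about the final step — the paper dismisses the verification that the simplicial structure of $U$ agrees with that of $U_\alpha$ as ``not difficult to show,'' whereas you isolate the block-length identity that this verification actually requires.
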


\begin{proof}
    Let $\alpha \in \nd_n^{root}$. As demonstrated above, the listing $u_\alpha : \D{n} \xslashedrightarrow{} U_\alpha$ is perfect for all $\alpha$. By definition, we have $u_\alpha(1_{[n]}) \cong A_n$. Hence, this $U_\alpha$ is fundamental just in case $A_n \cong [0]$, i.e. in case $\alpha$ is rooted. 

    For the converse, let $u : \D{n} \xslashedrightarrow{} U$ be a fundamental operadic simplicial list. Let us denote $u(i) : A_i \to U_0$ for some $A_i \in \dplus$, $0 \leq i \leq n$. Since $u$ is perfect, we have $U_0 \cong \displaystyle \coprod_{i=0}^n A_i$. 
    Similarly, for $\theta : [k] \to [n]$ in $\D{n}_k$, let us denote $u(\theta) : A_\theta \to U_k$. Since $u$ is perfect, we must have $U_k \cong \displaystyle \coprod_{\theta : [k] \to [n]} A_\theta$. In particular, since $U$ is fundamental, we obtain $A_n \cong A_{1_{[n]}} \cong [0]$.

    Let $t_k : [0] \to [k]$ be the last vertex mapping $0 \mapsto k$. We have a commutative square in $\llist$
    \[\begin{tikzcd}
	{\D{n}_k} & {U_k} \\
	{\D{n}_0} & {U_0}
	\arrow["{u_k}", "\shortmid"{marking}, from=1-1, to=1-2]
	\arrow["{u_0}"', "\shortmid"{marking}, from=2-1, to=2-2]
	\arrow["{t_k^*}", from=1-2, to=2-2]
	\arrow["{t_k^*}"', from=1-1, to=2-1]
\end{tikzcd}\]
    The listings on the top and bottom are perfect, while the structure map on the right is a function since $U$ is operadic. Therefore, when restricting to a particular $\theta : [k] \to [n]$ in $\D{n}_k$, we deduce that 
    $$A_\theta \cong A_{\theta(k)}$$

    Now, we recover a simplex $\alpha \in \nd^{root}_n$ with $\alpha(i) = A_i$.
    In case $k=1$, we have a commutative square in $\llist$ 
    \[\begin{tikzcd}
	{\D{n}_1} & {U_1} \\
	{\D{n}_0} & {U_0}
	\arrow["{u_1}", "\shortmid"{marking}, from=1-1, to=1-2]
	\arrow["{u_1}"', "\shortmid"{marking}, from=2-1, to=2-2]
	\arrow["{d_1^*}"', from=1-1, to=2-1]
	\arrow["{\underline{d}_1^*}", "\shortmid"{marking}, from=1-2, to=2-2]
\end{tikzcd}\]
    Every map $[1] \to [n]$ amounts to choosing a pair $i,j \in [n]$ with order $i \leq j$. 
     When restricting the above commutative square to a single element $(i,j) \in \D{n}_1$, we have a commutative square in $\llist$
    \[\begin{tikzcd}
	{\{(i,j) \}} & {A_{(i,j)} \cong A_j} \\
	{\{i \}} & {A_i}
	\arrow["{u_1}", "\shortmid"{marking}, from=1-1, to=1-2]
	\arrow["{u_0}"', "\shortmid"{marking}, from=2-1, to=2-2]
	\arrow["{d_1^*}", "\shortmid"{marking}, from=1-2, to=2-2]
	\arrow["{d_1^*}"', from=1-1, to=2-1]
\end{tikzcd}\]
    The listings on top and bottom are still perfect. Hence, we obtain a function
    $$\alpha_{i,j} : A_i \to A_j$$
    by declaring that $\alpha_{i,j}(a) = b$ in case $a$ appears as a term in the sequence $d_1^*(b)$. It is not difficult to show that the simplicial structure of $U$ is precisely that of $U_\alpha$ constructed above. 
    
\end{proof}

\begin{note} [$U_\alpha$ as a partial operad] \label{note: partial operad}
    The above combinatorics is somewhat involved, but there is an angle on the construction of $U_\alpha$ we find explanatory. Let $\alpha \in \nd_n^{root}$. Combining a little jargon from operads and simplices, we may regard the set of $0$-simplices $\displaystyle \coprod_{i=1}^n A_i$ as the \emph{colors} of $U_\alpha$. The \emph{operations} ($1$-simplices) are declared  in the form $(\theta, a)$ where $\theta : [1] \to [n]$ and $a \in A_{\theta(1)}$. 
    The simplex $\theta$ is just the choice of two indexes $i,j$ with order $i \leq j$, while the specification of an element $a \in A_j$ just says that this operation is of the form $\alpha_{i,j}^{-1}(a) \to a$.
    
    In similar fashion, $2$-simplices are encoded in the form $((i,j,k), a)$, where $i,j,k$ are indexes with order $i \leq j \leq k$ and $a \in A_k$. These record a \emph{partial composition} operation of the appropriate operations. We may think of the composition scheme as follows: an operation $g$ may be composed with a sequence of operations $\underline{f} = (f_1, \dots , f_l)$ if and only if $d_0 g = d_1 \underline{f}$ and moreover all the terms of $\underline{f}$ belong to the same level. 
\end{note}

We will not formulate a notion of partial operad here, even though it is a useful heuristic in understanding the formation of $U_\alpha$ and a thick version of it we will construct in the next section. Let us illustrate with an example.

\begin{example}
    Let $\alpha$ be the rooted $2$-simplex in $\nd_2^{root}$ which records the rooted tree
     \[\begin{tikzcd}[column sep=small]
	{} && {} \\
	{f_1} && {f_2} \\
	& g \\
	& {}
	\arrow["{a_1}"', no head, from=1-1, to=2-1]
	\arrow["{b_1}"', no head, from=2-1, to=3-2]
	\arrow["{b_2}", no head, from=2-3, to=3-2]
	\arrow["c"', no head, from=3-2, to=4-2]
	\arrow["{a_2}", no head, from=1-3, to=2-3]
\end{tikzcd}\]
    The $0$-simplices, or colors, of $U_\alpha$ are just $a_1, a_2, b_1, b_2, c$. The non-degenerate $1$-simplices, or operations, are $f_1, f_2 ,g$ and moreover, we have another (non-depicted) $1$-simplex $h : (a_1, a_2) \to c$. We may think of $h$ as the “composite" $g \circ (f_1, f_2)$. This composition is witnessed by the unique non-degenerate 2-simplex of $U_\alpha$ (the fundamental one $u_\alpha$). 

    Notice that in $U_\alpha$ we do not have other compositions, since composition is only allowed level-wise. For instance, $g \circ (f_1, 1_{b_2})$ is not allowed since $f_1$ is a term in the sequence $u_\alpha ((0,1))$ while $1_{b_2}$ is a term in the sequence $u_\alpha((0,0))$, where $(0,1)$ and $(1,1)$ are the indexing $1$-simplices in $\D{2}$. 
\end{example}

\begin{proposition}
    The category $\Upsilon$ of fundamental operadic simplicial lists can be described as follows:
    \begin{itemize}
        \item [-] The objects are rooted simplices $\alpha \in \nd_n^{root}$, $n \geq 0$. 
        \item[-] A morphism $(\theta, a) : \beta \to \alpha$ between $\beta \in \Upsilon^{(k)}$ and $\alpha \in \Upsilon^{(n)}$ consists of a map $\theta : [k] \to [n]$ and an element $a \in A_{\theta(k)}$ such that $\beta = (\theta^*\alpha)_a$, which means that $\beta$ is the rooted restriction (Construction \ref{const : rooted restriction}) of $\theta^* \alpha$ at $a$.
    \end{itemize}
\end{proposition}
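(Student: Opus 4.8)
The plan is to read off both the objects and the hom-sets of $\Upsilon$ from its definition as the full subcategory of $\CF$ spanned by the operadic fundamental simplicial lists, using the preceding theorem to parametrise objects and the classification of simplices by their shapes to parametrise morphisms. First, for objects: the preceding theorem gives that every operadic fundamental simplicial list is isomorphic to some $U_\alpha$ with $\alpha\in\nd^{root}$, and the recipe in its proof recovers $\alpha$ from $U$ canonically — the set $A_i$ is the fibre of the indexing perfect listing over the vertex $i$, and the maps $\alpha_{i,j}$ are read off from the level-one data — so, finite ordered sets being objects of $\dplus$ up to unique isomorphism, $\alpha$ is uniquely determined by $U$. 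Hence $\alpha\mapsto U_\alpha$ is a bijection from $\nd^{root}$ onto the isomorphism classes of objects of $\Upsilon$, and I would take $\Ob\Upsilon=\nd^{root}$ accordingly.

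Next, for the hom-sets: fixing $\beta\in\nd_k^{root}$ and $\alpha\in\nd_n^{root}$, since $\CF$ and $\Upsilon$ are full subcategories of $\slist$ I need to describe $\slist(U_\beta,U_\alpha)$. By Lemma~\ref{lemma : properties of F}(ii) the assignment $f\mapsto f(u_\beta)$ injects $\slist(U_\beta,U_\alpha)$ into the set $U_{\alpha,k}$ of $k$-simplices of $U_\alpha$. I would then identify its image with $\{x\in U_{\alpha,k}:U_\beta\cong U_x\}$, where $U_x$ is the shape of $x$: if $f(u_\beta)=x$, then $f$ post-composed with the perfect listing $u_\beta\colon\D{k}\xslashedrightarrow{}U_\beta$ equals the listing $(x)\colon\D{k}\xslashedrightarrow{}U_\alpha$ representing $x$ (both send $\phi$ to $\phi^*x$, using naturality of $u_\beta$ and that $f$ is simplicial), so $U_\beta$ together with $f$ is a perfect-function factorisation of $(x)$ and hence is canonically isomorphic to $U_x$; conversely the canonical map $U_x\to U_\alpha$ carries the fundamental simplex to $x$. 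By Construction~\ref{const : Ua} a $k$-simplex of $U_\alpha$ is a pair $(\theta,a)$ with $\theta\colon[k]\to[n]$ in $\D{}$ and $a\in A_{\theta(k)}$, so the whole problem reduces to computing the shape of such a pair.

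The key step, which I expect to be the main obstacle, is to show $U_{(\theta,a)}\cong U_{(\theta^*\alpha)_a}$. I would do this by writing down the morphism $g\colon U_{(\theta^*\alpha)_a}\to U_\alpha$ sending an $l$-simplex $(\psi,c)$ of $U_{(\theta^*\alpha)_a}$ — here $\psi\colon[l]\to[k]$ and $c$ lies in the $\psi(l)$-th level of $(\theta^*\alpha)_a$, which by Construction~\ref{const : rooted restriction} is $\alpha_{\theta\psi(l),\theta(k)}^{-1}(a)\subseteq A_{\theta\psi(l)}$ — to $(\theta\psi,c)\in U_{\alpha,l}$. A direct check using the action formula of Construction~\ref{const : Ua} shows that $g$ is a well-defined functional morphism and that $g$ post-composed with the perfect listing $u_{(\theta^*\alpha)_a}$ equals the listing $((\theta,a))$, since both send $\psi$ to the sequence $\{(\theta\psi,c):\alpha_{\theta\psi(l),\theta(k)}(c)=a\}$. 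As $u_{(\theta^*\alpha)_a}$ is perfect and $g$ is a function, this is the perfect-function factorisation of $((\theta,a))$, whence $U_{(\theta,a)}\cong U_{(\theta^*\alpha)_a}$ by uniqueness. Combining with the previous two paragraphs, a morphism $U_\beta\to U_\alpha$ is the same datum as a pair $(\theta,a)$ with $U_\beta\cong U_{(\theta^*\alpha)_a}$, that is, with $\beta=(\theta^*\alpha)_a$, which is the asserted description. The delicate point here is purely combinatorial: reconciling the $\D{}$-action on $U_\alpha$ with rooted restriction well enough to recognise $g$ as the perfect-function factorisation.

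Finally I would verify that this identification is compatible with composition and identities. The composite of $(\psi,b)\colon\gamma\to\beta$ with $(\theta,a)\colon\beta\to\alpha$ should be $(\theta\psi,b)\colon\gamma\to\alpha$: here $b$ lies in the $\psi(l)$-th level of $\beta=(\theta^*\alpha)_a$, which is $\alpha_{\theta\psi(l),\theta(k)}^{-1}(a)\subseteq A_{\theta\psi(l)}$, and one checks $\gamma=(\psi^*\beta)_b=((\theta\psi)^*\alpha)_b$ because rooted restriction commutes with pullback along morphisms of $\D{}$ — a short diagram chase around Construction~\ref{const : rooted restriction} — while the identity of $\alpha$ corresponds to $(1_{[n]},*)$ with $*$ the unique element of $A_n\cong[0]$, using $\alpha_*=\alpha$. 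These last verifications are routine bookkeeping, and everything beyond the shape computation rests on the preceding theorem and Lemma~\ref{lemma : properties of F}.
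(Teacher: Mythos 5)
Your proposal is correct and follows essentially the same route as the paper's proof: objects are identified via the preceding classification theorem, and morphisms via Lemma \ref{lemma : properties of F} together with the observation that the shape of the $k$-simplex $(\theta,a)\in U_{\alpha,k}$ is $U_{(\theta^*\alpha)_a}$. The paper states this last point and the compatibility with composition without proof, whereas you supply the explicit perfect-function factorisation and the check that rooted restriction commutes with pullback; these added details are accurate.
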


\begin{proof}
    The above proposition tells us that fundamental operadic simplicial lists are of the form $U_\alpha$ for $\alpha \in \nd^{root}$. Since $U_\alpha$ is uniquely determined by $\alpha$, we may identify the objects of $\Upsilon$ with rooted simplices of the category $\dplus$. 
    
    Let $\phi : U_\beta \to U_\alpha$ be a morphism in $\Upsilon$.
    By the general description of morphisms in $\CF$ (Lemma \ref{lemma : properties of F}), $\phi$ is uniquely determined by the image of the fundamental $k$-simplex in $U_\beta$. By construction of $U_\alpha$, we may construct a $k$-simplex of shape $\beta$ in $U_\alpha$ if and only if we have a map $\theta : [k] \to [n]$ such that $\beta = (\theta^* \alpha)_a$ for some $a \in A_{\theta(k)}$. Hence, we may identify $\phi$ with the pair $(\theta, a)$. It is easy to check that the above determine an isomorphism of categories.
\end{proof}

\begin{remark} 
    In \cite[Definition 2.4]{barwick2018operator}, Barwick constructs, for any operator category $\Phi$, a category $\D{}_\Phi$ called the category of $\Phi$-\emph{sequences}. Specializing to the case $\Phi = \dplus$, this category has the following description:
    \begin{itemize}
        \item[-] Objects are simplices $\alpha : \D{n} \to \dplus$, $[n] \in \D{}$.
        \item [-] For $\alpha \in N(\dplus)_n$ and $\beta \in \nd_k$, a morphism $(\theta, \phi) : \beta \to \alpha$ consists of a map $\theta : [k] \to [n]$ in $\D{}$ and a natural transformation $\phi : \beta \to \alpha \circ \theta$
        \[\begin{tikzcd}
	{\D{k}} && {\D{n}} \\
	& \dplus
	\arrow["\theta", from=1-1, to=1-3]
	\arrow[""{name=0, anchor=center, inner sep=0}, "\beta"', from=1-1, to=2-2]
	\arrow["\alpha", from=1-3, to=2-2]
	\arrow["\phi"{description}, shorten <=9pt, Rightarrow, from=0, to=1-3]
\end{tikzcd}\]
such that the following conditions are satisfied:
\begin{itemize}
    \item [(i)] For all $i \in [k]$, the component function $\phi_i : B_i \to A_{\theta(i)}$ is an interval inclusion (meaning an inclusion such that the image consists of consecutive terms of the target).
    \item[(ii)] For all $i \leq j$ in $[k]$, the square 
    \[\begin{tikzcd}
	{B_i} && {A_{\theta(i)}} \\
	{B_j} && {A_{\theta(j)}}
	\arrow["{\phi_i}", from=1-1, to=1-3]
	\arrow[from=1-1, to=2-1]
	\arrow[from=1-3, to=2-3]
	\arrow["{\phi_j}"', from=2-1, to=2-3]
\end{tikzcd}\]
is a pullback square.
\end{itemize}
    \end{itemize}

    Notice that a morphism $(\theta, \phi) : \beta \to \alpha$ as above is completely determined by $\theta : [k] \to [n]$ and an interval inclusion $B_k \subseteq A_{\theta(k)}$, as the rest of the simplex $\beta$ and components of $\phi$ are determined by condition $(ii)$. 
    
    In particular, in case $\beta$ is rooted, the latter amounts to the choice of an element $a \in A_{\theta(k)}$. Thus, this notion of morphism coincides with the one between rooted simplices of $\nd$ described in the above proposition. This way, we see that $\Upsilon$ can be interpreted as the full subcategory of $\D{}_\Phi$, $\Phi = \dplus$, spanned by rooted simplices. 

    This not only offers another description of morphisms in $\Upsilon$, but we may also conclude that Segal $\Upsilon$-spaces, meaning functors $\Upsilon^{op} \to \mathsf{Kan}$ valued in Kan complexes (or any other model for spaces, or in the $\infty$-category of spaces) which satisfy a Segal condition (see for instance \cite[Definition 2.6]{barwick2018operator} or \cite[Definition 2.7]{chu2021homotopy}), do model $\infty$-operads. For instance, an equivalence with Lurie's model is established in \cite[Section 10]{barwick2018operator}.

    We do not unpack the Segal conditions, although the reader can easily guess. In fact, we work in the quasi-categorical direction towards establishing a direct connection between simplicial operads and list quasi-operads. One reason for this is that simplicial lists and list quasi-operads are far easier to describe that Segal $\Upsilon$-spaces on the level of combinatorics. We intend to pursue a proper comparison in future work.
\end{remark}

    We use the description of $\Upsilon$ in the statement of the above proposition, while we write
    \[
    \begin{tikzcd}
        U_\bullet : &[-3em] \Upsilon \arrow[r] & \slisto \\[-2em]
        & \alpha \arrow[r, maps to] & U_\alpha
    \end{tikzcd}
    \]
    for the Yoneda embedding into $\slisto$. 

Having done the combinatorial heavy-lifting, we may reinterpret the nerve functor $N^l : \operad \to \slisto$ to be the right adjoint to the Yoneda extension of the functor $\alpha \mapsto T_\alpha$
\[\begin{tikzcd}
	& \slisto \\
	\Upsilon && \operad
	\arrow["{\alpha \mapsto T_\alpha}"', from=2-1, to=2-3]
	\arrow["{U_\bullet}", from=2-1, to=1-2]
	\arrow["N^l"{description}, from=2-3, to=1-2]
	\arrow["{\tau_1}"{description}, curve={height=-18pt}, dashed, from=1-2, to=2-3]
\end{tikzcd}\]
In fact, the presheaf structure indicates the nerve functor ought to be what it is, since the most evident operad one can construct out of a rooted simplex $\alpha \in \nd$ is the operad $T_\alpha$. Moreover, the Yoneda extension provides us with a \emph{fundamental operad}, or \emph{free operad}, functor
\[
\begin{tikzcd}
    \tau_1 :&[-3em] \slisto \arrow[r] & \operad
\end{tikzcd}
\]
for free. The fact that $\tau_1 U_\alpha = T_\alpha$, which follows by definition, is sensible when we regard $U_\alpha$ as a partial operad (Note \ref{note: partial operad}), in which case $T_\alpha$ is the free operad which contains the missing composites. 

\begin{note}
    A major difference between the category $\Upsilon$ and the category $\D{}$ is the fact that the representables $U_\alpha$ are not nerves of operads, while the simplicial sets $\D{n}$ are. This is not the case in dendroidal theory, where trees are fundamentally identified with the free operads they generate. On the other hand, the objects in our base $\Upsilon$ do behave like simplices in many other ways and morphisms between them are tractable combinatorially and there is far less of them compared to the dendroidal setting.  
\end{note}

\section{The coherent nerve} \label{section : coherent nerve}

In order to define a list coherent nerve functor $\ncl : \soperad \to \slisto$, we construct an appropriate functor $\VV : \Upsilon \to \soperad$ and proceed by Yoneda extension to obtain a \emph{rigidification} functor $\fo : \slisto \to \soperad$ which has as right adjoint $\ncl$. Taking our cue from the pattern developed in the ordinary nerve construction, we define simplicial operads $\VVa$ for $\alpha \in \Upsilon$ which, not only are cofibrant replacements of the operads $T_\alpha$, but are also simplicially well-behaved.

\subsection{Fundamental simplicial objects in $\slist$}

We begin with a preliminary discussion. Let $\Q$ be a simplicial operad.
The monoidal envelope $\llq$ inherits a simplicial enrichment from $\Q$ (see Remark \ref{remark : L discrete}). 
An $n$-simplex in the coherent nerve, say $x \in  \nc(\LQ)_n$, is represented by a simplicially enriched functor
\[
\begin{tikzcd}
    x :&[-3em] \DD{n} \arrow[r] & \LQ
\end{tikzcd}
\]
where $\DD{n}$ is the ordinary simplicial thickening of the category $\D{n}$ (see Appendix \ref{appendix_free}).

Consider the associated discrete simplicial objects $\DD{n}_\bullet , \LQ_\bullet : \D{op} \to \cat$. Recall that we have $(\LQ)_\bullet \cong \CL(\Q_\bullet)$, where $\Q_\bullet : \D{op} \to \operad$ is the discrete simplicial object associated to $\Q$ (Remark \ref{remark : L discrete}). Now, consider the morphism $x_\bullet : \DD{n}_\bullet \to \LQ_\bullet$ associated to $x$. 

By applying the nerve functor $N : \cat \to \sset$ dimension-wise, and using $N \CL \cong \CL N^l$ (Proposition \ref{prop : L commutes N}), we may regard $x_\bullet$ as a morphism $N(\DD{n}_\bullet) \to \CL N^l(\Q_\bullet)$ in $\sset^{\D{op}}$. Equivalently, $x_\bullet$ is a dimension-wise listing
\[\begin{tikzcd}[column sep=scriptsize]
	{x_\bullet :} &[-2em] {N(\DD{n}_\bullet)} && {N^l(\Q_\bullet)}
	\arrow["\shortmid"{marking}, from=1-2, to=1-4]
\end{tikzcd}\]
in $\widetilde{\slist}^{\D{op}}$. 
We may apply the perfect-function factorization dimension-wise to obtain 
\[\begin{tikzcd}[column sep=scriptsize]
	{N(\DD{n}}_\bullet) && N^l(\Q_\bullet) \\
	& \UU_x
	\arrow["x_\bullet", "\shortmid"{marking}, from=1-1, to=1-3]
	\arrow["{u_x}"', "\shortmid"{marking}, from=1-1, to=2-2]
	\arrow["{f_x}"', from=2-2, to=1-3]
\end{tikzcd}\]
for some simplicial object $\UU_x : \D{op} \to \slisto$. 

We see that the simplex $x$ is represented functionally by $f_x$. The list nerve functor $N^l$ is right adjoint to the free operad functor $\tau_1 : \slisto \to \operad$. Thus, $f_x$ can be transposed into a morphism $g_x : \VV_x \to \Q_\bullet$ in $\operad^{\D{op}}$, where $\VV_x = \tau_1 \UU_x$ (dimension-wise). 

Finally, it is easy to see (say, from the proof of Proposition \ref{prop : factorisation}) that $\UU_x$ has the following property: the simplicial lists $(\UU_x)_m$, $[m] \in \D{}$, have the same set of $0$-simplices and all simplicial operations are identity on $0$-simplices. Thus, the simplicial object $\VV_x \in \operad^{\D{op}}$ is discrete (in the sense of Remark \ref{remark : L discrete}) and may be regarded as a simplicial operad and $g_x : \VV_x \to \Q$ a morphism in $\soperad$.

Reverse engineering the above steps, we may conclude the following:
\begin{quote}
    For a simplicial operad $\Q$, simplices in the coherent nerve $\nc(\LQ)_n$, $[n] \in \D{}$, are classified by morphism of simplicial operads $\VV \to \Q$ for simplicial operads $\VV$ which are freely generated by simplicial objects $\UU : \D{op} \to \slisto$ which are indexed by a perfect listing with domain $N(\DD{n}_\bullet)$.
\end{quote}

In what follows, we classify the aforementioned simplicial objects $\UU$. As expected, such objects are indexed by simplices in the nerve of the category $\dplus$. 

\begin{construction}[Thick version of $U_\alpha$] \label{const_thickua}

Let $\alpha \in \nd_n$. We adopt the usual notation. For $[k], [m] \in \D{}$, define the set 
$$\UUa [k, m] = \coprod_{\theta : \D{k} \to \DD{n}_m} A_{\theta(k)} .$$
The mapping $[k, m] \mapsto \UUa [k,m]$ is functionally simplicial in the variable $m$ and listing simplicial in the variable $k$ (in the same manner $U_\alpha$ is, as described in Construction \ref{const : Ua}). 

This way, we may regard $\UUa$ as a simplicial object
\[
\begin{tikzcd}
    \UUa :&[-3em] \D{op} \arrow[r] & \slist \\[-2em]
    & \left[m \right] \arrow[r, maps to] & \UUa[- , m]
\end{tikzcd}
\]
For ease, we will denote $\UUa^{(m)} = \UUa[-,m]$ for all $m \geq 0$.

Notice that for all $[m] \in \D{}$ we have $$\UUa [0, m] \cong \coprod_{i \in [n]} A_i$$ and moreover, the simplicial operations are identities on the latter set. Hence, applying the free operad functor produces a simplicially enriched operad which we denote 
$$\VVa = \tau_1 \UUa .$$
    
\end{construction}

\begin{remark}
    $\UUa$ may be regarded to be obtained by freely resolving the partial level-wise composition operation in $U_\alpha$ discussed in Note \ref{note: partial operad}.
\end{remark}

\begin{proposition} \label{prop : UUa augmentation}
    For all $\alpha \in \nd$, there is an augmentation $\UUa \dashrightarrow U_\alpha$ with extra degeneracies. 
\end{proposition}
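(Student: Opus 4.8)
The plan is to exhibit the augmentation explicitly by comparing the combinatorial indexing data of $\UUa$ with that of $U_\alpha$, and then to produce the extra degeneracies by hand, exactly mimicking the classical pattern by which a simplicial resolution of a category augments to the category (see Appendix \ref{appendix_free}). Recall that $\UUa^{(m)} = \UUa[-,m]$ has $k$-simplices indexed by pairs $(\theta, a)$ with $\theta : \D{k} \to \DD{n}_m$ and $a \in A_{\theta(k)}$, while $U_\alpha$ has $k$-simplices indexed by pairs $(\theta, a)$ with $\theta : [k] \to [n]$ and $a \in A_{\theta(k)}$. The augmentation map $\epsilon : \UUa^{(0)} \to U_\alpha$ should be induced by the structure map of the standard resolution $\DD{n} \dashrightarrow \D{n}$, which on $m$-simplices of hom-spaces sends a chain of composable morphisms in $\D{n}$ to its composite; concretely, a $0$-simplex of $\UUa^{(0)}$ is a pair $(\theta, a)$ with $\theta : \D{k} \to \DD{n}_0 = \D{n}$, and one sends it to $(\theta, a) \in U_{\alpha,k}$ unchanged — so in fact $\UUa^{(0)} = U_\alpha$ on the nose (the resolution at level $0$ is the original category), and $\epsilon$ is the identity. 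The real content is the coaugmentation data: simplicial maps $U_\alpha \to \UUa^{(m)}$ for the extra-degeneracy maps, compatible with all face and degeneracy operations except the outermost ones.

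First I would make precise the claim that applying the perfect-function factorisation and the $\tau_1$ functor to $\UUa$ is harmless here, so that it suffices to produce the augmentation-with-extra-degeneracies at the level of the simplicial objects $\UUa, U_\alpha \in \slisto^{\D{op}}$; this is clean because extra degeneracies are preserved by any functor. Second, I would spell out the extra-degeneracy maps $s_{-1} : \UUa^{(m)} \to \UUa^{(m+1)}$ directly in terms of the indexing data: given $\theta : \D{k} \to \DD{n}_m$, precompose (or postcompose) with the appropriate face/degeneracy of $\DD{n}$ induced by the extra degeneracy of the standard resolution $\DD{n}_\bullet$, which adjoins a new copy of the identity at the front of each chain of composable morphisms. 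Because $\DD{n}_\bullet \dashrightarrow \D{n}$ carries extra degeneracies in the classical sense (Appendix \ref{appendix_free}), these maps satisfy the requisite simplicial identities levelwise in the $A_{\theta(k)}$-coordinate, and one checks they are natural in $k$ — i.e. compatible with the listing-simplicial structure — simply because the $A_{\theta(k)}$-component is untouched and only the reindexing datum $\theta$ is modified. Third, I would verify that $\UUa^{(0)} \cong U_\alpha$ and that the augmentation together with the $s_{-1}$'s satisfies the extra-degeneracy axioms, which reduces to the corresponding already-known statement for $\DD{n}_\bullet \dashrightarrow \D{n}$.

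The main obstacle — really the only place where something needs to be checked rather than transported — is confirming that the extra-degeneracy maps $s_{-1}$, which are defined on the $\theta$-component via the resolution of $\DD{n}$ but are required to be natural transformations of simplicial lists (not merely of simplicial sets), genuinely commute with the last-face \emph{listing} operations. Concretely, when a simplicial operation $\eta$ on the $k$-index with $\eta(l) = k$ fails, one must pass to the formula $\eta^*(\theta,a) = \{(\theta\eta, b) : \alpha_{\theta\eta(l),\theta(k)}(b) = a\}$ and confirm that prepending an identity to the chain $\theta$ and then applying $\eta$ agrees with applying $\eta$ and then prepending an identity, as \emph{ordered sequences} in the fibre; this is a bookkeeping check about how the preimage sets $\alpha^{-1}$ and their inherited orders behave under the two operations, and it goes through because prepending an identity does not alter which composite morphism $\theta\eta$ represents nor the relevant $\alpha_{i,j}$. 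Once this compatibility is in hand, the rest is formal, and the conclusion that $\VVa = \tau_1\UUa$ likewise carries an augmentation with extra degeneracies to $\tau_1 U_\alpha = T_\alpha$ follows by functoriality of $\tau_1$, which will be the starting point for the weak-equivalence statement to follow.
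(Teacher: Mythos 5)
Your overall strategy is exactly the paper's: transport the augmentation and extra degeneracies of the classical resolution $\DD{n} \dashrightarrow \D{n}$ through the $\theta$-coordinate of the indexing pairs $(\theta,a)$, leaving the $a$-coordinate and the fibres $\alpha_{i,j}^{-1}$ untouched, and observe that the listing-simplicial structure in the $k$-direction is unaffected. The paper's proof is precisely the formulas $d_0(\theta,a)=(d_0(\theta),a)$ and $s_{-1}(\theta,a)=(s_{-1}(\theta),a)$, so on that score you match it.

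There is, however, one concrete error in your write-up: you claim that $\UUa^{(0)} = U_\alpha$ on the nose and that the augmentation is the identity, ``the resolution at level $0$ being the original category.'' With the conventions of Appendix \ref{appendix_free}, the $0$-simplices of the resolution $FU_*\C$ form $FU\C$, the free category on the underlying graph of $\C$, not $\C$ itself; so $\DD{n}_0 = FU(\D{n}) \neq \D{n}$, and $\UUa^{(0)}$ is indexed by $k$-simplices $\theta : \D{k} \to N(FU\,\D{n})$, which are strictly more plentiful than morphisms $[k]\to[n]$. The augmentation is therefore not the identity but the genuine map $(\theta,a)\mapsto(d_0\circ\theta, a)$ induced by postcomposing with the counit $d_0 : FU(\D{n})\to\D{n}$ (composition of chains) — which, to your credit, is exactly the map you describe in the first half of the same sentence before contradicting yourself. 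The fix is purely local and the rest of your argument, including the compatibility check with the last-face listings and the passage through $\tau_1$ to get $\VVa \dashrightarrow T_\alpha$, is sound.
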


\begin{proof}
    The simplicial category $\DD{n}$, by virtue of being a free resolution, is equipped with an augmentation $\DD{n} \dashrightarrow \D{n}$ with extra degeneracies
    \[\begin{tikzcd}
	\dots & {\DD{n}_1} && {\DD{n}_0} & {\D{n}}
	\arrow["{d_0}", from=1-4, to=1-5]
	\arrow["{d_0}", shift left=3, from=1-2, to=1-4]
	\arrow["{d_1}"', shift right=3, from=1-2, to=1-4]
	\arrow["{s_0}"{description}, from=1-4, to=1-2]
	\arrow["{s_{-1}}", curve={height=-12pt}, dashed, from=1-5, to=1-4]
	\arrow["{s_{-1}}", curve={height=-24pt}, dashed, from=1-4, to=1-2]
\end{tikzcd}\]
    This induces an augmentation with extra degeneracies $\UUa \dashrightarrow U_\alpha$
    \[\begin{tikzcd}
	\dots & {\UUa^{(1)}} && {\UUa^{(0)}} & {U_\alpha}
	\arrow["{d_0}", from=1-4, to=1-5]
	\arrow["{d_0}", shift left=3, from=1-2, to=1-4]
	\arrow["{d_1}"', shift right=3, from=1-2, to=1-4]
	\arrow["{s_0}"{description}, from=1-4, to=1-2]
	\arrow["{s_{-1}}", curve={height=-12pt}, dashed, from=1-5, to=1-4]
	\arrow["{s_{-1}}", curve={height=-24pt}, dashed, from=1-4, to=1-2]
\end{tikzcd}\]
    The morphism $d_0 : \UUa^{(0)} \to U_\alpha$ is defined, for $\theta : \D{k} \to \DD{n}_0$ and $a \in A_{\theta(k)}$, by
    $$d_0(\theta, a) = (d_0 (\theta), a)$$
    where $d_0(\theta)$ is the composite of $\theta$ with the augmentation map $d_0 : \DD{n}_0 \to \D{n}$. Similarly, the extra degeneracies $s_{-1} : \UUa^{(m-1)} \to \UUa^{(m)}$ are defined, for $\theta : \D{k} \to \DD{n}_{m-1}$ and $a \in A_{\theta(k)}$ by
    $$s_{-1}(\theta, a) = (s_{-1}(\theta), a)$$
    where $s_{-1}(\theta)$ is the composite of $\theta$ with the extra degeneracy $s_{-1} : \DD{m-1} \to \DD{m}$ pertaining to the resolution $\DD{n}$. 
\end{proof}

\begin{corollary}
    There is a weak equivalence of simplicial operads $\VVa \to T_\alpha$ for all $\alpha \in \nd$.
\end{corollary}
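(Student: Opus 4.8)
The argument runs entirely parallel to the proof of Proposition \ref{prop : weakeqop}. Recall that $\VVa = \tau_1 \UUa$ by Construction \ref{const_thickua}, and that $\tau_1 U_\alpha \cong T_\alpha$: for rooted $\alpha$ this is the Yoneda-extension property defining $\tau_1$, and the general case follows since both $U_\alpha$ and $T_\alpha$ decompose as coproducts indexed by the roots of $\alpha$ (Lemma \ref{lemma : coprod}) and $\tau_1$, being a left adjoint, preserves coproducts. The plan is to apply $\tau_1$ componentwise to the augmentation $\UUa \dashrightarrow U_\alpha$ of Proposition \ref{prop : UUa augmentation} and then invoke the standard fact that an augmented simplicial object with extra degeneracies is a simplicial homotopy equivalence onto its augmentation (Appendix \ref{appendix_Aug}).

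First I would check that $\VVa$ is genuinely a simplicial operad, i.e. a discrete simplicial object in $\operad^{\D{op}}$. By Construction \ref{const_thickua} we have $\UUa[0,m] \cong \coprod_{i \in [n]} A_i$ for every $[m] \in \D{}$, with all simplicial operations in the variable $m$ acting as the identity on this set; since $\tau_1$ sets the colors of $\tau_1 X$ to be $X_0$ and acts as the identity on $0$-simplices, the simplicial object $[m] \mapsto \tau_1 \UUa^{(m)}$ has constant colors and identity-on-colors structure maps. Applying $\tau_1$ componentwise to the augmented simplicial object with extra degeneracies $\UUa \dashrightarrow U_\alpha$ then yields an augmented simplicial operad $\VVa \dashrightarrow T_\alpha$ (with $T_\alpha$ regarded as discrete) which again carries extra degeneracies, since a functor applied componentwise preserves composition and identities, hence the defining relations of extra degeneracies. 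By the computation of $0$-simplices above this augmentation is moreover a bijection on colors, so the induced morphism $\VVa \to T_\alpha$ is identity-on-colors.

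It remains to see that this morphism is a weak equivalence of simplicial operads. In the model structure on $\soperad$ it suffices to verify condition (W1): since the morphism is bijective on colors, (W1) applied to unary operations shows that the bijective-on-objects functor $\pi_0[\VVa]_1 \to \pi_0[T_\alpha]_1$ is fully faithful, hence an equivalence, which is (W2). For (W1), fix a sequence of colors $\ux$ and a color $y$ of $\VVa$. Because every operad and every morphism in sight is identity-on-colors, the assignment $\cc{O} \mapsto \cc{O}(\ux, y)$ is functorial with values in $\set$; applying it componentwise to $\VVa \dashrightarrow T_\alpha$ produces an augmented simplicial set $\VVa(\ux, y) \dashrightarrow T_\alpha(\ux, y)$ which inherits the extra degeneracies. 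By Appendix \ref{appendix_Aug}, such an augmented simplicial set is a simplicial homotopy equivalence onto the constant simplicial set on $T_\alpha(\ux, y)$; in particular $\VVa(\ux, y) \to T_\alpha(\ux, y)$ is a weak equivalence of simplicial sets. This establishes (W1) and finishes the proof.

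The only point requiring genuine care is the bookkeeping: that $\tau_1$, applied componentwise, simultaneously preserves the extra-degeneracy structure and lands among (discrete) simplicial operads, and that passing to a fixed operation space $\cc{O} \mapsto \cc{O}(\ux, y)$ transports the "augmented with extra degeneracies" structure to simplicial sets. Once these observations are in place, the conclusion is immediate from the appendix on augmented simplicial objects, exactly as in Proposition \ref{prop : weakeqop}.
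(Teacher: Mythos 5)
Your proof is correct and follows essentially the same route as the paper's: apply $\tau_1$ to the augmentation with extra degeneracies $\UUa \dashrightarrow U_\alpha$, observe the result is identity-on-colors, and restrict to operation spaces where the extra degeneracies force a weak equivalence of simplicial sets. The extra care you take in checking $\tau_1 U_\alpha \cong T_\alpha$ for non-rooted $\alpha$ and in explicitly verifying (W2) is sound but not a different argument.
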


\begin{proof}
    By applying the fundamental operad functor $\tau_1$ to $\UUa \dashrightarrow U_\alpha$, we obtain an augmentation with extra degeneracies $\VVa \dashrightarrow T_\alpha$. The induced map $\VVa \to T_\alpha$ is identity on objects by construction. In particular, given any sequence of colors $\ua$ and color $b$ in $\VVa$, we have an augmentation with extra degeneracies $\VVa(\ua, b) \dashrightarrow T_\alpha(\ua, b)$ from which we conclude that the induced map $\VVa(\ua, b) \rightarrow T_\alpha(\ua, b)$ is a weak equivalence of simplicial sets. Hence the conclusion.
\end{proof}

\begin{theorem} \label{prop_classsslist}
    Let $\UU : \D{op} \to \slisto$ be a simplicial object. Then, there exists a pointwise perfect listing $\DD{n} \xslashedrightarrow{} \UU$ if and only if $\UU \cong \UUa$ for some $\alpha \in \nd$.
\end{theorem}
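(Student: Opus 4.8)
The plan is to mirror the proof of the analogous (un-thickened) classification theorem, the one characterizing fundamental operadic simplicial lists as the $U_\alpha$'s, but now one categorical level up, replacing the role of $\D{n}$ by the simplicial thickening $\DD{n}$ and the role of perfect listings of sets by \emph{pointwise} perfect listings of simplicial objects in $\slisto$. So first I would dispatch the easy direction: given $\alpha \in \nd$, exhibit the canonical perfect listing $\DD{n} \xslashedrightarrow{} \UUa$. This is essentially already written down in Construction \ref{const_thickua}: at bidegree $[k,m]$ the map sends $\theta : \D{k} \to \DD{n}_m$ to the sequence $((\theta, a))_{a \in A_{\theta(k)}}$, exactly as $u_\alpha$ does for $U_\alpha$, and perfection is checked pointwise in $m$ using the same argument as for $u_\alpha$ (every simplex of $\UUa[-,m]$ appears exactly once, with no repeats, as a term of such a sequence). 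One should note this listing is simplicial in $m$ by the functional simplicial structure in the $m$-variable, so it is genuinely a morphism $\DD{n} \xslashedrightarrow{} \UUa$ of simplicial objects in $\slisto$.

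For the converse, suppose $\UU : \D{op} \to \slisto$ is a simplicial object equipped with a pointwise perfect listing $u : \DD{n} \xslashedrightarrow{} \UU$. The strategy is to reconstruct $\alpha$ and then identify $\UU$ with $\UUa$. Fix $[m] \in \D{}$: then $u^{(m)} : \DD{n}_m \xslashedrightarrow{} \UU_m$ is a perfect listing into a \emph{fundamental operadic} simplicial list (fundamental because $\DD{n}_m$ is a category, hence $1_{[k]}$-components give singletons appropriately — more precisely, $\DD{n}_m$ has a terminal-ish last object structure compatible with being an honest category, so $u^{(m)}$ restricted along the last-vertex inclusion behaves as in the un-thickened case). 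By the previous classification theorem (\textbf{the one just before Note \ref{note: partial operad}}, stating that fundamental operadic simplicial lists are exactly the $U_\beta$), we get $\UU_m \cong U_{\beta_m}$ for a uniquely determined $\beta_m \in \nd^{root}$ — wait, more carefully: $\DD{n}_m$ is not $\D{k}$ for a single $k$, so I instead apply the argument \emph{internal} to the proof of that theorem: write $u^{(m)}(i) : A_i^{(m)} \to \UU_{m,0}$ and, using perfection plus operadicity (the last-face-is-a-function property) exactly as in that proof, deduce $A^{(m)}_\theta \cong A^{(m)}_{\theta(k)}$ and recover a functor $\D{n} \to \dplus$, i.e.\ an element $\alpha_m \in \nd_n$, with $\UU_m \cong U_{\alpha_m}$ naturally. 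The key new point is then that the assignment $m \mapsto \alpha_m$ is \emph{constant}: the simplicial structure maps of $\UU$ are identities on $0$-simplices (this needs to be observed — it follows because $\DD{n} \dashrightarrow \D{n}$ has extra degeneracies concentrated appropriately, or more simply because $\DD{n}_m$ all share the same object set $[n]$ and perfection forces $\UU_{m,0} \cong \coprod_i A_i$ compatibly), so the sets $A_i^{(m)}$ and the maps $\alpha_i$ are independent of $m$. Hence all $\alpha_m$ equal a single $\alpha \in \nd_n$.

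It then remains to upgrade the pointwise isomorphisms $\UU_m \cong U_\alpha^{(m)} = \UUa[-,m]$ to an isomorphism of simplicial objects, i.e.\ to check naturality in $m$. This is the usual diagram chase: for a simplicial operator $\eta$ in the $m$-direction, both $\UU$ and $\UUa$ transport the data along the corresponding operator of $\DD{n}$ (via $\theta \mapsto \eta^*\theta$ on the indexing $\theta : \D{k} \to \DD{n}_m$), and the pointwise isomorphisms were defined precisely by tracking the perfect listing, so they intertwine these operators. I expect the \textbf{main obstacle} to be exactly this bookkeeping step — making precise that the perfect-function factorisation, applied pointwise in $m$ and compared against the explicit combinatorial model $\UUa$, assembles into a genuine natural isomorphism, and in particular carefully justifying that the $0$-simplex sets and hence the recovered $\alpha$ do not vary with $m$. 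Everything else is a direct transcription of the un-thickened argument, with $\DD{n}$ in place of $\D{n}$, together with an appeal to the functoriality of the perfect-function factorisation (the corollary following the factorisation proposition) to pin down $\UU_\alpha$ up to unique isomorphism.
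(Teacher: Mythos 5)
Your proposal is correct and follows essentially the same route as the paper: the easy direction is the explicit perfect listing $\theta \mapsto ((\theta,a))_{a\in A_{\theta(k)}}$ from Construction \ref{const_thickua}, and the converse uses exactly the paper's two ingredients — perfection gives $\UU[k,m]\cong\coprod_\theta A_\theta$, and then operadicity in the $k$-direction (via the last-vertex map $t_k$) together with functionality of the $m$-direction structure maps (via the terminal map $[m]\to[0]$) force $A_\theta\cong A_{\theta(k)}$ independently of $m$, whence $\alpha$ is recovered from the spine. The only difference is organizational (you run the $k$-direction argument for each fixed $m$ and then prove constancy in $m$, where the paper interleaves the two), and your self-corrections (e.g.\ replacing the initial claim $\UU_m\cong U_{\alpha_m}$ by $\UU_m\cong\UUa[-,m]$) land on the right statement.
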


\begin{proof}
    Let $\alpha \in \nd_n$.
    We may define a listing 
    \[
    \begin{tikzcd}
        \eta_\alpha :&[-3em] \DD{n} \arrow[r, "\shortmid"{marking}] & \UUa \\[-2em]
        &\theta : \D{k} \to \DD{n}_m \arrow[r, maps to] & ((\theta , a))_{a \in A_{\theta(k)}}
    \end{tikzcd}
    \]
    for all $k,m \geq 0$. This listing is perfect by construction of $\UUa$.

    For the converse, let $\UU \in \slisto^{\D{op}}$ and let $\eta : \DD{n} \xslashedrightarrow{} \UU$ be a perfect listing. We write $\UU[k,m]$ for the $k$-simplices of the simplicial list $\UU_m$. This way, for all $k,m \geq 0$ we have a perfect listing of sets
    \[
    \begin{tikzcd}
        \eta_{k,m} :&[-3em] N(\DD{n}_m)_k \arrow[r, "\shortmid"{marking}] & \UU[k,m]
    \end{tikzcd}
    \]
    For $\theta : \D{k} \to \DD{n}_m$, let us denote $\eta_{k,m}(\theta) : A_\theta \to \UU[k,m]$, where $A_\theta \in \dplus$. Since $\eta$ is perfect, we have an isomorphism $$\UU[k,m] \cong \coprod_{\theta : \D{k} \to \DD{n}_m} A_\theta.$$

    Consider the case $k=0$. Let $A_i$ be the ordered set corresponding as above to the morphism $\{ i \} : \D{0} \to \DD{n}_0$ which picks the object $i \in \DD{n}_0$. In general, let $A_i^{(m)}$ denote the ordered set corresponding to the morphism $\{ i \} : \D{0} \to \DD{n}_m$. Consider the terminal map $s : [m] \to [0]$ in $\D{}$. We have a commutative square in $\llist$
    \[\begin{tikzcd}
	{N(\DD{n}_0)_0} && {\UU[0,0]} \\
	{N(\DD{n}_m)_0} && {\UU[0,m]}
	\arrow["{s^*}"', from=1-1, to=2-1]
	\arrow["{\eta_{0,m}}"', "\shortmid"{marking}, from=2-1, to=2-3]
	\arrow["{\eta_{0,0}}", "\shortmid"{marking}, from=1-1, to=1-3]
	\arrow["{s^*}", from=1-3, to=2-3]
\end{tikzcd}\]
    The listings on the top and bottom are perfect. By restricting to elements $\{ i\} : \D{0} \to \DD{n}_0$ we see that there is an isomorphism $A_i \cong A_i^{(m)}$ for all $i$ and for all $[m] \in \D{}$.

    For general $k$, consider the morphism $t_k : [0] \to [k]$, $0 \mapsto k$, in $\D{}$. We have a commutative square in $\llist$
    \[\begin{tikzcd}
	{N(\DD{n}_m)_k} && {\UU[k,m]} \\
	{N(\DD{n}_m)_0} && {\UU[0,m]}
	\arrow["{t_k^*}"', from=1-1, to=2-1]
	\arrow["{\eta_{0,m}}"', "\shortmid"{marking}, from=2-1, to=2-3]
	\arrow["{\eta_{k,m}}", "\shortmid"{marking}, from=1-1, to=1-3]
	\arrow["{t_k^*}", from=1-3, to=2-3]
\end{tikzcd}\]
    Again, the horizontal listings are perfect. The map on the right is a function since $\UU_m$ is assumed to be operadic. By restricting to some element $\theta : \D{k} \to \DD{n}_m$ in $N(\DD{n}_m)_k$, we see that we must have an isomorphism $A_\theta \cong A_{\theta(k)}^{(m)}$. Combining with the previous result, we conclude that $A_\theta \cong A_{\theta(k)}$ for all $\theta$.

    By looking at the spine of $\DD{n}$ it is not difficult to recover morphisms $\alpha_{i} : A_{i-1} \to A_i$ so that we form a simplex $\alpha \in \nd_n$ with $\alpha(i) = A_i$. It is also easy to check  that the simplicial operations of $\UU$ agree with those of $\UUa$.

\end{proof}

The above classification has immediate important consequences.

\begin{corollary} \label{prop_thicksimplex}
    Let $\Q \in \soperad$. There is a natural bijection
    $$\nc (\LQ)_n \cong \coprod_{\alpha \in \nd_n} \soperad(\VVa ,  \Q)$$
    for all $[n] \in \D{}$.
\end{corollary}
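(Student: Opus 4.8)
The plan is to mirror the proof of Proposition~\ref{prop : simplex classification}, replacing $\D{n}$ by its simplicial thickening $\DD{n}$ and the classification of the free operads $T_\alpha$ by the classification of thick shapes $\UUa$ from Theorem~\ref{prop_classsslist}. Most of the bridge has in fact already been built in the discussion immediately preceding the statement, so the corollary should be the formal payoff of that discussion combined with Theorem~\ref{prop_classsslist}.

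\emph{From simplices to perfect--function factorisations.} An $n$-simplex $x \in \nc(\LQ)_n$ is, by definition, a simplicially enriched functor $x : \DD{n} \to \LQ$. I would apply the ordinary nerve functor pointwise in simplicial degree (where it is fully faithful), use the degreewise form of the lemma showing $\CL$ extends the monoidal envelope, i.e.\ $N_\bullet(\LQ) \cong \CL_\bullet\bigl(N^l_\bullet \Q\bigr)$, together with the adjunction between $\set \subseteq \llist$ and $\CL$ applied in each bidegree, to identify $x$ with a listing $\widetilde{x} : N_\bullet(\DD{n}) \xslashedrightarrow{} N^l_\bullet\Q$ of discrete simplicial objects in $\slist$. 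Applying the functorial perfect--function factorisation degreewise then gives $\widetilde{x} = f_x \circ u_x$ with $u_x : N_\bullet(\DD{n}) \xslashedrightarrow{} \UU_x$ perfect and $f_x : \UU_x \to N^l_\bullet\Q$ functional, for a simplicial object $\UU_x$ unique up to unique isomorphism. Since each $N^l(\Q_m)$ is operadic and types of simplicial lists are closed under receiving morphisms, $\UU_x$ is pointwise operadic, and it is pointwise discrete as observed in the preamble; hence Theorem~\ref{prop_classsslist} produces a \emph{unique} $\alpha \in \nd_n$ with $\UU_x \cong \UUa$.

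\emph{Identifying the remaining datum.} Applying $\tau_1 : \slisto \to \operad$ pointwise turns $f_x : \UUa \to N^l_\bullet\Q$ into a morphism of discrete simplicial objects in $\operad$, that is a morphism of simplicial operads, with source $\tau_1\UUa = \VVa$ (Construction~\ref{const_thickua}); by the adjunction $\tau_1 \dashv N^l$ applied degreewise, $f_x \mapsto \tau_1 f_x$ is a bijection onto $\soperad(\VVa, \Q)$. Running the identifications backwards — composing a morphism $\VVa \to \Q$ with the canonical perfect listing $\DD{n} \xslashedrightarrow{} \UUa$ from the proof of Theorem~\ref{prop_classsslist}, then undoing the $\CL$-adjunction and the pointwise nerve — recovers a simplicial functor $\DD{n} \to \LQ$, and uniqueness of the perfect--function factorisation together with uniqueness of the classifying $\alpha$ make the two directions mutually inverse. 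This yields
\[
\nc(\LQ)_n \;\cong\; \coprod_{\alpha \in \nd_n} \soperad(\VVa, \Q).
\]
Naturality in $\Q$ is automatic since every step is natural; naturality in $[n]$ I would verify exactly as in Proposition~\ref{prop : simplex classification}, using the maps $\VV_{\theta^*\alpha} \to \VVa$ obtained by applying $\tau_1$ pointwise to the evident maps $\UU_{\theta^*\alpha} \to \UUa$ induced by $\theta$.

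\emph{Main obstacle.} The mathematical content here is light; the work is entirely in the bookkeeping. One must check that all the ``pointwise'' constructions (the pointwise nerve, the degreewise $\CL$- and $\tau_1$-adjunctions, the degreewise perfect--function factorisation) are compatible with the ambient outer simplicial structure and genuinely land among \emph{discrete} simplicial objects. The one point that deserves an explicit argument rather than a citation is that $\UU_x$ is pointwise operadic and pointwise discrete; granting this, the statement is a formal consequence of Theorem~\ref{prop_classsslist}.
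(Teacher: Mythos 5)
Your proposal is correct and follows essentially the same route as the paper: the paper's proof likewise invokes Theorem~\ref{prop_classsslist} to identify the middle term of the pointwise perfect--function factorisation of a simplex $\DD{n} \xslashedrightarrow{} \Q$ with some $\UUa$, and then passes through the $\tau_1 \dashv N^l$ adjunction to land in $\soperad(\VVa,\Q)$. The only difference is that you spell out the bookkeeping (pointwise discreteness and operadicity of $\UU_x$, naturality) that the paper delegates to the discussion preceding the statement.
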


\begin{proof}
    This follows by the discussion in the beginning of this section.
\end{proof}

\begin{corollary}
    For all $\alpha \in \nd$, the simplicial operad $\VVa$ is cofibrant.
\end{corollary}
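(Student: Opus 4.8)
The plan is to show cofibrancy of $\VVa$ by exhibiting it as a retract (in fact, essentially an instance) of a free resolution, and then invoking the characterisation of cofibrant objects sketched in the remark on operad-valued simplicial computads. Concretely, recall from Construction \ref{const_thickua} that $\VVa = \tau_1 \UUa$, where $\UUa$ is built levelwise from the simplicial thickening $\DD{n}$; the simplicial category $\DD{n}$ is itself the free resolution $FU_* [n]$ of the poset $[n]$ (Appendix \ref{appendix_free}). So the strategy is: first understand the levelwise structure $\VVa^{(m)} = \tau_1 \UUa^{(m)}$, then verify that each $\VVa^{(m)}$ is a free operad and that degeneracies preserve atomic operations, which is exactly the definition of an operad-valued simplicial computad, hence cofibrant.

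First I would pin down $\VVa^{(m)}$ explicitly. By Construction \ref{const_thickua}, $\UUa^{(m)}[k] = \coprod_{\theta : \D{k} \to \DD{n}_m} A_{\theta(k)}$, and by the same argument as in Construction \ref{const : Ua} this is a fundamental operadic simplicial list indexed by a simplex of $\nd$ --- namely the simplex whose sequence of sets is obtained by pulling back $\alpha$ along the objects of $\DD{n}_m$ (which, since $\DD{n}_m$ has the same objects $0,\dots,n$ as $[n]$, is just $\alpha$ itself, but with the morphisms replaced by the $m$-simplices of the mapping spaces of $\DD{n}$). Thus $\UUa^{(m)} \cong U_{\alpha^{(m)}}$ for an appropriate $\alpha^{(m)} \in \nd$, and $\VVa^{(m)} = \tau_1 U_{\alpha^{(m)}} = T_{\alpha^{(m)}}$, a free operad on a multigraph. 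The key point is that $T_\beta = FM_\beta$ is free in the sense of the computad remark: its atomic operations are exactly the generating operations $p_i^{(a)}$ of $M_\beta$ (up to identities), since any decomposition $h = g \circ (f_1,\dots,f_k)$ of a generator forces all but one factor to be an identity.

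Next I would check the two computad conditions. Condition one --- that each $\VVa^{(m)}$ is free --- follows from the previous paragraph. Condition two --- that degeneracies of atomic operations are atomic --- is where the combinatorics of $\DD{n}$ enters: the degeneracy maps in the simplicial direction of $\UUa$ are induced by the degeneracies of the free resolution $\DD{n}$, which by construction (Appendix \ref{appendix_free}) send atomic $1$-simplices (represented by single morphisms, with no composition) to atomic $1$-simplices (a single bracket containing a single morphism, which represents an atomic operation in the next-level free operad). Passing through $\tau_1$, a generator $p_i^{(a)}$ of $M_{\alpha^{(m)}}$ is sent to a generator of $M_{\alpha^{(m+1)}}$, hence to an atomic operation. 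Therefore $\VVa_\bullet$ is an operad-valued simplicial computad, and by the (cited, well-known) characterisation of cofibrant objects in $\soperad$ --- equivalently, by building the latching maps explicitly and observing they are generating cofibrations --- $\VVa$ is cofibrant.

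The main obstacle I anticipate is that the excerpt only \emph{sketches} the computad characterisation of cofibrant objects and explicitly declines to prove it, so leaning on it may be considered circular or at least not self-contained. The clean alternative, which I would actually pursue in writing the proof, is to argue directly that $\VVa \to T_\alpha$ (which we already know is a weak equivalence by the preceding Corollary) is in fact a \emph{cofibrant replacement} in the standard sense: filter $\VVa$ by its skeleta in the simplicial direction and show each inclusion $\mathrm{sk}_{m-1}\VVa \hookrightarrow \mathrm{sk}_m \VVa$ is a pushout of a coproduct of generating cofibrations of $\soperad$ (the maps $\partial\D{m} \otimes T_\beta \sqcup_{\partial\D{m}\otimes T_{\beta'}} \D{m}\otimes T_{\beta'} \to \D{m}\otimes T_\beta$ attached along the non-degenerate $m$-simplices, i.e. the atomic generators of $\VVa^{(m)}$ not in the image of a degeneracy). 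The latching-object computation $L_m \VVa \to \VVa^{(m)}$ being a cofibration of operads is precisely the freeness/atomicity statement above, so the two approaches amount to the same combinatorial input; I would present whichever the referee finds more palatable, but the genuinely load-bearing fact is that $\UUa$, hence $\VVa$, is "freely generated level by level with degeneracies respecting generators," which is visibly true from Construction \ref{const_thickua}.
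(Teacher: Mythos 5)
There is a genuine gap, in fact two. First, your identification $\UUa^{(m)} \cong U_{\alpha^{(m)}}$ for some $\alpha^{(m)} \in \nd$ is false. The objects $U_\beta$ are classified by \emph{linear} chains $B_0 \to \dots \to B_N$ in $\dplus$, equivalently by perfect listings from a standard simplex $\D{N}$; but $\UUa^{(m)} = \UUa[-,m]$ carries a perfect listing from the nerve of the category $\DD{n}_m$, which for $m \geq 0$ has many parallel morphisms between its objects (chains of subsets of $\{0,\dots,n\}$) and is not a linear order. So the classification theorem for fundamental operadic simplicial lists does not apply, and $\VVa^{(m)}$ is not of the form $T_{\alpha^{(m)}}$. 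One can still hope that $\VVa^{(m)}$ is a free operad (because each $\DD{n}_m = FU^{(m+1)}[n]$ is a free category), but that requires a separate argument about how $\tau_1$ interacts with these non-representable shapes, which you have not given. Second, even granting levelwise freeness and atomicity of degeneracies, your conclusion rests on the computad characterisation of cofibrant objects in $\soperad$, which the paper explicitly states without proof and deliberately avoids using; your fallback via latching objects and generating cofibrations is only gestured at, and the generating cofibrations of the Robertson model structure (and the tensoring $\D{m} \otimes T_\beta$ you invoke) are nowhere set up in the paper. As written, the proposal is therefore not self-contained.

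The paper's own proof is entirely different and avoids all of this machinery: given a lifting problem for $\VVa$ against a trivial fibration $\K \twoheadrightarrow \Q$, it uses the bijection $\soperad(\VVa,\Q) \cong \{\alpha\text{-shaped simplices of } \nc(\LQ)\}$ (Corollary \ref{prop_thicksimplex}) to transpose the problem into $\scat$ as a lifting problem for $\DD{n}$ against $\CL\K \twoheadrightarrow \LQ$, which is still a trivial fibration since $\CL$ preserves them (Lemma \ref{lemma : Lwefib}). Since $\DD{n}$ is cofibrant in simplicial categories, a lift exists, and commutativity forces it to be of shape $\alpha$, hence to transpose back to the desired lift $\VVa \to \K$. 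If you want to salvage your approach, I would recommend adopting this transposition argument instead; otherwise you must actually prove the computad characterisation or carry out the skeletal filtration in full, including a correct description of the levelwise operads $\VVa^{(m)}$.
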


\begin{proof}
    Consider a lifting problem 
    \[\begin{tikzcd}
	& {\cc{K}} \\
	\VVa & \Q
	\arrow[from=2-1, to=2-2]
	\arrow["\sim", two heads, from=1-2, to=2-2]
	\arrow[dashed, from=2-1, to=1-2]
\end{tikzcd}\]
    in the category of simplicial operads, where the map on the right is a trivial fibration. By the previous result, we may transpose this lifting problem into the category of simplicial categories 
    \[\begin{tikzcd}
	& {\CL \cc{K}} \\
	{\bb{\D{n}}} & \LQ
	\arrow[from=2-1, to=2-2]
	\arrow["\sim", two heads, from=1-2, to=2-2]
	\arrow[dashed, from=2-1, to=1-2]
\end{tikzcd}\]
    The morphism on the right is still a trivial fibration, since the functor $\CL$ preserves trivial fibrations (Lemma \ref{lemma : Lwefib}). Hence, since $\bb{\D{n}}$ is a cofibrant simplicial category, this lifting problem has a solution. By commutativity, any chosen lift has to be of shape $\alpha$. Therefore, the original lifting problem has a solution.
\end{proof}

\begin{corollary}
    For all $\alpha \in \nd$, $\VVa$ is a cofibrant replacement of $T_\alpha$ in the category of simplicial operads.
\end{corollary}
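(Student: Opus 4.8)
The plan is short, since the statement is essentially the conjunction of the two preceding corollaries together with an unwinding of the definition. Recall that a cofibrant replacement of $T_\alpha$ in $\soperad$ is a cofibrant object $R$ equipped with a weak equivalence $R \to T_\alpha$ (and, with the strongest convention, one asks this map to be a fibration, hence a trivial fibration). First I would take $R = \VVa$ together with the map $\epsilon_\alpha \colon \VVa \to T_\alpha$ obtained by applying $\tau_1$ to the augmentation $\UUa \dashrightarrow U_\alpha$ of Proposition~\ref{prop : UUa augmentation}. By the corollary immediately preceding this statement, $\VVa$ is cofibrant; by the corollary before that, $\epsilon_\alpha$ is a weak equivalence of simplicial operads. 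By definition, this exhibits $\VVa$ as a cofibrant replacement of $T_\alpha$, which already proves the statement.

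If one adopts the convention that the replacement map must in addition be a fibration, I would append the following observation. The augmentation $\UUa \dashrightarrow U_\alpha$ carries extra degeneracies, and these are transported by $\tau_1$ to the mapping spaces: for every sequence of colors $\ua$ and color $b$ of $\VVa$ one obtains an augmented simplicial set $\VVa(\ua, b)_\bullet \dashrightarrow T_\alpha(\ua, b)$ with extra degeneracies, exactly as in the proofs of Proposition~\ref{prop : weakeqop} and the corollary above. Since $T_\alpha(\ua, b)$ is discrete, the extra degeneracies exhibit $\VVa(\ua, b)$ as simplicially deformation retracting onto $T_\alpha(\ua, b)$; in particular every fibre of $\VVa(\ua, b) \to T_\alpha(\ua, b)$ is a contractible Kan complex, so this map is a trivial Kan fibration, which gives condition (F1). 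On unary operations $\pi_0 \epsilon_\alpha$ is then a bijection on hom-sets and the identity on colors, so $\pi_0[\epsilon_\alpha]_1$ is an isomorphism of categories, in particular an isofibration, which gives (F2). Hence $\epsilon_\alpha$ is a trivial fibration, and $\VVa$ is a cofibrant replacement of $T_\alpha$ in the strongest sense.

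I do not anticipate any genuine obstacle here. The only point needing a little care is the bookkeeping that extra degeneracies survive the passage through $\tau_1$ on each mapping space and that this forces contractibility of the fibres over the discrete operad $T_\alpha$ — but this is precisely the mechanism already deployed earlier in the section, so it can be invoked rather than redone.
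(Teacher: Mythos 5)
Your first paragraph is exactly the paper's (implicit) argument: the corollary is stated without proof precisely because it is the conjunction of the two preceding corollaries, namely that the augmentation induces a weak equivalence $\VVa \to T_\alpha$ and that $\VVa$ is cofibrant. That part is correct and complete for the statement as the paper intends it.

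One caveat on your optional second paragraph: the inference from ``extra degeneracies'' to ``trivial Kan fibration'' does not hold. Extra degeneracies give a simplicial contraction of each fibre of $\VVa(\ua,b) \to T_\alpha(\ua,b)$, hence a weak equivalence, but they do not make the fibres Kan complexes; for instance $\D{1} \dashrightarrow \D{0}$ has extra degeneracies, yet $\D{1} \to \D{0}$ is not a Kan fibration. So condition (F1) is not established by this mechanism, and if one insists on the fibration convention a genuinely different argument would be needed. Since that convention is not the one in force here, this does not affect the validity of your proof of the stated corollary.
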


\begin{remark}
    To justify the title of this section, we can define a \emph{fundamental} simplicial object in $\slist$, say $\UU$, to be equipped with a perfect listing $u : N(\DD{n}_\bullet) \xslashedrightarrow{} \UU$ for some $n \geq 0$ and such that the sequence $u(n)$ is a singleton. Then, Theorem \ref{prop_classsslist} says that fundamental simplicial objects in $\slisto$ are of the form $\UUa$ for rooted simplices $\alpha \in \nd^{root}$.
\end{remark}

\subsection{The homotopy coherent list nerve} \label{subsection : coherent nerve}

It is not difficult to see that $\alpha \mapsto \VVa$ yields a functor $\VV : \Upsilon \to \soperad$. By Yoneda extension, we obtain a \emph{list rigidification} functor $\fo : \slisto \to \soperad$. We define the \emph{list coherent nerve functor} $\ncl : \soperad \to \slisto$ to be the right adjoint of $\fo$.
\[\begin{tikzcd}
	& \slisto \\
	\Upsilon && \soperad
	\arrow["{U_\bullet}", from=2-1, to=1-2]
	\arrow["{\alpha \mapsto \VVa}"', from=2-1, to=2-3]
	\arrow["\ncl"{description}, from=2-3, to=1-2]
	\arrow["\fo", curve={height=-18pt}, dashed, from=1-2, to=2-3]
\end{tikzcd}\]
For $\Q \in \soperad$ we will have
$$(\ncl \Q)_n = \coprod_{\alpha \in \nd_n^{root}} \soperad(\VVa , \Q) .$$

\begin{proposition} \label{prop : commute}
    The following squares of categories and functors commute (up to natural isomorphism)
    \[
    \begin{tikzcd}
	\scat & \soperad \\
	\sset & \slisto
	\arrow["\subseteq"', from=2-1, to=2-2]
	\arrow["\subseteq", from=1-1, to=1-2]
	\arrow["\nc"', from=1-1, to=2-1]
	\arrow["\ncl", from=1-2, to=2-2]
\end{tikzcd}
    ,
    \begin{tikzcd}
	\soperad & {\cat_\D{}} \\
	\slisto & \sset 
	\arrow["{\cc{L}}", from=1-1, to=1-2]
	\arrow["{\cc{L}}"', from=2-1, to=2-2]
	\arrow["\ncl"', from=1-1, to=2-1]
	\arrow["\nc", from=1-2, to=2-2]
\end{tikzcd}\]
\end{proposition}

\begin{proof}
    The first square says that our coherent nerve for simplicial operads extends the coherent nerve of simplicial categories. This is clear. To verify the commutativity of the second square, let
     $\Q \in \soperad$. By Proposition \ref{prop_thicksimplex}, simplices in the set $\nc (\LQ)_n$ are in bijection with morphisms $\VVa \to \Q$ for $\alpha \in \nd_n$, for all $[n] \in \D{}$. Each particular $\alpha \in \nd_n$ has a canonical rooted decomposition as 
    $$\alpha = \coprod_{a \in A_n} \alpha_a$$
    Given that $\alpha \mapsto \VVa$ respects disjoint union (this is easy to see), each morphism $\VVa \to \Q$ determines and is determined by a sequence of simplices in $\ncl \Q$, namely the sequence $(\VV_{\alpha_a} \to \Q)_{a \in A_n}$. Hence the conclusion.
\end{proof}

\begin{remark}
    We could have used the mapping $\alpha \mapsto FU_* T_\alpha$ in order to obtain a coherent nerve. While $FU_* T_\alpha$ is a cofibrant replacement of $T_\alpha$, it has too many simplices in the mapping spaces and does not give the precise results we want. 
\end{remark}

\begin{theorem}
    If $\Q \in \soperad$ is enriched in Kan complexes, then $\ncl \Q$ is a quasi-operad.
\end{theorem}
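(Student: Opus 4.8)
The plan is to reduce the assertion to the classical fact that the homotopy coherent nerve of a simplicial category enriched in Kan complexes is a quasi-category, via the compatibility of $\ncl$ with the monoidal envelope recorded in Lemma \ref{lemma_commute}.

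First note that $\ncl \Q$ is operadic, since $\ncl$ is by construction a functor with codomain $\slisto$; hence the content of the statement is exactly that the simplicial set $\CL(\ncl \Q)$ is a quasi-category. By the right-hand square of Lemma \ref{lemma_commute} there is a natural isomorphism $\CL(\ncl \Q) \cong \nc(\CL \Q)$, where $\CL \Q$ now denotes the monoidal envelope of $\Q$ regarded as a simplicially enriched category and $\nc$ is the ordinary coherent nerve. So it suffices to prove that $\nc(\CL \Q)$ is a quasi-category.

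The next step is to check that $\CL \Q$ is enriched in Kan complexes. This is immediate from the description of the mapping spaces of the monoidal envelope: $(\CL \Q)(\ua, \ub) \cong \coprod_{\beta}\left(\prod_{b} \Q(\ua_b, b)\right)$, and finite products and arbitrary coproducts of Kan complexes are again Kan complexes. (Equivalently: $\Q$ being enriched in Kan complexes is precisely the statement that $\Q$ is fibrant in $\soperad$, i.e.\ that $\Q \to \assoc$ is a fibration — condition (F2) being automatic — so by Lemma \ref{lemma : Lwefib} its image $\CL \Q \to \CL \assoc \cong \dplus$ is a fibration in $\scat$, and since $\dplus$ has discrete mapping spaces this forces the mapping spaces of $\CL \Q$ to be Kan complexes.)

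Finally, invoke the classical theorem (see e.g.\ \cite[Proposition 1.1.5.10]{lurie2009higher}) that the coherent nerve of a simplicial category enriched in Kan complexes is a quasi-category; the coherent nerve appearing here agrees with the standard one by the left-hand square of Lemma \ref{lemma_commute}. Applying it to $\CL \Q$ shows that $\nc(\CL \Q) \cong \CL(\ncl \Q)$ is a quasi-category, so $\ncl \Q$ is a quasi-operad. The argument has no real obstacle beyond assembling these ingredients; the only point needing a line of care is the verification that $\CL \Q$ is locally Kan, carried out above.
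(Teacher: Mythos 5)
Your proof is correct and follows essentially the same route as the paper's: reduce via the right-hand square of Lemma \ref{lemma_commute} to the classical statement that the coherent nerve of a locally Kan simplicial category is a quasi-category. The only difference is that you spell out why $\LQ$ is locally Kan (coproducts of products of Kan complexes), a step the paper asserts without comment.
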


\begin{proof}
    By Proposition \ref{prop : commute}, we have $\cc{L} (\ncl \Q) \cong \nc (\LQ)$. In case the mapping spaces in $Q$ are Kan complexes, then so are the mapping spaces in $\LQ$ and hence $\nc (\LQ)$ is an $\infty$-category. Thus, $\ncl \Q$ is an $\infty$-operad. 
\end{proof}

\begin{example}[The $\infty$-operads of spaces and $\infty$-categories]
\label{example : operad of spaces}
    Let $\mathsf{Kan}_{\D{}}$ denote the simplicially enriched category of Kan complexes. We may form the simplicial operad $\mathsf{Kan}_{\D{}}^\times$ via the cartesian product. Since the spaces of operations are Kan complexes, we obtain an $\infty$-operad
    $$\cc{S}^{\times} = \ncl (\mathsf{Kan}_{\D{}}^\times)$$
    which may be regarded as the $\infty$-operad of spaces. 

    Similarly, let $\mathsf{qCat}_{\D{}}$ be the simplicially enriched category of $\infty$-categories (quasi-categories). Since the mapping spaces are not Kan complexes, we consider $\mathsf{qCat}_{\D{}}^\circ$, which is the simplicial subcategory with mapping spaces the maximal Kan complexes in the mapping spaces of $\mathsf{qCat}_{\D{}}$. In similar fashion, we may form the simplicial operad $(\mathsf{qCat}_{\D{}}^\circ)^\times$ via the cartesian product and apply the coherent nerve functor to obtain an $\infty$-operad of $\infty$-categories
    $$\cat_\infty^\times = \ncl (\mathsf{qCat}_{\D{}}^\circ)^\times$$
\end{example}

\subsection{Some properties of rigidification}

Rigidification is usually hard to understand. The reason is that, for $X \in \slisto$, the simplicial operad $\fo X$ is given as a colimit of simplicial operads and the latter are not easy to compute. In case $X$ is a simplicial set, even though the rigidification $\fc X$ is a colimit of simplicial categories, we do know something about the shape of the mapping spaces via the necklace theorem of Dugger and Spivak (\cite{dugger2011rigidification}, see also \cite{haderi2023colimits} for a related discussion on colimits of categories). This provides us a little window into understanding some facts about $\fo X$, at least for the case where $X = N^l(\CP)$ for some operad $\CP$, in terms of the simplicial category $\fc (\LP)$.

Let $\CP$ be an operad and let us denote by $\fo \CP$ the list rigidification of $N^l \CP$. We can describe the simplicial category $\fc (\LP)$ as follows. Its objects are those of $\LP$. Given two sequences $\ua , \ub \in \LP$, a $k$-simplex (or $k$-morphism) in the mapping space $\fc (\LP)(\ua, \ub)_k$ is represented by a pair $(f,B)$ consisting of:
\begin{itemize}
    \item [-] A simplex $f : \D{n} \to \LP_{\ua, \ub}$, where notation indicates that $0 \mapsto \ua$ and $n \mapsto \ub$. 

    \item[-] A $k$-morphism $B \in \DD{n}(0,n)_k$ (which is just a bracketing formula, see Appendix \ref{appendix_free}).
\end{itemize}
We identify two such pairs $(f: \D{n} \to \LP_{\ua, \ub},B)$ and $(g : \D{m} \to \LP_{\ua, \ub}, B^\prime)$ if there is a simplicial operation $\sigma : [m] \to [n]$ such that $\sigma^* f = g$
\[\begin{tikzcd}
	{\D{m}} && {\D{n}} \\
	& \LP_{\ua, \ub}
	\arrow["g"', from=1-1, to=2-2]
	\arrow["f", from=1-3, to=2-2]
	\arrow["\sigma", from=1-1, to=1-3]
\end{tikzcd}\]
and $ B^\prime \mapsto B$ under the induces map $\DD{m} \to \DD{n}$.

On the other hand, by Proposition \ref{prop : simplex classification}, the first piece of data above is just a morphism of operads 
    $$f : T_\alpha \to \CP_{\ua, \ub}$$
for some $\alpha \in \nd_n$ and such that $f(A_0) = \ua$ and $f(A_n) = \ub$.
By rooted decomposition of simplices in $\dplus$, we obtain a decomposition functor
\[
\begin{tikzcd}
     \fc(\LP)_k \arrow[r] &\CL \fc (\LP)_k \\[-2em]
     (f: T_\alpha \to \CP, B)  \arrow[r, maps to] & ((f_a : T_{\alpha_a} \to \CP, B))_{a \in A_n}
\end{tikzcd}
\]
This is well defined, since the relation we have on representatives is preserved by rooted decomposition. 

\begin{definition}[Sequenced morphism]
    Let $\CP$ be an operad and $\Q$ be a simplicial operad. We say that a morphism of simplicial categories $$\phi : \fc (\LP) \to \LQ$$ is \emph{sequenced} if for all pairs $(f : T_\alpha \to \CP, B)$ representing a morphism in $\fc(\LP)$ we have that:
    \begin{itemize}
        \item [-] $\phi$ preserves the above decomposition, meaning that we have
    $$\phi (f : T_\alpha \to \CP, B) = (\phi(f_a : T_{\alpha_a} \to \CP, B))_{a \in A_n}$$

        \item[-]  In case $\alpha$ is rooted, $\phi(f : T_\alpha \to \CP, B)$ is a singleton sequence as a morphism in $\LQ$. 
    \end{itemize}

\end{definition}

\begin{construction}[The universal sequenced morphism]

For an operad $\CP$, we have an identity on colors morphism
$$S_\CP : \fc (\LP) \to \CL \fo (\CP)$$
as follows. Given a $k$-morphism $(f, B) \in \fc (\LP) (\ua, b)_k$, for some sequence of colors $\ua$ and color $b$, represented by a pair $f : T_\alpha \to \CP_{\ua, b}$ for some rooted $\alpha \in \nd_n^{root}$ and $B : 0 \to n$ in $\DD{n}_k$, we let $S_\CP (f,B)$ be the operation in $\fc(\CP)$ which is the image under the structure map $\VVa \to \fo(\CP)$ 
of the operation $(B, r_\alpha) : A_0 \to A_n$. The latter operation lives in $\UUa$, when regarding $B$ as a morphism $\D{1} \to \DD{n}_k$. Then, we extend $S_\CP$ sequentially, so that it is a sequenced morphism by construction.

Equivalently, we can define $S_\CP$ via a series of transpositions through adjunctions as follows. Consider the identity morphism $id : \fo (\CP) \to \fo (\CP)$, which induces the unit $N^l(\CP) \to \ncl (\fo (\CP))$ in $\slist$. By applying the functor $\CL$, which commutes with both nerves, we obtain a morphism $N(\LP) \to \nc (\CL \fo(\CP))$ in $\sset$. The latter transposes into a morphism $S_\CP : \fc (\LP) \to \CL \fo (\CP)$ in $\scat$ by adjunction. 

\end{construction}

\begin{lemma}[Universal property of $S_\CP$]

Let $\CP$ be an operad and $\Q$ be a simplicial operad. Given a sequenced morphism $\phi: \fc (\LP) \to \LQ$, there is a unique morphism of simplicial operads $\psi : \fo(\CP) \to \Q$ such that $(\CL \psi) \circ S_\CP = \phi$. 
\[\begin{tikzcd}[column sep=small]
	{\fc (\LP)} && \LQ \\
	{\CL  \fo (\CP)}
	\arrow["{S_\CP}"', from=1-1, to=2-1]
	\arrow["\phi", from=1-1, to=1-3]
	\arrow["{\exists ! \CL \psi}"', dashed, from=2-1, to=1-3]
\end{tikzcd}\]
    
\end{lemma}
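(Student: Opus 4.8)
The strategy is to pass through the chain of adjunctions that defines $S_\CP$ and to use the universal property of rigidification at the level of $\infty$-categories, then transport the solution back to operads. Given a sequenced morphism $\phi : \fc(\LP) \to \LQ$, I would first transpose it along the adjunction $(\fc, \nc)$ on simplicial categories to obtain a morphism of simplicial sets $\bar\phi : N(\LP) \to \nc(\LQ)$. Since $\CL$ commutes with both nerves (Lemma \ref{lemma_commute}) and $\CL N^l(\CP) \cong N(\LP)$, the sequencing hypothesis on $\phi$ is exactly what is needed to guarantee that $\bar\phi$ is compatible with rooted decomposition, so that $\bar\phi$ is the image under $\CL$ of a morphism $\tilde\phi : N^l(\CP) \to \ncl(\Q)$ in $\slisto$. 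Concretely: an $n$-simplex of $N^l(\CP)$ is a morphism $T_\alpha \to \CP$ with $\alpha \in \nd^{root}_n$, and $\tilde\phi$ is defined on it by sending it through $\bar\phi$; one checks the image is a singleton sequence precisely by the second clause in the definition of sequenced, while compatibility with the face and degeneracy listings follows from the first clause together with naturality of $\bar\phi$.

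Next I would transpose $\tilde\phi$ along the rigidification adjunction $(\fo, \ncl)$ to obtain the desired $\psi : \fo(\CP) \to \Q$ in $\soperad$. It then remains to verify the equation $(\CL\psi)\circ S_\CP = \phi$ and the uniqueness of $\psi$. For the equation, I would unwind the definition of $S_\CP$ via the same series of transpositions used in its alternative description in the construction: $S_\CP$ is obtained from $\mathrm{id}_{\fo(\CP)}$ by taking the unit $N^l(\CP)\to\ncl(\fo(\CP))$, applying $\CL$, and adjoint-transposing; replacing $\mathrm{id}_{\fo(\CP)}$ by $\psi$ in this diagram-chase and using functoriality of the transpositions (i.e. the triangle identities for the three adjunctions in play, together with the fact that $\CL$ commutes with nerves) yields exactly $\phi$. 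For uniqueness, I would observe that any $\psi'$ with $(\CL\psi')\circ S_\CP = \phi$ must, after reversing the transpositions, induce the same $\tilde\phi : N^l(\CP)\to\ncl(\Q)$; since the transposition $\slisto(N^l(\CP),\ncl(\Q)) \cong \soperad(\fo(\CP),\Q)$ is a bijection, $\psi' = \psi$.

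The main obstacle I anticipate is the bookkeeping in the step that produces $\tilde\phi$ from $\bar\phi$ — that is, checking that a sequenced $\phi$ really does descend through the pointwise application of $\CL$ to a morphism in $\slisto$, rather than merely a morphism in $\widetilde{\slist}$. This is where the two clauses of the definition of sequenced morphism must be used in an essential way: the first (preservation of rooted decomposition) ensures the candidate $\tilde\phi$ is well-defined and natural as a transformation of functors $\D{op}\to\llist$, and the second (singleton output on rooted simplices) ensures its components are honest functions, so that $\tilde\phi$ is a genuine morphism of simplicial lists and not merely of simplicial objects in $\llist$. Once that descent is established, everything else is a formal consequence of the triangle identities and the compatibilities recorded in Lemma \ref{lemma_commute}, and I would present it as a short diagram chase rather than a computation.
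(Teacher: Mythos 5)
Your proposal is correct and follows essentially the same route as the paper: the paper packages the argument as a commutative diagram of hom-set bijections, namely $\mathsf{Cat}_\Delta(\mathfrak{C}(\mathscr{L}\mathscr{P}),\mathscr{L}\mathscr{Q})_{seq}\cong\mathsf{sSet}(N(\mathscr{L}\mathscr{P}),N_\Delta(\mathscr{L}\mathscr{Q}))_{mon}\cong\mathsf{sList}(N^l\mathscr{P},{N_\Delta}^{l}\mathscr{Q})\cong\mathsf{Op}_\Delta(\mathfrak{O}(\mathscr{P}),\mathscr{Q})$, and identifies the composite with $S_{\mathscr{P}}^*$, while you trace a single $\phi$ through the same chain of transpositions. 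The one point the paper leaves to "scrutiny" — that sequencedness is exactly the condition for descent through $\mathscr{L}$ to a morphism of simplicial lists with functional components — is the same point you correctly isolate as the crux.
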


\begin{proof}
    We have a commutative diagram
    \[\begin{tikzcd}[column sep=small]
	{\soperad (\fo (\CP), \Q)} && {\slist(N\CP, \nc \Q)} \\
	{\scat (\CL \fo (\CP),\CL \Q)_{mon}} && {\sset (N(\LP), \nc (\LQ))_{mon}} \\
	& {\scat (\fc (\LP), \LQ)_{seq}}
	\arrow["\CL"', from=1-1, to=2-1]
	\arrow["{S_\CP^*}"', from=2-1, to=3-2]
	\arrow["\CL", from=1-3, to=2-3]
	\arrow["\cong", from=1-1, to=1-3]
	\arrow["\cong", from=2-1, to=2-3]
	\arrow["\cong"', from=3-2, to=2-3]
\end{tikzcd}\]
where $\soperad (\CL \fo (\CP),\CL \Q)_{mon}$ and $\sset (N(\LP), \nc (\LQ))_{mon}$ are the sets of morphisms which respect monoid structure, and 
$\scat (\fc (\LP), \LQ)_{seq}$ is the set of sequenced morphisms. The unlabelled isomorphisms in the top square are the maps given by adjunction. Notice that the vertical maps induced by applying $\CL$ are also isomorphisms. 

For the triangle on the bottom, it is evident by scrutiny that the restriction of the adjunction isomorphism $\scat (\fc (\LP), \LQ) \xrightarrow{\cong} \sset (N(\LP), \nc (\LQ))$ to sequenced morphisms is a bijection into maps of monoids. Hence, the function $S_\CP^*$ induced by composing with $S_\CP$ is also a bijection. 
\end{proof}

\begin{proposition}
    For all operads $\CP$, the simplicial operad $\fo (\CP)$ is cofibrant.
\end{proposition}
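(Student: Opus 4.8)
The plan is to verify that $\fo(\CP) = \fo(N^l\CP)$ has the left lifting property against an arbitrary trivial fibration $p : \K \to \Q$ in $\soperad$, reducing this — via the adjunction $\fo \dashv \ncl$ and the presheaf identification $\slisto \cong \set^{\Upsilon^{op}}$ — to the corresponding statement for representables. First I would transpose: a lifting problem for $\fo(\CP)$ against $p$ is the same as a lifting problem for $N^l\CP$ against $\ncl(p)$. Now $\Upsilon$ carries the evident Reedy structure with degree the simplicial dimension in $\nd$, so $N^l\CP$, like every object of $\set^{\Upsilon^{op}}$, is the colimit of its skeletal filtration, and each stage $\mathrm{sk}_{k-1}N^l\CP \hookrightarrow \mathrm{sk}_k N^l\CP$ is a pushout of a coproduct of boundary inclusions $\partial U_\alpha \hookrightarrow U_\alpha$ with $\alpha \in \nd^{root}_k$. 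Since the class of maps with the right lifting property against $\ncl(p)$ is saturated, it suffices to show $\ncl(p)$ has the right lifting property against each $\partial U_\alpha \hookrightarrow U_\alpha$; transposing once more, this says $\fo(\partial U_\alpha) \to \VVa$ is a cofibration of simplicial operads for every rooted $\alpha$ (the case $\alpha = [0]$, where $\partial U_{[0]} = \emptyset$ and $\VV_{[0]} = T_{[0]}$ is the free operad on one color, is immediate, as $\emptyset \to T_{[0]}$ is a generating cofibration).

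The key step is thus that $\fo(\partial U_\alpha) \to \VVa$ is a cofibration, and I would prove it by the same device used above to show $\VVa$ is cofibrant. Fix a trivial fibration $p : \K \to \Q$ in $\soperad$ and a lifting problem for $\fo(\partial U_\alpha) \to \VVa$ against it. By Corollary \ref{prop_thicksimplex}, a morphism $\VVa \to \Q$ is the same as a morphism $\DD{n} \to \CL\Q$ ``of shape $\alpha$'', and, taking the limit over the proper faces of $U_\alpha$, a morphism $\fo(\partial U_\alpha) \to \Q$ is the same as a morphism $\partial\DD{n} \to \CL\Q$ whose faces are of the shapes dictated by the faces of $\alpha$ — likewise with $\K$ in place of $\Q$. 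So the lifting problem transposes into a lifting problem in $\scat$ for the inclusion $\partial\DD{n} \hookrightarrow \DD{n}$ against $\CL(p) : \CL\K \to \CL\Q$. By Lemma \ref{lemma : Lwefib} the latter is a trivial fibration, and $\partial\DD{n} \hookrightarrow \DD{n}$ is a cofibration of simplicial categories (it is $\fc$ applied to $\partial\D{n}\hookrightarrow\D{n}$, and $\fc$ is left Quillen; cf. Appendix \ref{appendix_free}), so a lift $h : \DD{n} \to \CL\K$ exists. Exactly as in the proof that $\VVa$ is cofibrant, $h$ is automatically of shape $\alpha$: the underlying chain of colour-sequences of $h$ has the same lengths and the same maps in $\dplus$ as that of $\CL(p)\circ h$, which is of shape $\alpha$. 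Hence $h$ transposes back to the desired lift $\VVa \to \K$, and $\fo(\partial U_\alpha)\to\VVa$ is a cofibration.

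The main obstacle is the bookkeeping in this last step: one must check that, under the identifications of Corollary \ref{prop_thicksimplex}, the subobject $\partial U_\alpha \hookrightarrow U_\alpha$ corresponds precisely to $\partial\DD{n}\hookrightarrow\DD{n}$ equipped with the shape data on faces, and that a lift agreeing with the given boundary data is necessarily of shape $\alpha$, so that it really transposes back to a morphism out of $\VVa$ and not merely out of $\fc\CL U_\alpha$ (the functor $\CL : \slisto \to \sset$ is not full, so this shape-rigidity argument is what makes the transposition reversible). One also uses the standard fact that $\partial\DD{n}\hookrightarrow\DD{n}$ is a cofibration in the Bergner structure. Granting these, the reduction of the first paragraph shows $\ncl(p)$ lifts against $\emptyset \hookrightarrow N^l\CP$, i.e.\ $\fo(\CP)$ is cofibrant. (One could instead run the argument through the universal sequenced morphism $S_\CP$ and the rooted decomposition of $\fc(\LP)$, but the route above minimises the computad-style bookkeeping.)
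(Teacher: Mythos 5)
Your strategy is genuinely different from the paper's. The paper solves the lifting problem in one stroke: apply $\CL$, precompose with the universal sequenced morphism $S_\CP : \fc (\LP) \to \CL \fo(\CP)$, produce a lift out of $\fc(\LP)$ using its known cofibrancy and the fact that $\CL$ preserves trivial fibrations, observe that the lift is automatically sequenced by commutativity, and invoke the universal property of $S_\CP$ to descend it to the desired morphism $\fo(\CP) \to \K$. You instead run a cell-by-cell induction in $\set^{\Upsilon^{op}}$, reducing cofibrancy of $\fo(\CP)$ to the claim that each $\fo(\partial U_\alpha) \to \VVa$ is a cofibration, and handling that by the same shape-rigidity transposition the paper uses to prove $\VVa$ is cofibrant. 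Your second paragraph is essentially sound, granted the bookkeeping you flag (the faces of $U_\alpha$ in $\Upsilon$ are indexed by pairs $(\theta, a)$ with $a \in A_{\theta(k)}$, not by $\theta$ alone, so identifying $\fo(\partial U_\alpha)$ with shape data over $\partial\DD{n}$ requires reassembling the $a$-indexed faces via $\VV_{\beta \amalg \beta^\prime} \cong \VV_\beta \amalg \VV_{\beta^\prime}$ and rooted decomposition).

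The genuine gap is in your first paragraph: you assert that $N^l\CP$, ``like every object of $\set^{\Upsilon^{op}}$,'' is a transfinite cell complex of boundary inclusions $\partial U_\alpha \hookrightarrow U_\alpha$. For an arbitrary presheaf category this is simply false; it holds when the base is an elegant Reedy (Eilenberg--Zilber) category, which requires, at minimum, that every element of a presheaf on $\Upsilon$ be a degeneracy of a unique non-degenerate element along a unique degeneracy operator, that degeneracy operators split and have absolute pushouts, and that latching objects decompose as stated. None of this is established in the paper, and it is a substantive combinatorial claim about $\Upsilon$ --- plausible (the objects are rigid because the sets $A_i$ are ordered), but of a weight comparable to the proposition you are proving. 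As written, your argument therefore imports an unproven structural input that the paper's proof deliberately avoids by routing everything through $S_\CP$ and the already-known cofibrancy of $\fc(\LP)$. If you supply the EZ structure on $\Upsilon$, your argument closes and in fact yields slightly more (a set of generating inclusions against which cofibrancy of any object in the image of $\fo$ can be tested); without it, the reduction from $\emptyset \to N^l\CP$ to the boundary inclusions does not go through.
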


\begin{proof}
    Consider a lifting problem in the category of simplicial operads
    \[\begin{tikzcd}
	& {\cc{K}} \\
	{\fo (\CP)} & \Q
	\arrow[from=2-1, to=2-2]
	\arrow["\sim", two heads, from=1-2, to=2-2]
	\arrow[dashed, from=2-1, to=1-2]
\end{tikzcd}\]
    where the map on the right is a trivial fibration. By, applying the lists functor $\CL$, and in light the universal property of $\fo(\CP)$, consider the following diagram:
    \[\begin{tikzcd}
	&& {\CL \cc{K}} \\
	{\fc(\LP)} & {\CL \fo (\CP)} & \LQ
	\arrow[from=2-2, to=2-3]
	\arrow["\sim", two heads, from=1-3, to=2-3]
	\arrow[dashed, from=2-2, to=1-3]
	\arrow["S"', from=2-1, to=2-2]
	\arrow[dashed, from=2-1, to=1-3]
\end{tikzcd}\]
    The map on the right is a trivial fibration of simplicial categories since the functor $\CL$ preserves trivial fibrations (Lemma \ref{lemma : Lwefib}), hence there exists a lift $\fc(\LP) \to \CL \cc{K}$ since $\fc(\LP)$ is cofibrant. Moreover, this lift has to be sequenced because of commutativity. Hence, we obtain the desired lift $\fo(\CP) \to \cc{K}$.
    
\end{proof}

\begin{remark}[Cofibrant replacement]

  By construction, we have the colimit formula $\fo (\CP) \cong 
\displaystyle \colim_{T_\alpha \to \CP} \VVa$. Since $\CP \cong \tau_1 (N^l\CP)$, we also have the colimit formula $\CP \cong \displaystyle \colim_{T_\alpha \to \CP} T_\alpha$. Thus, we obtain a structure map $\fo (\CP) \to \CP$, where $\CP$ is regarded as a discrete simplicial operad.

In a previous version of this paper, we \emph{erroneously} claimed that this map is weak equivalence, so that $\fo (\CP)$ serves as a cofibrant replacement for $\CP$. We believe the result is true, but we leave it for subsequent work. 

Notive that $\fo(\CP) \to \CP$ being a weka equivalence would imply that the the universal sequenced morphisms $S_\CP : \fc (\LP) \to \CL \fo(\CP)$ is also a weak equivalence since we have a commutative triangle
\[\begin{tikzcd}[column sep=small]
	{\fc (\LP)} && {\CL  \fo (\CP)} \\
	& \LP
	\arrow["{S_\CP}", from=1-1, to=1-3]
	\arrow["\sim"', from=1-1, to=2-2]
	\arrow[ from=1-3, to=2-2]
\end{tikzcd}\]

In the proof of Proposition \ref{prop : natwe} we constructed a natural weak equivalence $D : \fc (\LP) \to \CL (FU_* \CP)$. By manner of construction, $D$ is sequenced, hence it induces a morphism $R : \fo \CP \to FU_* \CP$. The map $\CL R$ fits into the following commutative square
    \[\begin{tikzcd}
	& {\fc (\LP)} \\
	{\CL (\fo \CP)} && {\CL (FU_* \CP)} \\
	& \LP
	\arrow[from=2-1, to=3-2]
	\arrow[from=2-3, to=3-2]
	\arrow["{S_\CP}"', from=1-2, to=2-1]
	\arrow["D", from=1-2, to=2-3]
	\arrow["{\CL R}"{description}, from=2-1, to=2-3]
\end{tikzcd}\]
in which the two maps on the bottom are obtained by applying $\CL$ to the structure morphisms $\fo \CP \to \CP$ and $FU_* \CP \to \CP$. The top triangle commutes by definition, while the bottom triangle commutes by inspection. Thus, we have a commutative triangle 
\[\begin{tikzcd}
	{ \fo \CP} && { FU_* \CP} \\
	& \CP
	\arrow[from=1-1, to=2-2]
	\arrow["\sim", from=1-3, to=2-2]
	\arrow["{ R}", from=1-1, to=1-3]
\end{tikzcd}\]
which would imply that $R$ is a weak equivalence as well.

\end{remark}

\begin{remark}[Homotopy coherence]

Let $\CP$ be an operad and $\Q$ be a simplicial operad. In Section \ref{subsec : free resolution} we discussed how it is sensible, in analogy with classical theory, to say that a homotopy coherent $\Q$-valued algebra of $\CP$ can be said to be a morphism of simplicial operads $FU_* \CP \to \Q$. 

In light of the coherent nerve we proposed earlier, it also makes sense for homotopy coherent algebras to be defined as morphisms $\fo \CP \to \Q$. 
The advantage of studying maps $\fo \CP \to \Q$ is that they can be transposed into morphisms of simplicial lists $N^l\CP \to \ncl \Q$. 

The true benefit of such a transposition for coherence can only come to surface when more theory is developed. Nonetheless, we deem the ability to state that a homotopy coherent algebra is a map of simplicial lists an important simplification in the study of coherent phenomena. We illustrate with a couple of examples.
    
\end{remark}

\begin{example}[$A_\infty$-spaces and $A_\infty$-algebras]

We can model $A_\infty$-spaces as morphisms of simplicial lists $N^l\assoc \to \cc{S}^\times$ and $A_\infty$-algebras as morphisms of simplicial lists $N^l\assoc \to \cat_\infty^\times$, where $\cc{S}^\times$ and $\cat_\infty^\times$ are the $\infty$-operads of spaces and $\infty$-categories constructed in Example \ref{example : operad of spaces}.
    
\end{example}

\begin{remark}
    The above notions ought to correspond, via a good theory of fibrations, to monoidal $\infty$-groupoids and monoidal $\infty$-categories. We hope to achieve such results in the sequel papers.
\end{remark}

\begin{example}[Weak enrichment]

Let $S$ be a set and $P$ be an $\infty$-operad. We can say that a $P$-enriched category with set of objects $S$ is a morphism of simplicial lists $N^l \mathsf{Hom}_S \to P$, where $\mathsf{Hom}_S$ is the operad from Example 
\ref{example : hom operad}. In particular, categories weakly enriched in spaces give us a variant for $(\infty, 1)$-categories, while enriching in $\infty$-categories produces a variant for $(\infty,2)$-categories. 

\end{example}

\section{Homology of simplicial lists} \label{section : homology}

We briefly discuss a natural variant for the homology of a simplicial list. Other possible variants, and more sophisticated results about homology and its applications are left for future work.

Let $\Ab $ denote the category of abelian groups. Let $\bb{Z} :\llist \to \Ab$ be the functor that sends an object $X$ in $\llist $ to the free abelian group $\mathbb{Z}X$ in $\Ab$ and send a morphism $u :X\xslashedrightarrow{} Y$ in  $\llist $  to the homomorphism $\bb{Z}(u):\mathbb{Z}X\to \mathbb{Z}Y$ in  $\Ab $  that linearly extends the assignment
$$\bb{Z}(u)(x)=\sum _{i=1}^{|u(x)|} u(x)_i$$
for $x$ in $X$ where $u(x)$ is the list $\left(u(x)_1, u(x)_2,\dots u(x)_{|u(x)|}\right)$. Here, note that if $u(x) = \emptyset$ is the empty list then $\bb{Z}(u)(x)=0$.

Let  $\sAb$ denote the category of simplicial abelian groups, $\cAb$ the category of nonnegatively graded chain complexes, and $\listdelta$ the functor category from $\Delta ^{op}$ to   $\llist$, as discussed in \ref{subsec : slist}. Recall that  $\slist$ is a wide subcategory of $\listdelta$.
The functor  $\bb{Z}:\llist \to \Ab$ induces a functor  from $\listdelta $
to $\sAb$. Hence composing with the full alternating face map complex functor we get a functor  
\[
\begin{tikzcd}
    C: &[-3em] \listdelta \arrow[r]  & \cAb
\end{tikzcd}
\]

\begin{definition}[Homology]

Let $\grAb$ be the category of graded abelian groups. Let  $H:\cAb\to \grAb$ denote the homology functor. For $X$ in $\listdelta $, we define the homology $H(X)$ of $X$ as the homology $H(C(X))$ of the chain complex $C(X)$. 
    
\end{definition}

Note that an augmented simplicial object $X \dashrightarrow X_{-1}$  in $\listdelta $ induces a morphism $X\to X_{-1}$ in $\listdelta $ where $X_{-1}$ is considered as the constant simplicial list. Hence it induces a morphism of homology groups $H(X)\to H(X_{-1})$. Notice that we have $H(X_{-1})\cong \mathbb{Z}X_{-1}$ where $\mathbb{Z}X_{-1}$ denotes free abelian group generated by the set $X_{-1}$ considered as a graded abelian group concentrated at degree $0$.
\begin{theorem}\label{thm:aug_homology}
	If an augmented simplicial object $X \dashrightarrow X_{-1}$ in $\listdelta $ has extra-degeneracies, then the induces morphisms $H(X)\to \mathbb{Z}X_{-1}$ is an isomorphism of graded abelian groups. 
\end{theorem}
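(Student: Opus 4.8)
The plan is to reduce the statement to the classical fact that an augmented simplicial abelian group equipped with extra degeneracies has an acyclic associated chain complex. The point is that the only input specific to $\llist$, namely the behaviour of the functor $\bb{Z}:\llist\to\Ab$, enters only through the trivial observation that functors preserve diagrams.

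First I would note that $\bb{Z}:\llist\to\Ab$, being a functor, sends the augmented simplicial object $X\dashrightarrow X_{-1}$ in $\listdelta$, together with its extra degeneracies, to an augmented simplicial abelian group $\bb{Z}X\dashrightarrow\bb{Z}X_{-1}$ which again carries extra degeneracies. Indeed, the extra-degeneracy structure (see Appendix \ref{appendix_Aug}) consists of a family of morphisms $s_{-1}$ together with a list of identities between composites of face maps, degeneracies, the augmentation, and these $s_{-1}$; all of this is preserved by applying $\bb{Z}$ levelwise. Thus the augmentation $\bb{Z}(\epsilon):\bb{Z}X_0\to\bb{Z}X_{-1}$ and the homomorphisms $h_n:=\bb{Z}(s_{-1}):\bb{Z}X_n\to\bb{Z}X_{n+1}$ (for $n\ge -1$, with $\bb{Z}X_{-1}$ in degree $-1$) satisfy the usual relations.

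Next I would exhibit the contracting homotopy on the augmented complex. Setting $\partial_0=\bb{Z}(\epsilon)$ and $\partial_n=\sum_{i=0}^n(-1)^i\bb{Z}(d_i)$ for $n\ge 1$, the extra-degeneracy identities $d_0s_{-1}=\mathrm{id}$, $d_{i+1}s_{-1}=s_{-1}d_i$, and $\epsilon s_{-1}=\mathrm{id}$ give, by the standard telescoping computation, $\partial h+h\partial=\mathrm{id}$ on the augmented chain complex $\cdots\to\bb{Z}X_1\to\bb{Z}X_0\xrightarrow{\partial_0}\bb{Z}X_{-1}\to 0$. (This works directly on the full alternating face map complex; no normalisation is needed.) Hence this augmented complex is contractible, and in particular exact. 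Equivalently, $H_n(C(X))=0$ for $n\ge 1$ while $\partial_0$ induces an isomorphism $H_0(C(X))\xrightarrow{\ \cong\ }\bb{Z}X_{-1}$. Unwinding definitions, the resulting isomorphism $H_0(C(X))\to\bb{Z}X_{-1}$ is exactly the map $H(X)\to\bb{Z}X_{-1}$ induced by the morphism $X\to X_{-1}$ of simplicial lists (with $X_{-1}$ constant), which is what we wanted, and in the remaining degrees both sides vanish.

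I do not expect a real obstacle here; the single point deserving a sentence of care is the claim that $\bb{Z}$ transports the extra-degeneracy structure, but this is immediate from functoriality since that structure is purely diagrammatic. Alternatively one could simply invoke the general statement that, for any functor $F$ from an arbitrary category into an abelian category, an augmented simplicial object with extra degeneracies is sent to one whose (full) alternating face map complex is acyclic, and then specialise to $F=\bb{Z}$, recalling that $C$ is by construction this complex composed with levelwise application of $\bb{Z}$.
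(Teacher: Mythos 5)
Your proposal is correct and is essentially the paper's own argument: the paper likewise takes the maps $h_n = \bb{Z}(s_{-1}) : C_n(X)\to C_{n+1}(X)$ and observes they give a chain homotopy from the identity to the composite $C(X)\to C(X_{-1})\to C(X)$, which (combined with $d_0s_{-1}=\mathrm{id}$) yields the stated isomorphism. Your phrasing via contractibility of the augmented complex and the paper's phrasing via a deformation retraction onto $C(X_{-1})$ are the same telescoping computation, and your remark that $\bb{Z}$ transports the extra-degeneracy structure because it is purely diagrammatic is exactly the (implicit) justification the paper relies on.
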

\begin{proof}
	The sequence of morphisms $\left\{h_n:C_n(X)\to C_{n+1}(X)\right\}_{n=0}^{\infty }$ induced by the morphism  $s_{-1}:X_n\to X_{n+1}$  for $n\geq 0$ gives a chain homotopy from the chain map $C(X)\stackrel{C(d_0) }{\to} C(X_{-1}) \stackrel{C(s_0) }{\to}C(X)$ to the identity chain map on $C(X)$.	
\end{proof}

Notice that the empty set $\emptyset $ is the initial and terminal object in $\llist$. Hence the empty set $\emptyset $  considered as a constant simplicial list is an initial and terminal object in $\listdelta$. Also note that the homology of the simplicial list $\emptyset$ is the trivial abelian group $0$. 

\begin{proposition} \label{prop : homology assoc}
	$H(N^l\assoc)\cong 0$
\end{proposition}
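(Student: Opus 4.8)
The plan is to exhibit an augmented simplicial object with extra degeneracies whose underlying simplicial list is $N^l\assoc$, so that Theorem \ref{thm:aug_homology} applies and forces the homology to be concentrated in degree $0$ and equal to $\mathbb{Z}X_{-1}$; then I would identify $X_{-1}$ as a one-point set, giving $H(N^l\assoc)\cong\mathbb{Z}$ in degree $0$. Wait — the statement claims $H(N^l\assoc)\cong 0$, so I must instead find an augmentation to the \emph{empty} simplicial list, or argue directly that the alternating face complex is exact including in degree $0$. The cleaner route is: recall $N^l\assoc\cong\nd^{root}$ (the example in Section \ref{subsec : nerve}), and $\CL(N^l\assoc)\cong\CL\nd^{root}\cong\nd=N(\dplus)$ as simplicial sets, since $\CL$ of the list nerve is the nerve of the monoidal envelope and $\CL\assoc\cong\dplus$. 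Now the key point is that $C$ only sees the $\mathbb{Z}$-linearization, and $\bb{Z}(u)(x)=\sum_i u(x)_i$ means that the chain complex $C(X)$ depends only on $\CL X$ via the standard identification $\mathbb{Z}(\CL X_n)\cong$ the free abelian group on $X_n$-lists, with differential the alternating sum — i.e. $C(N^l\assoc)\cong C^{\mathrm{simp}}(N(\dplus))$, the simplicial chain complex of the nerve of $\dplus$.

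So the real content reduces to: the nerve $N(\dplus)$ of the augmented simplex category has trivial reduced homology, and in fact trivial \emph{unreduced} homology, because $\dplus$ has an initial object $\emptyset$. First I would make precise the isomorphism of chain complexes $C(N^l\assoc)\cong C_\bullet(N(\dplus))$ using $\CL N^l\assoc\cong N(\dplus)$ and the definition of $\bb{Z}:\llist\to\Ab$ together with the fact that $C$ factors through $\CL$ on objects of $\listdelta$ in the evident way. Then I would invoke the classical fact that the nerve of a category with an initial (or terminal) object is contractible — its homology is $\mathbb{Z}$ concentrated in degree $0$. That would give $H(N^l\assoc)\cong\mathbb{Z}$, not $0$; so the statement must be using reduced homology, or $N^l\assoc$ must have empty $0$-simplices in the relevant sense. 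Re-examining: $\assoc$ has exactly one color, so $(N^l\assoc)_0\cong\nd^{root}_0\cong\{[0]\}$ is a one-point set, and $H_0\cong\mathbb{Z}$. For the proposition to read $H(N^l\assoc)\cong 0$ the intended $H$ must be reduced homology, OR I have miscounted and $\nd^{root}_0=\emptyset$ because the object $[0]$ of $\dplus$ may be regarded as having $A_0\cong[0]$ which is rooted — it is a one-point set, not empty. I will therefore present the argument yielding contractibility of $N(\dplus)$ and note that, under the convention that $H$ denotes reduced homology (consistent with the empty simplicial list having $H=0$ and with the augmentation discussion preceding the proposition), this gives $H(N^l\assoc)\cong 0$.

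Concretely the steps are: (1) establish $C(N^l\assoc)\cong C(\CL N^l\assoc\text{ as a simplicial set})$; (2) use $\CL\assoc\cong\dplus$ and the compatibility square from the lemma in Section \ref{subsec : nerve} to get $\CL N^l\assoc\cong N(\dplus)$; (3) observe $\dplus$ has the initial object $\emptyset$, so there is a natural transformation from the constant functor at $\emptyset$ to the identity functor, hence $N(\dplus)$ is simplicially contractible — equivalently, the augmented simplicial set $N(\dplus)\dashrightarrow \ast$ picking the object $\emptyset$ admits extra degeneracies given by "prepend $\emptyset$"; (4) apply Theorem \ref{thm:aug_homology} (for simplicial sets / via the $\bb{Z}$-linearization, exactly as in its proof: the map $s_{-1}$ of "prepending the initial object" furnishes the chain homotopy) to conclude $H(N(\dplus))\cong\mathbb{Z}$ in degree $0$, hence reduced homology vanishes and $H(N^l\assoc)\cong 0$. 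The main obstacle will be step (1): pinning down precisely why the functor $C$ on $\listdelta$ factors, up to natural isomorphism of chain complexes, through the monoidal envelope $\CL:\slist\to\sset$ — this requires checking that $\bb{Z}(u)(x)=\sum_i u(x)_i$ is exactly the composite of $\CL$ with the free-abelian-group functor $\mathbb{Z}[-]:\set\to\Ab$ on morphisms, which is a short but slightly fiddly unwinding of definitions; once that is in hand, everything else is the standard contractibility-of-a-category-with-initial-object argument.
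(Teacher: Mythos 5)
Your proof has a genuine gap at its key step (1). The claimed identification $C(N^l\assoc)\cong C^{\mathrm{simp}}(N(\dplus))$ is false: by definition $C(X)_n=\bb{Z}X_n$, the free abelian group on the set of $n$-simplices of $X$ itself, and the functor $\bb{Z}$ turns a listing into a homomorphism by \emph{summing} the entries of each output list --- it does not replace $X_n$ by $\CL X_n$. Here $C(N^l\assoc)_n=\bb{Z}\,\nd^{root}_n$, whereas the $n$-chains of $N(\dplus)\cong\CL N^l\assoc$ are free on $\nd_n\cong\CL\nd^{root}_n$; already in degree $0$ these are $\bb{Z}$ versus a free abelian group of countable rank, so no such isomorphism of chain complexes exists. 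Consequently your fallback --- reinterpreting $H$ as reduced homology to reconcile $H(N(\dplus))\cong\bb{Z}$ with the stated $H(N^l\assoc)\cong 0$ --- is not available: the paper's $H$ is unreduced (Theorem \ref{thm:aug_homology} gives $H(X)\cong\bb{Z}X_{-1}$, so a point has homology $\bb{Z}$). The vanishing is a genuine feature of the list linearization, not a normalization convention: the rooted $1$-simplex $\emptyset\to[0]$ (the nullary operation of $\assoc$) has $d_1$ equal to the empty list, which linearizes to $0$, so its boundary is $[0]-0=[0]$ and the generator of $C_0\cong\bb{Z}$ is already a boundary.

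The correct route is the one you raised in your first paragraph and then abandoned: augment over the \emph{empty} simplicial list. Since $\emptyset$ is terminal in $\llist$, there is a unique augmentation $\nd^{root}\dashrightarrow\emptyset$, and prepending the initial object of $\dplus$, i.e.\ $s_{-1}(A_0\to\cdots\to A_m)=(\emptyset\to A_0\to\cdots\to A_m)$, defines extra degeneracies (note the result is still rooted, and $d_0s_{-1}=1$ because $d_0$ deletes the first level). Theorem \ref{thm:aug_homology} then gives $H(N^l\assoc)\cong\bb{Z}\emptyset=0$ directly; this is exactly the paper's proof. Your formula in step (3) is the right one, but you applied it to the wrong object ($N(\dplus)$ augmented over a point) instead of to $\nd^{root}$ augmented over $\emptyset$.
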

\begin{proof} We have $N^l\assoc \cong \nd^{root}$. Hence, it is enough to show that the unique augmentation $\nd^{root}\dashrightarrow\emptyset$ has extra-degeneracies.  Let $\alpha$ be $A_0\stackrel{\alpha_1 }{\to}A_1\dots   \stackrel{\alpha_{m} }{\to}A_{m}$  in $ \nd^{root}_{m}$. Define  $s_{-1}(\alpha)$ as  $\emptyset \to A_0\stackrel{\alpha_1 }{\to}A_1\dots   \stackrel{\alpha_{m} }{\to}A_{m}$  in $ \nd_{m+1}$. It is straight forward to check that this assignment gives the required extra degeneracy maps. Hence the result follows by Theorem \ref{thm:aug_homology}. \end{proof}




\begin{theorem}\label{thm:homology_of_T_alpha}
	 Let $\alpha$ be  $A_0\stackrel{\alpha_1 }{\to}A_1\dots   \stackrel{\alpha_k }{\to}A_k$  in $ \nd_k$. Then  $$H(N^lT_\alpha)\cong \mathbb{Z}A_0$$ where $\mathbb{Z}A_0$ is considered as the graded abelian group concentrated at degree $0$.
\end{theorem}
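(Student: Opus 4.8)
The plan is to reduce the computation of $H(N^l T_\alpha)$ to the previous theorem (Theorem \ref{thm:aug_homology}) by exhibiting an appropriate augmented simplicial object with extra degeneracies. The naive hope — that the augmentation $N^l T_\alpha \dashrightarrow A_0$ itself has extra degeneracies — is false in general, since $N^l T_\alpha$ has many non-degenerate simplices in positive dimension, so I expect a more delicate argument. The natural target is the set $A_0 = \Col(T_\alpha)_{\text{leaves}}$... more precisely, the colors of $T_\alpha$ at level $0$, but the homology should only see these because the operations of $T_\alpha$ are all built from the generating corollas and become ``contractible'' once we resolve properly.

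First I would set up the augmentation. There is an evident morphism of simplicial lists $N^l T_\alpha \to A_0$, where $A_0$ is the constant simplicial list: on $0$-simplices it should send a color $c$ of $T_\alpha$ to its image under the retraction that collapses everything to level $0$, or more honestly, one uses that $T_\alpha = F M_\alpha$ is free and picks out, for each color, the unique ``leaf-most'' predecessor. I would then attempt to build extra degeneracies $s_{-1} : (N^l T_\alpha)_n \to (N^l T_\alpha)_{n+1}$. Concretely, an $n$-simplex is a map of operads $T_\beta \to T_\alpha$ for $\beta \in \nd_n^{root}$; the extra degeneracy should prepend a new bottom level to $\beta$ corresponding to the generating operations of $T_\alpha$, in the spirit of the extra degeneracy used in the $H(N^l\assoc) \cong 0$ proof but now ``terminating'' at the leaves $A_0$ rather than at $\emptyset$. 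The key point is that $T_\alpha$ being a \emph{free} operad on the multigraph $M_\alpha$ gives a canonical, functorial way to do this: every operation of $T_\alpha$ is a unique composite of generators, so there is a canonical ``unary path to the leaves'' construction.

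The main obstacle, as I see it, is verifying the simplicial identities for the extra degeneracy, in particular the compatibility $d_0 s_{-1} = \mathrm{id}$ and $d_{i+1} s_{-1} = s_{-1} d_i$, where one must be careful because the last face map $d_n$ in $N^l T_\alpha$ is only a listing (not a function) and the operadic structure interacts with how the new level is inserted. I would organize this by first doing it at the level of the generating multigraph $M_\alpha$ and the operadic simplicial list $\nd^{root}$, using rooted decomposition (Lemma \ref{lemma : rooted decomposition}) to control how listings split, and only then transport it through the fully faithful nerve $N^l$. An alternative, possibly cleaner, route would be to use the presheaf structure: $N^l T_\alpha$ is a colimit of representables $U_\beta$ for the simplices $T_\beta \to T_\alpha$, and the functor $C$ followed by $H$ could be analyzed on each $U_\beta$; but since $C$ is not obviously colimit-preserving in a way that commutes with $H$, I would keep the extra-degeneracy argument as the primary strategy and invoke Theorem \ref{thm:aug_homology} once the augmented object is shown to have them, concluding $H(N^l T_\alpha) \cong \mathbb{Z} A_0$ concentrated in degree $0$.
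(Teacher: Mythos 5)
Your strategy is essentially the paper's: augment $N^lT_\alpha$ over $A_0$, build extra degeneracies by prepending a new bottom (leaf) level to each $\beta \in \nd^{root}$ and extending $f : T_\beta \to T_\alpha$ along the new generating operations using freeness of $T_\alpha$, then invoke Theorem \ref{thm:aug_homology}. Two points need correcting, though. First, your opening claim that ``the augmentation $N^lT_\alpha \dashrightarrow A_0$ itself has extra degeneracies --- is false in general'' is both wrong and contradicted by the rest of your own argument: that augmentation is exactly what the paper equips with extra degeneracies, and having many non-degenerate simplices in positive dimensions is no obstruction (compare the nerve of a category with an initial object). Second, and more substantively, the augmentation map $d_0 : (N^lT_\alpha)_0 \to A_0$ is \emph{not} a function picking a ``unique leaf-most predecessor'' of each color --- a color $x \in A_i$ may have zero or many preimages in $A_0$. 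It is the \emph{listing} $x \mapsto \alpha_{0,i}^{-1}(x)$, which is legitimate precisely because homology is defined on $\listdelta$, where augmentations are allowed to be listings; correspondingly, the extra degeneracy $s_{-1}(f) : T_\gamma \to T_\alpha$ prepends the level $C_0 = \coprod_{b \in B_0}\alpha_{0,j(f(b))}^{-1}(f(b))$ and sends each new generator $p_{0,1}^{(b)}$ to $p_{0,j(f(b))}^{(f(b))}$. With that fix your outline closes up into the paper's proof; the alternative colimit-of-representables route you mention is not needed.
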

\begin{proof}
 Let $d_0$ be the morphism from $(N^lT_{\alpha})_0= \displaystyle \coprod_{i=0}^k A_i$ to $A_0$ that sends $x$ in $A_i$ to $\alpha _{0,i}^{-1}(x)$ considered as list of elements in $A_0$. Let $\beta $ be depicted by $B_0\to B_1$. Then for any operad morphism $f:T_{\beta}\to T_{\alpha}$ in $(N^lT_{\alpha})_1$  we have $d_0d_0(f)=\alpha _{0,i}^{-1}(f(B_1))=d_0d_1(f)$. Hence we have an augmentation 
$$N^lT_{\alpha}\dashrightarrow A_0.$$ 
Now we construct extra degeneracies for this augmentation. Define $s_{-1}:A_0\to (N^lT_{\alpha})_0$ as the inclusion. Then clearly $d_0s_{-1}$ is identity on $A_0$. Let $\beta$ be  $B_0\stackrel{\beta_1 }{\to}B_1\dots   \stackrel{\beta_s }{\to}B_s$  in $ \nd_s$ and let $f$ be an operad morphism from $T_{\beta}$ to $T_{\alpha}$. This means $f$ is an $s$-simplex in $N^lT_{\alpha}$. Then we define $\gamma $ depicted by  $C_0\stackrel{\gamma_1 }{\to}C_1\dots   \stackrel{\gamma_{s+1} }{\to}C_{s+1}$ in  $ \nd_{s+1}$ as follows: $C_{i+1}=C_i$ for $i\geq 0$ and
$$C_0=\displaystyle \coprod_{b\in B_0} \alpha_{0,j(f(b))}^{-1}(f(b))$$
where $j(b)$ is determined by saying $f(b)\in A_{j(f(b))}$ and $\gamma_{i+1}=\beta _i$ for $i\geq 1$ and $\gamma_1:C_0\to C_1$ be given by $\gamma_1(x)=b$ if $x\in \alpha_{0,j(f(b))}^{-1}(f(b))$.
Note that $T_{\beta}$ is a suboperad of $T_{\gamma}$. We extend $f$
to an operad morphism $s_{-1}(f)$ from $T_{\gamma }$ to $T_{\alpha }$
by sending $p_{0,1}^{(b)}$ to $p_{0,j(f(b))}^{(f(b))}$ for $b$ in $C_1=B_0$. It is straight forward to see that $d_is_{-1}=s_{-1}d_{i-1}$ for $i>0$ and $s_is_{-1}=s_{-1}s_{i-1}$ for $i>0$. Hence the result follows by Theorem \ref{thm:aug_homology}.
\end{proof}

Let $\listtwo$ denote the category whose objects are pairs of sets $(X,Y)$ where $Y$ is a subset of $X$ and morphisms from $(X,Y)$ to $(W,Z)$ are morphisms $f:X\to Z$ in $\llist$ which sends elements in $Y$ to lists in $Z$. Let $\listdeltatwo$ denote simplicial objects in $\listtwo$. Let $Q:\listtwo\to\llist$ denote the functor that sends a pair $(X,Y)$ to the set difference $X-Y$ and a morphism $f:(X,Y)\to (W,Z)$ in $\listtwo$ to the morphism $Q(f):(X-Y)\to (W-Z)$ where $Q(f)(x)$ is the list obtained from $f(x)$ by deleting every element in $Z$. Let $\iota:\llist\to\listtwo$ be the functor that sends $X$ to $(X,\emptyset)$ and morphisms to itself. The functors $Q$, $\iota$ respectively induce the functors $Q:\listdeltatwo\to\listdelta$ and $\iota:\listdelta\to\listdeltatwo$

Let $Y$ be a subsimplicial list of a simplicial list $X$. We define the relative homology groups as follows:
$$H(X,Y)=H(C(X)/C(Y)).$$ 
Then the short exact sequence $$0\to C(X)\to C(Y)\to C(X)/C(Y)\to 0$$ induces the following long exact 
$$ \cdots \to  H_{n+1}(Y)
\to H_{n+1}(X)
\to  H_{n+1}(X,Y)
\stackrel{\delta_n }{\to} H_{n}(Y)
\to H_{n}(X)
\to \cdots $$
where $\delta_n$ is the $n$th connecting homomorphism for $n\geq 0$.  This long exact sequence of homology groups will be called the \emph{homology long exact sequence} associated to the pair $(X,Y)$.


Assume that $Y$ is a subsimplicial list of a simplicial list $X$. Then it is straightforward to prove that we have 
$$H(X,Y)\cong H(Q(X,Y)).$$

\begin{proposition}
		 If $\alpha$ is in $ \nd_k$ then we have an isomorphism
		 $$H(U_{\alpha})\cong H(N^lT_\alpha).$$
		 induced by the inclusion $U_{\alpha }\to N^lT_\alpha$ which picks the $\alpha$-shaped simplex $1_{T_\alpha} : T_\alpha \to T_\alpha$.
\end{proposition}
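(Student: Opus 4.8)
The strategy is to compute $H(U_\alpha)$ and $H(N^l T_\alpha)$ by the same device and to observe that the canonical map $\iota\colon U_\alpha\to N^l T_\alpha$ of the statement respects this computation. Writing $\alpha$ as $A_0\xrightarrow{\alpha_1}A_1\to\cdots\xrightarrow{\alpha_k}A_k$, recall from Theorem~\ref{thm:homology_of_T_alpha} that $N^l T_\alpha$ carries an augmentation $N^l T_\alpha\dashrightarrow A_0$ with extra degeneracies, so that by Theorem~\ref{thm:aug_homology} the augmentation induces an isomorphism $H(N^l T_\alpha)\xrightarrow{\ \cong\ }\bb{Z}A_0$. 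I would produce the analogous structure on $U_\alpha$ and then check that $\iota$ is compatible with the two augmentations; the conclusion is then formal. I argue directly on $U_\alpha$ rather than treating it as a subobject of $N^l T_\alpha$, because when some $\alpha_i$ is invertible two summands of $U_{\alpha,m}$ get identified in $(N^l T_\alpha)_m$, so $\iota$ need not be a monomorphism.

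Concretely: by Construction~\ref{const : Ua} one has $U_{\alpha,0}=\coprod_{i=0}^{k}A_i$, which is also the set of colors of $T_\alpha$, that is $(N^l T_\alpha)_0$, and $\iota$ is the identity on $0$-simplices, sending $x\in A_i$ to the color $x$ of $T_\alpha$. I would equip $U_\alpha$ with the augmentation $d_0\colon\coprod_i A_i\to A_0$, $x\in A_i\mapsto\alpha_{0,i}^{-1}(x)$ viewed as an ordered list in $A_0$ — the same formula used for $N^l T_\alpha$. That $d_0 d_0=d_0 d_1$ on $U_{\alpha,1}$ follows from the identity $\alpha_{0,j}^{-1}(a)=\bigsqcup_{b\in\alpha_{i,j}^{-1}(a)}\alpha_{0,i}^{-1}(b)$ of ordered sets, for a $1$-simplex $(\theta,a)$ with $\theta\colon 0\mapsto i,\ 1\mapsto j$, exactly as in Theorem~\ref{thm:homology_of_T_alpha}. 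For the extra degeneracies I would take $s_{-1}\colon A_0\hookrightarrow\coprod_i A_i$ to be the inclusion of the $i=0$ summand and, for $m\ge 0$, $s_{-1}\colon U_{\alpha,m}\to U_{\alpha,m+1}$, $(\theta,a)\mapsto(\theta^{+},a)$, where $\theta^{+}\colon[m+1]\to[k]$ is $\theta$ with a new initial vertex sent to $0$ (so $\theta^{+}(0)=0$ and $\theta^{+}(j)=\theta(j-1)$ for $j\ge 1$, hence $a\in A_{\theta(m)}=A_{\theta^{+}(m+1)}$). One then checks the extra-degeneracy identities $d_0 s_{-1}=\mathrm{id}$, $d_{i+1}s_{-1}=s_{-1}d_i$ and $s_{i+1}s_{-1}=s_{-1}s_i$ for $0\le i\le m$.

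Granting the previous paragraph, Theorem~\ref{thm:aug_homology} gives $H(U_\alpha)\xrightarrow{\ \cong\ }\bb{Z}A_0$ via its augmentation. Since $\iota$ is the identity on $0$-simplices and the two augmentations agree there (both being $x\mapsto\alpha_{0,i}^{-1}(x)$), the composite $H(U_\alpha)\xrightarrow{\iota_{*}}H(N^l T_\alpha)\to\bb{Z}A_0$ equals the isomorphism $H(U_\alpha)\xrightarrow{\cong}\bb{Z}A_0$; as $H(N^l T_\alpha)\to\bb{Z}A_0$ is also an isomorphism, $\iota_{*}$ is an isomorphism. The main obstacle is the bookkeeping in the middle paragraph, and within it the one genuinely non-formal point: the last face map $d_{m+1}$ of $U_{\alpha,m+1}$ is a listing rather than a function, so the identity $d_{m+1}s_{-1}=s_{-1}d_m$ must be verified at the level of lists, where it again reduces to the ordered-fiber decomposition $\alpha_{0,j}^{-1}(a)=\bigsqcup_{b}\alpha_{0,i}^{-1}(b)$ used above. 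Everything else is immediate from the two homology theorems already established.
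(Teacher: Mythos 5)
Your argument is correct, but it is not the route the paper takes. The paper keeps the single augmentation $N^lT_\alpha \dashrightarrow A_0$ with the extra degeneracies $s_{-1}$ already built in the proof of Theorem \ref{thm:homology_of_T_alpha}, observes that these restrict/descend to extra degeneracies on the augmentation $Q(N^lT_\alpha, U_\alpha) \dashrightarrow \emptyset$, concludes that the relative homology $H(N^lT_\alpha,U_\alpha)$ vanishes, and finishes with the long exact sequence of the pair. You instead equip $U_\alpha$ with its own augmentation $d_0 \colon \coprod_i A_i \xslashedrightarrow{} A_0$ and extra degeneracies $(\theta,a)\mapsto(\theta^+,a)$, apply Theorem \ref{thm:aug_homology} to each of $U_\alpha$ and $N^lT_\alpha$ separately, and deduce that $\iota_*$ is an isomorphism from compatibility of the two augmentations on $0$-simplices. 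Your construction is in fact the pullback of the paper's $s_{-1}$ along $\iota$ (for $f=\lambda_\theta|_{(\theta^*\alpha)_a}$ the paper's recipe produces exactly $\lambda_{\theta^+}|_{((\theta^+)^*\alpha)_a}$), so the two proofs are using the same homotopy; what your version buys is that it avoids relative homology and the long exact sequence altogether, and — more substantively — it does not presuppose that $\iota$ is a monomorphism, which the paper's use of the pair $(N^lT_\alpha, U_\alpha)$ implicitly does. One correction to a side remark: invertibility of some $\alpha_i$ does \emph{not} make $\iota$ fail to be injective (distinct $p^{(a)}_{i,j}$ remain distinct in the free operad when the intermediate fibers are nonempty); the actual failure mode is empty fibers, e.g.\ for $\alpha\colon \emptyset\to\emptyset\to\{c\}$ one has $p^{(c)}_{0,2}=p^{(c)}_{1,2}=p^{(c)}_2$ by the unitality relation, so the two corresponding $1$-simplices of $U_\alpha$ have the same image. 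Since your argument never uses injectivity this does not affect your proof, but the stated reason should be fixed.
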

\begin{proof}
	 Let $s_{-1}$ be the extra degeneracies on the augmentation $N^l T_{\alpha} \dashrightarrow A_0$ constructed in the proof of Theorem \ref{thm:homology_of_T_alpha}. These extra degeneracies restrict to extra degeneracies on the augmentation  $Q(N^lT_{\alpha},U_{\alpha}) \dashrightarrow \emptyset$. Hence we are done by considering the long exact homology sequence for the pair $(N^lT_\alpha,U_{\alpha })$.
\end{proof}

\begin{remark}
    The homology of dendroidal sets has been studied in \cite{bavsic2014homology}. 
    We do not study functors relating dendroidal sets and simplicial lists, so we cannot give a complete comparison between the homology defined in this section and the homology defined in \cite{bavsic2014homology}. However, we can say that they both generalize the singular homology of simplicial sets, where a simplicial set is considered as a dendroidal set supported on the linear trees and considered as a simplicial list whose all degeneracy and face maps are functions. 
    
    Moreover, considering the list nerve of a non-symmetric operad and the dendroidal nerve of its symmetrization, the two homologies agree on some operads like the one obtained from a planar tree and the associative operad. For this, compare Proposition \ref{prop : homology assoc} and Theorem \ref{thm:homology_of_T_alpha} in this section with Theorem 6.5 and Corollary 5.13  in \cite{bavsic2014homology}. 

    Another variant for dendroidal homology has been studied in \cite{Hoffbeck2024homology}. Again, there seems to be a point of comparison between Proposition \ref{prop : homology assoc} and \cite[Example 3.2]{Hoffbeck2024homology}. 
\end{remark}

\appendix 

\section{Augmentations and extra degeneracies} \label{appendix_Aug}

\begin{definition}[Augmented simplicial set]

An augmented simplicial set is a functor $X : \dplus^{op} \to \set$.
    
\end{definition}

The augmented simplex category $\dplus$ is the category of finite ordinals in which the empty ordinal $[-1]$ is included. Hence, an augmented simplicial set is determined by specifying:
\begin{itemize}
    \item [-] A simplicial set $X$. 
    \item[-] A set $X_{-1}$ which is equipped with a map $d_0 : X_0 \to X_{-1}$ such that $d_0d_0 = d_1d_0$. 
\end{itemize}
In other words, we regard augmentation as extra structure on a simplicial set and use the notation $X \dashrightarrow X_{-1}$ to indicate data as above. Notice that we do have an actual map of simplicial sets $X \to X_{-1}$ into the constant simplicial object which maps a simplices of $X$ to the face $d_0$ of one of its vertices (this is well-defined by the above identity). 

\begin{definition}[Extra degeneracies, \protect{\cite[p.200]{goerss2009simplicial}}]

An augmented simplicial set $X \dashrightarrow X_{-1}$ is said to have extra degeneracies in case there exist maps $s_{-1} : X_{n-1} \to X_n$, $n \geq 0$, which satisfy $d_0s_{-1} = 1$ and for all $i,j \geq 0$, $d_{i+1}s_{-1} = s_{-1}d_i$ and $s_{j+1}s_{-1} = s_{-1}s_j$.

\end{definition}

\begin{lemma}[\protect{\cite[Lemma 5.1]{goerss2009simplicial}}] \label{lemma_contractible}

If an augmented simplicial set $X \dashrightarrow X_{-1}$ has extra-degeneracies, then the map $X \to X_{-1}$ is a homotopy equivalence.
    
\end{lemma}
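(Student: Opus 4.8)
The statement is classical --- it is \cite[Lemma 5.1]{goerss2009simplicial} --- and the plan is to give the usual contraction argument, which I now sketch. Regard $X_{-1}$ as a constant simplicial set and write $\varepsilon \colon X \to X_{-1}$ for the augmentation map, which in simplicial degree $n$ sends $x \in X_n$ to the iterated face $d_0^{\,n+1}(x) \in X_{-1}$; this is a well-defined simplicial map because of the augmentation identity $d_0 d_0 = d_1 d_0$ (together with the standard simplicial identities, which give $d_0^{\,n}d_i = d_0^{\,n+1}$ for $i \le n$). The first step is to produce a section: define $\sigma \colon X_{-1} \to X$ in degree $n$ by $\sigma_n = (s_{-1})^{\,n+1} \colon X_{-1} \to X_0 \to \cdots \to X_n$. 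Using the extra-degeneracy identities $d_{i+1}s_{-1} = s_{-1}d_i$ and $s_{j+1}s_{-1} = s_{-1}s_j$ one checks directly that $\sigma$ is simplicial, and the identity $d_0 s_{-1} = 1$ gives immediately $\varepsilon \circ \sigma = \mathrm{id}_{X_{-1}}$.

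The heart of the matter is to produce a simplicial homotopy $h \colon X \times \D{1} \to X$ from $\mathrm{id}_X$ to $\sigma \circ \varepsilon$, built from the extra degeneracies. Concretely this amounts to a family of maps $h_i \colon X_n \to X_{n+1}$, $0 \le i \le n$, satisfying $d_0 h_0 = \mathrm{id}_{X_n}$, $d_{n+1}h_n = \sigma_n \varepsilon_n$, together with the remaining simplicial homotopy identities relating the $h_i$ to the $d_i$ and $s_i$. Each of these identities reduces, after inserting the formulas for the $h_i$, to a purely formal manipulation with the three relations defining an extra degeneracy and the ordinary simplicial identities. I expect this bookkeeping to be the only genuine obstacle; it is entirely routine but requires care with the index ranges, and it is exactly the content of \cite[\S III.5]{goerss2009simplicial}. (On the level of normalized chains this homotopy degenerates to the contraction already exhibited in the proof of Theorem \ref{thm:aug_homology}, which is a useful sanity check.)

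Granting the homotopy, $\sigma$ is a simplicial homotopy inverse to $\varepsilon$, so $\varepsilon$ is in particular a homotopy equivalence (e.g.\ after geometric realization a simplicial homotopy becomes an honest homotopy), which is the assertion. I would present the argument in this order --- augmentation map, section, homotopy, conclusion --- and, since the statement is only needed as a black box elsewhere in the paper, I would keep the explicit verification of the homotopy identities brief and defer the details to \cite{goerss2009simplicial}.
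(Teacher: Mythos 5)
Your proposal is correct and is exactly the standard Goerss--Jardine argument (section via the extra degeneracies, simplicial homotopy from the identity to $\sigma\circ\varepsilon$) that the paper itself invokes: the paper gives no formal proof of this lemma, only the citation to \cite[Lemma 5.1]{goerss2009simplicial} together with an informal picture of the same contraction, so your write-up is a faithful (and slightly more detailed) version of the intended argument. The only point deserving care in the deferred bookkeeping is that the compatibility of $\sigma_n=(s_{-1})^{n+1}$ with degeneracies ultimately invokes an identity of the form $s_0s_{-1}=s_{-1}s_{-1}$, which sits at the boundary of the index range $i,j\ge 0$ in the paper's definition; this is handled in the cited source, to which you rightly defer.
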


The idea behind the above lemma can be sketched as follows. Let us depict the $0$-simplices $X_0$ which are degeneracies of elements of $X_{-1}$ as generic points $\bullet$ and see how the extra degeneracies do provide a recipe of contraction for simplices of $X$ to bullet points. For a $0$-simplex $x \in X_0$, the degeneracy $s_{-1} x$ has to be of the following form
\[\begin{tikzcd}
	x && \bullet
	\arrow["{s_{-1}x}"', from=1-3, to=1-1]
\end{tikzcd}\]
and thus provides a path for $x$ to “slide back" into $\bullet$. Given a $1$-simplex $x_0 \xrightarrow{\sigma} x_1$ in $X_1$, the degeneracy $s_{-1}\sigma$ is of the following form
\[\begin{tikzcd}
	{x_0} && {x_1} \\
	& \bullet
	\arrow["{s_{-1}x_0}", from=2-2, to=1-1]
	\arrow[""{name=0, anchor=center, inner sep=0}, "\sigma", from=1-1, to=1-3]
	\arrow["{s_{-1}x_1}"', from=2-2, to=1-3]
	\arrow["{s_{-1}\sigma}"{description}, draw=none, from=2-2, to=0]
\end{tikzcd}\]
and thus provides a path to slide the whole simplex back into $\bullet$. And so on for higher simplices. Natural examples of augmentations with extra degeneracies arise from free resolutions, where the term extra degeneracy makes good sense.

\begin{remark}
    The notion of augmentation and extra degeneracy applies to simplicial objects in any category. A statement which vastly generalizes the above lemma can be found in \cite[Corollary 4.5.2]{riehl2014categorical}. A detailed account of extra degeneracies and variants of this notion can be found in \cite{barr2019contractible}. In the context of $\infty$-categories, they appear as \emph{split simplicial objects} in \cite[Definition 10.2.6.3]{luriekerodon10.2}.
\end{remark}

\section{Free resolutions} \label{appendix_free}

The idea of a free resolution is simple and follows the same pattern in various algebraic structures. Informally, by algebraic structure we mean any mathematical structure which has incorporated in it some sort of operation that allows to write equations of the form $A \cdot B = C$. To resolve $\cdot$ involves forgetting the algebraic identities in the structure, and thus, in many examples a free-forgetful adjunction appears to control resolutions. 

For pedagogical purposes, we first consider the free-forgetful adjunction between monoids and pointed sets
\[
\begin{tikzcd}
    F :&[-3em] \pset \arrow[r, shift left = .5ex ] &\monoid : U \arrow[l, shift left = .5ex]
\end{tikzcd}
\]
For a monoid $M$, the pointed set $UM$ is simply the underlying set of $M$ pointed by the unit element $1 \in M$. For a pointed set $(X, e)$, the monoid $F(X, e)$ is the monoid whose elements are words in $X$ modulo a relation which says that $e$ is the unit for the operation given by concatenation. More precisely, for all words $(x_1, \dots , x_k)$, we identify
$$(x_1, \dots , x_k) \equiv (x_1, \dots, e , \dots , x_k)$$

This adjunction induces a functor 
\[
\begin{tikzcd}
    FU_* :&[-3em] \monoid \arrow[r] & \monoid^{\D{op}}
\end{tikzcd}
\]
from monoids to simplicial monoids. For a monoid $M$, the monoid of $m$-simplices of $FU_* M$ is the monoid $FU^{(m+1)}M$ obtained by iterating the endofunctor $FU$ $(m+1)$-times. The simplicial structure follows formally by applications of the unit and counit of the adjunction and it is equipped with an augmentation $FU_*M \dashrightarrow M$ with extra degeneracies. 
We are interested in a heuristic unpacking in terms of \emph{bracketing}. 

Let $M$ be a monoid. An element in $FUM$ is just a word in $M$ (technically a class, but we neglect the unit as it can always be removed). The augmentation morphism is given by the counit of the adjunction
\[
\begin{tikzcd}
    d_0 : &[-3em] FUM  \arrow[r] & M \\[-2em]
    & (x_1, \dots , x_k)  \arrow[r, maps to] & x_k \cdot \dots \cdot x_1
\end{tikzcd} , 
\]
while the extra degeneracy $s_{-1}$ is given by
\[
\begin{tikzcd}
    s_{-1} : &[-3em] M  \arrow[r] & FUM \\[-2em]
    & x  \arrow[r, maps to] & (x)
\end{tikzcd}
\]
Note how this really feels like a degeneracy operation. 
This iteration of the resolution resolves multiplication in $M$.

The next bit of resolution (and extra degeneracy)
\[\begin{tikzcd}
	FUFUM && FUM
	\arrow["{d_0}"{description}, shift left=4, from=1-1, to=1-3]
	\arrow["{d_1}"{description}, shift right=4, from=1-1, to=1-3]
	\arrow["{s_0}"{description}, from=1-3, to=1-1]
	\arrow["{s_{-1}}"{description}, curve={height=-24pt}, dashed, from=1-3, to=1-1]
\end{tikzcd}\]
can be described as follows. An element of the monoid $FUFUM$ is a word of words in $M$, say 
$$((x_1, x_2), (x_3, x_4, x_5))$$
which we think of as a \emph{bracketing} of the sequence $(x_1, x_2, x_3, x_4, x_5)$. Then, the map $d_1$ removes inner brackets, for instance
$$d_1((x_1, x_2), (x_3, x_4, x_5)) = (x_1, x_2, x_3, x_4, x_5)$$
The map $d_0$ applies multiplication inside the inner brackets and then removes them, for instance
$$d_0((x_1, x_2), (x_3, x_4, x_5)) = (x_2 \cdot x_1, x_5 \cdot x_4 \cdot x_3)$$
The degeneracy $s_0$ adds a layer of inner brackets
$$s_0(x_1, x_2, x_3, x_4, x_5) = ((x_1), (x_2), (x_3), (x_4), (x_5))$$
The extra-degeneracy $s_{-1}$ adds brackets on the exterior
$$s_{-1}(x_1, x_2, x_3, x_4, x_5) = ((x_1, x_2, x_3, x_4, x_5))$$
Again, this feels like a degenerate thing to do. 

In general, the elements of $FU^{(m)}M$ are sequences with $m$-layers of brackets. The faces operations in the simplicial structure remove layers, except for the face $d_0$ which applies multiplication in the inner brackets before removing them. The degeneracies add extra brackets around the layers. The extra degeneracy adds a pair of brackets outwardly. 

In case we want to be more rigorous about what we mean by bracketing, we would say the following:
\begin{itemize}
    \item [-] A word in $M$ is the choice of an element $x$ together with a factorization $x = x_k \cdot \dots \cdot x_1$ of $x$.
    \item[-] A bracketed word is the choice of a factorization for each of the factors $x_i$.
    \item[-] And so on. 
\end{itemize}
From this perspective, the extra degeneracies are the choice of the trivial factorization. 

When applied to the free-forgetful adjunction between abelian groups and pointed sets, we obtain a simplicial abelian group which, under the Dold-Kan correspondence, provides the free resolution of the abelian group in the usual sense of chain complexes. The quasi-isomorphism to the original group is precisely the presence of the augmentation with extra degeneracies. In fact, the backward pointing morphisms involved in the definition of null-homotopy for chain complexes can be seen to derive by the extra degeneracies.
Another classical example is the free resolution for categories obtained by the free-forgetful adjunction between graphs and categories. 

\begin{definition}[Graph]
    A (reflexive, directed) \emph{graph} $G$ is a $1$-truncated simplicial set, i.e. it consists of a set of vertices $G_0$ and a set of edges $G_1$ related by faces and degeneracy operations
    \[\begin{tikzcd}
	{G_1} && {G_0}
	\arrow["{d_0}"{description}, shift left=4, from=1-1, to=1-3]
	\arrow["{d_1}"{description}, shift right=4, from=1-1, to=1-3]
	\arrow["{s_0}"{description}, from=1-3, to=1-1]
\end{tikzcd}\]
    which satisfy $d_0s_0 = d_1s_0$.

    A morphism of graphs is a natural transformation. We denote by $\mathsf{Graph}$ the resulting category.
\end{definition}

The evident adjunction
\[
\begin{tikzcd}
    F :&[-3em] \mathsf{Graph} \arrow[r, shift left = .5ex ] & \cat : U \arrow[l, shift left = .5ex]
\end{tikzcd}
\]
assigns to a graph $G$ the category $FG$ which has as objects the vertex set $G_0$ and as morphisms chains of “composable" edges in $G_1$ (modulo a relation which says that degeneracies serve as identities in $FG$). For a category $\C$, the graph $U\C$ has the objects of $\C$ as vertices and morphisms of $\C$ as edges, with identities being the degeneracies. 

The free resolution $FU_* \C$, by construction, resolves the composition operation in $\C$. The simplicial structure of this resolution can be put in the exact same terms as the resolution for monoid described above, except that words are replaced by sequences of composable morphisms in $\C$. Moreover, given that $FU^{(m)}\C$ has always the same objects of $\C$ and that the simplicial operations are identity on objects, we may regard $FU_* \C$ as a simplicially enriched category, which means we have $m$-simplices in the mapping spaces provided by morphism sets in the resolution. 

By the augmentation and extra degeneracies, we do get for free that for all objects $a,b \in \C$ we have a weak-equivalence of simplicial sets $(FU_* \C)(a,b) \to \C(a,b)$, which means that the morphism $FU_*\C \to \C$ is a weak equivalence\footnote{These are also called Dwyer-Kan equivalence, or DK-equivalences for short. They serve as weak equivalence in the Bergner model structure on simplicial categories (\cite{bergner2007model}).} of simplicial categories. In fact, $FU_* \CP$ serves as cofibrant replacement in the Bergner model structure (see \cite[Section 16.2]{riehl2014categorical} for a proof). 

More is true. It is an easy exercise to determine that there is an isomorphism\footnote{Sometimes this is taken to be a definition.} $FU_*\D{n} \cong \bb{\D{n}}$, where $\bb{\D{n}}$ is the simplicial thickening of $\D{n}$ typically defined combinatorially as follows:
\begin{itemize}
    \item [-] The objects are $0, \dots, n$.
    \item[-] An $m$-simplex in the mapping space $\bb{\D{n}}(i,j)$ is a chain of inclusions $U_0 \subseteq \dots U_m$ of subsets of $\{0, \dots ,n \}$ such that $i,j \in U_0$.
    \item[-] Faces and degeneracies are obtained by removing or repeating terms of chains of subsets as above.
    \item[-] Composition is given by union of sets. 
\end{itemize}
The cosimplicial object $\D{} \to \cat_\D{}, [n] \to \bb{\D{n}}$, induces an adjunction 
\[
\begin{tikzcd}
    \fc :&[-3em] \sset \arrow[r, shift left = .5ex] & \scat : \nc \arrow[l, shift left = .5ex]
\end{tikzcd}
\]
between simplicial sets and simplicial categories, which is one of the fundamental constructions in homotopy theory as it establishes a Quillen equivalence between two models for $(\infty, 1)$-categories. The functor $\fc$ is also referred to as rigidification, since, when applied to an $\infty$-category it outputs a simplicial category where some compositions are strict. It is trickier to prove (for instance, by applying the Necklace Theorem of Dugger and Spivak proved in \cite{dugger2011rigidification}, see Theorem 16.4.7 in \cite{riehl2014categorical}), but true, that we have an isomorphism 
$$\fc(N\C) \cong FU_* \C$$
for all categories $\C$. 

\begin{remark}
    In full generality, given an adjunction $F : \C \leftrightarrows \cc{D} : G $, the comonad structure of $F \circ U$ equips $(F \circ U)^{\circ(\bullet+1)}$ with the structure of an augmented
simplicial object with extra degeneracies in the endofunctor category of $\cc{D}$. 
\end{remark}

\printbibliography

\end{document}